\documentclass[a4paper, 11pt]{article}
\usepackage{graphicx}
\usepackage{amssymb}
\usepackage{epstopdf}
\usepackage{fancyhdr}
\usepackage{amsmath}
\usepackage{amsthm}
\usepackage{hyperref}
\usepackage{makeidx}
\usepackage{mathdesign}
\usepackage{color}
\usepackage{soul}

\definecolor{purple}{rgb}{.9,0,.9}

\usepackage[top=2.2 cm,bottom=2.2 cm,left=1.9 cm, right=1.9 cm]{geometry}

\newcommand{\ve}{\varepsilon}

\newtheorem{theorem}{Theorem}[section]
\newtheorem{lemma}[theorem]{Lemma}

\newtheorem{definition}[theorem]{Definition}

\newtheorem{remark}[theorem]{Remark}

\def\Xint#1{\mathchoice
{\XXint\displaystyle\textstyle{#1}}%
{\XXint\textstyle\scriptstyle{#1}}%
{\XXint\scriptstyle\scriptscriptstyle{#1}}%
{\XXint\scriptscriptstyle\scriptscriptstyle{#1}}%
\!\int}
\def\XXint#1#2#3{{\setbox0=\hbox{$#1{#2#3}{\int}$ }
\vcenter{\hbox{$#2#3$ }}\kern-.58\wd0}}

\def\dashint{\Xint-}
\def\be{{\bf e}}


\title{Homogenization  of   biomechanical models  for plant tissues \thanks{This research was  supported by  Northern Research Partnership early career research exchange grant.
M.\ Ptashnyk  gratefully acknowledge the support of the EPSRC First Grant EP/K036521/1 ``Multiscale modelling and analysis of mechanical properties of plant cells and tissues''.} }
\author{Andrey Piatnitski\thanks{The Arctic University of Norway, Campus in Narvik, P.O.Box 385, Narvik 8505, Norway \ \ and \ \ Institute for information transmission problems of RAS, 19, Bolshoy Karetny per.,  Moscow,  127051, Russia   ({\tt andrey@sci.lebedev.ru}).}
 \and Mariya Ptashnyk\thanks{Department of Mathematics, University of Dundee, DD1 4HN Dundee, Scotland, UK,  ({\tt mptashnyk@maths.dundee.ac.uk}). } }

\begin{document}
\setcounter{footnote}{1}

\maketitle

\begin{abstract}
In this paper homogenization    of a mathematical model for plant tissue biomechanics is presented.  The microscopic model  constitutes  a strongly coupled system of  reaction-diffusion-convection  equations for chemical processes in plant cells,   the equations of  poroelasticity for elastic deformations of  plant cell walls and middle lamella,  and Stokes equations for fluid flow inside the cells.  The chemical process in  cells and the elastic properties of cell walls and middle lamella are coupled because elastic moduli depend on  densities involved in chemical reactions, whereas chemical reactions depend on  mechanical stresses.
 Using homogenization techniques we derive rigorously a macroscopic  model for plant biomechanics.  To pass to the limit in the nonlinear reaction terms, which depend on  elastic strain,   we prove the strong two-scale convergence of the displacement gradient and velocity field.
\end{abstract}

{\small  \quad {\bf Key words. }  {Homogenization; two-scale convergence; periodic unfolding method;
poroelasticity; \\  \phantom{a} \quad  \quad Stokes system; biomechanics of plant tissues.}}

\section{Introduction}

Analysis of interactions between mechanical properties  and chemical processes, which   influence the elasticity and extensibility of plant cell  tissues, is important  for better understanding  of  plant growth and  development, as well as their   response to environmental changes.   Plant tissues are composed of cells  surrounded by cell walls and connected by  a cross-linked  pectin network of middle lamella.  Plant cell walls  must be very strong   to resist high internal hydrostatic pressure and at the same time  flexible to permit growth.
 It is supposed that calcium-pectin cross-linking chemistry is one of the main regulators of plant cell wall elasticity and extension \cite{WHH}.  Pectin is  deposited to  cell walls in  a methylesterified form. In cell walls and middle lamella  pectin  can be modified  by the enzyme pectin methylesterase (PME), which removes  methyl groups by  breaking ester bonds. The de-esterified pectin  is able to form  calcium-pectin cross-links, and thus stiffen the cell wall and reduce its expansion, see e.g.~\cite{WG}. On the other hand,  mechanical stresses can break calcium-pectin cross-links and hence increase the extensibility  of plant cell walls and middle lamella.  It has been shown that   chemical properties of pectin  and  the control of the density of  calcium-pectin cross-links  greatly influence the  mechanical deformations of plant cell walls \cite{P2010},  and the interference with PME activity causes dramatic changes in growth behavior of plant  tissues \cite{WG}.

To analyze  the interactions between  calcium-pectin dynamics and  deformations  of a plant tissue, we derive a mathematical  model for plant tissue biomechanics  at  the  length scale of plant cells.  In the microscopic model we consider  a system of reaction-diffusion-convection equations describing the dynamics of the  methylesterified pectin, demethylesterified pectin, calcium ions, and calcium-pectin cross-links.
 Elastic deformations and water flow  are modelled by the equations of poroelasticity  for   cell walls  and middle lamella coupled with the  Stokes system for the flow velocity  inside  cells. The interplay between the mechanics and the chemistry   comes in by assuming that the elastic  properties of  cell walls and middle lamella depend on the density of the calcium-pectin cross-links and that  the  stress within  cell walls and middle lamella   can break the cross-links.  Thus the  microscopic problem is   a strongly coupled system of the Stokes equations, reaction-diffusion-convection equations, with reaction terms  depending on the displacement gradient,   and  equations of poroelasticity, with    elastic moduli  depending on  the density of cross-links.  To address the   situations when a  plant tissue is completely and not completely saturated by water, we consider both evolutional and  quasi-stationary equations  of poroelasticity.

  To show the existence of a weak solution of the  microscopic equations  we use a classical approach  and apply the  Banach fixed-point theorem.  However, due to quadratic non-linearities of reaction-terms,  the proof of the contraction inequality is not  standard and relies on delicate \textit{a priori} estimates for  the $L^\infty$-norm  of a solution of  the reaction-diffusion-convection system in terms of  the $L^2$-norm of displacement gradient and flow velocity.  The Alikakos \cite{Alikakos}  iteration technique is applied to show the  uniform boundedness of some of components of solutions of the microscopic equations.

 To analyze effective  mechanical properties  of  plant  tissues,     we derive rigorously a  macroscopic model for plant biomechanics  using homogenization techniques.
  The two-scale convergence, e.g.~\cite{allaire,Nguetseng}, and the periodic unfolding  method, e.g.~\cite{CDDGZ}, are applied to obtain the macroscopic equations.    The main   mathematical difficulty in the derivation of the macroscopic problem  arises from the strong coupling between the equations of poroelasticity     and the system of reaction-diffusion-convection equations.  In order  to pass to the limit in the nonlinear reaction terms  we prove the  strong two-scale convergence for the displacement gradient and fluid flow velocity, essential for the homogenization of the  coupled problem considered here. Due to the dependence of the elasticity tensor on the time variable,  in the proof of the strong two-scale convergence  a specific form of   the energy functional is considered.

Similar to  the microscopic problem,  to prove uniqueness of a solution of the macroscopic equations we derive a contraction inequality involving the  $L^\infty$-norm of the difference of two solutions of the reaction-diffusion-convection equations.  This contraction inequality also ensures  also the well-posedness of the limit system.

The poroelasticity  equations, modelling   interactions   between  fluid flow  and elastic stresses  in  porous media, were first  obtained  by Biot  using  a phenomenological approach \cite{Biot_1972, Biot_1962, Biot_1941} and subsequently  derived by applying  techniques of homogenization theory. Formal asymptotic expansion  was undertaken by \cite{Auriault, Burridge, Levy, SP} to derive Biot equations from microscopic description of elastic deformations of  a solid matrix and fluid flow in porous space.
The rigorous homogenization of the coupled system of equations of  linear elasticity  for a solid matrix combined with the Stokes or Navier-Stokes equations for the fluid part    was conducted   in \cite{Clopeau, GM,  Meirmanov_2007, Nguetseng_1990} by using the two-scale convergence method.  Depending on the ratios between the physical parameters,  different macroscopic equations were obtained, e.g.\ Biot's equations of poroelasticity, the system consisting of the anisotropic Lam\'e equations for the solid component, and the acoustic equations for the fluid component, the equations of viscoelasticity.
The homogenization of  a mathematical model for  elastic deformations, fluid flow  and chemical processes  in a cell  tissue was considered   in \cite{JMN}. In contrast to the problem considered in the present paper,  in \cite{JMN} the  coupling between the  equations of linear elasticity  and reaction-diffusion-convection equations for a concentration was given only through the dependence of  the elasticity tensor on the chemical concentration.  The existence and uniqueness  of a solution for equations of poroelasticity  were studied in \cite{Showalter, Zenisek_1984}.

Compared to the many   results for  the equations of  poroelasticity, there exist  only few studies  of  interactions between a free fluid and a deformable porous medium.    In \cite{Showalter_2005} nonlinear semigroup method was used for mathematical analysis of a system of poroelastic equations coupled with the Stokes equations for free fluid flow.  A rigorous derivation of  interface conditions between a poroelastic medium  and an elastic body  was considered in \cite{MW}. Numerical methods  for  coupled Biot poroelastic system  and Navier-Stokes equations were derived in  \cite{Badia}.  The Nitsche method for enforcing interface conditions was applied  in \cite{Bukac} for numerical simulation of  the Stokes--Biot coupled system.

Several results  on homogenization of equations of  linear elasticity can be found  in  \cite{BLP,JKO,OShY,SP} (and the references therein).  Homogenization of the  microscopic model for plant cell wall biomechanics,  composed of equations of linear elasticity and    reaction-diffusion equations for chemical processes,   has been  studied in  \cite{Ptashnyk}.

This paper is organized as follows. In Section~\ref{model} we  derive  the microscopic model for plant tissue biomechanics.   {\it A priori} estimates as well as the  existence and uniqueness  of a   weak solution of the microscopic problem are obtained  in  Section~\ref{a_priori}.  In Section~\ref{convergence} we prove the convergence results for  solutions of the microscopic problem. The multiscale analysis   of the coupled poroelastic  and Stokes problem  is conducted in Sections~\ref{macro_elasticity}. In Section~\ref{strong_convergence} we show strong two-scale convergence  of displacement  gradient and flow velocity.  The macroscopic equations for the system of reaction-diffusion-convection equations are derived  in Section~\ref{macro_diffusion}. The well-posedness and  uniqueness of a solution of the macroscopic problem are proven in Section~\ref{uniqueness_macro}.  In Section~\ref{incompresible} we consider the incompressible and quasi-stationary cases for the equations of poroelasticity.

\section{Microscopic model}\label{model}
 In the mathematical model for plant tissue biomechanics we consider   interactions between the mechanical properties of a plant tissue  and the chemical processes in plant cells.  A plant tissue is composed of  cell interior (intracellular space), the plasma membrane, plant cell walls,  and the cross-linked pectin network of the middle lamella joining individual cells together.  Primary plant cell walls  consist mainly of oriented cellulose microfibrils (that strongly influence the cell wall stiffness), pectin, hemicellulose, proteins, and water.
  It is supposed that calcium-pectin chemistry, given by the de-esterification of pectin and  creation and breakage of calcium-pectin  cross-links, is  one of the main regulators of cell wall elasticity, see e.g.~\cite{WHH}.    Hence in our mathematical model we consider the interactions and  two-way coupling  between calcium-pectin chemistry  and elastic deformations of a plant tissue. To describe the coupling between the mechanics and chemistry we consider the dynamics of pectins, calcium, and calcium-pectin cross-links, water flow in a plant tissue and the poroelastic nature of cell walls and middle lamella.

 To derive a mathematical model for plant tissue biomechanics, we denote a domain occupied by  a  plant tissue by $\Omega\subset \mathbb R^3$, where $\Omega$ is a bounded domain with $C^{1,\alpha}$ boundary for some $\alpha>0$. Notice that all results  also hold  for  a two-dimensional domain.   Then the   time-independent  domains  $\Omega_f\subset \Omega$ and $\Omega_e\subset\Omega$, with $\overline\Omega= \overline\Omega_e\cup \overline\Omega_f$  and $\Omega_e\cap\Omega_f = \emptyset$,   represent the reference (Lagrangian) configurations of the  intracellular (cell inside) and   intercellular  (cell walls and middle lamella) spaces, respectively,  and $\Gamma$ denotes the boundaries between the cell inside and cell walls and corresponds to the plasma membrane.
 Since  $\Gamma$ represents  the interface between elastic material and fluid in the Lagrangian configuration, it is also independent of time.

 Pectin is deposited into the cell wall in a highly methylesterified state and is modified by the wall enzyme pectin-methylesterase (PME), which removes methyl groups \cite{WG}.   It was observed experimentally that pectins can diffuse in a plant cell wall matrix, see e.g.\ \cite{Cosgrove,  Proseus_2005, Somerville}.
 Thus in the balance equation for the density of the methylesterified pectin $b_{e,1}$ and demethylesterified pectin $b_{e,2}$
 $$
 \partial_t b_{e,j} + {\rm div} J_{b,j}= g_{b,j} \quad \text{in } \Omega_e, \quad j=1,2,
 $$
  we assume the flux to be determined by Fick's law, i.e.\ $J_{b,j} = -D_{b_e,j} \nabla b_{e,j}$, for $j=1,2$, and $D_{b_e,j} >0$. The term $g_{b,j} $ models chemical reactions, that  correspond to the demethylesterification processes and creation and breakage of calcium-pectin cross-links.
  In general, diffusion coefficients for pectins and calcium depend on the microscopic structure of cell wall  given by the cell wall microfibrils and hemicellulose network, which is assumed
to be given and not to change in time, as well as on the density of pectins and calcium-pectin
cross-links. For presentation simplicity we assume here that the diffusion coefficient does not depend on the dynamics
of pectin and calcium-pectin cross-link densities. However the analysis can be conducted in the same way for the generalised model in which the diffusion of pectin, calcium, and cross-links depends on pectin and cross-link densities, assuming that diffusion coefficients are uniformly bounded from below and above, which is biologically
sensible.
The modification of methylesterified pectin by PME is modelled by the reaction term $g_{b,1}=-\mu_1 b_{e,1}$ with  some $\mu_1>0$.
  For simplicity  we assume that there is a constant concentration of PME enzyme in the cell wall.  By simple modifications of the analysis considered here, the same results can be obtained for a generalized model including the dynamics of PME and   chemical reactions between PME and pectin, see \cite{Ptashnyk} for the derivation of the corresponding system of equations.

 The deposition of the methylesterified pectin is described by the inflow boundary condition on the cell plasma membrane. We also assume that the demetylesterified pectin cannot move back into the cell interior:
 $$
 D_{b_e,1} \nabla b_{e,1} \cdot n= P_1(b_{e,1}, b_{e,2}, b_{e,3}), \quad  D_{b_e,2} \nabla b_{e,2} \cdot n =  0 \quad \text{ on } \Gamma.
 $$
  To account for mechanisms controlling the amount of pectin in the cell wall, we assume that the inflow of new methylesterified pectin depends on the density of methylesterified and demethylesterified  pectin, i.e.\ $b_{e,1}$ and $b_{e,2}$,  and calcium-pectin cross-links $b_{e,3}$.

    We consider the diffusion and transport by water flow of calcium molecules  in the symplast (in the cell inside) and  diffusion of calcium in  apoplast (cell walls and middle lamella), see e.g.\ \cite{White_2001}. Then the balance equations for calcium densities $c_f$  and $c_e$ in $\Omega_f$ and $\Omega_e$, respectively, are given by
    $$
    \begin{aligned}
  &  \partial_t c_f- {\rm div} (D_f \nabla c_f- \mathcal G(\partial_t u_f)c_f) = g_f  \quad && \text{ in } \Omega_f,  \\
    & \partial_t c_e - {\rm div} (D_e \nabla c_e) = g_e  \quad && \text{ in } \Omega_e,
    \end{aligned}
    $$
   where the chemical reaction term $g_f=g_f(c_f)$ in $\Omega_f$ describes the decay and/or  buffering of calcium inside the plant cells, see e.g.\ \cite{Xiong},  $g_e$ models the interactions between calcium and demethylesterified pectin in cell walls and middle lamella and the creation and breakage of calcium-pectin cross-links,   and $\mathcal G$ is a bounded function of the intracellular flow velocity $\partial_t u_f$.  The condition that $\mathcal G$ is bounded is natural from  the biological and physical point of view, because the flow velocity in plant tissues is bounded. This condition is also essential for a rigorous mathematical  analysis of the model.
   We assume that as the result of  the breakage of a calcium-pectin cross-link by mechanical stresses we obtain  one calcium molecule and two galacturonic acid monomers  of demethylesterified pectin. A detailed  derivation of the chemical reaction term $g_e$  is given in \cite{Ptashnyk}. See also Remark~\ref{remark1}  for the detailed form of the reaction terms.
     We assume  a passive  flow of calcium between cell walls and cell inside and assume that the exchange of  calcium between cell insides and cell walls  is facilitated only on parts of the cell membrane $\Gamma \setminus \widetilde \Gamma$, i.e.\
   $$
   \begin{aligned}
 &c_f = c_e, \qquad &&  (D_f \nabla c_f - \mathcal G(\partial_t u_f)c_f)  \cdot n = D_e \nabla c_e\cdot n \; \;   && \text{ on } \; \;   \Gamma \setminus \widetilde \Gamma, \\
&D_e \nabla c_e\cdot n =0, \; && (D_f \nabla c_f - \mathcal G(\partial_t u_f)c_f) \cdot n =0 \qquad  \; \;    && \text{ on }\;  \; \widetilde \Gamma.
   \end{aligned}
   $$
    The regulation  of calcium flow  by mechanical properties of the cell wall will be considered in future studies.

  Calcium-pectin cross-links $b_{e,3}$ are created by electrostatic and ionic binding between two galactu\-ronic acid monomers of pectin chains and calcium molecules. It is also known that these cross-links are very stable and can be disturbed mainly by thermal treatments and/or mechanical forces, see e.g.\ \cite{PB, Proseus_2007}.  Thus assuming a constant temperature, the calcium-pectin chemistry can be described as an reaction between calcium molecules and pectins, where the breakage of cross-links depends on the deformation gradient of the cell walls.
 Hence we assume that the cross-links are disturbed by the mechanical stresses in  cell walls and middle lamella, see \cite{Ptashnyk} for a  detailed description of the modelling of the calcium-pectin chemistry.  A similar approach was used in \cite{Rojas} to model a chemically mediated mechanical expansion of the cell wall of a pollen tube.  There are no experimental observations of diffusion of calcium-pectin cross-links $b_{e,3}$, however since  most calcium-pectin cross-links are not attached to cell wall microfibrils \cite{Cosgrove}  it is possible that cross-links can move inside the cell wall matrix by a very slow diffusion
$$
\partial_t b_{e,3} - {\rm div}(D_{b_e,3} \nabla b_{e,3}) = g_{b,3}  \quad \text{ in } \Omega_e,
$$
where $D_{b_e,3}>0$ and  the reaction term $g_{b,3}$  models the creation and breakage by mechanical stresses of calcium-pectin cross-links, see Remark~\ref{remark1} for a detailed form of $g_{b,3}$.
 For the analysis presented here the diffusion term in the equations for calcium-pectin cross-link density is important. However the same results can be obtained if one
assumes that calcium-pectin cross-links do not diffuse and that the reaction terms in equations
for  pectin, calcium and calcium-pectin cross-links  depend on a local average of the deformation gradient, reflecting the fact that in a  dense pectin network mechanical forces have a non-local effect on the calcium-pectin chemistry, see \cite{Ptashnyk}.

To describe  elastic deformations of plant cell walls and middle lamella  we consider the equations of poroelasticity reflecting the microscopic structure of cell walls  and middle lamella permeable to  fluid flow:
$$
\begin{aligned}
&\rho_e\partial^2_t u_e- {\rm div} ( {\bf E}(b_{e,3}) \be( u_e)) + \alpha \nabla p_e  = 0  \qquad &&  \text{ in } \Omega_{e},  \\
&\rho_p\partial_t p_e- {\rm div} ( K_p \nabla p_e - \alpha \,  \partial_t u_e)  = 0  \qquad &&  \text{ in } \Omega_{e}.
\end{aligned}
$$
Here  $u_e$ denotes the displacement from the equilibrium position,  ${\bf e}(u_e)$ stands for the symmetrized gra\-dient of $u_e$, and  $\rho_e$ denotes the poroelastic wall density. Since we consider the equations of poroelasticity, one more unknown function that should be determined is the pressure,  denoted by $p_e$.  The mass storativity coefficient is denoted by $\rho_p$ and $K_p$ denotes the hydraulic conductivity of cell walls and middle lamella. In what follows, we  assume that the Biot-Willis constant is   $\alpha=1$.

It is observed experimentally that  the  load bearing  calcium-pectin  cross-links reduce cell wall expansion, see e.g.~\cite{WHH}.   Hence  elastic properties of  cell walls and middle lamella depend on  the chemical configuration of pectin and density of calcium-pectin cross-links, see e.g. \cite{Zsivanovits_2004}.    This is reflected in the dependence of the elasticity tensor ${\bf E}$ of the cell wall and middle lamella on the density of calcium-pectin cross-links $b_{e,3}$.  The differences in the elastic properties of cell walls and middle lamella  result in the dependence of the elasticity tensor ${\bf E}$  on the spatial variables.  Since the properties of calcium-pectin cross-links are changing during the deformation and  the stretched cross-links have different impact (stress drive hardening) on the elastic properties of the cell wall matrix than newly-created cross-links, see e.g.\ \cite{Broedersz, Proseus_2006, Schuster_2012},  we consider a non-local in time dependence of the Young's modulus of the cell wall matrix  on the density of calcium-pectin  cross-links, see Assumption ${\bf A1}$.
A similar  approach was used in \cite{JMN} to model the dependence of cell deformations on the  concentration of a  chemical substance.   We assume that the hydraulic conductivity tensor varies between cell wall and middle lamella and, hence, $K_p$ depends on  the spacial variables.

In the  cell inside, that is in $\Omega_{f}$,  the water flow is modelled by the Stokes system
$$
\rho_f  \partial^2_t u_f - \mu \, \text{div} ( \be(\partial_t u_f)) + \nabla p_f = 0,  \qquad   {\rm div} \partial_t u_f =0 \qquad   \text{ in } \Omega_{f},
$$
where  $\partial_t u_f$ denotes the fluid velocity,  $p_f$ the fluid pressure, $\mu$ the fluid viscosity,  and $\rho_f$ the fluid density.

As transmission  conditions between free fluid  and poroelastic  domains we consider  the continuity of normal flux, which corresponds to mass conservation,  and the continuity of the normal component of total stress on the interface $\Gamma$,  i.e.\ the total stress of the poroelastic medium is balanced  by the total stress of the fluid,   representing  the conservation of momentum,
\begin{equation}\label{trans1}
\begin{aligned}
&(-K_p \nabla p_e +  \partial_t u_e)\cdot n = \partial_t u_f\cdot n \quad &&  \text{ on } \Gamma,  \\
  & ({\bf E}(b_{e,3}) \, \be( u_e) -  p_e I)\,  n = (  \mu \, \be(\partial_t u_f)  - p_f I)\,  n  \quad &&  \text{ on } \Gamma.
\end{aligned}
\end{equation}
Also taking into account the channel structure of a cell membrane separating cell inside and cell wall, given by the presence of aquaporins, see e.g.\  \cite{CC_1994},  we assume that the  water flow between the poroelastic cell wall and cell inside is in the direction normal to  the interface between the free fluid and the poroelastic medium. Hence we assume the no-slip interface condition,  which is appropriate for  problems where at the interface the fluid flow in the tangential direction is not allowed, see e.g.\  \cite{Bukac},
$$
 \Pi_\tau \partial_t u_e = \Pi_\tau \partial_t u_f  \qquad \text{ on } \Gamma.
 $$
 By $\Pi_\tau w$ we define the tangential projection of a vector $w$, i.e.\ $\Pi_\tau w= w - (w\cdot n) n$, where $n$ is a normal vector and $\tau$ indicates the tangential subspace to the boundary.
The balance  of the normal components of the stress in the fluid phase across the interphase is given by
\begin{equation}\label{trans2}
n \cdot ( \mu \,  \be(\partial_t u_f) - p_f I) \, n  = - p_e \qquad \text{ on } \Gamma.
\end{equation}
Notice that the  transmission conditions \eqref{trans1} and \eqref{trans2} imply   ${\bf E}(b_{e,3}) \, \be( u_e) \, n\cdot n =0$ on $\Gamma$.
  The transmission conditions are motivated by the models describing coupling between Biot and Navier-Stokes or Stokes equations   considered in e.g.\ \cite{Badia, Bukac, Murad_2000, Murad_2001, Showalter_2005}.
  The coupling between elastic deformations and fluid flow is described in  the Lagrangian configuration and hence $\Gamma$ is a fixed interface between fluid domain and elastic material. Since
   in our model we consider only the linear elastic nature of the solid skeleton of the cell walls,  the transmission conditions   \eqref{trans1} and \eqref{trans2}  are the corresponding linearizations of the fluid-solid interface conditions, i.e.\
$|\det(I +\nabla u_e)| (\mu \, \be(\partial_t u_f(t,x+u_e))  - p_f(t,x+u_e) I) (I+\nabla u_e)^{-T}n$ is approximated by $(\mu \, \be(\partial_t u_f(t,x))  - p_f(t,x) I)n$ on $\Gamma$ and the first Piola-Kirchhoff stress tensor is equal to the Cauchy stress tensor in the first order approximation.

Then the  model for the densities  of  calcium,  pectins and calcium-pectin cross-links reads as
\begin{equation}\label{eq_codif_dim}
\begin{aligned}
&\partial_t b_e = {\rm div}(D_b\nabla b_e) + g_b(c_e, b_e, \be(u_e) )  \quad &&  \text{ in }  \Omega_{e}, \;  t>0\\
&\partial_t c_e = {\rm div}(D_e\nabla c_e) + g_e ( c_e, b_e, \be(u_e))  \quad &&  \text{ in } \Omega_{e}, \; t>0 ,\\
&\partial_t c_f = {\rm div}(D_f\nabla c_f - \mathcal{G}(\partial_t u_f) c_f)  + g_f(c_f)\qquad &&  \text{ in } \Omega_{f}, \;   t>0,\\
&D_b\nabla b_e \cdot n = P(b_e) && \text{ on }   \Gamma, \; \; \,  t>0,   \\
&c_e = c_f,  \qquad \qquad
D _e\nabla c_e \cdot n = (D_f\nabla c_f -   \mathcal G(\partial_t u_f) c_f)\cdot n  \quad  && \text{ on } \Gamma\setminus\widetilde \Gamma,\;  t>0, \\
&D_e\nabla c_e \cdot n =0, \qquad (D_f \nabla c_f - \mathcal G(\partial_t u_f) c_f) \cdot n = 0 \quad && \text{ on } \widetilde \Gamma, \; \; \, t>0, \\
& b_e(0,x) = b_{e0}(x), \quad  c_e(0,x) = c_{0}(x) && \text{ in } \Omega_e, \\
& c_f(0,x) = c_{0}(x) && \text{ in } \Omega_f,
\end{aligned}
\end{equation}
where $b_e= (b_{e,1}, b_{e,2}, b_{e,3})$, $D_f>0$, $D_e>0$,  and $D_b= \text{diag} (D_{b_{e,1}}, D_{b_{e,2}}, D_{b_{e,3}})$  with $D_{b_{e,j}} >0$, $j=1,2,3$, stands for   the diagonal matrix of diffusion coefficients for $b_{e,1}$,  $b_{e,2}$, and   $b_{e,3}$.\\
For elastic deformations  of cell walls  and middle lamella and  fluid  flow inside the cells  we have a coupled system of Stokes equations and poroelastic (Biot)  equations:
\begin{equation}\label{equa_cla_dim}
\begin{aligned}
&\rho_e\partial^2_t u_e- {\rm div} ( {\bf E}(b_{e,3}) \be( u_e)) +  \nabla p_e  = 0  \quad &&  \text{ in } \Omega_{e}, \; t>0, \\
&\rho_p\partial_t p_e- {\rm div} ( K_p \nabla p_e -  \partial_t u_e)  = 0  \quad &&  \text{ in } \Omega_{e}, \; t>0, \\
&\rho_f  \partial^2_t u_f - \mu \, \text{div} ( \be(\partial_t u_f)) + \nabla p_f = 0&&  \text{ in } \Omega_{f}, \; t>0, \\
&\text{div}\, \partial_t u_f = 0 &&  \text{ in } \Omega_{f}, \; t>0, \\
& ({\bf E}(b_{e,3}) \, \be( u_e) -  p_e I)\,  n = (  \mu \, \be(\partial_t u_f)  - p_f I)\,  n &&  \text{ on } \Gamma, \; t>0,  \\
& \Pi_\tau \partial_t u_e= \Pi_\tau \partial_t u_f, \qquad  n \cdot ( \mu \,  \be(\partial_t u_f) - p_f I) \, n  = - p_e \quad
  &&  \text{ on } \Gamma,\; t>0, \\
&(-K_p \nabla p_e +  \partial_t u_e)\cdot n = \partial_t u_f\cdot n  &&  \text{ on } \Gamma,\; t>0, \\
& u_e(0,x) = u_{e0}(x), \quad  \partial_t u_e (0,x) = u_{e0}^1(x), \quad   p_e (0,x) = p_{e0}(x) && \text{ in } \Omega_e, \\
 & \partial_t u_f (0,x) =u^1_{f0}(x) \quad   &&  \text{ in } \Omega_f.
\end{aligned}
\end{equation}
For multiscale analysis of the mathematical model \eqref{eq_codif_dim}--\eqref{equa_cla_dim} we derive the nondimen\-sio\-na\-lized equations  from the dimensional model by considering $t = \hat t t^\ast$, $x = \hat x x^\ast$,
$b_e= \hat b b_e^\ast$, $c_j=\hat b c_j^\ast$, $u_j = \hat u u_j^\ast$,  $p_j = \hat p p_j^\ast$, with $j=e,f$, ${\bf E} = \hat E {\bf E}^\ast$,  $K_p = \hat K K_p^\ast$,  $\mu= \hat \mu \mu^\ast$, $\rho_p = \hat \rho_p \rho_p^\ast$,   $\rho_j = \hat \rho \rho_j^\ast$, with $j=e,f$,   $D_j = \hat D D_j^\ast$ for $j=b,e,f$, $P(b_e)=\hat R \hat b P^\ast(b_e^\ast)$,  $g_j(c_e, b_e, \be(u_e) )= \hat g \hat b g^\ast_j(c_e^\ast, b_e^\ast, \be(u_e^\ast))$, for $j=b,e$,
and $g_f(c_f) = \hat g_f \hat b g_f^\ast(c^\ast_f)$.  The dimensionless small parameter $\ve=l/L$ represents the ratio between the representative size of a plant cell $l$ and considered size  of a plant tissue $L$ and reflects  the size of the microstructure.  For a plant  root cell we can consider $l = 10\mu$m and $L=1$m and, hence,    $\ve$   is of order  $10^{-5}$.
We consider  $\hat x = L$, $\hat p = \Lambda \ve$, with $\Lambda =1$MPa,  and $\hat u = l$. For the time scale we take    $\hat t = \hat \mu/(\Lambda \ve^2)$,  which together  with  $\hat \mu = 10^{-2}$Pa$\cdot$s  corresponds approximately to $1.7$min.
We also consider   $\hat E = \Lambda $, $\hat K =\hat x^2 \ve/(\hat p \hat t)= l^2/\hat\mu$,
$\hat \rho = (\Lambda  \hat t^2)/ \hat x^2 =\hat \mu^2/( \Lambda \ve^4 L^2)$, $\hat \rho_p = 1/ \Lambda$,
$\hat D= \hat x^2/\hat t= l^2 \Lambda/ \hat \mu$, and $\hat R =\hat x \ve /\hat t= \ve^3 L \Lambda /\hat \mu$.
Hydraulic conductivity $K_p$ is of order  $10^{-9} - 10^{-8}$ m$^2\cdot$ s$^{-1} \cdot $ Pa$^{-1}$ and the minimal value of the elasticity tensor is of order $10$MPa  \cite{Zsivanovits_2004}. Hence the minimal value of the nondimensionalized elasticity tensor ${\bf E}^\ast$ is approximately $10$  and $K_p^\ast\sim 0.01-0.1$.  The  parameters  in the inflow boundary condition, i.e.\ in  $P(b_e)$,  are of order $10^{-7}$m/s,  and with $\hat R= 10^{-7}$m/s we obtain  the nondimensional  parameters in the  boundary condition for $b_e$ to be of order $1$.
Here we assume   that   $\rho_j>0$, with $j=e,p,f$, are fixed.  The case when the density $\rho_e$ and/or $\rho_p$  is of order $\ve^2$ can be analyzed in the same  way as the case when $\rho_e=0$ and $\rho_p=0$, considered in Section~\ref{incompresible}.

To describe the microscopic structure of a plant tissue,  we assume that  cells in the  tissue  are distributed periodically  and have a diameter of the order  $\ve$.
The stochastic case will be analyzed in a future paper. We consider a unit cell $\overline Y= \overline Y_e \cup \overline Y_f$, with   $\overline Y=[0,a_1]\times [0, a_2]\times [0, a_3]$, for $a_j >0$ with $j=1,2,3$,  where $Y_e$ represents the cell wall and  a part  of  middle lamella,
  and $Y_f$, with $\overline Y_f \subset Y$, defines the inner part of a cell.
  We denote  $\partial Y_f = \Gamma$ and let  $\widetilde \Gamma$ be an open on $\Gamma$ regular subset  of  $\Gamma$.

Then the   time-independent  domains  $\Omega_f^\ve$ and $\Omega_e^\ve$, representing the reference (Lagrangian) configuration of the  intracellular (cell inside) and   intercellular  (cell walls and middle lamella) spaces,  are  defined by
\begin{equation}\label{def_ome_ep_f}
\Omega_f^\ve =\text{Int}\big( \bigcup_{\xi \in \Xi^\ve} \ve(\overline Y_f + \xi)\big) \quad \text{ and } \quad
\Omega_e^\ve = \Omega \setminus \overline \Omega_f^\ve,
\end{equation}
respectively, where $\Xi^\ve=\{\xi=(a_1\eta_1, a_2\eta_2, a_3 \eta_3), \; \eta=(\eta_1, \eta_2, \eta_3) \in \mathbb Z^3: \; \ve(\overline Y_f+ \xi) \subset \Omega \}$, and $\Gamma^\ve =  \bigcup_{\xi \in \Xi^\ve} \ve(\Gamma + \xi)$ defines the boundaries between cell inside and cell walls,
$\widetilde \Gamma^\ve =  \bigcup_{\xi \in \Xi^\ve} \ve(\widetilde \Gamma + \xi)$, see Figure~\ref{geom}.

We shall use the following notation for time-space domains: $\Omega_s = (0,s)\times \Omega$,  $(\partial\Omega)_s = (0,s)\times \partial \Omega$, \,   $\Omega_{j,s}^\ve = (0,s)\times \Omega_j^\ve$,  for $j=e,f$,  \,  $\Gamma_s^\ve = (0,s) \times \Gamma^\ve$, and $\widetilde\Gamma_s^\ve = (0,s) \times \widetilde \Gamma^\ve$  for $s\in (0, T]$.

\begin{figure}
\begin{center}
\includegraphics[width=13cm]{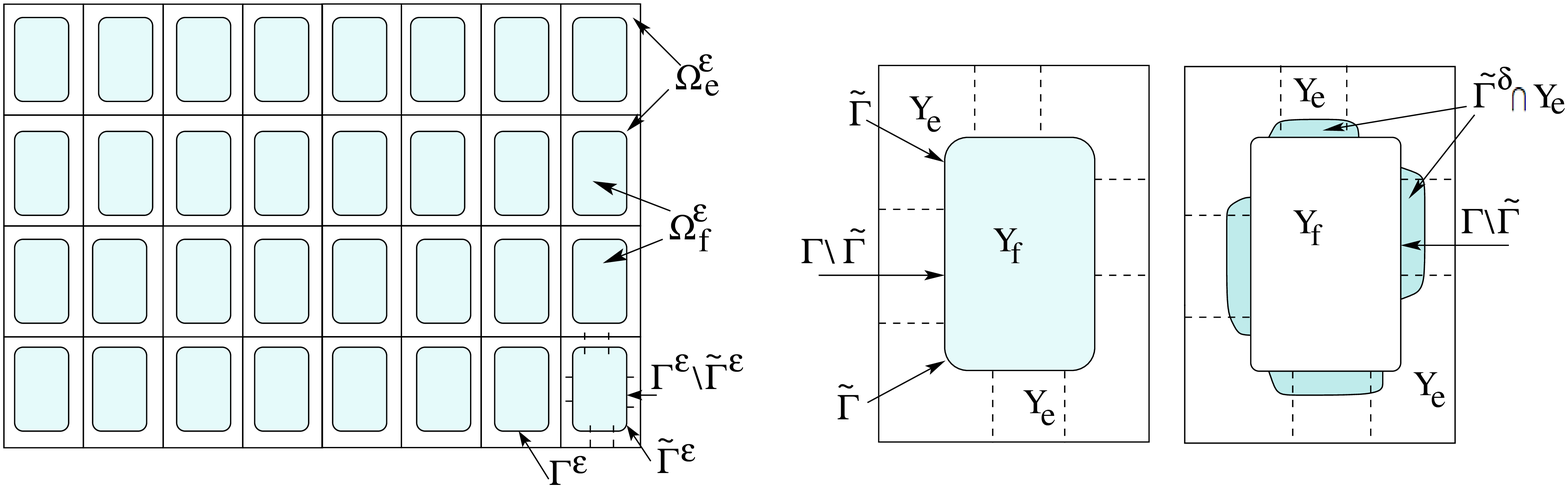}
\caption{Schematic diagram of the geometry of a plant tissue and unit cell}\label{geom}
\end{center}
\end{figure}

Neglecting $\phantom{}^\ast$   we obtain the nondimensonalised microscopic model for plant tissue biomechanics
\begin{equation}\label{eq_codif}
\begin{aligned}
&\partial_t b_e^\ve = {\rm div}(D_b\nabla b^\ve_e) + g_b(c_e^\ve, b_e^\ve, \be(u_e^\ve) )  \qquad &&  \text{ in } \Omega_{e,T}^\ve,\\
&\partial_t c_e^\ve = {\rm div}(D_e\nabla c^\ve_e) + g_e ( c^\ve_e, b_e^\ve, \be(u_e^\ve))  \qquad &&  \text{ in } \Omega_{e,T}^\ve ,\\
&\partial_t c^\ve_f = {\rm div}(D_f\nabla c_f^\ve - \mathcal{G}(\partial_t u_f^\ve) c_f^\ve)  + g_f(c_f^\ve)\qquad &&  \text{ in } \Omega_{f,T}^\ve,\\
&D_b\nabla b_e^\ve \cdot n =\ve \, P(b_e^\ve) && \text{ on }   \Gamma^\ve_T,  \\
&c_e^\ve = c_f^\ve,  \qquad
D _e\nabla c_e^\ve \cdot n = (D_f\nabla c_f^\ve -   \mathcal G(\partial_t u_f^\ve) c_f^\ve)\cdot n  \qquad  && \text{ on } \Gamma^\ve_T\setminus\widetilde \Gamma^\ve_T, \\
&D_e\nabla c_e^\ve \cdot n =0, \qquad (D_f \nabla c_f^\ve- \mathcal G(\partial_t u_f^\ve) c_f^\ve) \cdot n = 0 \quad && \text{ on } \widetilde \Gamma^\ve_T, \\
& b_e^\ve(0,x) = b_{e0}(x), \quad  c_e^\ve(0,x) = c_{0}(x) && \text{ in } \Omega_e^\ve, \\
& c_f^\ve(0,x) = c_{0}(x) && \text{ in } \Omega_f^\ve
\end{aligned}
\end{equation}
and
\begin{equation}\label{equa_cla}
\begin{aligned}
&\rho_e\partial^2_t u^\ve_e- {\rm div} ( {\bf E}^\ve(b^\ve_{e,3}) \be( u^\ve_e)) +  \nabla p_e^\ve  = 0  \qquad &&  \text{ in } \Omega_{e,T}^\ve, \\
&\rho_p\partial_t p^\ve_e- {\rm div} ( K^\ve_p \nabla p^\ve_e -  \partial_t u_e^\ve)  = 0  \qquad &&  \text{ in } \Omega_{e,T}^\ve, \\
& \rho_f\partial^2_t u_f^\ve -\ve^2 \mu\, \text{div} (\be(\partial_t u^\ve_f)) + \nabla p_f^\ve = 0&&  \text{ in } \Omega_{f,T}^\ve, \\
&\text{div}\, \partial_t u_f^\ve = 0 &&  \text{ in } \Omega_{f,T}^\ve , \\
& ({\bf E}^\ve(b^\ve_{e,3}) \, \be( u^\ve_e) - p_e^\ve I)\,  n = ( \ve^2  \mu\, \be(\partial_t u^\ve_f)  - p_f^\ve I)\,  n &&  \text{ on } \Gamma^\ve_T, \\
& \Pi_\tau \partial_t u_e^\ve= \Pi_\tau \partial_t u_f^\ve, \qquad  n \cdot (\ve^2  \mu \, \be(\partial_t u^\ve_f) - p_f^\ve I) \, n  = - p_e^\ve \qquad
  &&  \text{ on } \Gamma^\ve_T, \\
&(-K^\ve_p \nabla p^\ve_e + \partial_t u_e^\ve)\cdot n = \partial_t u_f^\ve\cdot n  &&  \text{ on } \Gamma^\ve_T, \\
& u_e^\ve(0,x) = u^\ve_{e0}(x), \quad  \partial_t u_e^\ve (0,x) = u_{e0}^1(x), \quad   p_e^\ve (0,x) = p^\ve_{e0}(x) && \text{ in } \Omega_e^\ve, \\
 & \partial_t u_f^\ve (0,x) =u^1_{f0}(x) \quad   &&  \text{ in } \Omega_f^\ve.
\end{aligned}
\end{equation}

On the external boundaries  we prescribe the following  force  and flux conditions:
\begin{equation}\label{exbou_co}
\begin{aligned}
&D_b\nabla b_e^\ve \cdot n = F_b(b_e^\ve), \qquad  D_e\nabla c_e^\ve \cdot n = F_c(c_e^\ve) \quad && \text{ on } (\partial \Omega)_T, \\
& {\bf E}^\ve(b^\ve_{e,3}) \be( u^\ve_e) \,  n = F_u  \quad && \text{ on } (\partial \Omega)_T, \\
& (K^\ve_p \nabla p^\ve_e - \partial_t u_e^\ve) \cdot n = F_p  &&  \text{ on }  (\partial \Omega)_T.
\end{aligned}
\end{equation}

The  elasticity  and permeability tensors are defined  by $Y$-periodic functions
$$
{\bf E}^\ve(x, \xi) = {\bf E}(x/\ve, \xi)\quad \hbox{and} \quad K_p^\ve(x) = K_p(x, x/\ve),
$$
where   ${\bf E}(\cdot, \xi)$ and $K_p(x, \cdot)$ are $Y$-periodic for a.a.\ $\xi \in \mathbb R$ and $x \in \Omega$.

 We emphasize that the diffusion coefficients $D_b$, $D_e$, and $D_f$ in   equations  \eqref{eq_codif} are supposed to be constant just for   presentation simplicity. The method developed in this paper also applies to  the case of  non-constant uniformly elliptic diffusion coefficients.\\

\noindent
We suppose the following conditions hold.

\begin{itemize}
\item [\bf A1.]  Elasticity tensor ${\bf E}(y, \zeta) = ( E_{ijkl}(y, \zeta))_{1\leq i,j,k,l\leq 3}$ satisfies  $E_{ijkl} = E_{klij} = E_{jikl} = E_{ijlk}$   and
$\alpha_1 |A|^2  \leq  {\bf E}(y, \zeta) A \cdot A   \leq \alpha_2 |A|^2 $ for  all symmetric matrices $A \in \mathbb R^{3 \times 3}$,   $\zeta \in \mathbb R_+$, and  $y \in Y$,  and  for some $\alpha_1$ and $\alpha_2$ such that $0<\alpha_1 \leq \alpha_2 < \infty$,\\
 ${\bf E}(y, \zeta) ={\bf E}_1(y, \mathcal F(\zeta))$,   where
 $$
  {\bf E}_1\in C_{\text{per}}(Y; C^2_b( \mathbb R))\quad \hbox{and}  \quad \mathcal F(\zeta) = \int_0^t \kappa (t-\tau) \zeta (\tau, x) d \tau,
 $$
with a smooth function $\kappa: \mathbb R_+ \to \mathbb R_+$  such that $\kappa(0)=0$, and $x \in \Omega$.
\item [\bf A2.]  $K_p \in C(\overline \Omega; L^\infty_{\text{per}}(Y))$ and $K_p(x,y)\eta \cdot \eta \geq k_1 |\eta|^2$ for $\eta \in \mathbb R^3$,  a.a.\   $y \in Y$ and $x \in \Omega$,  and $k_1 >0$.

\item [\bf A3.]  $\mathcal{G}$ is a Lipschitz continuous function on $\mathbb R^3$ such that $|\mathcal{G}(r)|\leq R$, for some $R>0$ and all $r \in \mathbb R^3$.
\item [\bf A4.] For functions $g_b$, $g_e$, $g_f$, $P$, $F_b$, and $F_c$ we assume
$$
g_b\in C(\mathbb R\times\mathbb R^3\times \mathbb R^6;\mathbb R^3), \quad
g_e\in C(\mathbb R\times\mathbb R^3\times \mathbb R^6), \quad
F_b,\,P\in C(\mathbb R^3;\mathbb R^3),
$$
and  $F_c$  and $g_f$  are Lipschitz continuous.   Moreover, the following estimates hold:
$$
\begin{aligned}
&|g_b(s,r, \xi )|\leq C_1(1+|s|+|r|) + C_2|r||\xi|,    \\
 &|g_e(s,r,\xi)|\leq C_3(1+|s|+ |r|)+ C_4 (|s|+|r|)|\xi|,
\end{aligned}
$$
$$
\begin{aligned}
& |F_b(r)| + |P(r)| \leq C(1+ |r|),  \\
& |F_c(s)| + |g_f(s)|\leq C(1+|s|),
\end{aligned}
$$

where $s\in\mathbb R_+$, $r \in \mathbb R^3_+$,  and $\xi$ is a symmetric $3\times 3$ matrix. Here and in what follows we identify the space of symmetric $3\times3$ matrices with $\mathbb R^6$.

  It is also assumed that for any symmetric $3\times 3$ matrix $\xi$  we have  that $g_{b,j}(s,r,\xi)$, $F_{b,j}(r)$, and $P_j(r)$  are non-negative for  $r_j=0$,  $s\geq 0$, and  $r_i\geq 0$, with $i=1,2,3$ and $j\neq i$,   and $g_e(s,r,\xi)$, $g_f(s)$, and $F_c(s)$  are non-negative for $s=0$ and $r_j\geq 0$,  with $j=1,2,3$.

We assume also  that $g_b(\cdot, \cdot, \xi)$, $g_e(\cdot, \cdot, \xi)$,  $F_b$ and $P$ are locally Lipschitz continuous   and
$$
\begin{aligned}
&  |g_b(s_1,r_1, \xi_1 ) - g_b(s_2,r_2, \xi_2 )| \leq C_1(|r_1|+|r_2| )|s_1-s_2|  \\ &
 \hspace{ 0.2  cm } + C_2 (|s_1|+ |s_2|+|\xi_1|+|\xi_2|)|r_1-r_2| +  C_3(|r_1|+|r_2| ) |\xi_1-\xi_2|, \\[2mm]
  &  |g_e(s_1,r_1, \xi_1 ) - g_e(s_2,r_2, \xi _2)|\leq C_1(|r_1|+|r_2| + |\xi_1|+ |\xi_2|)|s_1-s_2|  \\
  & \hspace{ 0.1  cm }  +C_2(|s_1|+ |s_2|+|\xi_1| + |\xi_2|)|r_1-r_2| + C_3 (|r_1|+|r_2| + |s_1|+ |s_2|) |\xi_1-\xi_2|,
\end{aligned}
$$
for $s_1, s_2\in\mathbb R_+$, $r_1, r_2 \in \mathbb R^3_+$,  and $\xi, \xi_1, \xi_2$ are symmetric $3\times 3$ matrices.

\item [\bf A5.]
$b_{e0}\in   L^\infty(\Omega)^3$,\,  $c_{0} \in  L^\infty(\Omega)$, \  and $b_{e0,j}\geq 0$, $c_{0}\geq 0$ a.e.\ in $\Omega$,  where  $j=1,2,3$,
\\
$ u_{e0}^1 \in H^1(\Omega)^3$,    $u_{f0}^1\in H^2(\Omega)^3$  and
   ${\rm div }\,  u^{1}_{f0} = 0$  in $\Omega_f^\ve$, \\
$u_{e0}^{\ve} \in H^1(\Omega_e^\ve)^3$,  \,   $p^{\ve}_{e0} \in H^1(\Omega)$,   are defined as solutions of
$$
\begin{aligned}
& {\rm div}({\bf E}^{\ve}(b_{e0,3}) \be( u^{\ve}_{e0})) = f_u  \quad  &&  \text{in } \Omega_e^\ve, &&\\
&\Pi_\tau({\bf E}^{\ve}(b_{e0,3}) \be( u^{\ve}_{e0}) \, n ) = \ve^2 \mu \,  \Pi_\tau ( \be(u_{f0}^1) n) &&  \text{on } \Gamma^\ve,\\
& n \cdot {\bf E}^{\ve}(b_{e0,3}) \be( u^{\ve}_{e0}) \, n  =0&& \text{on } \Gamma^\ve, &&  u^\ve_{e0} = 0 \quad  \text{on } \partial \Omega,
\\
& {\rm div} (K^{\ve}_p \nabla p^{\ve}_{e0}) = f_p \quad && \text{in } \Omega,  &&   p^\ve_{e0} =0 \quad \text{on } \partial \Omega,
 \end{aligned}
$$
for given  $f_u  \in L^2(\Omega)^3$ and $f_p \in L^2(\Omega)$,   \\
$F_p \in H^1(0,T; L^2(\partial \Omega))$, $F_u \in H^2(0,T; L^2(\partial \Omega))^3$.
\end{itemize}

\begin{remark} Under  the  assumptions on $u^\ve_{e0}$ and $p^\ve_{e0}$  by the standard homogenization results   we obtain
$$
\begin{aligned}
&\tilde  u^\ve_{e0} \to u_{e0}, \qquad     p^\ve_{e0} \to  p_{e0} && \text{strongly  in } L^2(\Omega),  \\
& \be(u^\ve_{e0}) \to \be(u_{e0}) + \be_y(\hat u_{e0}) && \text{strongly  two-scale}, \;  \hat u_{e0} \in  L^2(\Omega; H^1(Y_e)/\mathbb R)^3,
\end{aligned}
$$
where $\tilde u_{e0}^\ve$  is an extension of  $u_{e0}^\ve$,  and  $u_{e0}\in H^1(\Omega)^3$ and $p_{e0} \in H^1(\Omega)$ are solutions of the corresponding macroscopic (homogenized) equations.
\end{remark}

\begin{remark}\label{remark_iniosc}
Our approach also applies to the case when the initial velocity $u^1_{f0}$ has the form
$u^{1, \ve}_{f0}(x)=U^1_{f0}(x, x/\varepsilon)$, where the vector function $U^1_{f0}(x,y)$ is periodic in $y$,
sufficiently regular, and such that $\mathrm{div}_x\: U^1_{f0}(x,y)=0,\ \mathrm{div}_y\: U^1_{f0}(x,y)=0$.
\end{remark}

\begin{remark}\label{remark1}
The reaction terms for $c_f^\ve$, $b_{e,1}^\ve$, $b_{e,2}^\ve$, $b_{e,3}^\ve$,  and $c_e^\ve$ can be considered in the following form:
 \begin{equation*}
 \begin{aligned}
 & g_f(c_f^\ve)= - \mu_2 c_f^\ve, \qquad  g_{b,1}(b_e^\ve, c_e^\ve,   {\bf e}(u_e^\ve))= - \mu_1 b_{e,1}^\ve, \\
& g_{b,2}(c^\ve_e, b_e^\ve, {\bf e}(u_e^\ve))=  \mu_1 b_{e,1}^\ve - 2r_{dc} \frac{b_{e,2}^\ve c_e^{\ve}}{\kappa+ c_e^\ve} + 2R_b(b_{e,3}^\ve)({\rm tr} {\bf E}^\ve(b^\ve_{e,3}){\bf e}(u^\ve_e))^{+}- r_d b^\ve_{e,2}, \\
& g_{b,3}(c^\ve_e, b_e^\ve, {\bf e}(u_e^\ve)) = r_{dc} \frac{b_{e,2}^\ve c_e^{\ve}}{\kappa+ c_e^\ve}- R_b(b_{e,3}^\ve)({\rm tr}  {\bf E}^\ve(b^\ve_{e,3}){\bf e}(u^\ve_e))^{+}, \\
& g_e(c^\ve_e, b_e^\ve, {\bf e}({\bf u}_e^\ve))= - r_{dc} \frac{b_{e,2}^\ve c_e^{\ve}}{\kappa+ c_e^\ve} + R_b(b_{e,3}^\ve)({\rm tr}   {\bf E}^\ve(b^\ve_{e,3}){\bf e}(u^\ve_e))^+,
\end{aligned}
\end{equation*}
where $\mu_1,\mu_2,  r_{dc} , r_d , \kappa>0$, and  $R_b(b_{e,3}^\ve)$ is a Lipschitz continuous function of calcium-pectin cross-links density, e.g.\ $R_b(b_{e,3}^\ve)= r_b b_{e,3}^\ve$ with some constant $r_b>0$.
We assume that the concentration of  the enzyme PME  is constant and hence methylesterified pectin is de-esterified at a constant rate.   The demethylesterified pectin is produced through the de-esterification  of methylesterified pectin by PME,  demethylesterified  pectin can decay and through the interaction between two galacturonic acid groups of  pectin chains and a calcium molecule  a calcium-pectin cross-link is produced.   If  a cross-link breaks due to mechanical forces we regain two acid groups of demethylesterified pectin and one calcium molecule.  We consider the decay of calcium  inside the cells.  The positive part of the trace of the elastic stress reflects the fact that extension rather than compression causes the breakage of  calcium-pectin cross-links. See \cite{Ptashnyk} for more details on the derivation of a microscopic  model  for the  biomechanics of a plant cell wall.
\end{remark}

In what follows we  use the notation  $\langle \cdot, \cdot \rangle_{H^1(A)^\prime, H^1}$  for  the duality product between $L^2(0,s; (H^1(A))^\prime)$ and $L^2(0,s; H^1(A))$,  and
$$
\langle \phi, \psi \rangle_{A_s} =  \int_0^s \int_A  \phi \, \psi \, dx dt\quad \text{ for } \quad  \phi \in L^q(0,s; L^p(A)) \text{ and } \psi \in L^{q^\prime}(0,s; L^{p^\prime} (A)),
$$
where $1/q+ 1/q^\prime=1$ and   $1/p + 1/p^\prime =1$,  for any $s>0$ and  domain $A\subset \mathbb R^3$.

We also  use the notation
$$
c^\ve = \left\{ \begin{aligned}
c_e^\ve \quad \text{in } \Omega_{e,T}^\ve,\\
c_f^\ve \quad \text{in } \Omega_{f,T}^\ve.
\end{aligned}
\right.
$$

Next we define a weak solution of the coupled system \eqref{eq_codif}--\eqref{exbou_co}.
\begin{definition}\label{def_weak_micro}
Functions
$$
\begin{aligned}
&u_e^\ve \in \big[ L^2(0,T; H^1(\Omega_e^\ve))\cap H^2(0,T; L^2(\Omega_e^\ve))\big]^3,\\
&p_e^\ve \in L^2(0,T; H^1(\Omega_e^\ve))\cap H^1(0,T; L^2(\Omega_e^\ve)),\\
 &\partial_t u_f^\ve \in \big[L^2(0,T; H^1(\Omega_f^\ve))\cap H^1(0,T; L^2(\Omega_f^\ve))\big]^3,  && \quad  p_f^\ve \in L^2((0,T)\times \Omega_f^\ve),  \\
 & \Pi_\tau \partial_t u_e^\ve=\Pi_\tau \partial_t u_f^\ve \qquad  {\rm on } \; \;  \Gamma_{T}^\ve,  && \quad \hbox{\rm div}\, \partial_t u_f^\ve =0    \quad {\rm in } \; \; \Omega_{f,T}^\ve
 \end{aligned}
 $$
 and
$$
\begin{array}{l}
b_e^\ve\in \big[L^2 (0,T; H^1(\Omega_e^\ve))\cap L^\infty (0,T; L^2(\Omega_e^\ve))\big]^3,\\[2mm]
c^\ve\in L^2 (0,T; H^1(\Omega\setminus \widetilde\Gamma^\ve))\cap L^\infty (0,T; L^2(\Omega))
\end{array}
$$
are a weak solution of \eqref{eq_codif}--\eqref{exbou_co}  if\\
(i)  $(u_e^\ve,  p_e^\ve, \partial_t u_f^\ve, p_f^\ve)$ satisfy the  integral relation:
\begin{equation}\label{weak_u_ef}
\begin{aligned}
&\langle \rho_e\, \partial^2_t u^\ve_e, \phi \rangle_{\Omega_{e,T}^\ve} + \left\langle{\bf E}^\ve(b_{e,3}^\ve) \be( u^\ve_e), \be(\phi) \right\rangle_{\Omega_{e,T}^\ve}  +
\langle \nabla p_e^\ve, \phi \rangle_{\Omega_{e,T}^\ve}
\\
+ &
\langle \rho_p \, \partial_t p^\ve_e, \psi \rangle_{\Omega_{e,T}^\ve} +\left \langle K_p^\ve \nabla p^\ve_e - \partial_t u_e^\ve, \nabla \psi \right\rangle_{\Omega_{e,T}^\ve}  +\langle \partial_t u_f^\ve\cdot n , \psi \rangle_{\Gamma^\ve_T}  - \langle p_e^\ve, \eta\cdot n \rangle_{\Gamma^\ve_T}  \\
+ &  \langle \rho_f\,  \partial^2_t u^\ve_f, \eta \rangle_{\Omega_{f,T}^\ve} +  \ve^2 \mu\left \langle \be(\partial_t u^\ve_f), \be(\eta)\right \rangle_{\Omega_{f,T}^\ve}
 = \langle F_u, \phi \rangle_{(\partial \Omega)_T} +  \langle F_p, \psi \rangle_{(\partial \Omega)_T}
\end{aligned}
\end{equation}
for all  $\psi  \in L^2(0,T; H^1(\Omega_e^\ve))$,   $\phi \in L^2(0,T; H^1(\Omega_e^\ve))^3$,   and $\eta \in   L^2(0,T; H^1(\Omega_f^\ve))^3$,
with $\Pi_\tau\phi=\Pi_\tau\eta$ on $\Gamma^\ve_T$ and  ${\rm div} \eta =0$ in $(0,T)\times\Omega_f^\ve$,
\\
(ii)  $(b_e^\ve,  c^\ve)$ satisfy the  integral relations:
\begin{equation}\label{cd_one}
\begin{aligned}
\langle \partial_t b^\ve_e, \varphi_1 \rangle_{H^1(\Omega_{e}^\ve)^\prime, H^1}+\langle D_b \nabla b^\ve_e, \nabla\varphi_1 \rangle_{\Omega_{e,T}^\ve}
- \left \langle g_b(c^\ve_e,b_e^\ve, \be(u_e^\ve)), \varphi_1 \right\rangle_{\Omega_{e,T}^\ve}\\ = \ve \langle P(b_e^\ve), \varphi_1 \rangle_{\Gamma^\ve_T}  + \langle F_b(b_e^\ve), \varphi_1 \rangle_{(\partial\Omega)_T}
\end{aligned}
\end{equation}
and
\begin{equation}\label{cd_two}
\begin{aligned}
 \langle \partial_t c^\ve_e, \varphi_2 \rangle_{H^1(\Omega_{e}^\ve)^\prime, H^1}+\langle D_e \nabla c^\ve_e, \nabla\varphi_2 \rangle_{\Omega_{e,T}^\ve}
-\left\langle g_e(c^\ve_e,b_e^\ve, \be(u_e^\ve)), \varphi_2\right \rangle_{\Omega_{e,T}^\ve} \qquad \quad \\
 +\langle \partial_t c^\ve_f, \varphi_2 \rangle_{H^1(\Omega_{f}^\ve)^\prime, H^1}+\langle D_f \nabla c^\ve_f
- \mathcal G(\partial_t u^\ve_f) c_f^\ve, \nabla\varphi_2 \rangle_{\Omega_{f,T}^\ve}
-\langle g_f(c^\ve_f), \varphi_2 \rangle_{\Omega_{f,T}^\ve}
\\  = \langle F_c(c_e^\ve), \varphi_2 \rangle_{(\partial\Omega)_T}
\end{aligned}
\end{equation}
for all $\varphi_1\in L^2(0,T; H^1(\Omega^\ve_{e}))^3$ and $\varphi_2\in L^2(0,T; H^1(\Omega\setminus\widetilde\Gamma^\ve))$,
\\
(iii) the corresponding  initial conditions are satisfied. Namely, as $t\to0$, \\
$u_e^\ve(t, \cdot) \to u^\ve_{e0}(\cdot)$ and $\partial_t u_e^\ve(t, \cdot) \to u^1_{e0}(\cdot)$  in $L^2(\Omega^\ve_e)^3$,\,  $p_e^\ve(t, \cdot) \to p^\ve_{e0}(\cdot)$ in $L^2(\Omega^\ve_e)$,\\
 $\partial_t u_f^\ve(t, \cdot) \to u^1_{f0}(\cdot)$  in $L^2(\Omega^\ve_f)^3$, \\
 $b^\ve_e(t, \cdot) \to b_{e0}(\cdot)$ in $L^2(\Omega_e^\ve)^3$, and $c^\ve(t, \cdot) \to c_0(\cdot)$ in $L^2(\Omega)$.
\end{definition}

\section{A priori estimates, existence and uniqueness of  a solution of  the microscopic problem}\label{a_priori}
We begin by proving the existence of a weak solution of the microscopic model  \eqref{eq_codif}--\eqref{exbou_co}  and uniform in $\ve$ {\it a priori} estimates.
In order to obtain uniform in $\ve$ estimates  we shall  extend $H^1$-functions from a perforated domain into the whole domain.
\begin{lemma}\label{extension}
\begin{itemize}
\item There exist extensions $\overline b_e^\ve$ and $\overline c_e^\ve $ of  $b_e^\ve$ and $c_e^\ve$, respectively,  from $L^2(0,T; H^1(\Omega_e^\ve))$  to $L^2(0, T; H^1(\Omega))$ such that
\begin{equation}\label{estim_ext_1}
\| \overline b_e^\ve \|_{L^2(\Omega_T)} \leq  C \|b_e^\ve \|_{L^2(\Omega^\ve_{e,T})}, \quad \| \nabla \overline b_e^\ve \|_{L^2(\Omega_T)} \leq  C \|\nabla b_e^\ve \|_{L^2(\Omega^\ve_{e,T})},
\end{equation}
\begin{equation}\label{estim_ext_11}
\| \overline c_e^\ve \|_{L^2(\Omega_T)} \leq  C \|c_e^\ve \|_{L^2(\Omega^\ve_{e,T})}, \quad \| \nabla \overline c_e^\ve \|_{L^2(\Omega_T)} \leq  C \|\nabla c_e^\ve \|_{L^2(\Omega^\ve_{e,T})}.
\end{equation}
\item There exists an extension $\overline c^\ve $ of  $c^\ve$ from  $L^2(0,T; H^1(\widetilde \Omega_{ef}^\ve))$  to $L^2(0,T; H^1(\Omega))$ such that
\begin{equation}\label{estim_ext_2}
\begin{aligned}
& \| \overline c^\ve \|_{L^2(\Omega_T)} \leq  C \|c^\ve \|_{L^2(\widetilde  \Omega_{ef, T}^\ve)} , \quad  \| \nabla \overline c^\ve \|_{L^2(\Omega_T)} \leq  C \|\nabla c^\ve \|_{L^2(\widetilde \Omega_{ef, T}^\ve)}.
\end{aligned}
\end{equation}
Here  the constant $C$ is independent of $\ve$ and   $\widetilde \Omega^\ve_{ef}=\Omega \setminus \widetilde \Omega^\ve$, with  $\widetilde \Omega^\ve= \bigcup_{\xi \in \Xi^\ve} \ve( \widetilde \Gamma^\delta\cap Y_e+ \xi)$,   where  $\widetilde \Gamma^\delta$ is a $\delta$-neighborhood of $\widetilde \Gamma$ such that   $\widetilde \Gamma^\delta \cap \partial Y = \emptyset$ and $Y \setminus \overline{\widetilde  \Gamma^\delta\cap Y_e}$ is a connected set.
\end{itemize}
\end{lemma}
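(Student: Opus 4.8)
The plan is to reduce the whole statement to the existence of a single bounded linear extension operator on the reference cell, and then to assemble a global operator cell by cell whose norm is $\ve$-independent by a scaling argument. I would carry this out as follows.

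\emph{Reference cell.} Since $Y_e$ is a connected open subset of $Y$ with Lipschitz boundary (it is the complement in $Y$ of the smooth inclusion $\overline Y_f$, with $\overline Y_f\subset Y$), there is a classical extension operator $P\in\mathcal L\big(H^1(Y_e),H^1(Y)\big)$ of Calder\'on--Stein / Acerbi--Chiad\`o Piat--Dal Maso--Percivale type. Moreover, because $Y_e$ is connected, one can arrange (by first subtracting the mean $\overline v$ of $v$ over $Y_e$, extending $v-\overline v$, and adding $\overline v$ back) that
$$
\|Pv\|_{L^2(Y)}\le c_0\,\|v\|_{L^2(Y_e)},\qquad \|\nabla Pv\|_{L^2(Y)}\le c_0\,\|\nabla v\|_{L^2(Y_e)},
$$
the gradient estimate using the Poincar\'e inequality on the connected set $Y_e$ to absorb the lower-order term that the raw extension theorem produces. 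Since $\overline Y_f\subset Y$, the extension acts as the identity near $\partial Y$, so $P$ is compatible with periodic assembly.

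\emph{Assembly and scaling.} For $v\in H^1(\Omega_e^\ve)$ and each $\xi\in\Xi^\ve$ I set $v_\xi(y):=v(\ve(y+\xi))$ on $Y_e$ and define $\overline v$ on $\ve(Y+\xi)$ by $\overline v(x):=(Pv_\xi)(x/\ve-\xi)$; on the part of $\Omega$ not covered by any $\ve(Y+\xi)$, $\xi\in\Xi^\ve$, I set $\overline v:=v$. The local pieces match on the interfaces $\ve(\partial Y+\xi)$ because $P$ acts as the identity there, hence $\overline v\in H^1(\Omega)$. The change of variables $x=\ve(y+\xi)$ multiplies both sides of each of the two reference estimates by the same power of $\ve$, so summing over $\xi$ yields \eqref{estim_ext_1}--\eqref{estim_ext_11} with constant $C=c_0$ independent of $\ve$. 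Applying this for a.e.\ $t\in(0,T)$ and invoking Fubini gives the space--time estimates; measurability in $t$ is preserved since $P$ is linear and continuous on $H^1$. The extension of $c^\ve$ from $\widetilde\Omega_{ef}^\ve=\Omega\setminus\widetilde\Omega^\ve$ is obtained by the identical construction with the reference perforated cell $Y\setminus\overline{\widetilde\Gamma^\delta\cap Y_e}$ replacing $Y_e$: by hypothesis this set is connected, and since $\widetilde\Gamma^\delta\cap\partial Y=\emptyset$ the removed region lies in the interior of $Y$, so both the Poincar\'e step and the periodic matching go through verbatim and give \eqref{estim_ext_2}.

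\emph{The main obstacle.} The only non-routine point is the layer of cells meeting $\partial\Omega$: a cell $\ve(Y+\xi)$ may protrude from $\Omega$ while still carrying a hole, since $\Xi^\ve$ only requires $\ve(\overline Y_f+\xi)\subset\Omega$. Such cells lie within an $O(\ve)$-collar of $\partial\Omega$, and there the cell-wise operator cannot simply be copied. I would handle this by exploiting the $C^{1,\alpha}$-regularity of $\partial\Omega$: locally flatten the boundary and perform a reflection/extension across the portion of $\ve(Y_e+\xi)$ actually contained in $\Omega$, then check by the same scaling that the resulting operator norm stays bounded uniformly in $\ve$. This boundary-layer bookkeeping is where the real work sits; the interior construction above is entirely standard.
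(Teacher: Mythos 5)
Your proposal takes essentially the same route as the paper, whose proof here is a one-line sketch citing the standard references (Acerbi--Chiad\`o Piat--Dal Maso--Percivale and Cioranescu--Saint Jean Paulin); your cell-by-cell assembly of a reference-cell extension operator with a Poincar\'e-corrected gradient bound, followed by $\ve$-scaling, is exactly the content of those works. You also correctly single out the boundary layer (cells whose hole $\ve(\overline Y_f+\xi)$ lies in $\Omega$ while the cell $\ve(Y+\xi)$ protrudes through $\partial\Omega$, because $\Xi^\ve$ only constrains the hole) as the one genuinely delicate step, and, like the paper, you leave its bookkeeping at the sketch level, deferring to the cited references where it is carried out.
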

\begin{proof}[Proof Sketch]
The assumptions on the geometry of $\Omega_e^\ve$ and  $\widetilde \Omega^\ve_{ef}$ and a standard extension operator, see  e.g.\ \cite{Acerbi, CiorPaulin99}, ensure the existence of extensions    of $b_e^\ve$, $c_e^\ve$, and $c^\ve $ satisfying estimates \eqref{estim_ext_1},
\eqref{estim_ext_11}, and \eqref{estim_ext_2}, respectively.
\end{proof}
\noindent
{\bf Remark.} Notice that we have a jump in $c^\ve$ across $\widetilde \Gamma$. Thus in order to construct an extension of $c^\ve$ in  $H^1(\Omega)$ we have to consider $c^\ve$ outside a $\delta$-neighborhood of $\widetilde \Gamma$.  Also since we would like  to have  an extension of $c^\ve_f$ from $\Omega_f^\ve$ to $\Omega$, we have to consider $\widetilde \Gamma^\delta \cap Y_e$, see Figure~\ref{geom}.
\\
 Notice that, since $Y_f \subset Y$ with  $\partial Y_f \cap \partial Y = \emptyset$ and $\Gamma= \partial Y_f$,  for $\delta>0$ sufficiently small $\widetilde \Gamma^\delta$ will satisfy the assumption  of lemma.

\begin{lemma}\label{Lemma:apriori}
Under assumptions {\bf A1.}--{\bf A5.}  solutions of the microscopic problem \eqref{eq_codif}--\eqref{exbou_co} satisfy the following  \textit{a priori} estimates:\\
 for elastic deformation, pressures and  flow velocity  we have
\begin{equation}\label{estim_u_p_u}
\begin{aligned}
&\| u_e^\ve\|_{L^\infty(0,T; H^1(\Omega_e^\ve))} + \|\partial_t u_e^\ve\|_{L^\infty(0,T; H^1(\Omega_e^\ve))} + \| \partial^2_t u_e^\ve \|_{L^\infty(0,T; L^2(\Omega_{e}^\ve))} \leq C , \\
&\| p_e^\ve\|_{L^2(0,T; H^1(\Omega_e^\ve))}+  \| \partial_t p_e^\ve\|_{L^\infty(0,T; L^2(\Omega_e^\ve))} + \| \partial_t p_e^\ve \|_{L^2(0,T; H^1(\Omega_{e}^\ve))} \leq C, \\
& \| \partial_t u_f^\ve\|_{L^\infty(0,T; L^2(\Omega_f^\ve))} + \| \partial^2_t u_f^\ve\|_{L^\infty(0,T; L^2(\Omega_{f}^\ve))}
 + \ve \|\nabla \partial_t u_f^\ve\|_{H^1(0,T; L^2(\Omega_{f}^\ve))} \\ & \hbox{ }\hskip 7.5cm   + \| p_f^\ve \|_{L^2(\Omega_{f,T}^\ve)} \leq C,
\end{aligned}
\end{equation}
 for the  densities   we have
\begin{equation}\label{estim_b_c}
\begin{aligned}
& b_{e, i}^\ve \geq 0, \quad c_e^\ve \geq 0 \quad \text{a.e.\ in }  \Omega_{e,T}^\ve, \quad  c_f^\ve \geq 0 \quad \text{a.e.\  in } \Omega_{f,T}^\ve,  && i =1,2,3, \\
& \| b_e^\ve\|_{L^2(0,T; H^1(\Omega_e^\ve))} +  \ve^{1/2} \| b_e^\ve\|_{L^2(\Gamma_T^\ve)}+ \|b_e^\ve\|_{L^\infty(0,T; L^\infty(\Omega_{e}^\ve))} \leq C , \\
&\| c_j^\ve\|_{L^2(0,T; H^1(\Omega_j^\ve))}  + \| c_j^\ve\|_{L^\infty(0,T; L^2(\Omega_j^\ve))}+ \| c_j^\ve\|_{L^\infty(0,T; L^4(\Omega_j^\ve))} \leq C, \; && j = e,f,
\end{aligned}
\end{equation}
 and
\begin{equation}\label{estim_time_h}
\begin{aligned}
& \| \theta_h b_e^\ve -  b^\ve_e\|_{L^2((0, \tilde T)\times\Omega_{e}^\ve)} +  \| \theta_h c_j^\ve -  c^\ve_j\|_{L^2((0, \tilde T)\times \Omega_{j}^\ve)}  \leq C h^{1/4}, \; \qquad \quad j=e,f,
\end{aligned}
\end{equation}
 for  $\tilde T\in (0,T-h]$, where  $\theta_h v(t,x) = v(t+h, x)$ for  $(t,x) \in (0,T-h]\times \Omega_j^\ve$, with  $j=e,f$,  and  the constant $C$ is independent of $\ve$.
\end{lemma}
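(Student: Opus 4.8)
The plan is to prove \eqref{estim_u_p_u}, \eqref{estim_b_c} and \eqref{estim_time_h} separately, in that order, keeping every constant traced to be independent of $\ve$. The $\ve$-uniformity rests on three standard facts for the assumed periodic geometry (holes strictly inside the cells): the scaled trace inequality $\ve\|v\|_{L^2(\Gamma^\ve)}^2\le C(\|v\|_{L^2(\Omega_e^\ve)}^2+\ve^2\|\nabla v\|_{L^2(\Omega_e^\ve)}^2)$; a uniform Korn inequality $\|v\|_{H^1(\Omega_e^\ve)}\le C(\|\be(v)\|_{L^2(\Omega_e^\ve)}+\|v\|_{L^2(\Omega_e^\ve)})$; and the scale-invariant inf--sup (Bogovskii) estimate on the disjoint cells forming $\Omega_f^\ve$. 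The uniform Sobolev embedding $H^1(\Omega_e^\ve)\hookrightarrow L^6$ follows from the extension operator of Lemma~\ref{extension}.

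For \eqref{estim_u_p_u} I would test \eqref{weak_u_ef} with the admissible triple $\phi=\partial_t u_e^\ve$, $\psi=p_e^\ve$, $\eta=\partial_t u_f^\ve$ (admissible because $\Pi_\tau\partial_t u_e^\ve=\Pi_\tau\partial_t u_f^\ve$ on $\Gamma^\ve_T$ and $\mathrm{div}\,\partial_t u_f^\ve=0$). The pressure--displacement coupling $\langle\nabla p_e^\ve,\partial_t u_e^\ve\rangle-\langle\partial_t u_e^\ve,\nabla p_e^\ve\rangle$ and the interface terms $\langle\partial_t u_f^\ve\cdot n,p_e^\ve\rangle_{\Gamma^\ve_T}-\langle p_e^\ve,\partial_t u_f^\ve\cdot n\rangle_{\Gamma^\ve_T}$ both cancel, leaving an energy identity whose left-hand side controls $\tfrac{\rho_e}{2}\|\partial_t u_e^\ve(s)\|^2$, $\tfrac12\langle{\bf E}^\ve(b^\ve_{e,3})\be(u_e^\ve),\be(u_e^\ve)\rangle(s)$, $\tfrac{\rho_p}{2}\|p_e^\ve(s)\|^2$, $\tfrac{\rho_f}{2}\|\partial_t u_f^\ve(s)\|^2$, $k_1\int_0^s\|\nabla p_e^\ve\|^2$ and $\ve^2\mu\int_0^s\|\be(\partial_t u_f^\ve)\|^2$, and whose right-hand side is the data at $t=0$, the boundary forcing (moved to $t=0$ by integration by parts in $t$, using $F_u\in H^2(0,T;L^2(\partial\Omega))$, $F_p\in H^1(0,T;L^2(\partial\Omega))$ and the trace theorem), and the term $\tfrac12\int_0^s\langle\partial_t{\bf E}^\ve(b^\ve_{e,3})\be(u_e^\ve),\be(u_e^\ve)\rangle$. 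By \textbf{A1}, ${\bf E}^\ve(b^\ve_{e,3})={\bf E}^\ve_1(\cdot,\mathcal F(b^\ve_{e,3}))$ with ${\bf E}_1\in C_{\mathrm{per}}(Y;C_b^2(\mathbb R))$ and $\partial_t\mathcal F(\zeta)=\int_0^t\kappa'(t-\tau)\zeta(\tau,\cdot)\,d\tau$ (using $\kappa(0)=0$), so $\|\partial_t{\bf E}^\ve(b^\ve_{e,3})(t)\|_{L^\infty}\le C\int_0^t\|b^\ve_{e,3}(\tau)\|_{L^\infty}\,d\tau$; together with the $L^\infty$-bound \eqref{estim_b_c} for $b^\ve_{e,3}$ this makes that term $\le C\int_0^s\|\be(u_e^\ve)\|^2$, and uniform ellipticity, Korn and Gronwall give the first-order bounds. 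The higher-order bounds come from differentiating \eqref{equa_cla} in $t$ and testing with $\partial_t^2 u_e^\ve$, $\partial_t p_e^\ve$, $\partial_t^2 u_f^\ve$; the new term $\langle\partial_t{\bf E}^\ve\be(u_e^\ve),\be(\partial_t^2 u_e^\ve)\rangle$ is integrated by parts in $t$ and absorbed via boundedness of $\partial_t{\bf E}^\ve,\partial_t^2{\bf E}^\ve$, and the well-prepared initial data in \textbf{A5} ($u^1_{f0}\in H^2$, the elliptic problems for $u^\ve_{e0}$, $p^\ve_{e0}$) make $\partial_t^2 u_e^\ve(0)$, $\partial_t p_e^\ve(0)$, $\partial_t^2 u_f^\ve(0)$ well defined in $L^2$. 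Finally $p_f^\ve$ is recovered from $\nabla p_f^\ve=\ve^2\mu\,\mathrm{div}(\be(\partial_t u^\ve_f))-\rho_f\partial_t^2 u^\ve_f$, bounded in $(H^1(\Omega_f^\ve))'$ uniformly in $\ve$ (since $\ve\|\nabla\partial_t u^\ve_f\|\le C$), via the cell-wise inf--sup estimate, with the additive constant of $p_f^\ve$ on each cell fixed by \eqref{trans2} in terms of $p_e^\ve$.

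For \eqref{estim_b_c}, non-negativity follows by testing the $j$-th scalar equation in \eqref{cd_one}, resp.\ \eqref{cd_two}, with $-(b^\ve_{e,j})^-$, resp.\ $-(c^\ve)^-$ (legitimate since $c^\ve$ is continuous across $\Gamma^\ve\setminus\widetilde\Gamma^\ve$ and the fluxes match there): the diffusion term has the good sign, the convection term $-\int_{\Omega_f^\ve}\mathcal G(\partial_t u^\ve_f)c^\ve_f\cdot\nabla(c^\ve_f)^-$ is bounded \emph{without} integrating by parts (so $\mathrm{div}\,\mathcal G$ never appears) by $R\|(c_f^\ve)^-\|\,\|\nabla(c_f^\ve)^-\|$ and absorbed by the diffusion, and the reaction and boundary contributions have the right sign by the quasi-positivity assumptions in \textbf{A4}; Gronwall with vanishing initial negative part gives $b^\ve_{e,j},c^\ve\ge0$. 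The $L^2(0,T;H^1)\cap L^\infty(0,T;L^2)$ bounds are obtained \emph{jointly}: testing \eqref{cd_one} with $b_e^\ve$ and \eqref{cd_two} with $c^\ve$, adding, and estimating the strain-dependent reaction contributions $\int|b^\ve_e|^2|\be(u^\ve_e)|$ and $\int(|c^\ve|+|b^\ve_e|)|c^\ve||\be(u^\ve_e)|$ via $\|\be(u^\ve_e)\|_{L^2}\le C$ and the Gagliardo--Nirenberg inequality $\|w\|_{L^4}^2\le C\|\nabla w\|_{L^2}^{3/2}\|w\|_{L^2}^{1/2}+C\|w\|_{L^2}^2$, with the gradient part absorbed into the diffusion and the remainder linear in $\|b^\ve_e\|_{L^2}^2+\|c^\ve\|_{L^2}^2$; the term $\ve\langle P(b^\ve_e),b^\ve_e\rangle_{\Gamma^\ve_T}$ is handled by the scaled trace inequality and the $(\partial\Omega)_T$-terms by the trace theorem, and Gronwall closes it. The $L^\infty$-bound on $b_e^\ve$ and the $L^\infty(0,T;L^4)$-bound on $c^\ve$ come from the Alikakos iteration: testing the $j$-th equation of \eqref{cd_one} with $(b^\ve_{e,j})^{p-1}$, $p=2^k$, the worst reaction term $\int|\be(u^\ve_e)|(b^\ve_{e,j})^p$ is written as $\|\be(u^\ve_e)\|_{L^2}\|(b^\ve_{e,j})^{p/2}\|_{L^4}^2$ and, by interpolation of $L^4$ between $L^2$ and $L^6$ and the uniform embedding $H^1(\Omega_e^\ve)\hookrightarrow L^6$, bounded by $\eta\|\nabla(b^\ve_{e,j})^{p/2}\|^2+C\eta^{-3}\|b^\ve_{e,j}\|_{L^p}^p$; choosing $\eta\sim p^{-1}$ and using the scaled trace inequality on the $\ve P(b^\ve_e)$ boundary term leaves a Gronwall inequality with coefficient polynomial in $p$, and iterating over $k$ from $b_{e0}\in L^\infty$ yields $\|b^\ve_e\|_{L^\infty(\Omega^\ve_{e,T})}\le C$. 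The $L^4$-bound for $c^\ve$ is the single step $p=4$ (now with $b^\ve_e\in L^\infty$ available), the convection term treated as in the non-negativity step; using the $L^2$- rather than the $L^6$-norm in Gagliardo--Nirenberg is what keeps the right-hand side linear in $\|c^\ve\|_{L^4}^4$ and hence Gronwall-closable. For \eqref{estim_time_h} I would note that, from \eqref{cd_one}--\eqref{cd_two} and the bounds just obtained, $\partial_t b^\ve_e\in L^2(0,T;(H^1(\Omega^\ve_e))')$ and $\partial_t c^\ve\in L^2(0,T;(H^1(\Omega\setminus\widetilde\Gamma^\ve))')$ uniformly in $\ve$ (the $\ve P(b^\ve_e)$ functional controlled via the scaled trace inequality), and then the standard estimate $\|\theta_h v-v\|_{L^2(0,\tilde T;L^2)}^2\le C\,h\,\|\partial_t v\|_{L^2(0,T;(H^1)')}\|v\|_{L^2(0,T;H^1)}$ gives the stated rate (indeed $h^{1/2}$).

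The main obstacle is the strong two-way coupling: the energy estimate for \eqref{equa_cla} needs the $L^\infty$-bound on $b^\ve_{e,3}$ (to control $\partial_t{\bf E}^\ve$ through the memory functional $\mathcal F$), whereas the Alikakos bound on $b^\ve_e$ needs $\|\be(u^\ve_e)\|_{L^\infty(0,T;L^2)}\le C$ from the energy estimate. I would resolve this as in the paper's existence proof: inside the Banach fixed-point scheme the elasticity tensor is evaluated at a function from a fixed ball of $L^\infty$, so $\partial_t{\bf E}^\ve$ and $\partial_t^2{\bf E}^\ve$ are a priori bounded and the energy estimate closes with constants depending only on the data and the radius of that ball, after which the strain bounds feed the Alikakos iteration; the global a priori bounds of this Lemma then rule out finite-time blow-up, so the local solution extends to $[0,T]$. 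Making the dependence of $\|b^\ve_e\|_{L^\infty}$ on the $L^2$-norms of $\be(u^\ve_e)$ and $\partial_t u^\ve_f$ quantitative (and polynomial, not exponential) is the delicate point, and is precisely what the contraction inequality in the existence/uniqueness proof requires.
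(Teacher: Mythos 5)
Your overall roadmap matches the paper's proof almost step for step: the energy identity from testing \eqref{weak_u_ef} with $(\partial_t u_e^\ve, p_e^\ve, \partial_t u_f^\ve)$ and the observed cancellations, the time-differentiated identity for the second-order bounds, the well-prepared initial data of {\bf A5} to give $\partial_t^2 u_e^\ve(0), \partial_t p_e^\ve(0), \partial_t^2 u_f^\ve(0)$ a sense, the Bogovskii-type recovery of $p_f^\ve$, the $L^2 H^1\cap L^\infty L^2$ bounds via testing with the unknowns themselves, and the Alikakos iteration for $L^\infty$. Two of your deviations are legitimate alternatives and in fact mildly improve on the paper: your negative-part truncation for positivity (the paper invokes the theorem on positively invariant regions instead) and your $(H^1)'$-duality estimate for the time increments, which gives $h^{1/2}$ whereas the paper's translate-and-test argument gives $h^{1/4}$. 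Your diagnosis of the circular dependence through the memory functional $\mathcal F$ in ${\bf A1}$ (energy bound needs $\|b_{e,3}^\ve\|_{L^\infty}$, Alikakos needs $\|\be(u_e^\ve)\|_{L^\infty L^2}$) is accurate, and the paper indeed resolves it the way you indicate, inside the iterative existence scheme.

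There is, however, a real gap in the Alikakos step. Interpolating $L^4$ between $L^2$ and $L^6$ writes, with $w=(b^\ve_{e,j})^{p/2}$,
\begin{equation*}
\|w\|_{L^4}^2 \;\le\; C\,\|w\|_{L^2}^{1/2}\|w\|_{H^1}^{3/2}
\;\le\; \eta\,\|\nabla w\|_{L^2}^2 + C\,\eta^{-3}\|w\|_{L^2}^2
\;=\; \eta\,\|\nabla w\|_{L^2}^2 + C\,\eta^{-3}\|b^\ve_{e,j}\|_{L^p}^p,
\end{equation*}
so the right-hand side carries the \emph{same-level} quantity $\|b^\ve_{e,j}\|_{L^p}^p$. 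Inserting $\eta\sim p^{-1}$ and applying Gronwall at level $p$ then yields $\|b^\ve_{e,j}(t)\|_{L^p}^p \lesssim \|b_{e0}\|_{L^p}^p\,e^{C p^4 t}$, i.e.\ $\|b^\ve_{e,j}(t)\|_{L^p} \lesssim e^{C p^3 t}$, which diverges as $p\to\infty$; there is no iteration to invoke because the estimate at level $p$ is not controlled by the one at level $p/2$. What makes the argument close (and what the paper does in Lemma~\ref{Linf_L4}) is the Gagliardo--Nirenberg inequality between $L^1$ and $\dot H^1$,
\begin{equation*}
\|w\|_{L^4} \le C\,\|\nabla w\|_{L^2}^{9/10}\,\|w\|_{L^1}^{1/10},
\qquad
\|w\|_{L^4}^2 \le \eta\,\|\nabla w\|_{L^2}^2 + C\,\eta^{-9}\,\|w\|_{L^1}^2,
\end{equation*}
which turns $\|w\|_{L^1}=\|(b^\ve_{e,j})^{p/2}\|_{L^1}=\|b^\ve_{e,j}\|_{L^{p/2}}^{p/2}$ into the \emph{half-level} quantity; the recursion $M_p\le (C\,p^r)^{1/p}M_{p/2}$ then has a finite limit along $p=2^k$. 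So the $L^1$-norm, not the $L^2$-norm, must appear in the interpolation at this stage; the $L^2$-variant is fine only for the single fixed exponent $p=4$ used for $c^\ve$, as you correctly observe, because there no passage $p\to\infty$ is required. A secondary omission, less serious but worth flagging, is that the admissibility of $(b^\ve_e)^{p-1}$ as a test function is not free: the paper works with the truncations $b^\ve_{e,N}=\min(b^\ve_e,N)$, first establishes ($\ve$-dependent) $L^\infty L^3$- and $L^\infty L^6$-bounds to justify passing $N\to\infty$, and only then runs the $\ve$-uniform Alikakos loop; your sketch skips this preliminary regularisation.
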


\begin{proof}
The non-negativity of $c^\ve_e$, $c_f^\ve$,  and $b_e^\ve$ is justified in the proof of Theorem~\ref{th:exist_uniq_micro} on  the existence and uniqueness of a weak solution of the microscopic problem \eqref{eq_codif}--\eqref{exbou_co}.

To derive the estimates in \eqref{estim_u_p_u}, we first  take $(\partial_t u_e^\ve, \, p_e^\ve, \, \partial_t u_f^\ve)$ as test functions  in \eqref{weak_u_ef}  and obtain
\begin{equation*}
\begin{aligned}
&\rho_e\| \partial_t u^\ve_e(s) \|^2_{L^2(\Omega_{e}^\ve)} + \langle {\bf E}^\ve(b_{e,3}^\ve) \be( u^\ve_e(s)), \be(u^\ve_e(s)) \rangle_{\Omega_{e}^\ve} -\langle\partial_t {\bf E}^\ve(b^\ve_{e,3}) \be(u^\ve_e), \be(u^\ve_e)\rangle_{\Omega_{e, s}^\ve} \\ + &
2 \langle \nabla p_e^\ve,  \partial_t u^\ve_e\rangle_{\Omega_{e, s}^\ve}
+
\rho_p \|p^\ve_e(s) \|^2_{L^2(\Omega_{e}^\ve)}
 + 2 \langle K_p^\ve \nabla p^\ve_e, \nabla p^\ve_e \rangle_{\Omega_{e, s}^\ve}    - 2 \langle  \partial_t u_e^\ve, \nabla p_e^\ve \rangle_{\Omega_{e, s}^\ve} \\
+ &\rho_f  \| \partial_t u^\ve_f(s) \|^2_{L^2(\Omega_{f}^\ve)} + 2  \ve^2  \mu \| \be(\partial_t u^\ve_f)\|^2_{L^2(\Omega_{f, s}^\ve)}
\\ = &2 \langle F_u,   \partial_t u^\ve_e \rangle_{(\partial \Omega)_s} + 2\langle F_p,   p^\ve_e \rangle_{(\partial \Omega)_s}
+   \rho_e\| \partial_t u^\ve_e(0) \|^2_{L^2(\Omega_{e}^\ve)} \\ +& \rho_p\|p^\ve_e(0)\|^2_{L^2(\Omega_e^\ve)} + \rho_f\| \partial_t u^\ve_f(0) \|^2_{L^2(\Omega_{f}^\ve)}+  \langle {\bf E}^\ve(b_{e,3}^\ve) \be( u^\ve_e(0)), \be(u^\ve_e(0)) \rangle_{\Omega_{e}^\ve}
\end{aligned}
\end{equation*}
for $s \in (0, T]$.
As was defined just after formula \eqref{def_ome_ep_f}, $\Omega^\ve_{j,s} := (0,s)\times \Omega_j^\ve$ for $j=e,f$.
Using assumptions {\bf A1.}, {\bf A2.}, and  {\bf A5.}  yields
\begin{equation}\label{estim_uef_p_1}
\begin{aligned}
 \|\partial_t u_e^\ve(s)\|^2_{L^2(\Omega_e^\ve)}  +  \| \be(u_e^\ve(s) )\|^2_{L^2(\Omega_e^\ve)} +  \|\partial_t u_f^\ve(s)\|^2_{L^2(\Omega_f^\ve)}
 + \ve^2 \| \be(\partial_t u_f^\ve)\|^2_{L^2(\Omega_{e,s}^\ve)} \\
+  \|p^\ve_e(s) \|^2_{L^2(\Omega_{f}^\ve)}
+ \|\nabla p_e^\ve\|^2_{L^2(\Omega_{e, s}^\ve)}
\\
 \leq   \delta \big[\|u^\ve_e(s) \|^2_{L^2(\partial \Omega)} +  \|p^\ve_e \|^2_{L^2((0,s)\times\partial \Omega)} \big] + C_1 \langle |\partial_t {\bf E}^\ve(b^\ve_{e,3})| \be(u_e^\ve), \be(u_e^\ve) \rangle_{\Omega_{e,s}^\ve}
 \\
+  C_\delta\big[\| F_u \|^2_{L^\infty(0,s; L^2(\partial \Omega))} + \| \partial_t F_u \|^2_{L^2((0,s)\times\partial \Omega)} + \| F_p \|^2_{L^2((0,s)\times\partial \Omega)}\big]+ C_2
\end{aligned}
\end{equation}
for $s \in (0,T]$. Under our standing assumptions {\bf A1.}  on ${\bf E}$ we have
$$
\|  \partial_t {\bf E}^\ve(b^\ve_{e,3}) \|_{L^\infty((0,T)\times \Omega_e^\ve) }  \leq C.
$$
Applying the trace and Korn inequalities \cite{OShY}  and using extension properties of $u_e^\ve$ we obtain
\begin{equation}\label{bou_u}
\begin{aligned}
\| u_e^\ve(s)\|_{ L^2(\partial\Omega)}  \leq  C \big[ \| u_e^\ve(s)\|_{ L^2(\Omega_e^\ve)} + \| \be(u_e^\ve(s))\|_{ L^2(\Omega_e^\ve)} \big].
\end{aligned}
\end{equation}
Our assumptions {\bf A5.} on the initial conditions ensure
\begin{equation}\label{bou_init_u}
\|u_e^\ve(s) \|_{L^2(\Omega_e^\ve)} \leq  \|\partial_t u_e^\ve\|_{L^2(\Omega_{e,s}^\ve)} + \|u_{e0}^\ve \|_{L^2(\Omega_e^\ve)} \leq C + \|\partial_t u_e^\ve\|_{L^2(\Omega_{e,s}^\ve)}
\end{equation}
for  $s \in (0,T]$. Then applying  the trace and Gronwall inequalities  in \eqref{estim_uef_p_1} yields the following estimate
\begin{equation}\label{estim_upu_22}
\begin{aligned}
& \|\partial_t u_e^\ve\|_{L^\infty(0,T; L^2(\Omega_{e}^\ve))} + \| \be(u_e^\ve)\|_{L^\infty(0, T; L^2(\Omega_e^\ve))} + \|p^\ve_e \|_{L^\infty(0,T; L^2(\Omega_{e}^\ve))}
 \\ & + \|\nabla p_e^\ve\|_{L^2(\Omega_{e,T}^\ve)}
+  \|\partial_t u_f^\ve\|_{L^\infty(0,T; L^2(\Omega_f^\ve))}  + \ve  \| \be(\partial_t u_f^\ve)\|_{L^2(0,T; L^2(\Omega_f^\ve))}
 \leq  C,
\end{aligned}
\end{equation}
where the constant $C$ is independent of $\ve$. Using the Korn inequality \cite{OShY} for  deformation and velocity, together with a scaling argument,  we obtain
\begin{equation}\label{bou_uef_11}
\begin{aligned}
  \| u_e^\ve\|_{L^\infty(0,T; L^2(\Omega_e^\ve))}&+ \|\nabla  u_e^\ve \|_{L^\infty(0, T; L^2(\Omega_e^\ve))}
 \\ &\leq C_1 \big(\| \be(u_e^\ve)\|_{L^\infty(0,T; L^2(\Omega_e^\ve))}  +   \| u_e^\ve\|_{L^\infty(0,T; L^2(\Omega_e^\ve))} \big) \leq C, \\[3mm]
\|\partial_t u_f^\ve\|_{L^2(\Omega_{f,T}^\ve)}  & + \ve  \|\nabla \partial_t u_f^\ve \|_{L^2(\Omega_{f,T}^\ve)}
\\ &\leq C_2 \big( \ve \| \be(\partial_t u_f^\ve)\|_{L^2(\Omega_{f,T}^\ve)}  +   \|\partial_t u_f^\ve\|_{L^2(\Omega_{f,T}^\ve)} \big) \leq C.
\end{aligned}
\end{equation}
Differentiating all  equations in (\ref{equa_cla}) with respect to time $t$ and  taking  $(\partial^2_t u_e^\ve,\,  \partial_t p_e^\ve,\, \partial^2_t u_f^\ve)$ as test functions in the resulting equations,  we obtain
\begin{equation}\label{estim_time_upu}
\begin{aligned}
&\rho_e \| \partial^2_t u^\ve_e(s) \|^2_{L^2(\Omega_{e}^\ve)} +
\langle {\bf E}^\ve(b_{e,3}^\ve) \be( \partial_t u^\ve_e(s)), \be(\partial_t u^\ve_e(s)) \rangle_{\Omega_{e}^\ve}  -  \rho_e \| \partial^2_t u^\ve_e(0) \|^2_{L^2(\Omega_{e}^\ve)}\\ & +
\rho_p \|\partial_t p^\ve_e(s) \|^2_{L^2(\Omega_{e}^\ve)}
 + 2 \langle K_p^\ve \nabla \partial_t p^\ve_e, \nabla \partial_t p^\ve_e \rangle_{\Omega_{e,s}^\ve}-
 \rho_p\|\partial_t p^\ve_e(0) \|^2_{L^2(\Omega_{e}^\ve)}
\\
& + \rho_f \| \partial^2_t u^\ve_f(s) \|^2_{L^2(\Omega_{f}^\ve)} + 2 \,  \ve^2 \mu \| \be(\partial^2_t u^\ve_f)\|^2_{L^2(\Omega_{f, s}^\ve)} - \rho_f\| \partial^2_t u^\ve_f(0) \|^2_{L^2(\Omega_{f}^\ve)}
\\ & =
  \big\langle {\bf E}^\ve(b_{e,3}^\ve(0)) \be(\partial_t u^\ve_e(0)), \be(\partial_t u^\ve_e(0)) \big\rangle_{\Omega_{e}^\ve}
+2 \big\langle \partial_t {\bf E}^\ve(b^\ve_{e,3}(s)) \be( u^\ve_e(s)), \be(\partial_t u^\ve_e(s)) \big\rangle_{\Omega_{e}^\ve}\\
&
-2 \big \langle \partial_t {\bf E}^\ve(b^\ve_{e,3}(0)) \be( u^\ve_e(0)), \be(\partial_t u^\ve_e(0)) \big\rangle_{\Omega_{e}^\ve} + 2 \langle \partial_t F_u,   \partial^2_t u^\ve_e \rangle_{(\partial \Omega)_s} \\
& -\big\langle 2 \partial^2_t {\bf E}^\ve(b_{e,3}^\ve) \, \be( u^\ve_e)+ \partial_t {\bf E}^\ve(b_{e,3}^\ve)\, \be(\partial_t u^\ve_e), \be(\partial_t u^\ve_e)\big \rangle_{\Omega_{e, s}^\ve} +2 \langle \partial_t F_p,   \partial_t p^\ve_e \rangle_{(\partial \Omega)_s}
\end{aligned}
\end{equation}
for  $s\in (0,T]$. Here we used the following equality
\begin{equation*}
\begin{aligned}
\big\langle \partial_t {\bf E}^\ve(b_{e,3}^\ve) \,\be( u^\ve_e), \be(\partial^2_t u^\ve_e)\big \rangle_{\Omega_{e,s}^\ve}  =
\big\langle \partial_t {\bf E}^\ve(b_{e,3}^\ve(s)) \be( u^\ve_e(s)), \be(\partial_t u^\ve_e(s)) \big\rangle_{\Omega_{e}^\ve}
\\
- \big\langle \partial_t {\bf E}^\ve(b_{e,3}^\ve (0)) \be( u^\ve_e(0)), \be(\partial_t u^\ve_e(0)) \big\rangle_{\Omega_{e}^\ve}
\\  -\big\langle \partial^2_t {\bf E}^\ve(b^\ve_{e,3}) \, \be( u^\ve_{e})+  \partial_t {\bf E}^\ve(b^\ve_{e,3})\, \be(\partial_t u^\ve_e), \be(\partial_t u^\ve_e)\big \rangle_{\Omega_{e,s}^\ve} .
\end{aligned}
\end{equation*}
Assumptions {\bf A5.}  on the initial conditions  together with  the microscopic  equations in  \eqref{equa_cla} ensure that
\begin{equation}\label{init_estim_h}
\begin{aligned}
&\| \partial^2_t u^\ve_e(0) \|^2_{L^2(\Omega_{e}^\ve)}+  \|\partial_t p^\ve_e(0) \|^2_{L^2(\Omega_{e}^\ve)}+ \|\partial^2_t u^\ve_f(0) \|^2_{L^2(\Omega_{f}^\ve)}
\leq  C,
\end{aligned}
\end{equation}
where   the constant $C$ is independent of $\ve$. To justify \eqref{init_estim_h}, first we consider   the Galerkin approximations of $u^\ve_e$ and $\partial_t u^\ve_f$ and a function $\phi^k$ in the corresponding finite dimensional   subspace, with   $\phi^k = 0$ on $\partial \Omega$ and ${\rm div } \, \phi^k = 0$ in $\Omega_f^\ve$,
$$
\begin{aligned}
&\langle \rho_e\, \partial^2_t u^{\ve,k}_e, \phi^k \rangle_{\Omega_{e}^\ve} + \left\langle{\bf E}^\ve(b_{e,3}^\ve) \be( u^{\ve,k}_e), \be(\phi^k) \right\rangle_{\Omega_{e}^\ve}  +
\langle \nabla p_e^{\ve, k}, \phi^k \rangle_{\Omega_{e}^\ve} \\
& +
\langle \rho_f \, \partial^2_t u_f^{\ve, k}, \phi^k \rangle_{\Omega_f^\ve}   +\ve^2 \mu\, \langle  \be(\partial_t u^{\ve, k}_f), \be(\phi^k)\rangle_{\Omega_f^\ve}
+
\langle p_e^{\ve,k} , \phi^k \cdot n \rangle_{\Gamma^\ve}  = 0.
\end{aligned}
$$
Taking $t\to 0$ and using the regularity of  $u^{\ve,k}_e$,  $\partial_t u_f^{\ve, k}$,  and $b_{e,3}^\ve$ with respect to the time variable  we obtain
$$
\begin{aligned}
&\langle \rho_e\, \partial^2_t u^{\ve,k}_e(0), \phi^k \rangle_{\Omega_{e}^\ve} + \left\langle{\bf E}^\ve(b_{e0,3}) \be( u^{\ve,k}_e(0)), \be(\phi^k) \right\rangle_{\Omega_{e}^\ve}  +
\langle \nabla p_e^{\ve, k}(0), \phi^k \rangle_{\Omega_{e}^\ve} \\
& +
\langle \rho_f \, \partial^2_t u_f^{\ve, k}(0), \phi^k \rangle_{\Omega_f^\ve}   +\ve^2 \mu\, \langle  \be(\partial_t u^{\ve, k}_f(0)), \be(\phi^k)\rangle_{\Omega_f^\ve}
+
\langle p_e^{\ve,k}(0) , \phi^k \cdot n \rangle_{\Gamma^\ve}  = 0.
\end{aligned}
$$
Then the integration by parts in the last two terms and   the assumptions on the initial values ensure
$$
\begin{aligned}
&|\langle  \partial^2_t u^{\ve,k}_e(0), \phi^k \rangle_{\Omega_{e}^\ve} |  + |\langle \partial^2_t u_f^{\ve, k}(0), \phi^k \rangle_{\Omega_f^\ve}|
\leq  |\left\langle f_u,  \phi^k  \right\rangle_{\Omega_{e}^\ve}|  +
|\langle \nabla p_{e0}^{\ve, k}, \phi^k \rangle_{\Omega}| \\
 &+  \ve^2 \mu\, | \langle  {\rm div }\,  \be(\partial_t u^{1,k}_{f0}), \phi^k\rangle_{\Omega_f^\ve}| + \ve^2 \mu\, \| \nabla^2 \partial_t u^{1,k}_{f0}\|_{L^2(\Omega)}\|\phi^k\|_{L^2(\Omega_f^\ve)}
\leq C \|\phi^k\|_{L^2(\Omega)},
\end{aligned}
$$
and hence
$$
\| \partial^2_t u^{\ve,k}_e(0) \|_{L^2(\Omega_e^\ve)} + \| \partial^2_t u_f^{\ve, k}(0)\|_{L^2(\Omega_f^\ve)} \leq C,
$$
where the constant $C$ is independent of $k$ and ${\rm div}\,  \partial^2_t u_f^{\ve, k}(0) =0$ in $\Omega_f^\ve$.  In a similar way we also obtain  the boundedness of $\|\partial_t p^{\ve, k}_e(0)\|_{L^2(\Omega_e^\ve)}$ uniformly in $k$.

Then the estimates similar to \eqref{estim_time_upu} for the Galerkin approximations of $u_e^\ve$, $p_e^\ve$, and $\partial_t u_f^\ve$ imply that  $p^\ve_e \in C([0,T]; L^2(\Omega^\ve_e))$,
  $\nabla p^\ve_e, \partial_t u^\ve_e  \in C([0,T]; L^2(\Omega^\ve_e))^3$,  $\be(u_e^\ve) \in C([0,T]; L^2(\Omega^\ve_e))^{3\times 3}$,  $\partial_t u^\ve_f \in C([0,T]; L^2(\Omega^\ve_f))^3$,
${\bf e}(\partial_t u_f^\ve) \in C([0,T]; L^2(\Omega^\ve_f))^{3\times 3}$.

Then from the equations for $u_e^\ve$  and $p_e^\ve$ and the continuity of  $\be(u_e^\ve)$, $\partial_t u^\ve_e$, and  $\nabla p^\ve_e$  with respect to the time variable we obtain the continuity of $\partial^2_t u^\ve_e$ and  $\partial_t p_e^\ve$ with respect to  the time variable. Then the assumptions on   $u_{e0}^\ve$, $u^1_{e0}$,  and $p_{e0}^\ve$ ensure the boundedness of $\| \partial^2_t u^\ve_e(0) \|_{L^2(\Omega_{e}^\ve)}$ and   $\|\partial_t p^\ve_e(0) \|_{L^2(\Omega_{e}^\ve)}$ uniformly in $\ve$.

 For $\phi \in H^1_0(\Omega)$, with ${\rm div} \, \phi =0$ in $\Omega_f^\ve$, we have
$$
\begin{aligned}
&\langle \rho_e\, \partial^2_t u^\ve_e, \phi \rangle_{\Omega_{e}^\ve} + \left\langle{\bf E}^\ve(b_{e,3}^\ve) \be( u^\ve_e), \be(\phi) \right\rangle_{\Omega_{e}^\ve}  +
\langle \nabla p_e^\ve, \phi \rangle_{\Omega_{e}^\ve} \\
& +
\langle \rho_f\partial^2_t u_f^\ve, \phi \rangle_{\Omega_f^\ve}   +\ve^2 \mu\, \langle  \be(\partial_t u^\ve_f), \be(\phi)\rangle_{\Omega_f^\ve}
+
\langle p_e^\ve , \phi \cdot n \rangle_{\Gamma^\ve}  = 0
\end{aligned}
$$

Considering the continuity of  $\be(u_e^\ve)$, $\partial_t^2 u_e^\ve$, $\nabla p^\ve_e$ and $\be(\partial_t u_f^\ve)$  with respect to the time variable and taking $t \to 0$  we obtain the continuity of $\partial^2_t u^\ve_f$ and
$$
\begin{aligned}
&\langle \rho_e\, \partial^2_t u^\ve_e(0), \phi \rangle_{\Omega_{e}^\ve} + \left\langle{\bf E}^\ve(b_{e0,3}) \be( u^\ve_{e0}), \be(\phi) \right\rangle_{\Omega_{e}^\ve}  +
\langle \nabla p_{e0}^\ve, \phi \rangle_{\Omega_{e}^\ve} \\
&+ \langle \rho_f\, \partial^2_t u_f^\ve(0), \phi \rangle_{\Omega_f^\ve}  +\ve^2 \mu\, \langle  \be(u_{f0}^1), \be(\phi)\rangle_{\Omega_f^\ve}
+
\langle p_{e0}^\ve , \phi \cdot n \rangle_{\Gamma^\ve}  = 0.
\end{aligned}
$$
The integration by parts, the boundary conditions for  $u_{e0}^\ve$,  and the assumptions on $\phi$ imply
$$
\begin{aligned}
\langle \rho_f\partial^2_t u_f^\ve(0), \phi \rangle_{\Omega_f^\ve}  =
- \langle \rho_e\, \partial^2_t u^\ve_e(0), \phi \rangle_{\Omega_{e}^\ve}
+ \left\langle {\rm div}({\bf E}^\ve(b_{e0,3}) \be( u^\ve_{e0})), \phi \right\rangle_{\Omega_{e}^\ve}  -
\langle \nabla p_{e0}^\ve, \phi \rangle_{\Omega}\\ +
\langle   \ve^2   \mu \,  {\rm div}  (\be(u_{f0}^1)), \phi\rangle_{\Omega_f^\ve}
-
\ve^2 \langle \mu \, n\cdot \be(u_{f0}^1) n, \phi \cdot n \rangle_{\Gamma^\ve} .
\end{aligned}
$$
From the assumptions on  $u_{e0}^\ve$ and  $p_{e0}^\ve$   we have   that   ${\rm div}({\bf E}^\ve(b_{e0,3}) \be( u^\ve_{e0}))
= f_u$, with $f_u \in L^2(\Omega)$, and  $\|\nabla p_{e0}^\ve \|_{L^2(\Omega)}
\leq C_1$, where $C_1$ is independent of $\ve$. 
\\
The assumptions on $u_{f0}^1$ ensure that  $\ve^2  \mu\| {\rm div}  (\be(u_{f0}^{1}))\|_{L^2(\Omega_f^\ve)} \leq C_2$ and  there exists $\psi^\ve \in H^1(\Omega_f^\ve)$,   such that  $\|\nabla\psi^\ve\|_{L^2(\Omega_f^\ve)} \leq C_3$ and
$$
\begin{aligned}
|\ve^2 \langle \mu \, n\cdot \be(\partial_t u_{f0}^1) n, \phi \cdot n \rangle_{\Gamma^\ve} | =
| \langle \nabla \psi^\ve, \phi  \rangle_{\Omega_f^\ve}| \leq C_4 \|\phi\|_{L^2(\Omega_f^\ve)},
\end{aligned}
$$
where the constants $C_2$, $C_3$, and $C_4$ are  independent of $\ve$.   Using the density of $\phi$ in $\mathcal H = \{v\in L^2(\Omega_f^\ve) : \, {\rm div} \, v = 0 \; \text{ in } \Omega_f^\ve \}$ we obtain the  boundedness of $\partial^2_t u_f^\ve(0)$  in  $\mathcal H$ uniformly in $\ve$.

Then considering assumptions  {\bf A1.}--{\bf A2.} and   applying the H\"older and Gronwall inequalities in \eqref{estim_time_upu},  we obtain the estimates for $\partial^2_t u_e^\ve$,  $\partial_t p^\ve_e$ and $\partial^2_t u_f^\ve$ stated in \eqref{estim_u_p_u}.
Here we used the fact that assumptions {\bf A1.} on ${\bf E}$ imply the following upper bound:
$$
\|  \partial^2_t {\bf E}^\ve(b^\ve_{e,3}) \|_{L^\infty((0,T)\times \Omega_e^\ve) }  \leq C.
$$
Testing the first and third equations  in \eqref{equa_cla}  with $\phi \in L^2(0,T; H^1(\Omega))^3$  and using the a priori estimates for $u_e^\ve$,  $p_e^\ve$, and $\partial_t u_f^\ve$, we obtain
\begin{equation}\label{estim_pf}
\begin{aligned}
\langle p_f^\ve, {\rm div} \, \phi \rangle_{\Omega_{f,T}^\ve} +  \langle p_e^\ve, {\rm div }\, \phi   \rangle_{\Omega_{e, T}^\ve} = \langle \ve^2 \mu \, \be(\partial_t u_f^\ve),  \be(\phi)  \rangle_{\Omega_{f,T}^\ve} +  \rho_f\langle \partial^2_t u_f^\ve, \phi\rangle_{\Omega_{f, T}^\ve}
\\
 +\rho_e \langle \partial^2_t u_e^\ve, \phi\rangle_{\Omega_{e, T}^\ve}
+  \langle {\bf E}^\ve(b^\ve_{e,3}) \be(u_e^\ve), \be(\phi) \rangle_{\Omega_{e, T}^\ve}+\langle p_e^\ve\,  n - F_u, \phi \rangle_{(\partial \Omega)_T} \\
\leq C \|\phi\|_{L^2(0,T; H^1(\Omega))^3}.
\end{aligned}
\end{equation}
Here we used the properties of an extension of $p_e^\ve$  from $\Omega_e^\ve$ to $\Omega$, see Lemma~\ref{extension},  and  the trace estimate
$\|p_e^\ve\|_{L^2((0,T)\times\partial\Omega)}\leq C_1 \|p_e^\ve\|_{L^2(0,T; H^1(\Omega))} \leq C_2 \|p_e^\ve\|_{L^2(0,T; H^1(\Omega_e^\ve))}$.

For any $q \in L^2(\Omega_T)$ there exists $\phi \in  L^2(0,T; H^1(\Omega))^3$ satisfying ${\rm div} \, \phi =q$ in $\Omega$ and $\phi \cdot n = \frac 1{|\partial \Omega|}\int_\Omega q(\cdot,x) dx $ on $\partial \Omega$ and $\| \phi\|_{L^2(0,T; H^1(\Omega))^3}\leq C \|q\|_{L^2(\Omega_T)}$.
Thus for
$$
\widetilde p^\ve =\begin{cases}  p_f^\ve & \text{ in } \; \;  (0,T)\times  \Omega_f^\ve \\
p_e^\ve & \text{ in } \; \;  (0,T)\times(  \Omega \setminus  \Omega_f^\ve)
\end{cases}
$$
 using \eqref{estim_pf}  we obtain
$$
\langle \widetilde p^\ve,  q \rangle_{\Omega_{T}} \leq C\|q\|_{L^2((0,T)\times \Omega)},
$$
where  the constant $C$ is independent of $\ve$.  This implies, by the definition of the $L^2$-norm and the estimates for $p^\ve_e$, that $\|p_f^\ve\|_{L^2((0,T)\times\Omega_f^\ve)} \leq C$.

To  justify estimates  \eqref{estim_b_c}  we take $b_e^\ve$ and $c^\ve$ as  test functions in \eqref{cd_one} and  \eqref{cd_two}, respectively. Using   assumptions {\bf A3.}--{\bf A5.}, we obtain
\begin{equation*}
\begin{aligned}
\| b_e^\ve(s)&\|^2_{L^2(\Omega_e^\ve)}  + \| \nabla b_e^\ve\|^2_{L^2(\Omega_{e,s}^\ve)}
\\ \leq & \| b_e^\ve(0)\|^2_{L^2(\Omega_e^\ve)} + C_1\|\be(u_e^\ve)\|_{L^\infty(0,s; L^2(\Omega_{e}^\ve))} \|b_e^\ve\|^2_{L^2(0,s; L^4(\Omega_{e}^\ve))}  \\
+ & C_2\big[\|c_e^\ve\|^2_{L^2(\Omega_{e, s}^\ve)}  +  \|b_e^\ve\|^2_{L^2(\Omega_{e, s}^\ve)}\big]+ C_3 \big[1+ \ve \|b_e^\ve\|^2_{L^2(\Gamma_s^\ve)}   + \| b_e^\ve\|^2_{L^2((0,s)\times \partial \Omega)} \big]
\end{aligned}
\end{equation*}
and
\begin{equation*}
\begin{aligned}
 \| c_e^\ve(s)&\|^2_{L^2(\Omega_e^\ve)} + \| c_f^\ve(s)\|^2_{L^2(\Omega_f^\ve)}   + \| \nabla c_e^\ve\|^2_{L^2(\Omega_{e,s}^\ve)}+  \| \nabla c_f^\ve\|^2_{L^2(\Omega_{f, s}^\ve)}
\\ \leq & \| c_e^\ve(0)\|^2_{L^2(\Omega_e^\ve)}   +   \| c_f^\ve(0)\|^2_{L^2(\Omega_f^\ve)}\\
+ & C_1\|\be(u_e^\ve)\|_{L^\infty(0,s; L^2(\Omega_{e}^\ve))}\big[\|b_e^\ve\|^2_{L^2(0,s; L^4(\Omega_{e}^\ve))}
+ \|c_e^\ve\|^2_{L^2(0,s; L^4(\Omega_{e}^\ve))} \big]
\\  + & C_2 \big[1+  \|\mathcal G(\partial_t u_f^\ve)\|^2_{L^\infty(\Omega_{f,s}^\ve)}\big] \|c_f^\ve\|^2_{L^2(\Omega_{f, s}^\ve)}
\\ + & C_3\big[\|b_e^\ve\|^2_{L^2(\Omega_{e, s}^\ve)}+  \| c_e^\ve\|^2_{L^2(\Omega_{e, s}^\ve)} +  \| c_e^\ve\|^2_{L^2((0,s)\times\partial \Omega)} \big] .
\end{aligned}
\end{equation*}
The  Gagliardo-Nirenberg and trace  inequalities,  together with the extension properties of $b^\ve_e$ and $c^\ve$, see Lemma \ref{extension},  yield
\begin{equation}\label{estim_GN_trace}
\begin{aligned}
& \|b_e^{\ve}\|^2_{L^4(\Omega_e^\ve)}   \leq \|b_e^{\ve}\|^2_{L^4(\Omega)}  \leq  \delta_1 \|\nabla  b_e^{\ve}\|^2_{L^2(\Omega)}
+ C_{\delta_1} \| b_e^{\ve}\|^2_{L^2(\Omega)} \\ & \hskip 3.4 cm \leq    \delta_2 \|\nabla  b_e^{\ve}\|^2_{L^2(\Omega_e^\ve)}
+ C_{\delta_2} \| b_e^{\ve}\|^2_{L^2(\Omega_e^\ve)} ,
\\
& \|c_e^{\ve}\|^2_{L^4(\Omega_e^\ve)} +  \|c_f^{\ve}\|^2_{L^4(\Omega_f^\ve)}  \leq \delta\left[ \|\nabla  c_e^{\ve}\|^2_{L^2(\Omega_e^\ve)}+  \|\nabla  c_f^{\ve}\|^2_{L^2(\Omega_f^\ve)} \right]
\\
& \hskip 3.6cm + C_\delta \left[\|c_e^{\ve}\|^2_{L^2(\Omega_e^\ve)} +  \|c_f^{\ve}\|^2_{L^2(\Omega_f^\ve)}\right],\\
& \|b_e^{\ve}\|^2_{L^2(\partial\Omega)}   \leq \delta \|\nabla  b_e^{\ve}\|^2_{L^2(\Omega_e^\ve)}
+ C_\delta \| b_e^{\ve}\|^2_{L^2(\Omega_e^\ve)} , \quad\\
&  \|c_e^{\ve}\|^2_{L^2(\partial\Omega)}   \leq \delta \|\nabla  c_e^{\ve}\|^2_{L^2(\Omega_e^\ve)}
+ C_\delta \| c_e^{\ve}\|^2_{L^2(\Omega_e^\ve)} , \\
&\ve \|b_e^{\ve}\|^2_{L^2(\Gamma^\ve)}   \leq C\big[\ve^2 \|\nabla  b_e^{\ve}\|^2_{L^2(\Omega_e^\ve)}
+  \| b_e^{\ve}\|^2_{L^2(\Omega_e^\ve)} \big],
 \end{aligned}
\end{equation}
for an arbitrary $\delta >0$,  and $C_\delta$ depending on $\delta$ and independent of $\ve$. Notice that since the Gagliardo-Nirenberg inequality is applied to the extension of  $b^\ve_e$ and $c^\ve$ defined in $\Omega$, the constant in the Gagliardo-Nirenberg inequality  is independent of $\ve$.  Then applying the Gronwall inequality and using the assumptions {\bf A3.} on $\mathcal G$  yields
\begin{equation}\label{bou_b_c}
\begin{aligned}
&\| b_e^\ve\|_{L^\infty(0,T; L^2(\Omega_e^\ve))}  + \| \nabla b_e^\ve\|_{L^2((0,T) \times \Omega_e^\ve)}  \leq C,\\
& \| c_j^\ve\|_{L^\infty(0,T; L^2(\Omega_j^\ve))}  + \| \nabla c_j^\ve\|_{L^2((0,T)\times \Omega_j^\ve)}   \leq C,  \quad j = e,f.
\end{aligned}
\end{equation}
The uniform boundedness of $b_e^\ve$, i.e.\
\begin{equation}\label{estim_Linfty_b_e}
\|b_e^\ve\|_{L^\infty(0,T; L^\infty(\Omega_e^\ve))} \leq C,
\end{equation}
with a constant $C$ independent of $\ve$,  is proved by applying  the Alikakos iteration Lemma \cite[Lemma 3.2]{Alikakos}.
Since the derivation of estimate \eqref{estim_Linfty_b_e} is rather involved, we present the detailed proof of this estimate in Appendix, see Lemma~\ref{Linf_L4}.
In the same Lemma in Appendix we also prove the estimate
 $$
 \|c^\ve_e\|_{L^\infty(0,T; L^4(\Omega_e^\ve))} + \| c_f^\ve\|_{L^\infty(0,T; L^4(\Omega_f^\ve))} \leq C,
 $$
where the constant $C$ does not depend on $\ve$.

To justify the last estimate  (\ref{estim_time_h}), we integrate  the equation for $b_e^\ve$ in \eqref{eq_codif} over $(t, t+h)$ and consider $\theta_h b^\ve_e - b^\ve_e $  as  a test function:
 \begin{equation*}
\begin{aligned}
 \| \theta_h b^\ve_e & - b^\ve_e \|^2_{L^2((0, \tilde T)\times \Omega_e^\ve)} +
 \Big\langle D_b\int_t^{t+h} \nabla b_e^\ve(s)  \, ds,  \nabla (\theta_h b_e^\ve) -\nabla b_e^\ve \Big \rangle_{(0, \tilde T)\times \Omega_{e}^\ve} \\
 & =
 \Big \langle \int_t^{t+h} g_b( b_e^\ve(s), c_e^\ve(s), \be(u_e^\ve(s))) \, ds,  \theta_h b_e^\ve - b_e^\ve   \Big\rangle_{(0, \tilde T)\times \Omega_{e}^\ve}
 \\ & + \ve \Big  \langle \int_t^{t+h} P( b_e^\ve(s)) \, ds,  \theta_h b_e^\ve- b_e^\ve  \Big\rangle_{(0,\tilde T)\times \Gamma^\ve}\\
&  + \Big  \langle \int_t^{t+h} F_b(b_e^\ve) \, ds,  \theta_h b_e^\ve - b_e^\ve  \Big\rangle_{(0, \tilde T)\times\partial \Omega}
\end{aligned}
\end{equation*}
for all $\tilde T \in (0, T-h]$. Then using  the a priori estimates for $u_e^\ve$, $b_e^\ve$ and $c^\ve_e$ in \eqref{estim_u_p_u} and \eqref{estim_b_c}  together with the H\"older inequality   implies the estimate for $b^\ve_e(t+h,x) - b^\ve_e(t,x)$.
Similar calculations yield the estimates for $c^\ve_e(t+h,x) - c^\ve_e(t,x)$ and $c^\ve_f(t+h,x) - c^\ve_f(t,x)$.
\end{proof}

\begin{theorem}\label{th:exist_uniq_micro}
Under assumptions {\bf A1.}--{\bf A5.} for every $\ve>0$ there exists a unique weak solution of the coupled problem \eqref{eq_codif}--\eqref{exbou_co}.
\end{theorem}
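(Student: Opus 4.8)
The plan is to build the weak solution by a Banach fixed-point argument that decouples the poroelasticity--Stokes block from the reaction--diffusion--convection block, and to obtain uniqueness from a contraction inequality that rests on the uniform $L^\infty$-bound for $b_e^\ve$. Fix $\ve>0$ and suppress it from the notation. On a short interval $[0,T_0]$ to be chosen, let $\mathcal K$ be the set of nonnegative $\widetilde b_{e,3}\in L^\infty((0,T_0)\times\Omega_e^\ve)$ with $\|\widetilde b_{e,3}\|_{L^\infty}\le \Lambda$, where $\Lambda$ is fixed large relative to $\|b_{e0}\|_{L^\infty(\Omega)}$. Define $\Phi:\mathcal K\to L^\infty((0,T_0)\times\Omega_e^\ve)$ by first solving the \emph{linear} poroelasticity--Stokes system \eqref{equa_cla} with the elasticity tensor frozen at ${\bf E}^\ve(\widetilde b_{e,3})$ (equivalently, ${\bf E}_1$ evaluated at the convolution $\mathcal F(\widetilde b_{e,3})$), and then solving the semilinear system \eqref{eq_codif} with the resulting strain $\be(u_e)$ and velocity $\partial_t u_f$ plugged into the reaction terms; $\Phi(\widetilde b_{e,3})$ is the third component $b_{e,3}$ of that solution. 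A fixed point of $\Phi$ is exactly a weak solution in the sense of Definition~\ref{def_weak_micro}.

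To see that $\Phi$ is well defined, I would solve the linear poroelasticity--Stokes problem by a Galerkin scheme in the space of vector fields that are divergence-free in $\Omega_f^\ve$ and satisfy $\Pi_\tau\phi=\Pi_\tau\eta$ on $\Gamma^\ve$, then recover the pressures $p_e$ and $p_f$ from the incompressibility and transmission constraints through the inf--sup/De Rham argument already used to derive \eqref{estim_pf}; the uniform estimates \eqref{estim_u_p_u} of Lemma~\ref{Lemma:apriori} --- whose proof only uses $\partial_t{\bf E}^\ve,\partial_t^2{\bf E}^\ve\in L^\infty$, which holds by {\bf A1.} since $\mathcal F$ is a bounded time-convolution with the smooth kernel $\kappa$ and $\widetilde b_{e,3}$ is bounded --- provide the compactness to pass to the limit and give uniqueness of this block. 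For the reaction--diffusion--convection system with $\be(u_e)$ and $\partial_t u_f$ fixed, I would obtain a local-in-time solution by a standard parabolic contraction (or a truncation of the quadratic terms $g_b,g_e,g_f$ to a globally Lipschitz problem), establish $b_{e,i}\ge 0$, $c_e\ge 0$, $c_f\ge 0$ by testing with the negative parts and using the sign conditions in {\bf A4.}, and then extend globally and remove the truncation via the uniform bounds $\|b_e\|_{L^\infty(0,T;L^\infty(\Omega_e^\ve))}\le C$ and $\|c\|_{L^\infty(0,T;L^4)}\le C$ of Lemma~\ref{Linf_L4}, proved by the Alikakos iteration and depending only on the energy norms of $u_e$ and $u_f$. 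Combined with the short time, this last bound also gives $\Phi(\mathcal K)\subseteq\mathcal K$.

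It remains to show $\Phi$ is a contraction for $T_0$ small. Given $\widetilde b_{e,3}^{(1)},\widetilde b_{e,3}^{(2)}\in\mathcal K$, the difference of the two poroelastic solutions solves the linear system with a source ${\rm div}\big(({\bf E}^\ve(\widetilde b_{e,3}^{(1)})-{\bf E}^\ve(\widetilde b_{e,3}^{(2)}))\be(u_e^{(2)})\big)$; since ${\bf E}_1\in C^2_b$ one has $|{\bf E}^\ve(\widetilde b_{e,3}^{(1)})-{\bf E}^\ve(\widetilde b_{e,3}^{(2)})|\le C\int_0^t|\widetilde b_{e,3}^{(1)}-\widetilde b_{e,3}^{(2)}|\,d\tau$, and the energy identity behind Lemma~\ref{Lemma:apriori} gives $\|\be(u_e^{(1)}-u_e^{(2)})\|_{L^\infty(0,t;L^2)}+\|\partial_t(u_f^{(1)}-u_f^{(2)})\|_{L^\infty(0,t;L^2)}\le C\,t^{1/2}\|\widetilde b_{e,3}^{(1)}-\widetilde b_{e,3}^{(2)}\|_{L^\infty(0,t;L^2)}$ after Gronwall. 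Subtracting the two reaction--diffusion--convection systems, testing \eqref{cd_one}--\eqref{cd_two} with the differences of the densities, and bounding the quadratic reaction differences via the local Lipschitz estimates of {\bf A4.} together with the uniform $L^\infty$-bound on $b_e$ and $L^4$-bound on $c$, a Gronwall argument yields $\|b_{e,3}^{(1)}-b_{e,3}^{(2)}\|_{L^\infty(0,T_0;L^2)}\le C(T_0)\|\widetilde b_{e,3}^{(1)}-\widetilde b_{e,3}^{(2)}\|_{L^\infty(0,T_0;L^2)}$ with $C(T_0)\to0$ as $T_0\to0$. Banach's theorem then gives a unique solution on $[0,T_0]$; since the estimates of Lemma~\ref{Lemma:apriori} do not depend on $T_0$, one restarts the argument on $[T_0,2T_0]$, and so on, covering $[0,T]$, and the very same contraction computation applied to two full solutions of \eqref{eq_codif}--\eqref{exbou_co} gives global uniqueness.

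The main obstacle is precisely this contraction estimate: the reaction terms are quadratic in the densities and simultaneously couple to $\be(u_e)$, which in turn depends on $b_{e,3}$ through the memory term $\mathcal F$, so a naive Gronwall argument does not close. It works only because the $L^\infty$-in-space control of $b_e$ and the $L^4$-control of $c$ --- produced by the Alikakos iteration and expressed purely through the energy norms of $u_e$ and $u_f$ --- let one bound the quadratic differences linearly, and because the factor $t^{1/2}$ gained in the poroelastic block makes the contraction constant small on short time intervals.
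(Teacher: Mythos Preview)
Your overall architecture---decouple the mechanics from the chemistry and close by a Banach fixed point on a short interval---is the same as the paper's, and the existence steps for each block (Galerkin for the poroelastic--Stokes system, parabolic fixed point plus positivity plus Alikakos bounds for the reaction--diffusion--convection system) are correctly identified. The difference is that you iterate on $b_{e,3}$ while the paper iterates on $(u_e,\partial_t u_f)$; this is not merely cosmetic, and your version has a gap in the contraction step.

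The problem is the estimate you claim for the poroelastic block,
\[
\|\be(u_e^{(1)}-u_e^{(2)})\|_{L^\infty(0,t;L^2)}\le C\,t^{1/2}\,\|\widetilde b_{e,3}^{(1)}-\widetilde b_{e,3}^{(2)}\|_{L^\infty(0,t;L^2)}.
\]
In the energy identity for $\widetilde u_e=u_e^{(1)}-u_e^{(2)}$ the cross term is $\big\langle\big({\bf E}^\ve(\widetilde b^{(1)})-{\bf E}^\ve(\widetilde b^{(2)})\big)\be(u_e^{(2)}),\,\be(\partial_t\widetilde u_e)\big\rangle$, which after integrating by parts in time leaves the boundary term $\big\langle\Delta{\bf E}(s)\,\be(u_e^{(2)}(s)),\,\be(\widetilde u_e(s))\big\rangle_{\Omega_e^\ve}$ and a term with $\partial_t\be(u_e^{(2)})$. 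Since $\be(u_e^{(2)})$ and $\be(\widetilde u_e)$ are only in $L^2_x$, H\"older forces $\Delta{\bf E}(s,\cdot)\in L^\infty_x$, i.e.\ you need $\|\widetilde b^{(1)}-\widetilde b^{(2)}\|_{L^\infty(0,s;L^\infty(\Omega_e^\ve))}$, not the $L^2_x$ norm. With only $L^2_x$ control of the $b$-difference the product $\Delta{\bf E}\,\be(u_e^{(2)})$ lies merely in $L^1_x$ and the estimate does not close. Consequently your contraction in $L^\infty_tL^2_x$ for $b_{e,3}$ cannot be established as written.

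This is exactly why the paper organizes the fixed point the other way round and invests in a second Alikakos iteration, this time for the \emph{difference} $\widetilde b_e$ (Lemma~\ref{Linf_contract}), yielding
\[
\|\widetilde b_e\|_{L^\infty(0,s;L^\infty(\Omega_e^\ve))}\le C\,\|\be(\widetilde u_e)\|_{L^{1+1/\sigma}(0,s;L^2)}+C_\delta\|\partial_t\widetilde u_f\|_{L^2}+\delta\|\be(\partial_t\widetilde u_f)\|_{L^2}.
\]
With this $L^\infty_x$ control in hand the poroelastic difference estimate goes through, and the small factor needed for contraction comes from $s^{\sigma/(1+\sigma)}$ (and $s^{1/2}$ on the fluid side), not from your $t^{1/2}$. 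If you want to keep your iteration variable $b_{e,3}$, you can, but then you must run the contraction in $L^\infty_{t,x}$ and supply the analogue of Lemma~\ref{Linf_contract} to bring the output $b_{e,3}$-difference back to $L^\infty_x$; a plain Gronwall in $L^2$ is not enough.
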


\begin{proof}
We shall use a  contraction argument to show the existence of a solution of the coupled system.
We consider an operator $\mathcal K$ over $L^\infty(0,s; H^1(\Omega_e^\ve)^3)\times L^\infty(0,s; L^2(\Omega_f^\ve)^3)$ defined by  $(u^{\ve,j}_{e}, \partial_t u^{\ve,j}_{f})  = \mathcal K (u_{e}^{\ve,j-1}, \partial_t u^{\ve,j-1}_{f})$, where for given $(u_{e}^{\ve,j-1},\partial_tu_{f}^{\ve,j-1})$ we first define $(b^{\ve,j}_{e}, c^{\ve,j}_{e}, c^{\ve,j}_{f})$ as a solution of  system \eqref{eq_codif} with functions $(u_{e}^{\ve,j-1},\partial_tu_{f}^{\ve,j-1})$ in place of
$(u_{e}^\ve,\partial_tu_{f}^\ve)$ and with
external boundary conditions in \eqref{exbou_co}, and then  $(u^{\ve,j}_{e}, p_e^{\ve,j}, \partial_t u_{f}^{\ve,j}, p_f^{\ve, j})$ are  solutions of \eqref{equa_cla} with $b_{e}^{\ve,j}$ in place of
$b_{e}^\ve$.

For each $j=2,3,\ldots$, the proof of  existence and uniqueness of $(b_{e}^{\ve,j}, c_{e}^{\ve,j}, c_{f}^{\ve,j})$ for given $(u_{e}^{\ve,j-1}, \partial_t u^{\ve,j-1}_{f})$ follows the same arguments (with a number of simplifications)  as the proof that $\mathcal K$ is a contraction for $(u^{\ve,j}_{e}, \partial_t u^{\ve,j}_{f})$, i.e.\ using  the Galerkin method and fixed point arguments.  Notice that the fixed point  argument  for the system for $b_{e}^{\ve,j}$ and  $c^{\ve,j}$ allows us to consider the equations for $b_e^{\ve,j}$ and $c^{\ve,j}$ recursively. Thus using the  non-negativity of initial data $b_{e0}$, $c_{e0}$, and $c_{f0}$ and   assumptions  \textbf{A4.}   on   the reaction   and boundary terms and   applying iteratively   the Theorem on positively invariant regions \cite{Redlinger,Smoller}  we obtain the non-negativity of   all components of $b^{\ve,j}_{e}$ and $c^{\ve,j}$.

We choose the first iteration $(u^{\ve,1}_{e}, p_e^{\ve,1}, \partial_t u^{\ve,1}_{f}, p_f^{\ve, 1})$ to satisfy the initial and boundary conditions in \eqref{equa_cla} and  \eqref{exbou_co}.   Then applying the Galerkin method  (using the basis functions for $H^1(\Omega_e^\ve)\times H^1(\Omega\setminus \widetilde \Gamma^\ve)$)  and    fixed-point argument we obtain the existence of solutions $(b_e^{\ve,2}, c_e^{\ve,2}, c_f^{\ve,2})$ of  system \eqref{eq_codif} with
external boundary conditions in \eqref{exbou_co} and  have
\begin{equation}\label{bou_b_c_itera}
\begin{aligned}
&\| b_e^{\ve,2}\|_{L^\infty(0,T; L^2(\Omega_e^\ve))}  + \| \nabla b_e^{\ve,2}\|_{L^2( \Omega_{e,T}^\ve)} + \| b_e^{\ve,2}\|_{L^\infty(0,T;  L^\infty(\Omega_e^\ve))}  \leq C,\\
& \| c_l^{\ve,2}\|_{L^\infty(0,T; L^2(\Omega_l^\ve))}  + \| \nabla c_l^{\ve,2}\|_{L^2(\Omega_{l,T}^\ve)}  +  \| c_l^{\ve,2}\|_{L^\infty(0,T; L^4(\Omega_l^\ve))} \leq C,  \quad l = e,f,
\end{aligned}
\end{equation}
where the constant $C$  depends only on $\|\be(u^{\ve,1}_e)\|_{L^\infty(0,T;L^2(\Omega^\ve_e))}$ and the constants in assumptions {\bf A1.}--{\bf A4.}
The estimates  \eqref{bou_b_c_itera} can be justified in the same way as those in \eqref{estim_b_c}.

Next we consider   system \eqref{equa_cla} with $b_{e}^{\ve,2}$ in place of $b_{e}^\ve$.  To show the existence result we use  the Galerkin method with the basis functions $\{\phi_j, \psi_j, \eta_j\}_{j\in \mathbb N}$ for  the space
$$
\begin{aligned}
W= \{(v  ,p,w) \in  H^1(\Omega_e^\ve)^3\times H^1(\Omega_e^\ve)\times H^1(\Omega_f^\ve)^3 : \;  \;  {\rm div}\, w =0 \; \text{ in } \; \Omega_f^\ve,
\\
 \Pi_\tau v= \Pi_\tau w \;   \text{ on }\;  \Gamma^\ve, \;  \;   \; {\rm div}(K^\ve_p \nabla p)  \in L^2(\Omega_e^\ve),
 \\
 \langle (v-K^\ve_p \nabla p -w ) \cdot n, \psi \rangle_{H^{-1/2}(\Gamma^\ve), H^{1/2}(\Gamma^\ve)}  = 0 \},
\end{aligned}
$$
and consider the approximate solutions in the form
$$
u^{\ve,2}_{e,k}= \sum_{j=1}^k q^k_j(t) \phi_j, \quad p^{\ve,2}_{e,k}= \sum_{j=1}^k \frac d{dt} q^k_j(t) \psi_j, \quad \partial_t u^{\ve,2}_{f,k}= \sum_{j=1}^k \frac d{dt} q^k_j(t) \eta_j, \quad k \in \mathbb N.
$$
The linearity of equations for $\big(u_{e}^{\ve,2}, \,p_e^{\ve,2},\,\partial_t u^{\ve,2}_{f}\big)$ ensures the existence of  unique solutions $q^k_j(t)$ of the corresponding linear system  of second order ordinary differential equations with initial conditions $q^k_j(0) = \alpha_j^k$ and $\dfrac d {dt} q^k_j(0) = \beta_j^k$, where $\alpha_j^k$ and $\beta_j^k$ are derived from the initial conditions in  \eqref{equa_cla},  and hence,  the existence of a unique solution $\big(u^{\ve,2}_{e,k},  p^{\ve,2}_{e,k}, \partial_t u^{\ve,2}_{f,k}\big)$ for $k \in \mathbb N$.
Then using  the  a priori estimates derived in the same way as in Lemma \ref{Lemma:apriori} (by considering assumptions {\bf A1.},  {\bf A2.}, and {\bf A5.}),  and taking the limit as $k \to \infty$ we obtain the existence of  $u_{e}^{\ve,2} \in [H^1(0,T; H^1(\Omega_e^\ve))\cap H^2(0,T; L^2(\Omega_e^\ve))]^3$,  $p_e^{\ve,2} \in    H^1(0,T; H^1(\Omega_e^\ve))$, and
$\partial_t u^{\ve,2}_{f} \in H^1(0,T;  H^1(\Omega_f^\ve))^3\cap L^2(0,T; V)$, with $V=\{v \in H^1(\Omega_f^\ve)^3:  \,  {\rm div }\, v=0 \, \text{ in } \,  \Omega_{f}^\ve \}$,  satisfying
\eqref{weak_u_ef} with $b_{e,3}^{\ve,2}$ in place of $b_{e,3}^\ve$.
Taking $\psi\in L^2(0,T; H^1_0(\Omega_e^\ve))$, $\phi\in L^2(0,T; H^1_0(\Omega_e^\ve))^3$,  and $\eta \in L^2(0,T; V_0)$, where   $V_0=\{v \in H^1_0(\Omega_f^\ve)^3:  \, {\rm div }\, v=0 \,  \text{ in } \, \Omega_{f}^\ve \}$, as test functions in the weak formulation we obtain the equations for $u_{e}^{\ve,2}$ and $p_e^{\ve,2}$ in   \eqref{equa_cla} and  $\langle\rho_f \partial^2_t u^{\ve,2}_f - \ve^2\mu\, {\rm div}({\bf e}(\partial_t u^{\ve,2}_f)), \eta \rangle =0$ for any $\eta \in L^2(0,T; V_0)$.
Then  De~Rham's theorem applied to $-\rho_f \partial^2_t u_f^{\ve,2} + \ve^2\mu\, {\rm div}({\bf e}(\partial_t u^{\ve,2}_f))$  implies the existence of $p_f^{\ve,2} \in L^2((0,T)\times \Omega_f^\ve)$ such that $-\rho_f \partial^2_t u_f^{\ve,2} + \ve^2 \mu\,{\rm div}({\bf e}(\partial_t u^{\ve,2}_f))= \nabla p_f^{\ve, 2}$.
 Using first $\psi=0$, $\phi=0$,  and $\eta \in L^2(0,T; H^1(\Omega_f^\ve))^3$, with $\Pi_\tau \eta =0$ on $(0,T)\times\Gamma^\ve$, as a test function in the weak formulation of the equations for
$\big(u_{e}^{\ve,2}, \,p_e^{\ve,2},\,\partial_t u^{\ve,2}_{f}\big)$ we obtain the transmission condition $-n\cdot \ve^2\mu \, {\bf e}(\partial_t u^{\ve,2}_f) \, n + p_f^{\ve,2} =p_e^{\ve,2}$ on $(0,T)\times\Gamma^\ve$, satisfied in the distribution sense.  Choosing  $\psi=0$, $\phi\in L^2(0,T; H^1(\Omega_e^\ve))^3$ and $\eta \in L^2(0,T; H^1(\Omega_f^\ve))^3$, with $\phi = \eta$ on $(0,T)\times\Gamma^\ve$, as test functions and using the equations for $u_e^{\ve,2}$ and $\partial_t u _f^{\ve,2}$ ensure $(\ve^2\mu \, {\bf e}(\partial_t u^{\ve,2}_f)  - p_f^{\ve,2}I)n =({\bf E}^\ve(b^{\ve,2}_{e,3}) {\bf e}(u_e^{\ve,2}) - p_e^{\ve,2}I)n$ on $(0,T)\times\Gamma^\ve$.
Then, using the equations for $ u_{e}^{\ve,2}$, $p_e^{\ve,2}$ and $\partial_t u^{\ve,2}_{f}$ and considering $\psi\in L^2(0,T; H^1(\Omega_e^\ve))$, $\phi\in L^2(0,T; H^1(\Omega_e^\ve))^3$  and $\eta \in L^2(0,T; H^1(\Omega_f^\ve))^3$, with $\Pi_\tau \phi = \Pi_\tau \eta$ on $(0,T)\times\Gamma^\ve$ and $\psi=0$, \, $\phi=0$ on $(0,T)\times \partial\Omega$,
as test functions  we obtain the transmission condition $(-K^\ve_p \nabla p^{\ve,2}_e + \partial_t u_e^{\ve,2}) \cdot n = \partial_t u_f^{\ve, 2} \cdot n$ on $(0,T)\times\Gamma^\ve$ in the distribution sense.  Taking  $\psi\in L^2(0,T; H^1(\Omega_e^\ve))$, $\phi\in L^2(0,T; H^1(\Omega_e^\ve))^3$ and $\eta \in L^2(0,T; H^1(\Omega_f^\ve))^3$, with $\Pi_\tau \phi = \Pi_\tau \eta$ on $(0,T)\times\Gamma^\ve$, as test functions we obtain  the boundary conditions on $(0,T)\times \partial\Omega$.
Hence we obtain that $\big(u_{e}^{\ve,2}, \,p_e^{\ve,2},\,\partial_t u^{\ve,2}_{f}, p^{\ve, 2}_f\big)$ is a weak solution of  \eqref{equa_cla}, with $b_{e,3}^{\ve,2}$ in place of $b_{e,3}^\ve$, together with   the corresponding external boundary conditions in \eqref{exbou_co}.  Standard arguments pertaining to the consideration of  two solutions of  \eqref{equa_cla} imply the uniqueness of a weak solution of \eqref{equa_cla}, \eqref{exbou_co}.
 The transmission condition   $-n\cdot \ve^2\mu \, {\bf e}(\partial_t u^{\ve,2}_f) \, n + p_f^{\ve,2} =p_e^{\ve,2}$ on $(0,T)\times\Gamma^\ve$ ensures that $p_f^{\ve,2}$ is defined uniquely.

Also,   we obtain that the estimates similar to \eqref{estim_u_p_u}  are valid for the functions $\big(u_{e}^{\ve,2}, \,p_e^{\ve,2},\,\partial_t u^{\ve,2}_{f}, \, p_f^{\ve, 2}\big)$ uniformly with respect to solutions
of  equations \eqref{eq_codif} with boundary conditions in \eqref{exbou_co}:
\begin{equation}\label{bou_u_p_u_itera}
\begin{aligned}
&\| \partial_t u_e^{\ve,2}\|_{L^\infty(0,T; L^2(\Omega_e^\ve))}  + \| \nabla u_e^{\ve,2}\|_{L^\infty(0,T;  L^2(\Omega_e^\ve))}  \leq C,\\
& \| p_e^{\ve,2}\|_{L^\infty(0,T; L^2(\Omega_e^\ve))}  + \| \nabla p_e^{\ve,2}\|_{L^2( \Omega_{e,T}^\ve)}   \leq C, \\
&\| \partial_t u_f^{\ve,2}\|_{L^\infty(0,T; L^2(\Omega_f^\ve))}  + \ve \| \nabla \partial_t u_f^{\ve,2}\|_{L^2(\Omega_{f,T}^\ve)}  + \|p_f^{\ve, 2}\|_{L^2(\Omega_{f,T}^\ve)} \leq C.
\end{aligned}
\end{equation}
Iterating this step, we conclude  the existence of a solution  $\big(b_{e}^{\ve,j},\, c_{e}^{\ve,j},\, c_{f}^{\ve,j}\big)$ of   \eqref{eq_codif}  with $(u_e^{\ve, j-1}, \partial_t u_f^{\ve, j-1})$ instead of   $(u_e^{\ve}, \partial_t u_f^{\ve})$  and  a solution $\big(u_{e}^{\ve,j},\,p_e^{\ve,j},\,\partial_t u^{\ve,j}_{f}, \, p_f^{\ve, j}\big)$   of system  \eqref{equa_cla} with $b_{e}^{\ve,j}$ instead of $b_{e}^{\ve}$,  and  that the estimates similar to (\ref{bou_b_c_itera}) and (\ref{bou_u_p_u_itera}) are fulfilled for  $\big(b_{e}^{\ve,j},\, c_{e}^{\ve,j},\, c_{f}^{\ve,j}\big)$ and $\big(u_{e}^{\ve,j},\,p_e^{\ve,j},\,\partial_t u^{\ve,j}_{f}, \, p_f^{\ve, j}\big)$,   with $j\geq 2$.
\\
To show the contraction property of $\mathcal K$,  we consider two  iterations
$$
(b_e^{\ve,j-1}, c_e^{\ve,j-1}, c_f^{\ve,j-1}),\    (\partial_t u_e^{\ve,j-2},\partial_t u_f^{\ve, j-2})\;\; \text{and}\;  \;(b_e^{\ve,j}, c_e^{\ve,j}, c_f^{\ve,j}),\ (\partial_t u_e^{\ve,j-1 },\partial_t u_f^{\ve, j-1}).
$$
Then the   differences   $\widetilde b_e^{\ve,j} = b^{\ve,j-1}_e - b_e^{\ve,j}$,  $\widetilde c_e^{\ve,j} = c^{\ve,j-1}_e - c_e^{\ve,j}$,  and $\widetilde c_f^{\ve,j} = c^{\ve,j-1}_f - c_f^{\ve,j}$
satisfy the following equations:
\begin{equation}\label{eq_diff}
\begin{aligned}
&\partial_t \widetilde b_e^{\ve,j} - \text{div}(D_b\nabla \widetilde b^{\ve,j}_e)
 \\&\hskip 1cm =  g_b(c_e^{\ve,j-1}, b_e^{\ve,j-1}, \be(u_e^{\ve,j-2}))  - g_b(c_e^{\ve,j}, b_e^{\ve,j}, \be(u_e^{\ve,j-1}))
  \   &&  \text{ in } \Omega_{e,T}^\ve,\\
&\partial_t \widetilde c_e^{\ve,j} - \text{div}(D_c\nabla \widetilde c^{\ve,j}_e)
\\&\hskip 1cm =  g_e ( c_e^{\ve,j-1}, b_e^{\ve,j-1}, \be(u_e^{\ve,j-2}))- g_e ( c_e^{\ve,j}, b_e^{\ve,j}, \be(u_e^{\ve,j-1}))  \   &&  \text{ in } \Omega_{e,T}^\ve ,\\
&\partial_t \widetilde c^{\ve,j}_f -
\mathrm{div}\big(D_f\nabla \widetilde c_f^{\ve,j} - \mathcal G(\partial_t u_f^{\ve,j-2}) \widetilde c_f^{\ve,j}\big)
\\ &\hskip 1cm+\mathrm{div}\big( c_f^{\ve,j}\big[\mathcal G(\partial_t u_f^{\ve,j-2}) - \mathcal G(\partial_t u_f^{\ve,j-1})\big] \big) \\
&\hskip 1cm= g_f(c_f^{\ve,j-1})- g_f(c_f^{\ve,j})\   &&  \text{ in } \Omega_{f,T}^\ve,
\end{aligned}
\end{equation}
together with the boundary conditions
\begin{equation}\label{eq_diff_2}
\begin{aligned}
&D_b\nabla \widetilde b_e^{\ve,j} \cdot n =\ve\big(P(b_e^{\ve, j-1}) -P(b_e^{\ve, j}) \big) && \text{ on }   \Gamma^\ve_T,  \\
&\widetilde c_f^{\ve,j} = \widetilde c_e^{\ve,j}  \quad  && \text{ on } \Gamma_T^\ve\setminus\widetilde \Gamma^\ve_T, \\
&D_e \nabla \widetilde c_e^{\ve,j} \cdot n = \big[D_f\nabla \widetilde c_f^{\ve,j} -  \mathcal G(\partial_t u_f^{\ve,j-2} )\widetilde c_f^{\ve,j}\big]\cdot n  \\
&\hskip 1.8cm -\big[
 \big(\mathcal G(\partial_t  u_f^{\ve,j-2}) - \mathcal G(\partial_t  u_f^{\ve,j-1})\big)  c_f^{\ve,j} \big]\cdot n  \quad  && \text{ on } \Gamma_T^\ve\setminus
  \widetilde \Gamma^\ve_T, \\
&D_e\nabla \widetilde c_e^{\ve,j} \cdot n =0 && \text{ on } \widetilde \Gamma^\ve_T,\\
& \big[D_f\nabla \widetilde c_f^{\ve,j} -  ( \mathcal G(\partial_t  u_f^{\ve,j-2}) c_f^{\ve,j-1} - \mathcal G(\partial_t u_f^{\ve,j-1} ) c_f^{\ve,j})\big]\cdot n   = 0 && \text{ on } \widetilde \Gamma^\ve_T, \\
&D_b\nabla \widetilde b_e^{\ve,j} \cdot n =F_b(b_e^{\ve, j-1}) -F_b(b_e^{\ve, j})  && \text{ on }   (\partial \Omega)_T,  \\
&D_e\nabla \widetilde c_e^{\ve,j} \cdot n =F_c(c_e^{\ve, j-1}) -F_c(c_e^{\ve, j})  && \text{ on }   (\partial \Omega)_T.
\end{aligned}
\end{equation}
 Using $\widetilde b_e^{\ve,j}$, $\widetilde c_e^{\ve,j}$ and $\widetilde c_f^{\ve,j}$ as test functions in  the weak formulation of  \eqref{eq_diff} and \eqref{eq_diff_2} we obtain, for any $\delta_1>0$,
\begin{equation}\label{eq_diff_b_weak}
\begin{aligned}
\partial_t\| \widetilde b_e^{\ve,j}\|^2_{L^2(\Omega_e^\ve)} +
\|\nabla \widetilde b^{\ve,j}_e\|^2_{L^2(\Omega_e^\ve)}
  \leq   (\ve^2+ \delta_1)   \|\nabla \widetilde b_e^{\ve,j}\|^2_{L^2(\Omega_e^\ve)} \qquad \\
   +C_1
\big(\|b_e^{\ve,j-1}\|_{L^\infty(\Omega_e^\ve)} +C_{\delta_1}\big) \left[ \|\widetilde b_e^{\ve,j}\|^2_{L^2(\Omega_e^\ve)} +
 \|\widetilde c_e^{\ve,j}\|^2_{L^2(\Omega_e^\ve)} \right] \\
 +
C_2\|b_e^{\ve,j-1}\|_{L^\infty(\Omega_e^\ve)} \left[ \| \be(\widetilde u_e^{\ve,j-1})\|^2_{L^2(\Omega_e^\ve)}  +  \|\widetilde b_e^{\ve,j}\|^2_{L^2(\Omega_e^\ve)} \right]
\\
 + C_3\big[\|c_e^{\ve,j-1}\|_{L^2(\Omega_e^\ve)}+ \|\be(u_e^{\ve,j-2})\|_{L^2(\Omega_e^\ve)}\big]\|\widetilde b_e^{\ve,j}\|^2_{L^4(\Omega_e^\ve)}
\\  + C_4\big[\ve \|\widetilde b_e^{\ve,j}\|^2_{L^2(\Gamma^\ve)}  + \|\widetilde b_e^{\ve,j}\|^2_{L^2(\partial \Omega)}  \big]
\end{aligned}
\end{equation}
and
 \begin{equation}\label{eq_diff_c_weak}
\begin{aligned}
&\partial_t\| \widetilde c_e^{\ve,j}\|^2_{L^2(\Omega_e^\ve)} + \|\nabla \widetilde c^{\ve,j}_e\|^2_{L^2(\Omega_e^\ve)}
+ \partial_t\| \widetilde c_f^{\ve,j}\|^2_{L^2(\Omega_f^\ve)} + \|\nabla \widetilde c^{\ve,j}_f\|^2_{L^2(\Omega_f^\ve)}
\\
&\leq C_1\left( \|b_e^{\ve,j}\|_{L^\infty(\Omega_e^\ve)}+1\right) \left(\|\widetilde c_e^{\ve,j}\|^2_{L^2(\Omega_e^\ve)} +
 \| \be(\widetilde u_e^{\ve,j-1})\|^2_{L^2(\Omega_e^\ve)}\right)\\
&
 + C_2\left( \| \be(u_e^{\ve,j-2})\|_{L^2(\Omega_e^\ve)}  + \|c_e^{\ve,j-1}\|_{L^2(\Omega_e^\ve)} \right)\left( \|\widetilde c_e^{\ve,j}\|^2_{L^4(\Omega_e^\ve)}  +
  \|\widetilde b_e^{\ve,j}\|^2_{L^4(\Omega_e^\ve)} \right)
  \\
  & + C_3\| \be(\widetilde u_e^{\ve,j-1})\|_{L^2(\Omega_e^\ve)} \|c_e^{\ve,j-1}\|_{L^4(\Omega_e^\ve)}  \|\widetilde c_e^{\ve,j}\|_{L^4(\Omega_e^\ve)}
  +C_4 \|\widetilde c_e^{\ve,j}\|^2_{L^2(\partial \Omega)}
 \\ & + C_5 \Big( \|c_f^{\ve,j-1}\|^2_{L^4(\Omega_f^\ve)} \|\partial_t \widetilde u_f^{\ve,j-1}\|^2_{L^4(\Omega_f^\ve)}
 +   \|\mathcal G(\partial_t  u_f^{\ve,j-1})\|^2_{L^4(\Omega_f^\ve)}\| \widetilde c_f^{\ve,j}  \|^2_{L^4(\Omega_f^\ve)}\Big),
\end{aligned}
\end{equation}
where $\widetilde u_e^{\ve,j-1} =  u_e^{\ve,j-1}-u_e^{\ve,j-2}$  and  $\widetilde u_f^{\ve,j-1} =  u_f^{\ve,j-1}-u_f^{\ve,j-2}$.
 Using  the trace and the  Gagliardo-Nirenberg  inequalities   we  estimate $\|\widetilde b_e^{\ve,j}\|^2_{L^4(\Omega_e^\ve)}$,
 $\|\widetilde c_e^{\ve,j}\|^2_{L^4(\Omega_e^\ve)}$, and $\|\widetilde c_f^{\ve,j}\|^2_{L^4(\Omega_f^\ve)}$, as well as the boundary terms
 $ \ve \|\widetilde b_e^{\ve,j}\|^2_{L^2(\Gamma^\ve)} $, $\|\widetilde b_e^{\ve,j}\|^2_{L^2(\partial \Omega)}$, and $\|\widetilde c_e^{\ve,j}\|^2_{L^2(\partial \Omega)} $
  in the same way as in    \eqref{estim_GN_trace}.
The estimates for $c_e^{\ve, j-1}$ in  $L^\infty(0,T; L^4(\Omega_e^\ve))$ and for $c_f^{\ve, j-1}$ in  $L^\infty(0,T; L^4(\Omega_f^\ve))$  ensure
  \begin{equation*}
\begin{aligned}
\int_0^s\Big[ \| \be(\widetilde u_e^{\ve,j-1})\|_{L^2(\Omega_e^\ve)} \|c_e^{\ve,j-1}\|_{L^4(\Omega_e^\ve)}  \|\widetilde c_e^{\ve,j}\|_{L^4(\Omega_e^\ve)} +  \|c_f^{\ve,j-1}\|^2_{L^4(\Omega_f^\ve)} \|\partial_t \widetilde u_f^{\ve,j-1}\|^2_{L^4(\Omega_f^\ve)} \Big]dt \\
\leq \|c_e^{\ve,j-1}\|_{L^\infty(0,s; L^4(\Omega_e^\ve))} \big[ C_1  \| \be(\widetilde u_e^{\ve,j-1})\|^2_{L^2(\Omega_{e,s}^\ve)} + C_\delta  \|\widetilde c_e^{\ve,j}\|^2_{L^2(\Omega_{e,s}^\ve)} + \delta \|\nabla \widetilde c_e^{\ve,j}\|^2_{L^2(\Omega_{e,s}^\ve)} \big]  \\
 + \|c_f^{\ve,j-1}\|^2_{L^\infty(0,s; L^4(\Omega_f^\ve))}\big[  C_\delta  \|\partial_t \widetilde u_f^{\ve,j-1}\|^2_{L^2(\Omega_{f,s}^\ve)}  + \delta  \|\be(\partial_t \widetilde u_f^{\ve,j-1})\|^2_{L^2(\Omega_{f,s}^\ve)}  \big]
\end{aligned}
\end{equation*}
for any $\delta >0$. Then combining  \eqref{eq_diff_b_weak} and \eqref{eq_diff_c_weak} and applying the Gronwall inequality we obtain
 \begin{equation}\label{eq_diff_bc_weak}
\begin{aligned}
 \| \widetilde b_e^{\ve,j}\|^2_{L^\infty(0,s; L^2(\Omega_e^\ve))} +
\|\nabla \widetilde b^{\ve,j}_e\|^2_{L^2(\Omega_{e,s}^\ve)} +
\| \widetilde c_e^{\ve,j}\|^2_{L^\infty(0,s; L^2(\Omega_e^\ve))}
+ \|\nabla \widetilde c^{\ve,j}_e\|^2_{L^2(\Omega_{e,s}^\ve)}\\
 + \| \widetilde c_f^{\ve,j}\|^2_{L^\infty(0, s; L^2(\Omega_f^\ve))} + \|\nabla \widetilde c^{\ve,j}_f\|^2_{L^2(\Omega_{f,s}^\ve)}
\\
\leq  C_1 \| \be(\widetilde u_e^{\ve,j-1})\|^2_{L^2(\Omega_{e,s}^\ve)}+ C_\delta  \|\partial_t  \widetilde u_f^{\ve,j-1}\|^2_{L^2(\Omega_{f,s}^\ve)} +\delta \|\be(\partial_t  \widetilde u_f^{\ve,j-1})\|^2_{L^2(\Omega_{f,s}^\ve)}.
 \end{aligned}
\end{equation}
Notice that $C_1= C_2 e^{C_3s} \leq C_2 e^{C_3T}$ and $C_\delta= C_4 e^{C_5s} \leq C_4 e^{C_5T}$ for $s\in (0,T]$ and we can consider  $C_1$ and $C_\delta$ to be independent of $s$.

 Considering $|\widetilde b_e^{\ve,j}|^{p-1}$, with $p=2^k$, $k=2,3,\ldots$, as a test function in the weak formulation of \eqref{eq_diff} and \eqref{eq_diff_2},  applying the  Gagliardo-Nirenberg inequality to $|\widetilde b_e^{\ve,j}|^{\frac p2}$,
 and using  the iteration in $p=2^k$ with $k \in \mathbb N$, see \cite[Lemma 3.2]{Alikakos},  we derive  the estimate
$$
\begin{aligned}
\|\widetilde b_{e}^{\ve, j} \|_{L^\infty(0,s; L^\infty(\Omega_e^\ve))}& \leq C_1 \|\be(\widetilde u_e^{\ve, j-1})\|_{L^{1 + \frac 1\sigma}(0,s; L^2(\Omega_e^\ve))} \\
& + C_\delta \|\partial_t \widetilde u_{f}^{\ve, j-1}\|_{L^2(\Omega_{f,s}^\ve)} +  \delta  \|\be(\partial_t \widetilde u_{f}^{\ve, j-1})\|_{L^2(\Omega_{f,s}^\ve)}
\end{aligned}
$$
for  $s \in (0,T]$,  an arbitrary  $0<\delta<1$,  and  for any   $0<\sigma < 1/9$. For more details see the proof of Lemma~\ref{Linf_contract} in Appendix.
Notice that $C_1$ and $C_\delta$ depend on $T$ and are independent of $s$.

Now letting
$\widetilde u^{\ve,j}_e=u^{\ve,j-1}_e-u^{\ve,j}_e$, $\ \widetilde p^{\ve,j}_e=p^{\ve,j-1}_e-p^{\ve,j}_e$, \ $\widetilde u^{\ve,j}_f
= u^{\ve,j-1}_f- u^{\ve,j}_f$,
\ considering  the equations for  $(\widetilde u^{\ve,j}_e, \widetilde p^{\ve,j}_e, \partial_t \widetilde u^{\ve,j}_f)$
and using $(\partial_t \widetilde u^{\ve,j}_e, \widetilde p^{\ve,j}_e, \partial_t \widetilde u^{\ve,j}_f)$ as test functions in the integral formulation of these equations,
we arrive at the following inequality
\begin{equation}
\begin{aligned}
&\quad  \frac 1 2 \rho_e\,  \partial_t \|\partial_t \widetilde u^{\ve,j}_e \|^2_{L^2(\Omega_e^\ve)} +
\frac 12  \partial_t \langle {\bf E}^\ve(b_{e,3}^{\ve,j-1}) \be(\widetilde u_e^{\ve,j}),  \be(\widetilde u_e^{\ve,j})\rangle_{\Omega_e^\ve}
 \\ &  + \frac 12\rho_p\,  \partial_t \|\widetilde p_e^{\ve,j}\|^2_{L^2(\Omega_e^\ve)}  + \|\nabla \widetilde p_e^{\ve,j}\|^2_{L^2(\Omega_e^\ve)}
\\
& +\frac 12 \rho_f\,  \partial_t \|\partial_t \widetilde u^{\ve,j}_f \|^2_{L^2(\Omega_f^\ve)}  + \mu\,  \ve^2\,  \| \be(\partial_t \widetilde u_f^{\ve,j} ) \|^2_{L^2(\Omega_f^\ve)} \\
&   \leq
  \big \langle \partial_t {\bf E}^\ve(b_{e,3}^{\ve,j-1}) \be(\widetilde u_e^{\ve,j}),  \be(\widetilde u_e^{\ve,j})\big\rangle_{\Omega_e^\ve} \\ & +
   \big \langle ({\bf E}^\ve(b_{e,3}^{\ve,j-1})- {\bf E}^\ve(b_{e,3}^{\ve,j})) \, \be( u_e^{\ve,j-1}), \partial_t \be(\widetilde u_e^{\ve, j})\big\rangle_{\Omega_e^\ve}
 \\
 &  \leq C_1  \| \be(\widetilde u_e^{\ve,j})\|^2_{L^2(\Omega_e^\ve)} +
\partial_t \big\langle ({\bf E}^\ve(b_{e,3}^{\ve,j-1})- {\bf E}^\ve(b_{e,3}^{\ve,j})) \be( u_e^{\ve,j-1}),  \be(\widetilde u^{\ve, j}_e )\big\rangle_{\Omega_e^\ve} \\
& - \big\langle \partial_t({\bf E}^\ve(b_{e,3}^{\ve,j-1})- {\bf E}^\ve(b_{e,3}^{\ve,j})) \be(u_e^{\ve, j-1}) ,  \be(\widetilde u_e^{\ve, j} )\big\rangle_{\Omega_e^\ve} \\
& -
 \big\langle ({\bf E}^\ve(b_{e,3}^{\ve,j-1})- {\bf E}^\ve(b_{e,3}^{\ve,j}))\, \partial_t \be(u_e^{\ve, j-1}),  \be(\widetilde u_e^{\ve, j})\big\rangle_{\Omega_e^\ve}.
\end{aligned}
\end{equation}
Thus using a priori estimates for $u_e^{\ve,j}$, $\partial_t u_e^{\ve,j}$, $\partial_t u_f^{\ve,j}$,  $b_e^{\ve,j}$, and $\widetilde b_e^{\ve,j}$ we have that
\begin{equation*}
\begin{aligned}
&\|\widetilde u_e^{\ve,j}\|_{L^\infty(0,s; L^2(\Omega_e^\ve))} +\|\be(\widetilde u_e^{\ve,j})\|_{L^\infty(0,s; L^2(\Omega_e^\ve))}   + \|\widetilde p_e^{\ve,j}\|_{L^\infty(0,s;L^2(\Omega_e^\ve))}
\\ & + \|\nabla \widetilde p_e^{\ve,j}\|_{L^2(\Omega_{e,s}^\ve)} + \|\partial_t \widetilde u_f^{\ve,j} \|_{L^\infty(0,s; L^2(\Omega_f^\ve))} + \|\be(\partial_t \widetilde u_f^{\ve,j}) \|_{L^2(\Omega_{f,s}^\ve)}
\\
&\leq C_1 \|\widetilde b_e^{\ve,j} \|_{L^\infty(0,s; L^\infty( \Omega_e^\ve))} \\
&\leq C_2 \|\be(\widetilde u_e^{\ve, j-1})\|_{L^{\sigma_1}(0,s; L^2(\Omega_e^\ve))}   + C_\delta \|\partial_t \widetilde u_f^{\ve, j-1} \|_{L^2(\Omega_{f,s}^\ve)}  + \delta\|\be(\partial_t \widetilde u_f^{\ve, j-1}) \|_{L^2(\Omega_{f,s}^\ve)}
\end{aligned}
\end{equation*}
for $s\in (0,T]$, $0<\delta<1$,  and $\sigma_1>10$, with the constants $C_1$, $C_2$, and  $C_\delta$ depending on $T$ and the model parameters but independent of the solutions, initial data,
and of $s\in (0,T]$.  Considering $\delta <1$ and  sufficiently small time intervals $s$  in the inequality
\begin{equation*}
\begin{aligned}
&\|\widetilde u_e^{\ve,j}\|_{L^\infty(0,s; L^2(\Omega_e^\ve))} + \|\be(\widetilde u_e^{\ve,j})\|_{L^\infty(0,s; L^2(\Omega_e^\ve))} + \|\partial_t \widetilde u_f^{\ve,j} \|_{L^\infty(0,s; L^2(\Omega_f^\ve))}\\
&  + \|\be(\partial_t \widetilde u_f^{\ve,j}) \|_{L^2(\Omega_{f,s}^\ve)} \leq C_2 s^{1/\sigma_1} \|\be(\widetilde u_e^{\ve, j-1})\|_{L^{\infty}(0,s; L^2(\Omega_e^\ve))}  \\ & + C_\delta  s^{1/2}\|\partial_t \widetilde u_f^{\ve, j-1} \|_{L^\infty(0,s; L^2(\Omega_f^\ve))}  + \delta \|\be(\partial_t \widetilde u_f^{\ve, j-1}) \|_{L^2(\Omega_{f,s}^\ve)}
\end{aligned}
\end{equation*}
we obtain  by the contraction arguments the existence of a fixed point of $\mathcal K$ and hence the existence of a unique weak solution of the microscopic problem  (\ref{eq_codif})--(\ref{exbou_co}) in $(0,s)$.
Since the constants $C_2$ and $C_\delta$ depend only on $T$ and the model parameters and do not depend on $s$,  iterating over  time intervals we obtain the existence and uniqueness result in the whole time interval  $(0,T)$.
\end{proof}

\section{Convergence results}\label{convergence}
The {\it a priori} estimates  proved in Lemma~\ref{Lemma:apriori} imply   convergence results for the components of solutions of the  microscopic problem \eqref{eq_codif}--\eqref{exbou_co}.

\begin{lemma}\label{convergence_u_p}
There exist functions $u_e \in H^1(0,T; H^1(\Omega))\cap H^2(0,T; L^2(\Omega))$, $p_e \in H^1(0,T; H^1(\Omega))$,
$u_e^1, \partial_t u_e^1 \in L^2(\Omega_T; H^1_{\text{per}}(Y_e)/\mathbb R)$,  $p_e^1 \in L^2(\Omega_T; H^1_\text{per}(Y_e)/\mathbb R)$,  $\partial_t u_f$, $\partial_t^2 u_f \in L^2(\Omega_T; H^1_\text{per}(Y_f))$, and $p_f \in L^2(\Omega_T\times Y_f)$ such that, up to a subsequence,
\begin{equation}\label{convergences_1}
\begin{aligned}
& u_e^\ve \rightarrow u_e && \text{ strongly in } H^1(0,T; L^2(\Omega)), \\
& p_e^\ve \rightarrow p_e && \text{ strongly in } L^2( \Omega_T), \\
&  \partial^2_t u_e^\ve  \rightharpoonup \partial^2_t u_e, \;  \partial_t p_e^\ve  \rightharpoonup  \partial_t p_e&& \text{ weakly two-scale} , \\
& \nabla u_e^\ve  \rightharpoonup\nabla u_e + \nabla_y u_e^1 && \text{ weakly two-scale} , \\
& \nabla p_e^\ve \rightharpoonup  \nabla p_e + \nabla_y p_e^1 && \text{ weakly two-scale} ,
\end{aligned}
\end{equation}
and for fluid velocity and pressure we have
\begin{equation}\label{convergences_2}
\begin{aligned}
&\partial_t u_f^\ve  \rightharpoonup  \partial_t u_f, \quad p_f^\ve   \rightharpoonup p_f && \text{ weakly two-scale} , \\
&\ve \nabla \partial_t u_f^\ve  \rightharpoonup  \nabla_y \partial_t u_f && \text{ weakly two-scale} .
\end{aligned}
\end{equation}
Additionally we have weak  two-scale convergence  $\partial_t u^\ve_e  \rightharpoonup  \partial_t u_e$ and $\partial_t u_f^\ve \rightharpoonup  \partial_t u_f$ on $\Gamma^\ve_T$.
\end{lemma}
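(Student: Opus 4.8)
The plan is to combine the uniform bounds of Lemma~\ref{Lemma:apriori} with the extension operators of Lemma~\ref{extension}, the classical two-scale compactness theorems \cite{allaire,Nguetseng}, and the Aubin--Lions--Simon compactness lemma. First I would extend $u_e^\ve$ and $p_e^\ve$ from $\Omega_e^\ve$ to $\Omega$ using the extension operator of Lemma~\ref{extension} (applied for a.e.\ $t$, so that it commutes with $\partial_t$); the estimates \eqref{estim_u_p_u} then show that the extensions $\tilde u_e^\ve$ are bounded in $H^2(0,T;L^2(\Omega))\cap H^1(0,T;H^1(\Omega))$ and $\tilde p_e^\ve$ in $H^1(0,T;H^1(\Omega))$. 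Weak-$\ast$ compactness yields limits $u_e$, $p_e$ in these spaces, and by two-scale compactness for bounded $H^1$-sequences we get, along a subsequence, $\nabla u_e^\ve\rightharpoonup\nabla u_e+\nabla_y u_e^1$ and $\nabla p_e^\ve\rightharpoonup\nabla p_e+\nabla_y p_e^1$ weakly two-scale, with correctors $u_e^1,p_e^1\in L^2(\Omega_T;H^1_{\mathrm{per}}(Y_e)/\mathbb R)$. Since \eqref{estim_u_p_u} gives the same bounds for $\partial_t u_e^\ve$, applying this argument to $\nabla\partial_t u_e^\ve$ and using uniqueness of the two-scale limit together with the fact that $\partial_t$ commutes with two-scale convergence (in the distributional sense) identifies the new corrector with $\partial_t u_e^1$, so $\partial_t u_e^1\in L^2(\Omega_T;H^1_{\mathrm{per}}(Y_e)/\mathbb R)$.

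For the strong convergences I would invoke Aubin--Lions--Simon twice: $\tilde u_e^\ve$ bounded in $L^2(0,T;H^1(\Omega))$ with $\partial_t\tilde u_e^\ve$ bounded in $L^2(0,T;L^2(\Omega))$ gives $u_e^\ve\to u_e$ strongly in $L^2(\Omega_T)$, while $\partial_t\tilde u_e^\ve$ bounded in $L^2(0,T;H^1(\Omega))$ with $\partial_t^2\tilde u_e^\ve$ bounded in $L^2(0,T;L^2(\Omega))$ (again by \eqref{estim_u_p_u}) gives $\partial_t u_e^\ve\to\partial_t u_e$ strongly in $L^2(\Omega_T)$; together these yield $u_e^\ve\to u_e$ in $H^1(0,T;L^2(\Omega))$. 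Likewise $\tilde p_e^\ve$ bounded in $L^2(0,T;H^1(\Omega))$ with $\partial_t\tilde p_e^\ve$ bounded in $L^2(0,T;L^2(\Omega))$ gives $p_e^\ve\to p_e$ strongly in $L^2(\Omega_T)$. The weak two-scale convergences $\partial^2_t u_e^\ve\rightharpoonup\partial^2_t u_e$ and $\partial_t p_e^\ve\rightharpoonup\partial_t p_e$ then follow because these sequences are bounded in $L^2(\Omega_T)$ and converge weakly in $L^2(\Omega_T)$ to the stated limits; testing against $\phi(t,x)\,\psi(x/\ve)$ with $\psi\in C^\infty_{\mathrm{per}}(Y)$ of zero mean, integrating by parts in $t$, and using the strong convergence of $\partial_t u_e^\ve$, resp.\ of $p_e^\ve$, shows that the two-scale limits carry no mean-zero oscillatory component, i.e.\ are independent of $y$.

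For the fluid part I would extend $\partial_t u_f^\ve$ and $p_f^\ve$ by zero to $\Omega$. By \eqref{estim_u_p_u} the zero-extension of $\partial_t u_f^\ve$ is bounded in $L^\infty(0,T;L^2(\Omega))\cap H^1(0,T;L^2(\Omega))$, the restriction to $\Omega_f^\ve$ satisfies that $\ve\nabla\partial_t u_f^\ve$ is bounded in $H^1(0,T;L^2(\Omega_f^\ve))$, and $p_f^\ve$ is bounded in $L^2(\Omega_{f,T}^\ve)$. The two-scale compactness result for $\ve$-scaled gradients (as in the homogenization of Stokes flow in porous media) then produces $\partial_t u_f,\partial^2_t u_f\in L^2(\Omega_T;H^1_{\mathrm{per}}(Y_f))$ with $\partial_t u_f^\ve\rightharpoonup\partial_t u_f$, $\ve\nabla\partial_t u_f^\ve\rightharpoonup\nabla_y\partial_t u_f$, and, applying the same to $\partial^2_t u_f^\ve$ and using commutation of $\partial_t$ with the two-scale limit, $\ve\nabla\partial^2_t u_f^\ve\rightharpoonup\nabla_y\partial^2_t u_f$; the zero-extensions force all limits to be supported in $Y_f$, and passing to the limit in $\mathrm{div}\,\partial_t u_f^\ve=0$ gives $\mathrm{div}_y\partial_t u_f=0$ in $Y_f$. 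Finally $p_f^\ve\rightharpoonup p_f$ weakly two-scale with $p_f\in L^2(\Omega_T\times Y_f)$.

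It remains to handle the traces on $\Gamma^\ve_T$. For $\partial_t u_e^\ve$ I would use that its extension converges strongly in $L^2(\Omega_T)$ while $\nabla\partial_t u_e^\ve$ stays bounded in $L^2$; the trace estimate on periodically oscillating surfaces obtained via unfolding \cite{CDDGZ} then implies that the boundary two-scale limit of $\partial_t u_e^\ve|_{\Gamma^\ve}$ coincides with the $y$-independent limit $\partial_t u_e$. For $\partial_t u_f^\ve$, the bound on $\ve\nabla\partial_t u_f^\ve$ combined with the $\ve^{1/2}$-scaled (unfolding) trace inequality yields weak two-scale convergence of $\partial_t u_f^\ve|_{\Gamma^\ve}$ to the trace on $\Gamma$ of $\partial_t u_f$. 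The main obstacle I anticipate is not any isolated step but the consistent bookkeeping of the scalings for the fluid velocity — the $\ve$-weighted gradient, the zero-extension, and the $\ve^{1/2}$-weighted boundary trace — together with the consistent identification of all correctors and their time derivatives; the second delicate point is the strong convergence in $H^1(0,T;L^2(\Omega))$, which genuinely requires the second-order-in-time estimates of Lemma~\ref{Lemma:apriori}.
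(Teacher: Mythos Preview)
Your proposal is correct and follows essentially the same route as the paper: extension of $u_e^\ve$, $p_e^\ve$ to $\Omega$, uniform bounds from Lemma~\ref{Lemma:apriori}, Aubin--Lions type compactness for the strong convergences, standard two-scale compactness for the gradients and the $\ve$-scaled fluid gradient, and a scaled trace estimate for the convergence on $\Gamma^\ve_T$. The only cosmetic difference is that the paper invokes the two-scale compactness theorem on oscillating surfaces \cite{Allaire_1996,neuss-radu} directly after the rescaled trace bound $\ve\|\partial_t u_j^\ve\|_{L^2(\Gamma^\ve_T)}^2\leq C(\|\partial_t u_j^\ve\|_{L^2}^2+\ve^2\|\nabla\partial_t u_j^\ve\|_{L^2}^2)$, whereas you phrase the same step via unfolding; both give the claimed boundary two-scale limits.
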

\begin{proof}
Applying standard extension arguments, see e.g.\   \cite{Acerbi, CiorPaulin99} or Lemma~\ref{extension},  and using the same notation for the original and extended sequences,   from estimates \eqref{estim_u_p_u}  in Lemma~\ref{Lemma:apriori} we obtain  {\it a priori} estimates, uniform in $\ve$,  for $u_e^\ve$, $\nabla u_e^\ve$, $\partial_t u_e^\ve$, $\partial^2_t u_e^\ve$ and $\nabla \partial_t u_e^\ve$, as well as $p_e^\ve$, $\nabla p_e^\ve$ and $\partial_t p_e^\ve$ in  $L^2(\Omega_T)$.
Then the  convergence results for $u_e^\ve$ and $p_e^\ve$ follow directly from the compactness of the embedding of $H^1(0,T; L^2(\Omega))\cap L^2(0,T; H^1(\Omega))$ in $L^2(\Omega_T)$,
the a priori estimates \eqref{estim_u_p_u}, and the compactness theorems for the two-scale convergence, see e.g.\ \cite{allaire, Nguetseng}.
The {\it a priori} estimates \eqref{estim_u_p_u}   and the compactness theorems for the two-scale convergence ensure the convergence results  for $\partial_t u_f^\ve$ and $p_f^\ve$.
Using the trace inequality and a scaling argument  together with  {\it a priori} estimates \eqref{estim_u_p_u} we obtain
$$
\begin{aligned}
\ve \|\partial_t u^\ve_e \|^2_{L^2(\Gamma_T^\ve)} \leq C \big( \|\partial_t u^\ve_e \|^2_{L^2(\Omega_{e,T}^\ve)}  +
\ve^2 \|\nabla \partial_t u^\ve_e \|^2_{L^2(\Omega_{e,T}^\ve)} \big) \leq C, \\
\ve \|\partial_t u^\ve_f \|^2_{L^2(\Gamma_T^\ve)} \leq C \big( \|\partial_t u^\ve_f \|^2_{L^2(\Omega_{f,T}^\ve)}  +
\ve^2 \|\nabla \partial_t u^\ve_f \|^2_{L^2(\Omega_{f,T}^\ve)} \big) \leq C,
\end{aligned}
$$
where the constant $C$ is independent of $\ve$. Then the compactness theorem for the two-scale convergence on oscillating surfaces \cite{Allaire_1996, neuss-radu} ensures the weak  two-scale convergence of
$\partial_t u^\ve_e$ and $\partial_t u^\ve_f$ on $\Gamma^\ve_T$.
\end{proof}

In what  follows we shall use the same notation for $b_e^\ve$, $c^\ve_e$ and their extensions to $\Omega$, whereas the extension of $c^\ve$ from $\widetilde \Omega_{ef}^\ve$ to $\Omega$ will be denoted by $\overline c^\ve$.  Then for $b_e^\ve$ and $c^\ve$ we have the following convergence results.
\begin{lemma}\label{convergence_b_c}
There exist functions
$$b_e, \, c  \in L^2(0,T; H^1(\Omega)), \; \;
b_e^1 \in L^2(\Omega_T; H^1_{\text{per}}(Y_e)/\mathbb R), \; \;  c^1 \in L^2(\Omega_T; H^1_\text{per}(Y\setminus \widetilde \Gamma)/\mathbb R), $$
 such that, up to a subsequence,
\begin{equation}\label{convergences_111}
\begin{aligned}
& b_e^\ve \rightarrow b_e, \;  \; \;  c_e^\ve \rightarrow c, \;  \; \;  \overline c^\ve \to c && \text{ strongly in } L^2(\Omega_T),  \\
& \nabla b_e^\ve \rightharpoonup \nabla b_e + \nabla_y b_e^1 && \text{ weakly two-scale},  \\
& \nabla c^\ve \rightharpoonup \nabla c + \nabla_y c^1 && \text{ weakly two-scale}.
\end{aligned}
\end{equation}
\end{lemma}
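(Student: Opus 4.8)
The plan is to combine the $\ve$-uniform \textit{a priori} bounds of Lemma~\ref{Lemma:apriori} with the extension operators of Lemma~\ref{extension} and a Lions--Aubin--Simon type compactness argument in which the fractional time-translate estimate \eqref{estim_time_h} substitutes for the missing uniform bound on the time derivatives of $b_e^\ve$ and $c^\ve$.

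First I would extend $b_e^\ve$, $c_e^\ve$ and $c^\ve$ to $\Omega$ via Lemma~\ref{extension}, keeping the symbols $b_e^\ve$, $c_e^\ve$ for the first two and writing $\overline c^\ve$ for the extension of $c^\ve$ from $\widetilde\Omega_{ef}^\ve$. By \eqref{estim_ext_1}--\eqref{estim_ext_2} together with \eqref{estim_b_c} these sequences are bounded in $L^2(0,T;H^1(\Omega))\cap L^\infty(0,T;L^2(\Omega))$ uniformly in $\ve$, and, transporting \eqref{estim_time_h} through the ($\ve$-uniformly bounded) extension operators, they also satisfy $\|\theta_h v^\ve-v^\ve\|_{L^2((0,T-h)\times\Omega)}\le C h^{1/4}$ with $C$ independent of $\ve$. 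Since $H^1(\Omega)\hookrightarrow\hookrightarrow L^2(\Omega)$ and the time-translates are equicontinuous uniformly in $\ve$, the Aubin--Lions--Simon lemma in the version requiring no derivative bound (Fr\'echet--Kolmogorov in time combined with the Rellich embedding in space) yields subsequences converging strongly in $L^2(\Omega_T)$ to limits $b_e$, $c_e$, $c$, which by weak lower semicontinuity lie in $L^2(0,T;H^1(\Omega))$. The standard two-scale compactness theorems applied to the gradients on the perforated part, respectively on $\Omega\setminus\widetilde\Gamma^\ve$, then produce the correctors $b_e^1\in L^2(\Omega_T;H^1_{\text{per}}(Y_e)/\mathbb R)$ and $c^1\in L^2(\Omega_T;H^1_{\text{per}}(Y\setminus\widetilde\Gamma)/\mathbb R)$, the slit in the target space for $c^1$ reflecting the admissible jump of $c^\ve$ across $\widetilde\Gamma^\ve$.

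The remaining point is to check that $c_e$ and $c$ coincide, so that $c_e^\ve$ and $\overline c^\ve$ converge to the same limit. On $\Omega_e^\ve\setminus\widetilde\Omega^\ve=\widetilde\Omega_{ef}^\ve\cap\Omega_e^\ve$ one has $c_e^\ve=c^\ve=\overline c^\ve$ --- this is where the transmission condition $c_e^\ve=c_f^\ve$ on $\Gamma^\ve\setminus\widetilde\Gamma^\ve$, which makes $c^\ve$ an element of $H^1(\Omega\setminus\widetilde\Gamma^\ve)$, enters. Hence $\chi_{\Omega_e^\ve\setminus\widetilde\Omega^\ve}(c_e^\ve-\overline c^\ve)\equiv 0$; passing to the limit and using $\chi_{\Omega_e^\ve\setminus\widetilde\Omega^\ve}\rightharpoonup^\ast\theta:=|Y_e\setminus(\widetilde\Gamma^\delta\cap Y_e)|/|Y|>0$ in $L^\infty(\Omega)$ against the strong $L^2(\Omega_T)$ convergence of the two factors gives $\theta(c_e-c)=0$, i.e.\ $c_e=c$. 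Since in addition $c^\ve=\overline c^\ve$ on $\Omega\setminus\widetilde\Omega^\ve$ and $c^\ve=c_e^\ve$ on $\widetilde\Omega^\ve\subset\Omega_e^\ve$, this also identifies the common strong limit of $c_e^\ve$, $\overline c^\ve$ (and of $c^\ve$ itself) as $c$, as asserted.

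I expect the main obstacle to be the compactness step rather than the corrector bookkeeping: no uniform control of $\partial_t b_e^\ve$ or $\partial_t c^\ve$ in any fixed Banach space is available, because of the $\ve$-scaled boundary flux in \eqref{eq_codif} and the quadratic reaction terms involving $\be(u_e^\ve)$, so the classical Aubin--Lions theorem cannot be invoked and one must run the Fr\'echet--Kolmogorov argument in the time variable directly, keeping every constant $\ve$-uniform through the extension operators; the identification $c_e=c$, though short, is the second delicate point, since $c_e^\ve$ and $\overline c^\ve$ genuinely differ on the non-vanishing region $\widetilde\Omega^\ve$ and the agreement of their limits can only be recovered by testing against the weak-$\ast$ limit of the relevant characteristic function.
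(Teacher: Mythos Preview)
Your proposal is correct and follows essentially the same approach as the paper: extension to $\Omega$ via Lemma~\ref{extension}, the time-translate estimate \eqref{estim_time_h} combined with the Kolmogorov compactness theorem (in place of any Aubin--Lions argument requiring control of $\partial_t b_e^\ve$, $\partial_t c^\ve$) for the strong $L^2(\Omega_T)$ convergence, and standard two-scale compactness for the gradients (the paper cites \cite{Zhikov} for the corrector $c^1$ living in $L^2(\Omega_T;H^1_{\rm per}(Y\setminus\widetilde\Gamma)/\mathbb R)$). Your identification $c_e=c$ via the weak-$\ast$ limit of $\chi_{\Omega_e^\ve\setminus\widetilde\Omega^\ve}$ is a slightly more explicit rendering of the paper's one-line argument that $c_e^\ve=\overline c^\ve$ on $\Omega_{e,T}^\ve\cap\widetilde\Omega_{ef,T}^\ve$ together with $y$-independence of the limits forces equality.
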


\begin{proof}
Using estimates \eqref{estim_b_c} and the extensions of $b_e^\ve$, $c_e^\ve$, and $c^\ve$,  defined in Lemma~\ref{extension},  we obtain
\begin{equation}\label{estim_ext_b}
\begin{aligned}
&\|b_e^\ve\|_{L^2(\Omega_T)}+ \|\nabla b_e^\ve\|_{L^2(\Omega_T)}  + \|c_e^\ve\|_{L^2(\Omega_T)}+ \|\nabla c_e^\ve\|_{L^2(\Omega_T)}  \leq C, \\
& \|\overline c^\ve\|_{L^2(\Omega_T)}+ \|\nabla\overline  c^\ve\|_{L^2(\Omega_T)} \leq C,
\end{aligned}
\end{equation}
where the constant $C$ is independent of $\ve$.
The  estimates \eqref{estim_ext_b},  the compactness of the embedding of $ H^1(\Omega)$ in $L^2(\Omega)$, along with the estimate \eqref{estim_time_h} and the Kolmogorov compactness theorem \cite{Necas}  yield the strong convergence of $b_e^\ve \to b_e$,  $c^\ve_e \to c_e$ and $\overline c^\ve \to c$  in $L^2(\Omega_T)$.  Since  $\Omega_{e,T}^\ve\cap \widetilde \Omega_{ef, T}^\ve \neq \emptyset$ and   $c_e^\ve(t,x) = \overline c^\ve(t,x)$  in  $\Omega_{e,T}^\ve \cap \widetilde \Omega_{ef, T}^\ve$, along with the fact that  $c_e$ and $c$ are independent of the microscopic variables $y$,  we obtain that $c_e(t,x) = c(t,x)$  in $\Omega_T$.

From the estimates for $c^\ve$, applying the compactness theorem for  the two-scale convergence  we obtain that there exists $c^1 \in L^2(\Omega_T; H^1_{\rm per}(Y\setminus \widetilde \Gamma)/\mathbb R)$ such that
$\nabla c^\ve \rightharpoonup \nabla c + \nabla_y c^1$ weakly two-scale~\cite{Zhikov}.
\end{proof}

\section{Derivation of macroscopic equations for  the flow velocity  and elastic deformations}\label{macro_elasticity}

 This section focuses on homogenization of the microscopic problem \eqref{equa_cla}--\eqref{exbou_co}.
First
we define the  effective  tensors  ${\bf E}^{\rm{hom}}$, $K_{p}^{\rm hom}$, and $K_{u}$.

The macroscopic elasticity tensor ${\bf E}^{\rm{hom}}= (E_{ijkl}^{\rm hom})$,  permeability tensor  $K_{p}^{\rm hom}=(K_{p,ij}^{\rm hom})$,  and $K_{u}=(K_{u,ij})$ are defined by
\begin{equation}\label{efective}
\begin{aligned}
E_{ijkl}^{\rm hom} (b_{e,3}) &=\frac 1 {|Y|}\int_{Y_e} \Big(E_{ijkl}(y, b_{e,3}) + E_{ij}(y, b_{e,3}) \be_y(w^{kl}) \Big) dy,\\
K_{p,ij}^{\rm{hom}}(x) & = \frac 1 {|Y|} \int_{Y_e} \Big( K_{p, ij}(x,y) + K_{p,i}(x,y) \nabla_y w_p^j \Big) dy,\\
 K_{u,ij}(x)  & = \frac 1 {|Y|}  \int_{Y_e}  \Big(\delta_{ij} - K_{p,i}(x,y) \nabla_y w_e^j \Big) dy,
\end{aligned}
\end{equation}
where $w^{kl}=w^{kl}(b_{e,3},\cdot)$ for $k,l=1,2,3$,  are $Y$-periodic solutions of the unit cell problems
\begin{equation}\label{unit_cell_ue_w}
\begin{aligned}
&{ \rm div}_y \big({\bf E}(y, b_{e,3}) (\be_y(w^{kl})   + {\bf b}_{kl}) \big) =0 & \text{ in } Y_e, \\
&{\bf E}(y, b_{e,3}) (\be_y(w^{kl})   + {\bf b}_{kl}) \, n = 0 \quad & \text{ on } \Gamma, \\
&\int_{Y_e}  w^{kl} \, dy = 0,
\end{aligned}
\end{equation}
functions $w_p^k=w_p^k(x,\cdot)$,  for $k=1,2, 3$, are $Y$-periodic solutions of  the unit cell problems
\begin{equation}\label{correct_p_Kp}
\begin{aligned}
&{ \rm div}_y \big(K_p(x,y)( \nabla_y w_p^k + e_k)\big) = 0 && \text{ in } Y_e , \\
& K_p(x,y)( \nabla_y w_p^k + e_k) \cdot n = 0  && \text{ on }  \Gamma, \\
&\int_{Y_e}  w_p^k\,  dy = 0,
\end{aligned}
\end{equation}
and  $w_e^k=w_e^k(x,\cdot)$, for $k=1,2,3$,  are $Y$-periodic solutions of the unit cell problems
\begin{equation}\label{correct_p_Ku}
\begin{aligned}
& {\rm div}_y (K_p(x,y) \nabla_y  w_e^k  -   e_k) = 0 && \text{ in } Y_e , \\
& \; \; \; (K_p(x,y) \nabla_y w_e^k  -  e_k) \cdot n = 0  && \text{ on }  \Gamma, \\
&\int_{Y_e}  w_e^k\,  dy = 0.
\end{aligned}
\end{equation}
Here ${\bf b}_{kl} = e_k \otimes e_l$ and $\{ e_j\}_{j=1}^3$ is the canonical  basis of $\mathbb R^3$.

\begin{lemma}
Periodic cell problems  \eqref{unit_cell_ue_w}, \eqref{correct_p_Kp} and \eqref{correct_p_Ku} are well-posed and
have a unique solution. The tensors ${\bf E}^{\rm{hom}}$ and $K_{p}^{\rm hom}$ are positive definite. Moreover,
${\bf E}^{\rm{hom}}$ possesses the symmetries declared in {\bf A1.}
\end{lemma}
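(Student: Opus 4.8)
The plan is to view each of \eqref{unit_cell_ue_w}, \eqref{correct_p_Kp}, \eqref{correct_p_Ku} as a standard linear elliptic Neumann-type problem on the connected perforated cell $Y_e$ with periodicity on $\partial Y$, solved by Lax--Milgram on a quotient space, and then to read off the properties of ${\bf E}^{\rm hom}$ and $K_p^{\rm hom}$ directly from the variational identities obtained by using the correctors themselves as test functions. First I would set up the weak formulations on $H^1_{\text{per}}(Y_e)/\mathbb R$ (for \eqref{correct_p_Kp}, \eqref{correct_p_Ku}) and on $H^1_{\text{per}}(Y_e)^3/\mathbb R^3$ (for \eqref{unit_cell_ue_w}), normed by $\|\nabla_y\cdot\|_{L^2(Y_e)}$, respectively $\|\be_y(\cdot)\|_{L^2(Y_e)}$. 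Coercivity of $\varphi\mapsto\int_{Y_e}K_p(x,y)\nabla_y w\cdot\nabla_y\varphi\,dy$ follows from {\bf A2.} and the Poincar\'e--Wirtinger inequality on the connected domain $Y_e$; coercivity of $\varphi\mapsto\int_{Y_e}{\bf E}(y,\zeta)\be_y(w)\cdot\be_y(\varphi)\,dy$ follows from the ellipticity in {\bf A1.} together with the Korn inequality for periodic functions on $Y_e$ (the only periodic rigid displacements being constants, which are quotiented out). The right-hand side functionals $-\int_{Y_e}(K_p\,e_k)\cdot\nabla_y\varphi\,dy$, $\int_{Y_e}e_k\cdot\nabla_y\varphi\,dy$ and $-\int_{Y_e}{\bf E}(y,\zeta){\bf b}_{kl}\cdot\be_y(\varphi)\,dy$ are bounded and vanish on constants, so Lax--Milgram yields a unique solution in the quotient; the conditions $\int_{Y_e}w^{kl}=0$, $\int_{Y_e}w_p^k=0$, $\int_{Y_e}w_e^k=0$ pick the representative. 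Boundedness of $\mathbf E$ and $K_p$ then gives the $H^1$-bounds.

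For the symmetries I would first use the weak form of \eqref{unit_cell_ue_w} for $w^{ij}$ (test function $w^{kl}$) to rewrite $E^{\rm hom}_{ijkl}=\frac1{|Y|}\int_{Y_e}\big({\bf E}(y,b_{e,3})(\be_y(w^{kl})+{\bf b}_{kl})\big)\cdot(\be_y(w^{ij})+{\bf b}_{ij})\,dy$. The major symmetry $E^{\rm hom}_{ijkl}=E^{\rm hom}_{klij}$ is then immediate from $E_{pqrs}=E_{rspq}$, i.e.\ ${\bf E}A\cdot B={\bf E}B\cdot A$ for symmetric $A,B$. For the minor symmetries, $E_{pqrs}=E_{pqsr}$ gives ${\bf E}{\bf b}_{kl}={\bf E}{\bf b}_{lk}$, so the cell problems for $w^{kl}$ and $w^{lk}$ coincide and by uniqueness $w^{kl}=w^{lk}$, whence $E^{\rm hom}_{ijkl}=E^{\rm hom}_{ijlk}$; likewise $E_{pqrs}=E_{qprs}$ gives $\big({\bf E}M\big)_{ij}=\big({\bf E}M\big)_{ji}$, hence $E^{\rm hom}_{ijkl}=E^{\rm hom}_{jikl}$. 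Together these are exactly the symmetries declared in {\bf A1.}

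For positive-definiteness, given a symmetric matrix $\eta=(\eta_{kl})$ set $w^\eta=\sum_{kl}\eta_{kl}w^{kl}$; the symmetrized formula gives $\mathbf E^{\rm hom}(b_{e,3})\eta\cdot\eta=\frac1{|Y|}\int_{Y_e}{\bf E}(y,b_{e,3})(\be_y(w^\eta)+\eta)\cdot(\be_y(w^\eta)+\eta)\,dy\ge\frac{\alpha_1}{|Y|}\int_{Y_e}|\be_y(w^\eta)+\eta|^2\,dy$. Similarly, with $v_\xi=\sum_k\xi_k w_p^k$, adding the identity $\int_{Y_e}K_p(\nabla_y v_\xi+\xi)\cdot\nabla_y v_\xi\,dy=0$ (the weak form of \eqref{correct_p_Kp} tested with $v_\xi$) to the definition of $K_p^{\rm hom}$ yields $K_p^{\rm hom}(x)\xi\cdot\xi=\frac1{|Y|}\int_{Y_e}K_p(x,y)(\nabla_y v_\xi+\xi)\cdot(\nabla_y v_\xi+\xi)\,dy\ge\frac{k_1}{|Y|}\int_{Y_e}|\nabla_y v_\xi+\xi|^2\,dy$ (no symmetry of $K_p$ is needed here). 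It then remains to bound $\int_{Y_e}|\be_y(w)+\eta|^2\,dy$ and $\int_{Y_e}|\nabla_y w+\xi|^2\,dy$ from below by $c|\eta|^2$, resp.\ $c|\xi|^2$, uniformly in $x$ and $b_{e,3}$: these integrals are bounded below by the infima over all periodic $w$, which are nonnegative quadratic forms in $\eta$ (resp.\ $\xi$), and such a form vanishes only if $\be_y(w)\equiv-\eta$ (resp.\ $\nabla_y w\equiv-\xi$) for some periodic $w$, i.e.\ only if $\eta=0$ (resp.\ $\xi=0$) by the rigidity of periodic fields with constant (symmetrized) gradient; compactness of the unit sphere in the finite-dimensional space of (symmetric) matrices then gives the uniform lower bound.

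I expect the only genuine obstacle to be the coercivity step on the perforated cell, namely the periodic Korn inequality on $Y_e$ and the rigidity statement "a periodic displacement with constant symmetrized gradient has that gradient equal to zero" used in the positive-definiteness argument; both are classical (cf.\ \cite{OShY,JKO,BLP}) but must be invoked with care because $Y_e=Y\setminus\overline Y_f$ is perforated. Everything else reduces to the routine bookkeeping of homogenization of linear elasticity and of a diffusion equation in a perforated periodic medium.
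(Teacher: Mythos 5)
Your proposal is correct and follows essentially the same route the paper takes (and merely sketches with citations to \cite{OShY} and \cite{BLP}): Lax--Milgram on the quotient of $H^1_{\text{per}}(Y_e)$ with coercivity from the periodic Korn inequality (respectively Poincar\'e--Wirtinger) on the connected perforated cell, the symmetrized energy representation of ${\bf E}^{\rm hom}$ for the symmetries, and the standard argument combining that representation with the rigidity of periodic fields having constant (symmetrized) gradient for positive definiteness. You have simply filled in the details the paper leaves to the references.
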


\begin{proof}[Proof Sketch]
Assumptions {\bf A1.} on $\bf E$ and the Korn inequality for periodic functions  ensure the existence of a unique solution of the  unit cell problems  \eqref{unit_cell_ue_w} for a given  $b_{e,3}\in L^2(\Omega_T)$, see e.g.\ \cite{OShY}. Assumptions {\bf A2.} on $K_p$  yield the existence of unique solutions of the unit cell problems  \eqref{correct_p_Kp} and \eqref{correct_p_Ku}. The positive definiteness of ${\bf E}$ and $K_p$, the definition of ${\bf E}^{\rm{hom}}$ and  $K_{p}^{\rm hom}$, and the fact that
$w^{kl}$ and $w_p^k$, for $k,l=1,2,3$, are solutions of  \eqref{unit_cell_ue_w}  and \eqref{correct_p_Kp} ensure in the standard way
(see \cite{BLP})  that ${\bf E}^{\rm{hom}}$ and  $K_{p}^{\rm hom}$ are  positive definite.
The definition of ${\bf E}^{\rm{hom}}$ implies that ${\bf E}^{\rm{hom}}$  satisfies  the same symmetry assumptions in {\bf A1.}  as ${\bf E}$.
\end{proof}

Applying the method of the  two-scale convergence  and using the convergence results in Lemmas~\ref{convergence_u_p} and \ref{convergence_b_c} we derive the homogenized equations for displacement gradient, pressure  and flow velocity   for a given $\{b_e^\ve\}$ such that  $b_e^\ve \to b_e$ strongly in $L^2(\Omega_T)^3$ as $\ve \to 0$. It should be emphasized that we have not yet derived the equation for the limit function $b_e$. We only use the strong convergence of $\{b_e^\ve\}$.

In the formations of the macroscopic problem for $(u_e, p_e, \partial_t u_f)$ we shall use the function $Q(x,\partial_t u_f)$ defined as
\begin{equation}\label{q_uf}
\begin{aligned}
&Q(x,\partial_t u_f) = \frac 1{|Y|} \Big( \int_{Y_f}  \partial_t u_f \, dy - \int_{Y_e} K_p(x,y) \nabla_y q(x,y, \partial_t u_f) \, dy\Big),
\end{aligned}
\end{equation}
where for   $(t,x) \in \Omega_T$  the function $q$ is a $Y$-periodic solution of the problem
\begin{equation}\label{two-scale_qf}
\begin{aligned}
&{\rm div}_y (K_p(x,y) \nabla_y q )= 0 && \text{ in } Y_e , \\
& - K_p(x,y) \nabla_y q \cdot n =  \partial_t u_f \cdot n \quad   && \text{ on }  \Gamma, \\
& \int_{Y_e} q(x,y, \partial_t u_f) \, dy =0.&&
\end{aligned}
\end{equation}

\begin{theorem}\label{thm52}
A sequence of solutions $\{u_e^\ve, p_e^\ve, \partial_t u_f^\ve, p_f^\ve \}$ of microscopic problem  \eqref{equa_cla} and \eqref{exbou_co} converges, as $\ve\to0$, to a solution  $(u_e, p_e, \partial_t u_f, \pi_f)$ of the macroscopic equations
\begin{equation}\label{macro_ue}
\begin{aligned}
&\vartheta_e \rho_e\,   \partial_t^2 u_e - {\rm div} ( {\bf E}^{\rm{hom}}(b_{e,3}) \be(u_e)) + \nabla p_e + \vartheta_f  \rho_f\, \dashint_{Y_f} \partial^2_t u_f \, dy = 0 && \text{ in } \Omega_T, \\
&\vartheta_e \rho_p \, \partial_t p_e - {\rm div} \big( K_{p}^{\rm hom} \nabla p_e - K_ {u}\,  \partial_t u_e - Q(x, \partial_t u_f)\big) =0 && \text{ in } \Omega_T,
\end{aligned}
\end{equation}
with boundary and initial conditions
\begin{equation}\label{macro_pe}
\begin{aligned}
&  {\bf E}^{\rm hom}(b_{e,3}) \be(u_e) \, {n} =F_u  && \text{ on } (\partial \Omega)_T, \\
& (K_{p}^{\rm hom} \nabla p_e - K_{u}\,  \partial_t u_e)\cdot {n} = F_p + Q(x, \partial_t u_f) \cdot n
&& \text{ on }   (\partial \Omega)_T, \\
& u_e(0) = u_{e0}, \quad \partial_t u_{e}(0) = u_{e0}^1, \quad p_{e}(0) = p_{e0} && \text{ in } \Omega,
\end{aligned}
\end{equation}
and the  two-scale  problem for  the fluid flow velocity and pressure
\begin{equation}\label{macro_two-scale_uf}
\begin{aligned}
&\rho_f\, \partial^2_t u_f - { \rm div}_y (\mu \, \be_y(\partial_t u_f) - \pi_fI) + \nabla p_e = 0, \quad {\rm  div}_y \partial_t u_f =0 && \text{ in } \; \Omega_T\times Y_f, \\
&\Pi_\tau \partial_t u_f  =  \Pi_\tau\partial_t u_e  && \text{ on } \; \Omega_T\times \Gamma, \\
 & { n} \cdot ( \mu \, \be_y(\partial_t u_f) - \pi_f I) \,  n = - p_e^1 && \text{ on } \; \Omega_T\times \Gamma, \\
 &\partial_t u_ f (0) = u_{f0}^1 && \text{ in } \; \Omega\times Y_f,
 \end{aligned}
\end{equation}
 where $\vartheta_e=|Y_e|/ |Y|$, $\vartheta_f = |Y_f|/ |Y|$, and
\begin{equation}\label{form_pe-1}
p_e^1(t,x,y)= \sum_{k=1}^3 \partial_{x_k} p_e (t,x) \, w^k_p(x,y) + \sum_{k=1}^3 \partial_{t} u_e^k(t,x) \,  w_e^k(x,y) + q(x,y, \partial_t u_f),
\end{equation}
with $w^k_p$, $w_e^k$ and  $q$  being solutions of \eqref{correct_p_Kp},  \eqref{correct_p_Ku}, and \eqref{two-scale_qf},  respectively.

We have $u_e\in H^2(0,T; L^2(\Omega))\cap H^1(0,T; H^1(\Omega))$, $p_e \in H^1(0,T; H^1(\Omega))$, $\partial_t u_f \in L^2(\Omega_T; H^1(Y_f))\cap H^1(0,T; L^2(\Omega\times Y_f))$,
and $\pi_f \in L^2(\Omega_T\times Y_f)$ and the convergence in the following sense
\begin{equation*}
\begin{aligned}
& u_e^\ve \rightarrow u_e \; \;  \text{  in } H^1(0,T; L^2(\Omega)),    &&p_e^\ve \rightarrow p_e \; \;  \text{  in } L^2( \Omega_T), \\
& \nabla u_e^\ve  \rightharpoonup\nabla u_e + \nabla_y u_e^1, &&
  \nabla p_e^\ve \rightharpoonup  \nabla p_e + \nabla_y p_e^1 && \text{ weakly two-scale} , \\
&  \partial_t u_f^\ve  \rightharpoonup  \partial_t u_f, \quad  p_f^\ve   \rightharpoonup p_e, &&  \ve \nabla \partial_t u_f^\ve  \rightharpoonup  \nabla_y \partial_t u_f && \text{ weakly two-scale}  .
\end{aligned}
\end{equation*}
\end{theorem}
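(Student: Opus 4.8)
The plan is to pass to the two-scale limit in the weak formulation \eqref{weak_u_ef} of the coupled Stokes--Biot system, relying on the \textit{a priori} bounds of Lemma~\ref{Lemma:apriori} and the compactness statements of Lemma~\ref{convergence_u_p}. First I would record all the limit objects: extend $u_e^\ve$, $p_e^\ve$ to $\Omega$ and invoke Lemma~\ref{convergence_u_p} to obtain $u_e\in H^2(0,T;L^2(\Omega))\cap H^1(0,T;H^1(\Omega))$, $p_e\in H^1(0,T;H^1(\Omega))$, correctors $u_e^1,p_e^1\in L^2(\Omega_T;H^1_{\mathrm{per}}(Y_e)/\mathbb R)^{(\cdot)}$, and $\partial_t u_f$, $\pi_f$ on $\Omega_T\times Y_f$ with $\mathrm{div}_y\partial_t u_f=0$ and $\Pi_\tau\partial_t u_f=\Pi_\tau\partial_t u_e$ on $\Omega_T\times\Gamma$ (the last from the two-scale trace convergence on $\Gamma_T^\ve$). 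Since $b_{e,3}^\ve\to b_{e,3}$ strongly in $L^2(\Omega_T)$ and ${\bf E}_1\in C_{\mathrm{per}}(Y;C^2_b(\mathbb R))$ with $\mathcal F$ a bounded linear (in fact compact, via the smooth kernel $\kappa$) operation, the composition ${\bf E}^\ve(b_{e,3}^\ve)={\bf E}_1(x/\ve,\mathcal F(b_{e,3}^\ve))$ two-scale converges strongly to ${\bf E}_1(y,\mathcal F(b_{e,3}))={\bf E}(y,b_{e,3})$; this lets me pass to the limit in the product $\langle{\bf E}^\ve(b_{e,3}^\ve)\be(u_e^\ve),\be(\phi)\rangle$ against oscillating test functions $\phi(t,x)+\ve\phi_1(t,x,x/\ve)$.

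Next I would choose the test functions in \eqref{weak_u_ef} adapted to the two-scale structure: $\phi=\phi_0(t,x)+\ve\phi_1(t,x,x/\ve)$ with $\phi_1\in C_0^\infty(\Omega_T;C^\infty_{\mathrm{per}}(Y_e))^3$, $\psi=\psi_0(t,x)+\ve\psi_1(t,x,x/\ve)$, and $\eta=\eta_1(t,x,x/\ve)$ with $\mathrm{div}_y\eta_1=0$, $\eta_1$ compactly supported in $Y_f$, plus a matching-trace piece so that $\Pi_\tau\phi=\Pi_\tau\eta$ on $\Gamma^\ve_T$; note $\ve^2\mu\langle\be(\partial_t u_f^\ve),\be(\eta)\rangle$ survives in the limit as $\mu\langle\be_y(\partial_t u_f),\be_y(\eta_1)\rangle_{\Omega_T\times Y_f}$ because $\ve\be(\eta)\rightharpoonup\be_y(\eta_1)$ while $\ve\be(\partial_t u_f^\ve)\rightharpoonup\be_y(\partial_t u_f)$. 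Taking first $\phi_0=\psi_0=0$ isolates the cell problems: the $\phi_1$-equation gives $\mathrm{div}_y({\bf E}(y,b_{e,3})(\be(u_e)+\be_y(u_e^1)))=0$ in $Y_e$ with the natural boundary condition on $\Gamma$, hence $u_e^1=\sum_{kl}w^{kl}(b_{e,3})\,e_{kl}(u_e)$ via \eqref{unit_cell_ue_w}; the $\psi_1$-equation together with the flux transmission condition $(-K_p^\ve\nabla p_e^\ve+\partial_t u_e^\ve)\cdot n=\partial_t u_f^\ve\cdot n$ on $\Gamma^\ve_T$ yields the cell problem for $p_e^1$ whose solution is the decomposition \eqref{form_pe-1} in terms of $w_p^k$, $w_e^k$ (from \eqref{correct_p_Kp}, \eqref{correct_p_Ku}) and $q$ (from \eqref{two-scale_qf}); and the $\eta_1$-equation reproduces the two-scale Stokes system \eqref{macro_two-scale_uf}, with $\pi_f$ recovered by De~Rham on $Y_f$ from $-\rho_f\partial_t^2 u_f-\nabla p_e+\mathrm{div}_y(\mu\be_y(\partial_t u_f))\perp\{\mathrm{div}_y=0\}$, and the boundary relation $n\cdot(\mu\be_y(\partial_t u_f)-\pi_f I)n=-p_e^1$ from the limit of \eqref{trans2}-type term $\langle p_e^\ve,\eta\cdot n\rangle_{\Gamma^\ve_T}$.

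Then, choosing $\phi_1=\psi_1=0$ and general $\phi_0,\psi_0$, I substitute the corrector formulas back, integrate over $Y$, and read off the macroscopic system \eqref{macro_ue}: the elastic term produces $\mathrm{div}({\bf E}^{\rm hom}(b_{e,3})\be(u_e))$ by the definition \eqref{efective} of $E^{\rm hom}_{ijkl}$; the inertial terms give $\vartheta_e\rho_e\partial_t^2 u_e$ and $\vartheta_f\rho_f\dashint_{Y_f}\partial_t^2 u_f\,dy$; the $\nabla p_e^\ve$ term limits to $\nabla p_e$ (after integration by parts, $\int_{Y_e}\nabla_y p_e^1=\int_\Gamma p_e^1 n$ contributes to the pressure equation, not the momentum one); and the Darcy-type term $\langle K_p^\ve\nabla p_e^\ve-\partial_t u_e^\ve,\nabla\psi_0\rangle$ together with the $\Gamma$-flux term assembles into $-\mathrm{div}(K_p^{\rm hom}\nabla p_e-K_u\partial_t u_e-Q(x,\partial_t u_f))$ with $K_p^{\rm hom}$, $K_u$, $Q$ exactly as in \eqref{efective}, \eqref{q_uf}. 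The external boundary terms $\langle F_u,\phi_0\rangle_{(\partial\Omega)_T}$, $\langle F_p,\psi_0\rangle_{(\partial\Omega)_T}$ pass to the limit directly and give \eqref{macro_pe}, while the initial conditions follow from the $L^2$-convergence $u_e^\ve(t)\to u_e(t)$, $\partial_t u_e^\ve(t)\to\partial_t u_e(t)$, $p_e^\ve(t)\to p_e(t)$ and the two-scale initial-data convergence of Remark after {\bf A5}. The regularity claims for $u_e$, $p_e$, $\partial_t u_f$, $\pi_f$ are inherited from the uniform bounds in \eqref{estim_u_p_u} by weak lower semicontinuity.

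The main obstacle I anticipate is the rigorous handling of the interface terms on $\Gamma^\ve_T$: the coupling $\langle\partial_t u_f^\ve\cdot n,\psi\rangle_{\Gamma^\ve_T}$ and $\langle p_e^\ve,\eta\cdot n\rangle_{\Gamma^\ve_T}$ must be passed to the two-scale limit consistently with the no-slip constraint $\Pi_\tau\partial_t u_e^\ve=\Pi_\tau\partial_t u_f^\ve$, which links the solid and fluid test functions. One has to use the two-scale convergence on oscillating surfaces (\cite{Allaire_1996, neuss-radu}) for $\partial_t u_e^\ve$, $\partial_t u_f^\ve$ and $p_e^\ve$ on $\Gamma^\ve_T$, and carefully construct admissible oscillating test functions whose tangential traces match on $\Gamma^\ve_T$ while having prescribed $Y$-periodic profiles; verifying that the limiting variational identity indeed decouples into the separate cell problems and the macroscopic balance, and that the extra term $Q(x,\partial_t u_f)$ arising from the inhomogeneous Neumann data in \eqref{two-scale_qf} is correctly identified, is where the real work lies. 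The dependence of ${\bf E}$ on the nonlocal-in-time functional $\mathcal F(b_{e,3})$ is handled by the strong two-scale convergence of ${\bf E}^\ve(b_{e,3}^\ve)$ noted above, so it does not obstruct this particular passage — the genuinely delicate point for the full homogenization, namely passing to the limit in the nonlinear reaction terms $g_b,g_e$ depending on $\be(u_e^\ve)$, is deferred to Section~\ref{strong_convergence} and is not needed here since only the strong $L^2$-convergence of $\{b_e^\ve\}$ is assumed.
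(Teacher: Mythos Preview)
Your overall two-scale strategy is correct and matches the paper's, but two substantive gaps would block the derivation of \eqref{macro_ue} as written.

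First, you omit the identification $p_f=p_e$. The weak two-scale limit of $p_f^\ve$ is \emph{a priori} a function $p_f(t,x,y)$ on $\Omega_T\times Y_f$; the paper obtains $p_f=p_f(t,x)$ from \eqref{preasure_1} by testing with $\eta\in C_0^\infty(Y_f)$, and then $p_f=p_e$ on $\Omega_T\times\Gamma$ (hence in $\Omega_T$) from \eqref{macro_11} by allowing $\eta\cdot n\neq 0$. Without this step the term $\nabla p_e$ in the momentum equation and the transmission boundary condition in \eqref{macro_two-scale_uf} cannot be written as stated.

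Second, your account of how $\vartheta_f\rho_f\dashint_{Y_f}\partial_t^2 u_f\,dy$ appears in the $u_e$-equation is incorrect. With macroscopic test functions $\phi_0(t,x)$ and $\eta$ matching only tangentially on $\Gamma$, the limit momentum balance is \eqref{ue_macro_11}, which contains both the corrector-pressure term $|Y|^{-1}\int_{Y_e}\nabla_y p_e^1\,dy$ and the tangential viscous stress $|Y|^{-1}\langle\mu\,\Pi_\tau(\be_y(\partial_t u_f)\,n),1\rangle_{H^{-1/2},H^{1/2}(\Gamma)}$. Neither of these is zero, and neither ``goes to the pressure equation''. The paper eliminates them by testing the already-derived two-scale Stokes problem \eqref{macro_two-scale_uf} with $\eta\equiv 1$, which yields
\[
-\langle\mu\,\Pi_\tau(\be_y(\partial_t u_f)\,n),1\rangle_\Gamma=\rho_f\!\int_{Y_f}\!\partial_t^2 u_f\,dy+|Y_f|\nabla p_e-\int_{Y_e}\!\nabla_y p_e^1\,dy,
\]
and substituting this into \eqref{ue_macro_11} produces \eqref{eq_ue_macro_1}. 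So the fluid inertia enters the macroscopic momentum equation through the tangential traction coupling on $\Gamma$, not directly as an ``inertial term''.

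A methodological remark: for the interface integrals $\langle\partial_t u_f^\ve\cdot n,\psi\rangle_{\Gamma^\ve_T}$ and $\langle p_e^\ve,\eta\cdot n\rangle_{\Gamma^\ve_T}$ you propose surface two-scale convergence. The paper avoids this by converting both to volume integrals via integration by parts (using $\mathrm{div}\,\partial_t u_f^\ve=0$ for the first and the $H^1$-extension of $p_e^\ve$ for the second), which makes the passage to the limit straightforward with only weak two-scale convergence of $\partial_t u_f^\ve$ and $\nabla p_e^\ve$. Your surface approach would require strong convergence of one factor on $\Gamma^\ve$ that you do not have for oscillating test functions; the volume rewriting is the cleaner route here.
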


\noindent {\bf Remark.} In the original microscopic problem the equations of poroelasticity and the Stokes system  are coupled
through the transmission conditions.
The limit system shows the strong coupling in the whole domain $\Omega_T$. Namely, the equations for macroscopic displacement and pressure defined in the whole domain $\Omega_T$ are coupled with the two-scale equations for the fluid flow defined on $\Omega_T\times Y_f$.   This coupling in the limit problem  can be observed through both  the lower order terms in the equations and the boundary conditions.

\begin{proof}[Proof of Theorem~{\rm \ref{thm52}}]
\hspace{-0.2 cm} Considering $(\ve \phi(t,x, x/\ve), \ve \psi(t,x, x/\ve), \ve \eta(t,x, x/\ve))$  with $\phi \in C^\infty_0(\Omega_T; C^\infty_{\text{per}}(Y_e))^3$,  $\psi \in C^\infty_0(\Omega_T; C^\infty_\text{per}(Y_e))$,  and $\eta \in C^\infty_0(\Omega_T; C^\infty_\text{per}(Y_f))^3$, as test functions in the weak formulation of \eqref{equa_cla}, with the corresponding boundary conditions in \eqref{exbou_co}, we obtain
\begin{equation}
\begin{aligned}
& \langle\rho_e \partial^2_t u^\ve_e, \ve \phi \rangle_{\Omega_{e,T}^\ve} + \langle {\bf E}^\ve(b^\ve_{e,3}) \be( u^\ve_e), \ve \be(\phi) \rangle_{\Omega_{e,T}^\ve}  +
\langle \nabla p_e^\ve, \ve \phi \rangle_{\Omega_{e,T}^\ve}
\\
&  +
\langle \rho_p\partial_t p^\ve_e, \ve \psi \rangle_{\Omega_{e,T}^\ve} + \langle K_p^\ve \nabla p^\ve_e - \partial_t u_e^\ve, \ve \nabla \psi \rangle_{\Omega_{e,T}^\ve} \\
& + \langle \rho_f \partial^2_t u^\ve_f, \ve \eta \rangle_{\Omega_{f,T}^\ve} +  \ve^2\mu\,  \langle \be(\partial_t u^\ve_f), \ve \, \be(\eta) \rangle_{\Omega_{f,T}^\ve}  -
\langle p_f^\ve, \ve\,  \text{div} \, \eta \rangle_{\Omega_{f,T}^\ve}  \\
& +\langle \partial_t u_f^\ve\cdot n , \ve  \psi \rangle_{\Gamma^\ve_T} - \langle p_e^\ve, \ve \eta\cdot n \rangle_{\Gamma^\ve_T}  = \langle F_u, \ve\phi \rangle_{(\partial \Omega)_T} + \langle F_p, \ve\psi \rangle_{(\partial \Omega)_T} .
\end{aligned}
\end{equation}
Letting   $\ve \to 0$ and using the convergence results in Lemmas~\ref{convergence_u_p} and \ref{convergence_b_c}   yield
\begin{equation}\label{macro_11}
\begin{aligned}
&\langle {\bf E}(y, b_{e,3}) (\be( u_e) + \be_y(u_e^1)),  \be_y(\phi) \rangle_{\Omega_{T}\times Y_e}
\\
&  +  \langle K_p( \nabla p_e + \nabla_y p_e^1)- \partial_t u_e,  \nabla_y \psi \rangle_{\Omega_{T}\times Y_e}
+  \langle \partial_t u_f\cdot n , \psi \rangle_{\Omega_T\times \Gamma} \\
& -
 \langle p_f, \text{div}_y \eta \rangle_{\Omega_{T}\times Y_f}
  - \langle p_e,  \eta\cdot n \rangle_{\Omega_T\times \Gamma}  = 0.
\end{aligned}
\end{equation}
Considering  first \\
(i) $\phi \in C^\infty_0(\Omega_T; C^\infty_{0}(Y_e))^3$,  $\psi \in C^\infty_0(\Omega_T; C^\infty_0(Y_e))$,  and $\eta \in C^\infty_0(\Omega_T; C^\infty_0(Y_f))^3$\\
 and then\\
(ii) $\phi \in C^\infty_0(\Omega_T; C^\infty_{\text{per}}( Y_e))^3$,  $\psi \in C^\infty_0(\Omega_T; C^\infty_\text{per}(Y_e))$,  and $\eta \in C^\infty_0(\Omega_T; C^\infty_\text{per}(Y_f))^3$
with $\Pi_\tau \phi  =\Pi_\tau  \eta $  and $\eta\cdot n =0 $ on $\Omega_T \times \Gamma$, we obtain
\begin{equation}\label{preasure_1}
\langle p_f, \text{ div}_y \eta \rangle_{\Omega_T \times Y_f} =0
\end{equation}
and  the equations for correctors
\begin{equation}\label{correctos_ue}
\begin{aligned}
{\rm div}_y \big( {\bf E}(y, b_{e,3}) (\be( u_e) + \be_y(u_e^1))\big) &=0\quad && \text{ in } \; \Omega_T\times Y_e, \quad\\
 {\bf E}(y, b_{e,3}) (\be( u_e) + \be_y(u_e^1))\, n& =0 \quad  && \text{ on  } \Omega_T\times \Gamma,
\end{aligned}
\end{equation}
and
\begin{equation}\label{correctos_pe}
\begin{aligned}
{\rm div}_y (K_p ( \nabla p_e + \nabla_y p_e^1)- \partial_t u_e) &= 0  \quad && \text{ in } \; \Omega_T\times Y_e, \\
(-K_p ( \nabla p_e + \nabla_y p_e^1)+  \partial_t u_e)\cdot n& = \partial_t u_f\cdot n \quad && \text{ on }  \Omega_T\times \Gamma.
\end{aligned}
\end{equation}
Considering $\eta \in C^\infty_0(\Omega_T; C^\infty_\text{per}(Y_f))^3$ with $\Pi_\tau \phi  =\Pi_\tau  \eta $   on $\Omega_T \times \Gamma$, from \eqref{macro_11} and \eqref{preasure_1}  follows that
\begin{equation*}
p_f= p_f(t,x) \qquad \text{ in } \Omega_T\times Y_f \quad \text{ and } \quad  p_f = p_e \quad  \text{ on }  \Omega_T\times \Gamma.
\end{equation*}
Thus we have $p_f = p_e$  in $\Omega_T$.
Taking  $(\phi(t, x), \psi(t, x), \eta(t, x, x/\ve))$,  where
\begin{itemize}
\item  $\phi\in C^\infty(\overline \Omega_T)^3$ and  $\psi\in C^\infty(\overline \Omega_T)$,
\item  $\eta\in C^\infty(\overline \Omega_T; C^\infty_{\text{per}}(Y_f))^3$ with  $\Pi_\tau \eta  =\Pi_\tau  \phi $ on $\Omega_T \times \Gamma$ and  ${\rm div}_y \eta (t, x,y) = 0$ in $\Omega_T \times Y_f$,
\end{itemize}
as  test functions in the weak formulation of \eqref{equa_cla}, with external boundary conditions in \eqref{exbou_co},   yields
\begin{equation}
\begin{aligned}
&\langle\rho_e\, \partial^2_t u^\ve_e,  \phi \rangle_{\Omega_{e,T}^\ve} + \langle {\bf E}^\ve(b^\ve_{e,3}) \be( u^\ve_e),  \be(\phi) \rangle_{\Omega_{e,T}^\ve}  +
\langle \nabla p_e^\ve,  \phi \rangle_{\Omega_{e,T}^\ve}
\\
&  +
\langle \rho_p\, \partial_t p^\ve_e,  \psi \rangle_{\Omega_{e,T}^\ve} + \langle K_p^\ve \nabla p^\ve_e -  \partial_t u_e^\ve,  \nabla \psi \rangle_{\Omega_{e,T}^\ve} \\
& + \langle \rho_f\, \partial^2_t u^\ve_f,  \eta \rangle_{\Omega_{f,T}^\ve} + \mu\,
 \ve^2  \langle \be(\partial_t u^\ve_f),  \be(\eta) +  \ve^{-1}  \be_y(\eta) \rangle_{\Omega_{f,T}^\ve}  -
\langle p_f^\ve,  \text{div}_x \eta \rangle_{\Omega_{f,T}^\ve}  \\
& +\langle \partial_t u_f^\ve\cdot n ,   \psi \rangle_{\Gamma^\ve_T} - \langle p_e^\ve,  \eta\cdot n \rangle_{\Gamma^\ve_T}  = \langle F_u, \phi \rangle_{(\partial \Omega)_T} + \langle F_p, \psi \rangle_{(\partial \Omega)_T}.
\end{aligned}
\end{equation}
Letting $\ve \to 0$ and using the two-scale convergence of $u_e^\ve$,  $p_e^\ve$,  and  $\partial_t u_f^\ve$ we obtain
\begin{equation}
\begin{aligned}
&\langle \rho_e \partial^2_t u_e,  \phi \rangle_{\Omega_{T}\times Y_e} + \langle {\bf E}(y, b_{e,3}) \big(\be( u_e)+ \be_y(u_e^1)\big),  \be(\phi) \rangle_{\Omega_{T}\times Y_e} \\
& +
\langle \nabla p_e+ \nabla_y p_e^1,  \phi \rangle_{\Omega_{T}\times Y_e}  -  \langle \partial_t u_f, \nabla \psi \rangle_{\Omega_T\times Y_f}
\\
& +
\langle\rho_p \, \partial_t p_e,  \psi \rangle_{\Omega_{T}\times Y_e} + \langle K_p( \nabla p_e+ \nabla_y p_e^1)   -  \partial_t u_e,  \nabla \psi \rangle_{\Omega_{T}\times Y_e}   \\
& + \langle \rho_f \, \partial^2_t u_f,  \eta \rangle_{\Omega_{T}\times Y_f} + \mu
 \langle \be_y(\partial_t u_f),   \be_y(\eta) \rangle_{\Omega_{T}\times Y_f}  +
\langle \nabla p_e,  \eta \rangle_{\Omega_{T}\times Y_f} \\
&-  \langle   p_e^1,  \eta \cdot n \rangle_{\Omega_{T}\times \Gamma} =|Y| \langle F_u, \phi \rangle_{(\partial \Omega)_T} +|Y|  \langle F_p, \psi \rangle_{(\partial \Omega)_T}.
\end{aligned}
\end{equation}
Here we used the relation $p_e= p_f$  a.e.\ in  $\Omega_T$, as well as  the fact  that due to the relation $\text{div}\,  \partial_t u_f^\ve=0$ and the two-scale convergence of $\partial_t u_f^\ve$,  we have
\begin{equation}
\begin{aligned}
\lim\limits_{\ve \to 0} \langle \partial_t u_f^\ve\cdot n ,   \psi \rangle_{\Gamma^\ve_T} & =
\lim\limits_{\ve \to 0} \left(- \langle \text{div}\,  \partial_t u_f^\ve ,   \psi \rangle_{\Omega_{f,T}^\ve} - \langle \partial_t u_f^\ve , \nabla   \psi \rangle_{\Omega_{f,T}^\ve}\right)\\
& =- \lim\limits_{\ve \to 0}  \langle \partial_t u_f^\ve , \nabla   \psi \rangle_{\Omega_{f,T}^\ve} = - |Y|^{-1} \langle \partial_t u_f , \nabla   \psi \rangle_{\Omega_T\times Y_f}.
\end{aligned}
\end{equation}
To show the convergence of $\langle p_e^\ve,  \eta\cdot n \rangle_{\Gamma^\ve_T} $ we use  $\text{div}_y \eta =0$ and the fact that $p_e^1$ is well-defined on $\Gamma$:
\begin{equation}
\begin{aligned}
\lim\limits_{\ve \to 0} \langle p_e^\ve, & \eta\cdot n \rangle_{\Gamma^\ve_T}  =\lim\limits_{\ve \to 0}\left( - \langle \nabla p_e^\ve,  \eta \rangle_{\Omega^\ve_{f,T}} -  \langle p_e^\ve,  \text{div}_x \eta \rangle_{\Omega^\ve_{f,T}}\right)
\\& = - |Y|^{-1}\langle \nabla p_e + \nabla_y p_e^1,  \eta \rangle_{\Omega_{T}\times Y_f} - |Y|^{-1} \langle p_e,  \text{div}_x \eta \rangle_{\Omega_{T}\times Y_f}
\\& = \;   |Y|^{-1}\big(\langle   p_e^1,  \eta \cdot n \rangle_{\Omega_{T}\times \Gamma} -   \langle \nabla p_e ,  \eta \rangle_{\Omega_{T}\times Y_f} -  \langle p_e,  \text{div}_x \eta \rangle_{\Omega_{T}\times Y_f}\big).
\end{aligned}
\end{equation}
 Notice that $n$ is the  internal for $Y_f$ normal at the boundary $\Gamma$.

Also, for an arbitrary test function $\eta_1\in C_0^\infty(\Omega_T; C_0^\infty(Y_f))$, from the two-scale convergence of $\partial_t u_f^\ve$  and the fact that   $\partial_t u_f^\ve$ is divergence-free it follows that
\begin{equation*}
\begin{aligned}
0= \lim\limits_{\ve\to 0}\langle \text{div } \partial_t u_f^\ve, \ve \eta_1(t, x, x/\ve) \rangle_{\Omega_{f,T}^\ve} =-
\lim\limits_{\ve\to 0}  \langle  \partial_t u_f^\ve, \ve \nabla_x\eta_1 + \nabla_y \eta_1  \rangle_{\Omega_{f,T}^\ve}\\  =-
|Y|^{-1}\langle  \partial_t u_f,  \nabla_y \eta_1\rangle_{\Omega_T\times Y_f}  =|Y|^{-1} \langle \text{div}_y  \partial_t u_f, \eta_1 \rangle_{\Omega_T\times Y_f}.
\end{aligned}
\end{equation*}
Thus $\text{div}_y\, \partial_t u_f =0$  in $\Omega_T\times Y_f$.

Considering $\phi\equiv 0$ and $\psi\equiv 0$, and taking  first  $\eta \in C^\infty_0(\Omega_T; C^\infty_0(Y_f))^3$,  with ${\rm div}_y \, \eta =0$
and then  $\eta \in C^\infty_0(\Omega_T; C^\infty_{\rm per}(Y_f))^3$ with   $\Pi_\tau \eta  =0$ on $\Omega_T\times \Gamma$
we obtain  the two-scale problem \eqref{macro_two-scale_uf} for $\partial_t u_f$. From   the boundary conditions $\Pi_\tau \partial_t u^\ve_e =  \Pi_\tau \partial_t u^\ve_f $ on $\Gamma^\ve_T$ and the two-scale convergence of  $\partial_t u_e^\ve$ and $\partial_t u_f^\ve$ on $\Gamma^\ve_T$, see Lemma~\ref{convergence_u_p},
we obtain
\begin{equation*}
\begin{aligned}
&\frac 1{|Y|} \int_{\Omega_T} \int_\Gamma \Pi_\tau \partial_t u_e(t,x) \,  \psi(t,x,y) \, d\gamma_y dx dt  =  \lim\limits_{\ve \to 0 }  \ve \int_{\Gamma^\ve_T} \Pi_\tau \partial_t u_e^\ve(t,x)\,  \psi(t,x, x/\ve) d\gamma dt \\
& =
 \lim\limits_{\ve \to 0 }  \ve \int_{\Gamma^\ve_T} \Pi_\tau \partial_t u_f^\ve(t,x) \, \psi(t,x, x/\ve)  d\gamma dt
\\&  =\frac 1{|Y|} \int_{\Omega_T} \int_\Gamma \Pi_\tau \partial_t u_f(t,x,y)\,  \psi(t,x,y) \, d\gamma_y dx dt
\end{aligned}
\end{equation*}
for all $\psi \in C_0(\Omega_T; C_{\rm per} (Y))$. Thus $\Pi_\tau \partial_t u_e  = \Pi_\tau \partial_t u_f$   on $\Omega_T \times \Gamma$.

Considering first $\phi\in C^\infty_0(\Omega_T)^3$,  $\psi\in C^\infty_0(\Omega_T)$,  and then
$\phi\in C^\infty(\overline\Omega_T)^3$,  $\psi\in C^\infty(\overline \Omega_T)$,  together with $\eta \in C_0^\infty(\Omega_T; C^\infty_{\rm per} (Y_f))^3$ and  $\Pi_\tau \eta = \Pi_\tau \phi$ on $\Gamma$,  and using the  equations \eqref{macro_two-scale_uf} for $\partial_t u_f$,  we obtain   the limit equations for $u_e$ and $p_e$:
\begin{equation}\label{ue_macro_11}
\begin{aligned}
\vartheta_e  \rho_e \,  \partial_t^2 u_e - {\rm div} \big( {\bf E}^{\rm hom} (b_{e,3}) \, \be(u_e)\big) + \vartheta_e  \nabla p_e +\frac 1{|Y|} \int_{Y_e} \nabla_y p_e^1\,  dy
\\   -
 \frac1{|Y|}\big\langle \mu \,\Pi_\tau (\be(\partial_t u_f)\, n), 1 \big\rangle_{H^{-1/2}, H^{1/2}(\Gamma)}  = 0 & \quad   \text{in } \Omega_T, \\
 {\bf E}^{\rm hom} (b_{e,3}) \, \be(u_e)\, n   = F_u  & \quad   \text{on } (\partial\Omega)_T,
\end{aligned}
\end{equation}
where $\vartheta_e = |Y_e|/ |Y|$ and    the  effective elasticity tensor ${\bf E}^{\rm hom}$ is defined by \eqref{efective},  and
\begin{equation}\label{pe_macro_11}
\begin{aligned}
& \vartheta_e  \rho_p\,  \partial_t p_e\\&  - \frac1 {|Y|}  {\rm div}\Big[ \int_{Y_e} [K_p(\nabla p_e + \nabla_y p_e^1)   - \partial_t u_e ] dy -  \int_{Y_f} \partial_t u_f \, dy \Big] = 0 && \text{ in }  \Omega_T, \\
& \frac 1 {|Y|} \Big[ \int_{Y_e} \left[K_p(\nabla p_e + \nabla_y p_e^1)   - \partial_t u_e\right] dy -   \int_{Y_f}\partial_t u_f\,dy \Big]\cdot n
 =  F_p &&
 \text{ on } (\partial\Omega)_T,
\end{aligned}
\end{equation}
with $p_e^1$ defined by the two-scale problem \eqref{correctos_pe}.  Considering the weak formulation of    \eqref{macro_two-scale_uf}  with the test function $\eta =1$ yields
\begin{equation*}
\begin{aligned}
\rho_f \int_{Y_f}   \partial^2_t u_f \, dy + |Y_f| \nabla p_e =   -  \langle\mu \,( \be_y(\partial_t u_f) - \pi_f I)  n, 1 \rangle_{H^{-1/2}, H^{1/2}(\Gamma)}    = \int_\Gamma p_e^1 n \, d\gamma_y \\
-  \langle \mu\,  \Pi_\tau ( \be_y(\partial_t u_f)  n),1 \rangle_{H^{-1/2}, H^{1/2}(\Gamma)}.
\end{aligned}
\end{equation*}
Using the  $Y$-periodicity of  $p_e^1$ we obtain
\begin{equation*}\label{u_f_int_Yf}
\begin{aligned}
- \langle  \mu \, \Pi_\tau ( \be_y(\partial_t u_f) \, n ), 1 \rangle_{H^{-1/2}, H^{1/2}(\Gamma)}
=\rho_f \int_{Y_f} \partial^2_t u_f dy + |Y_f| \nabla p_e - \int_{Y_e} \nabla_y p_e^1 dy.
\end{aligned}
\end{equation*}
Thus we can  rewrite the equation for $u_e$ as
\begin{equation}\label{eq_ue_macro_1}
\begin{aligned}
\vartheta_e \rho_e \,  \partial_t^2 u_e - {\rm div} \big( {\bf E}^{\rm hom} (b_{e,3})\,  \be(u_e)\big) +  \nabla p_e  + \vartheta_f  \rho_f\,  \dashint_{Y_f}  \partial^2_t u_f dy & = 0 &&  \text{ in } \Omega_T,
\end{aligned}
\end{equation}
where $\vartheta_f = |Y_f|/|Y|$. Considering   the structure of  problem \eqref{correctos_pe} we represent  $p_e^1$ in the form
\begin{equation}\label{form_p1}
\begin{aligned}
p_e^1(t,x,y)= \sum_{k=1}^3 \partial_{x_k} p_e (t,x) \, w^k_p(x, y) + \sum_{k=1}^3 \partial_{t} u_e^k(t,x) \,  w_e^k(x, y) + q(x,y, \partial_t u_f),
\end{aligned}
\end{equation}
where
$w^k_p$ and $w_e^k$ are solutions of unit cell problems \eqref{correct_p_Kp} and  \eqref{correct_p_Ku}, and $q$ is a solution of the two-scale problem \eqref{two-scale_qf}. Incorporating the expression  \eqref{form_p1} for $p_e^1$ into equations \eqref{pe_macro_11} and  considering  \eqref{eq_ue_macro_1} we obtain that  $p_e$ and  $u_e$ satisfy the macroscopic problem  \eqref{macro_ue}--\eqref{macro_pe},
where   ${\bf E}^{\rm hom}$, $K_p^{\rm hom}$, and  $K_u$ are  defined by \eqref{efective}.  The coupling with the flow velocity   $\partial_t u_f$ is reflected in the interaction  function $Q$,  defined by \eqref{q_uf}.  Notice that since ${\rm div}\,  \partial_t u_f = 0 $ in $Y_f$, we have that $\int_\Gamma \partial_t u_f \cdot n  \, d\gamma = 0$ and the problem \eqref{two-scale_qf} is well posed, i.e.\  the compatibility condition is satisfied.
\end{proof}

\section{Strong two-scale convergence of $\be(u^\ve_e)$, $\nabla p_e^\ve$,  and $\partial_t u_f^\ve$}\label{strong_convergence}

\begin{lemma} For a   subsequences of  solutions of microscopic problem  \eqref{equa_cla}--\eqref{exbou_co}  $\{u^\ve_e\}$, $\{ p_e^\ve \}$  and $\{\partial_t u_f^\ve\}$  (denoted again by $\{u^\ve_e\}$, $\{ p_e^\ve \}$, and $\{\partial_t u_f^\ve\}$)  and the  limit functions $u_e$, $u_e^1$, $p_e$, $p_e^1$, and $\partial_t u_f$  as in Lemma~\ref{convergence_u_p} we have
\begin{equation}
\begin{aligned}
& \nabla u_e^\ve \to  \nabla u_e + \nabla_y u_e^1 \qquad && \text{ strongly two-scale} , \\
& \nabla p_e^\ve \to  \nabla p_e + \nabla_y p_e^1 && \text{ strongly two-scale} , \\
&\partial_t u_f^\ve \to  \partial_t u_f && \text{ strongly two-scale}, \\
&\ve \,  \be(\partial_t u_f^\ve) \to  \be_y(\partial_t u_f) && \text{ strongly two-scale}.
\end{aligned}
\end{equation}
\end{lemma}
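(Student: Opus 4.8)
The plan is an energy argument for the coupled poroelastic--Stokes system, carried out on the difference between the microscopic solution and a smooth two-scale ansatz assembled from the limit functions of Lemmas~\ref{convergence_u_p}--\ref{convergence_b_c}, with the energy functional modified so as to absorb the contribution of the time derivative of ${\bf E}$. Fix $\delta>0$, keep $u_e$ and $p_e$ as they are, and pick $u^1_{e,\delta},p^1_{e,\delta}\in C^\infty_0(\Omega_T;C^\infty_{\text{per}}(Y_e))$ and $v_{f,\delta}\in C^\infty_0(\Omega_T;C^\infty_{\text{per}}(Y_f))^3$ approximating $u^1_e$, $p^1_e$, $\partial_t u_f$ in the respective spaces, with ${\rm div}_y v_{f,\delta}=0$ and $\Pi_\tau v_{f,\delta}=\Pi_\tau\partial_t u_e$ on $\Gamma$ up to $o_\delta(1)$. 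Set
\begin{equation*}
w^\ve_e:=u^\ve_e-u_e-\ve\,u^1_{e,\delta}(t,x,x/\ve),\qquad q^\ve_e:=p^\ve_e-p_e-\ve\,p^1_{e,\delta}(t,x,x/\ve),\qquad w^\ve_f:=\partial_t u^\ve_f-v_{f,\delta}(t,x,x/\ve)
\end{equation*}
(the divergence-free constraint needed to use $w^\ve_f$ as a test function being restored by the usual correction, which does not affect the argument). By Lemmas~\ref{convergence_u_p}--\ref{convergence_b_c}, $w^\ve_e$ and $q^\ve_e$ converge to $0$ strongly in $L^2(\Omega_T)$ and $w^\ve_f$ weakly two-scale to $\partial_t u_f-v_{f,\delta}$, while $\nabla w^\ve_e$, $\nabla q^\ve_e$, $\ve\,\be(w^\ve_f)$ converge weakly two-scale to $\nabla_y(u^1_e-u^1_{e,\delta})$, $\nabla_y(p^1_e-p^1_{e,\delta})$, $\be_y(\partial_t u_f-v_{f,\delta})$, all of which are $O_\delta$; moreover the Remark after Assumption~{\bf A5.} gives that $\be(w^\ve_e(0))$, $\partial_t w^\ve_e(0)$, $q^\ve_e(0)$, $w^\ve_f(0)$ tend to $0$ in $L^2$ up to $o_\delta(1)$.

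Next, introduce the modified energy
\begin{equation*}
\begin{aligned}
\mathcal E^\ve_\delta(s) &:= \tfrac{\rho_e}{2}\|\partial_t w^\ve_e(s)\|^2_{L^2(\Omega_e^\ve)}+\tfrac12\langle{\bf E}^\ve(b^\ve_{e,3}(s))\be(w^\ve_e(s)),\be(w^\ve_e(s))\rangle_{\Omega_e^\ve}+\tfrac{\rho_p}{2}\|q^\ve_e(s)\|^2_{L^2(\Omega_e^\ve)}\\
&\quad+\tfrac{\rho_f}{2}\|w^\ve_f(s)\|^2_{L^2(\Omega_f^\ve)}-\tfrac12\langle\partial_t{\bf E}^\ve(b^\ve_{e,3})\be(w^\ve_e),\be(w^\ve_e)\rangle_{\Omega_{e,s}^\ve}.
\end{aligned}
\end{equation*}
The last term is the ``specific form'' forced by the time dependence of ${\bf E}$ in {\bf A1.}: it is arranged so that $\tfrac{d}{ds}\mathcal E^\ve_\delta$ carries no $\partial_t{\bf E}$ contribution. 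Differentiating in $s$ and inserting the microscopic equations \eqref{equa_cla} for $(u^\ve_e,p^\ve_e,\partial_t u^\ve_f)$ themselves (not for the differences), while the pressure--displacement cross terms and the $\Gamma^\ve_T$-transmission terms telescope exactly as in the proof of Lemma~\ref{Lemma:apriori}, yields
\begin{equation*}
\begin{aligned}
&\mathcal E^\ve_\delta(s)+\langle K^\ve_p\nabla q^\ve_e,\nabla q^\ve_e\rangle_{\Omega_{e,s}^\ve}+\ve^2\mu\,\|\be(w^\ve_f)\|^2_{L^2(\Omega_{f,s}^\ve)}\\
&\qquad=\mathcal E^\ve_\delta(0)+\langle F_u,\partial_t w^\ve_e\rangle_{(\partial\Omega)_s}+\langle F_p,q^\ve_e\rangle_{(\partial\Omega)_s}+\mathcal R^\ve_\delta(s),
\end{aligned}
\end{equation*}
where $\mathcal R^\ve_\delta(s)$ collects the pairings of $\partial_t^2 u_e$, $\partial_t p_e$, $\nabla p_e$, the elastic stress ${\bf E}^\ve(b^\ve_{e,3})[\be(u_e)+\be_y(u^1_{e,\delta})(\cdot,\cdot/\ve)]$, the Darcy flux and the viscous stress of the ansatz against $(\partial_t w^\ve_e,q^\ve_e,w^\ve_f)$.

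Each contribution on the right tends to $O_\delta$: the inertial and storage pairings because $\partial_t w^\ve_e$ and $q^\ve_e$ go to $0$ strongly in $L^2(\Omega_T)$ (Lemma~\ref{convergence_u_p}, since $u^\ve_e\to u_e$ in $H^1(0,T;L^2)$ and $p^\ve_e\to p_e$ in $L^2$) while the ansatz data are bounded; the stress and flux pairings because ${\bf E}^\ve(b^\ve_{e,3})[\be(u_e)+\be_y(u^1_{e,\delta})(\cdot,\cdot/\ve)]\to{\bf E}(y,b_{e,3})[\be(u_e)+\be_y(u^1_{e,\delta})]$ strongly two-scale (using $b^\ve_{e,3}\to b_{e,3}$ strongly and the regularity of ${\bf E}_1$ in {\bf A1.}) and pair against the weakly two-scale convergent $\be(\partial_t w^\ve_e)$, $\nabla q^\ve_e$, $\ve\be(w^\ve_f)$ whose limits are $O_\delta$; and $\langle F_u,\partial_t w^\ve_e\rangle_{(\partial\Omega)_s}+\langle F_p,q^\ve_e\rangle_{(\partial\Omega)_s}\to0$ because the traces of $\partial_t w^\ve_e$ and $q^\ve_e$ on $(\partial\Omega)_T$ converge to $0$ in $L^2$ (by the a priori bounds of Lemma~\ref{Lemma:apriori}). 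Since {\bf A1.} gives $\|\partial_t{\bf E}^\ve(b^\ve_{e,3})\|_{L^\infty}\le C$, the correction term obeys $|\langle\partial_t{\bf E}^\ve(b^\ve_{e,3})\be(w^\ve_e),\be(w^\ve_e)\rangle_{\Omega_{e,s}^\ve}|\le\tfrac{C}{\alpha_1}\int_0^s\langle{\bf E}^\ve(b^\ve_{e,3}(\tau))\be(w^\ve_e(\tau)),\be(w^\ve_e(\tau))\rangle_{\Omega_e^\ve}\,d\tau$, which is bounded by a multiple of $\int_0^s\mathcal E^\ve_\delta(\tau)\,d\tau$ plus a term that can be reabsorbed; a Gronwall argument in $s$ then gives $\sup_{s\in(0,T]}\mathcal E^\ve_\delta(s)+\langle K^\ve_p\nabla q^\ve_e,\nabla q^\ve_e\rangle_{\Omega_{e,T}^\ve}+\ve^2\mu\|\be(w^\ve_f)\|^2_{L^2(\Omega_{f,T}^\ve)}\le C_T\big(\mathcal E^\ve_\delta(0)+\sup_{s}|\mathcal R^\ve_\delta(s)|\big)$, which is $O_\delta+o_\ve(1)$. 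By coercivity {\bf A1.}--{\bf A2.}, letting first $\ve\to0$ and then $\delta\to0$ (the mollifications converging to $u^1_e$, $p^1_e$, $\partial_t u_f$), the characterization of strong two-scale convergence via the unfolding operator \cite{CDDGZ} yields $\be(u^\ve_e)\to\be(u_e)+\be_y(u^1_e)$, $\nabla p^\ve_e\to\nabla p_e+\nabla_y p^1_e$, $\partial_t u^\ve_f\to\partial_t u_f$ and $\ve\be(\partial_t u^\ve_f)\to\be_y(\partial_t u_f)$ strongly two-scale; the statement for the full gradient $\nabla u^\ve_e$ then follows by applying the scaled Korn inequality on $\Omega_e^\ve$ (with constant independent of $\ve$ via extension) to $u^\ve_e-u_e-\ve u^1_{e,\delta}(t,x,x/\ve)$ together with $u^\ve_e\to u_e$ in $L^2(\Omega_T)$.

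The principal obstacle is precisely the term $\langle\partial_t{\bf E}^\ve(b^\ve_{e,3})\be(u^\ve_e),\be(u^\ve_e)\rangle$ appearing in the naive (unmodified) energy identity for $u^\ve_e$ itself: since $\be(u^\ve_e)$ converges only weakly two-scale, this product of a strongly convergent coefficient with a product of two weakly convergent factors cannot be passed to the limit, and the instantaneous-energy method stalls. The two devices above --- working with the small difference $w^\ve_e$ rather than with $u^\ve_e$ (so the offending term becomes quadratic in a vanishing quantity) and subtracting $\tfrac12\langle\partial_t{\bf E}^\ve(b^\ve_{e,3})\be(w^\ve_e),\be(w^\ve_e)\rangle_{\Omega_{e,s}^\ve}$ from the energy (so that what remains is Gronwall-closable) --- together dispose of it. The remaining, essentially technical, point is to verify $\mathcal R^\ve_\delta\to O_\delta$, which genuinely uses the full output of Theorem~\ref{thm52} (the two-scale Stokes problem \eqref{macro_two-scale_uf} and the representation \eqref{form_pe-1} of $p_e^1$), the time-regularity $u_e\in H^2(0,T;L^2)\cap H^1(0,T;H^1)$, $p_e\in H^1(0,T;H^1)$, and the strong convergence $b^\ve_{e,3}\to b_{e,3}$ of Lemma~\ref{convergence_b_c}.
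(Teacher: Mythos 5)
Your proposal takes a genuinely different route from the paper. The paper uses the microscopic triple $(\partial_t u^\ve_e\zeta^2, p^\ve_e\zeta^2, \partial_t u^\ve_f\zeta^2)$ with weight $\zeta(t)=e^{-\gamma t}$ as test functions in \eqref{weak_u_ef}; these are automatically admissible (divergence-free in $\Omega_f^\ve$, tangential matching on $\Gamma^\ve$). Weak lower semicontinuity together with the macroscopic energy identity \eqref{macro_test_1}--\eqref{macro_corrector_ue1} then produces the sandwich $\mathcal E(u_e,p_e,\partial_t u_f)\le\liminf\mathcal E^\ve\le\limsup\mathcal E^\ve=\mathcal E(u_e,p_e,\partial_t u_f)$, and strong two-scale convergence follows. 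You instead estimate the energy of the explicit error $(w^\ve_e,q^\ve_e,w^\ve_f)$ directly, which requires plugging $(\partial_t w^\ve_e, q^\ve_e, w^\ve_f)$ into \eqref{weak_u_ef}. That is where a genuine gap appears.

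The weak formulation \eqref{weak_u_ef} admits only test triples with $\mathrm{div}\,\eta=0$ in $\Omega^\ve_f$ and $\Pi_\tau\phi=\Pi_\tau\eta$ exactly on $\Gamma^\ve_T$. Your $w^\ve_f=\partial_t u^\ve_f-v_{f,\delta}(t,x,x/\ve)$ satisfies $\mathrm{div}\,w^\ve_f=-(\mathrm{div}_x v_{f,\delta})(t,x,x/\ve)$, which is $O(1)$ even with $\mathrm{div}_y v_{f,\delta}=0$; you call this "restored by the usual correction, which does not affect the argument", but the Bogovskii-type corrector needed here introduces additional terms whose smallness must be checked in the present coupled setting, and in particular interacts with the $\Gamma^\ve_T$-integrals in \eqref{weak_u_ef}. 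More seriously, the transmission constraint requires $\Pi_\tau\partial_t w^\ve_e=\Pi_\tau w^\ve_f$ exactly on $\Gamma^\ve_T$, which (using $\Pi_\tau\partial_t u^\ve_e=\Pi_\tau\partial_t u^\ve_f$) reduces to $\Pi_\tau\big[\partial_t u_e(t,x)+\ve\,\partial_t u^1_{e,\delta}(t,x,x/\ve)\big]=\Pi_\tau v_{f,\delta}(t,x,x/\ve)$ on $\Gamma^\ve$. An $\ve$-independent $v_{f,\delta}$ cannot satisfy this identically, and "$\Pi_\tau v_{f,\delta}=\Pi_\tau\partial_t u_e$ up to $o_\delta(1)$" does not make $(\partial_t w^\ve_e,q^\ve_e,w^\ve_f)$ admissible — it makes it \emph{inadmissible}, so the asserted energy identity simply does not follow from \eqref{weak_u_ef}. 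Repairing this requires either an $\ve$-dependent boundary-layer corrector with its own error analysis, or a modified variational identity accounting for the violated constraints; neither is "the usual correction" in this Biot--Stokes setting and neither is carried out. The paper's liminf/limsup device is designed precisely to avoid constructing any explicit corrector as a test function, which is why it sidesteps this obstruction entirely.

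A secondary remark: for the convergence of $\nabla u^\ve_e$ (as opposed to $\be(u^\ve_e)$) you invoke Korn and extension, but Korn on $\Omega^\ve_e$ yields $L^2$-norm control and does not by itself give \emph{strong two-scale} convergence of the full gradient from that of the symmetrized gradient; one also needs the weak two-scale convergence $\nabla u^\ve_e\rightharpoonup\nabla u_e+\nabla_y u^1_e$ from Lemma~\ref{convergence_u_p} plus the norm convergence of the unfolded sequence, and the norm to control is the unfolded $L^2(\Omega_T\times Y_e)$-norm, not the microscopic $L^2(\Omega_{e,T}^\ve)$-norm. This is fixable but is stated too loosely. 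The modified-energy device you use for $\partial_t{\bf E}$ (subtracting the time-integrated $\partial_t{\bf E}$ quadratic form and closing via Gronwall) is a legitimate alternative to the paper's exponential weight and that part of your argument is sound in spirit.
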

\begin{proof} To show the  strong two-scale convergence we prove  the convergence of  the energy  functional related to equations \eqref{equa_cla} for $u_e^\ve$, $p_e^\ve$, and $\partial_t u_f^\ve$.
Because of the dependence of ${\bf E}$ on the temporal variable we have to consider a modified  form of the energy  functional.
We consider a monotone decreasing function  $\zeta:\mathbb R_+\to \mathbb R_+$, e.g.\ $\zeta(t)= e^{-\gamma t}$ for $t \in \mathbb R_{+}$,  and define    the energy functional for the microscopic problem \eqref{equa_cla}--\eqref{exbou_co} as
\begin{equation}\label{function_def_1}
\begin{aligned}
\mathcal E^\ve(u_e^\ve, p_e^\ve, \partial_t u^\ve_f) = \frac 12 \rho_e\|\partial_t u_e^\ve(s)\zeta(s) \|^2_{L^2(\Omega_e^\ve)} -  \langle \zeta^\prime \zeta,  \rho_e|\partial_t u_e^\ve|^2 \rangle_{\Omega_{e,s}^\ve} \qquad \qquad \\
 + \frac 12  \langle {\bf E}^\ve(b^\ve_{e,3}) \,\be(u^\ve_e(s))\, \zeta(s), \be(u_e^\ve(s))\, \zeta(s) \rangle_{\Omega_{e}^\ve}\qquad \qquad   \\
  -  \frac 12 \left\langle \big[ 2 \zeta^\prime  \zeta\,  {\bf E}^\ve(b^\ve_{e,3})  +  \zeta^2\,  \partial_t {\bf E}^\ve(b^\ve_{e,3})\big]  \be(u^\ve_e),  \be(u^\ve_e)\right\rangle_{\Omega_{e, s}^\ve}
\\  + \frac 12\rho_p \|p_e^\ve(s)\zeta(s)\|^2_{L^2(\Omega_e^\ve)} -  \langle \zeta^\prime \zeta, \rho_p |p_e^\ve|^2 \rangle_{\Omega_{e,s}^\ve}
+ \langle K_p^\ve \nabla p_e^\ve \, \zeta, \nabla p_e^\ve\,  \zeta\rangle_{\Omega_{e,s}^\ve} \qquad
\\
 + \frac 12\rho_f \|\partial_t u_f^\ve(s)\zeta(s)\|^2_{L^2(\Omega_f^\ve)}  -  \langle \zeta^\prime  \zeta, \rho_f |\partial_t u_f^\ve|^2 \rangle_{\Omega_{f, s}^\ve}
+ \mu \| \ve  \zeta \, \be(\partial_t u_f^\ve)\|^2_{L^2(\Omega_{f,s}^\ve)}
\end{aligned}
\end{equation}
for  $s \in (0,T]$. Considering  $(\partial_t u_e^\ve\, \zeta^2, p_e^\ve\, \zeta^2, \partial_t u_f^\ve\, \zeta^2)$ as a test function in  \eqref{weak_u_ef} we  obtain  the   equality
\begin{equation}\label{strong2scale_1}
\begin{aligned}
&\mathcal E^\ve(u_e^\ve, p_e^\ve, \partial_t u^\ve_f) = \frac 12\rho_e \|\partial_t u_e^\ve(0)\|^2_{L^2(\Omega_e^\ve)} \\&\hskip 2cm
+ \frac 12\left\langle {\bf E}^\ve(b^\ve_{e,3}) \, \be(u^\ve_e(0)) , \be(u_e^\ve(0)) \right\rangle_{\Omega_{e}^\ve}
+ \frac 12\rho_p \| p_e^\ve(0)\|^2_{L^2(\Omega_e^\ve)}
\\& \hskip 2cm+\frac 12\rho_f \|\partial_t u_f^\ve(0)\|^2_{L^2(\Omega_f^\ve)}
   + \langle F_u, \partial_t u_e^\ve \rangle_{(\partial \Omega)_s} + \langle F_p,  p_e^\ve \rangle_{(\partial \Omega)_s}.
\end{aligned}
\end{equation}
Due to the assumptions on  ${\bf E}$ and $\partial_t {\bf E}$ there exists a positive constant  $\gamma$ such that
 $$
 \begin{array}{c}
  \left(2 \gamma  {\bf E}(y,  \xi) -  \partial_t {\bf E}(y, \xi) \right) A \cdot A  \geq 0  \; \text{ for all symmetric matrices } A,\\[2mm]  \text{ all continuous bounded functions }  \xi, \text{ and } y \in Y.
  \end{array}
 $$

Since $\{b^\ve_e\}$ converges strongly in $L^2(\Omega_T)$,  $\be( u_e^\ve)$ converges weakly two-scale,  and ${\bf E}^\ve(b^\ve_{e,3})$ is uniformly bounded,  we have  the  weak two-scale convergence of the sequence $({\bf E}^\ve(b^\ve_{e,3}))^{\frac 1 2} \be( u_e^\ve)$  to $ ({\bf E}(y,b_{e,3}))^{\frac 1 2} (\be( u_e)+\be_y(u_e^1)) $  and  of $(2\gamma {\bf E}^\ve(b^\ve_{e,3}) -  \partial_t {\bf E}^\ve(b_{e,3}^\ve))^{\frac 1 2} \be( u_e^\ve)$ to $ (2\gamma {\bf E}(y,b_{e,3}) -  \partial_t {\bf E}(y, b_{e,3}))^{\frac 1 2} (\be( u_e)+\be_y(u_e^1))$,  as $\ve \to 0$.
Using in \eqref{function_def_1} and  \eqref{strong2scale_1} the lower-semicontinuity of the corresponding norms, the initial conditions for $u_e^\ve$, $p_e^\ve$, and $\partial_t u_f^\ve$, and the convergence of $\partial_t u_e^\ve$, $p_e^\ve$, and $\partial_t u_f^\ve$   implies
\begin{equation}\label{lim_functional_1}
\begin{aligned}
&\phantom{+\, } \rho_e \|\partial_t u_e(s)\zeta(s) \|^2_{L^2(\Omega\times Y_e)} + 2\gamma  \rho_e \|\partial_t u_e\, \zeta\|^2_{L^2(\Omega_s\times Y_e)} \\
 & +   \left\langle {\bf E}(y, b_{e,3}) \zeta^2(s) (\be(u_e(s))+ \be_y(u_e^1(s)) , \be(u_e(s)) + \be_y(u_e^1(s))\right\rangle_{\Omega \times Y_e}
 \\
 & +   \langle \zeta^2 \big(2 \gamma\,   {\bf E}(y, b_{e,3}) -   \partial_t{\bf E}(y, b_{e,3})\big) ( \be(u_e)+ \be_y(u_e^1)),  \be(u_e)+  \be_y(u_e^1)\rangle_{\Omega_{s}\times Y_e}
\\
& + \rho_p \|p_e(s)\zeta(s)\|^2_{L^2(\Omega\times Y_e)}
+ 2 \gamma \rho_p\|p_e \, \zeta \|^2_{L^2(\Omega_{s}\times Y_e)}
\\ & + 2 \langle  \zeta^2 K_p (\nabla p_e+ \nabla_y p_e^1) , \nabla p_e + \nabla_y p_e^1\rangle_{\Omega_{s}\times Y_e}
 + \rho_f \|\partial_t u_f(s)\zeta(s)\|^2_{L^2(\Omega\times Y_f)}  \\
 &+ 2 \gamma \rho_f \|\partial_t u_f \, \zeta \|^2_{L^2(\Omega_{s}\times Y_f)}
+2 \mu
\|  \be_y(\partial_t u_f)\, \zeta\|^2_{L^2(\Omega_s \times Y_f)}
 \\ &\leq 2|Y|\liminf \limits_{\ve \to 0} \mathcal E^\ve(u_e^\ve, p_e^\ve, \partial_t u^\ve_f)
 \leq 2|Y|\limsup\limits_{\ve \to 0} \mathcal E^\ve(u_e^\ve, p_e^\ve, \partial_t u^\ve_f)  \\
 &
   =  \langle {\bf E}(y, b_{e,3}) (\be(u_{e0}) +  \be_y(\hat u_{e0})), \be(u_{e0}) +  \be_y(\hat u_{e0}) \rangle_{\Omega \times Y_{e}}
\\
&+\rho_e \|u^1_{e0}\|^2_{L^2(\Omega\times Y_e)}  + \rho_p \|p_{e0}\|^2_{L^2(\Omega\times Y_e)}     +
\rho_f \|u^1_{f0}\|^2_{L^2(\Omega\times Y_f)}  \\
& + 2|Y| \langle F_u,  \partial_t u_e \zeta^2 \rangle_{(\partial \Omega)_s}+ 2|Y| \langle F_p,  p_e  \zeta^2  \rangle_{(\partial \Omega)_s}
\end{aligned}
\end{equation}
for $s \in (0,T]$. Here we used the  weak and the weak two-scale  convergences of $\partial_t u_e^\ve$, $\be(u_e^\ve)$,
$\be(\partial_t u_e^\ve)$,  $p_e^\ve$, and $\nabla p_e^\ve$, and the weak two-scale convergence of $\partial_t u_f^\ve$ and $\ve\,  \be(\partial_t u_f^\ve)$.
Considering the limit equations  \eqref{macro_ue}--\eqref{macro_two-scale_uf} for  $u_e$,  $p_e$, and $\partial_t u_f$ and taking $(\partial_t u_e \, \zeta^2, p_e\, \zeta^2, \partial_t u_f\, \zeta^2)$ as a test function  yields
\begin{equation}\label{macro_test_1}
\begin{aligned}
&\;  \frac 12\rho_e \| \partial_t u_e(s) \zeta(s)\|^2_{L^2(\Omega \times Y_e)}  - \frac 12\rho_e \| \partial_t u_e(0) \|^2_{L^2(\Omega \times Y_e)}+ \gamma  \rho_e\| \partial_t u_e \zeta\|^2_{L^2(\Omega_{s}\times Y_e)}
\\
&+ \langle {\bf E}(y, b_{e,3}) \big(\be( u_e)+ \be_y(u_e^1)\big),  \be(\partial_t u_e)\,  \zeta^2 \rangle_{\Omega_{s}\times Y_e}  +
\langle  \nabla p_e+ \nabla_y p_e^1,  \partial_t u_e \, \zeta^2 \rangle_{\Omega_{s}\times Y_e}
\\
& +
\frac 12\rho_p \|p_e(s) \zeta^2(s)\|^2_{L^2(\Omega\times Y_e)}
- \frac 12\rho_p \|  p_e(0) \|^2_{L^2(\Omega \times Y_e)}+ \gamma\rho_p  \| p_e \zeta\|^2_{L^2(\Omega_{s}\times Y_e)} \\
& + \langle K_p(x,y)( \nabla p_e+ \nabla_y p_e^1)  -  \partial_t u_e,  \nabla p_e \zeta^2 \rangle_{\Omega_{s}\times Y_e}
+\frac 12 \rho_f\|\partial_t u_f(s)\zeta(s)\|^2_{L^2(\Omega\times Y_f)}
 \\ & - \frac 12 \rho_f\|\partial_t u_f(0)\|^2_{L^2(\Omega\times Y_f)} +  \gamma\rho_f  \| \partial_t u_f \zeta\|^2_{L^2(\Omega_{s}\times Y_f)} -  \langle   p_e^1,  \partial_t u_f \cdot n  \, \zeta^2\rangle_{\Omega_{s}\times \Gamma}
 \\ & + \mu
\langle \be_y(\partial_t u_f),   \be_y(\partial_t u_f)\zeta^2 \rangle_{\Omega_{s}\times Y_f}    =|Y| \langle F_u, \partial_t u_e \, \zeta^2  \rangle_{(\partial \Omega)_s} +  |Y|\langle F_p, p_e \, \zeta^2 \rangle_{(\partial \Omega)_s}
\end{aligned}
\end{equation}
for $s \in (0,T]$. From  equation \eqref{correctos_pe} for the corrector $p_e^1$ we obtain
\begin{equation}\label{macro_corrector_pe1}
\begin{aligned}
 -  \langle   p_e^1,  \partial_t u_f \cdot n \, \zeta^2\rangle_{\Omega_{s}\times \Gamma} =  \langle K_p(x,y)( \nabla p_e+ \nabla_y p_e^1)  -  \partial_t u_e,  \nabla_y p_e^1 \, \zeta^2 \rangle_{\Omega_{s}\times Y_e}.
\end{aligned}
\end{equation}
Considering    equation \eqref{correctos_ue} for the corrector $u_e^1$ and taking   $\partial_t u_e^1 \,  \zeta^2$ as a test function yields
\begin{equation}\label{macro_corrector_ue1}
\begin{aligned}
&\; \;  \; \; \langle {\bf E}(y, b_{e,3}) \big(\be( u_e)+ \be_y(u_e^1)\big),  \be(\partial_t u_e) \zeta^2 \rangle_{\Omega_{s}\times Y_e} \\
& = \langle {\bf E}(y, b_{e,3}) \big(\be( u_e)+ \be_y(u_e^1)\big),  (\be(\partial_t u_e) +  \be_y(\partial_t u_e^1))\, \zeta^2 \rangle_{\Omega_{s}\times Y_e}  \\
& =
\frac 12 \left \langle {\bf E}(y, b_{e,3}) \big(\be(u_e(s))+ \be_y(u_e^1(s))\big)\zeta^2(s) ,  \be(u_e(s)) +  \be_y(u_e^1(s)) \right\rangle_{\Omega\times Y_e} \\
&  -
\frac 12 \left \langle {\bf E}(y, b_{e,3}) \big(\be( u_e(0))+ \be_y(u_e^1(0))\big),   \be(u_e(0)) +  \be_y(u_e^1(0))\right \rangle_{\Omega\times Y_e}\\
& + \frac 12 \left\langle \big(2 \gamma {\bf E}(y, b_{e,3}) -  \partial_t {\bf E}(y, b_{e,3}) \big) \zeta^2\big (\be( u_e)+ \be_y(u_e^1)\big),  \be(u_e) +  \be_y(u_e^1) \right\rangle_{\Omega_s\times Y_e}.
\end{aligned}
\end{equation}
Combining \eqref{macro_test_1}--\eqref{macro_corrector_ue1}  with  \eqref{lim_functional_1}  and using that $\be_y(u_e^1(0))= \be_y(\hat u_{e0})$  in $\Omega\times Y_e$ we obtain
\begin{equation*}
\begin{aligned}
\mathcal E(u_e, p_e, \partial_t u_f) \leq \liminf\limits_{\ve\to 0} \mathcal E^\ve(u_e^\ve, p_e^\ve, \partial_t u^\ve_f)
\leq \limsup\limits_{\ve\to0} \mathcal E^\ve(u_e^\ve, p_e^\ve, \partial_t u^\ve_f) =  \mathcal E(u_e, p_e, \partial_t u_f)
\end{aligned}
\end{equation*}
 and thus conclude  that  $ \lim\limits_{\ve\to 0} \mathcal E^\ve(u_e^\ve, p_e^\ve, \partial_t u^\ve_f)=\mathcal E(u_e, p_e, \partial_t u_f)$. Then
 the strong two-scale convergence relations stated in  Lemma follow by lower semicontinuity arguments.
\end{proof}

\section{Derivation of macroscopic equations for  reaction-diffusion-convec\-tion problem}\label{macro_diffusion}
The homogenized coefficients in the reaction-diffusion-convection equations,  which will be obtained in the derivation of the macroscopic problem,  are defined by
\begin{equation}\label{dhom_uhom}
\begin{aligned}
& D_{b,ij}^{\rm hom} =\!\frac 1 {|Y|} \int_{Y_e}\! \big[ D^{ij}_b + \big(D_b\nabla_y \omega^j_b (y)\big)_i \big] dy, \\ &
D_{ij}^{\rm hom} = \dashint_{Y} \big[ D^{ij}(y) + \big(D(y)\nabla_y \omega^j (y)\big)_i \big] dy, \\
& {\rm v}_f(t,x) = \frac 1 {|Y|}\int_{Y_f} \big[\mathcal G(\partial_t u_f(t,x,y)) -  D_f \nabla_y z(t,x,y) \big]dy,
\end{aligned}
\end{equation}
with $\omega_b$ and $\omega$ being   $Y$-periodic solutions of the unit cell problems
\begin{equation}\label{unit_cell_prob_b}
\begin{aligned}
{\rm div} \big( D_b (\nabla_y \omega^j_b(y) + e_j)\big) = 0 \quad \text{ in } Y_e,\\
D_b (\nabla_y \omega^j_b(y) + e_j) \cdot n = 0 \quad \text{ on } \Gamma,
\end{aligned}
\end{equation}
and
\begin{equation}\label{unit_cell_prob_c}
\begin{aligned}
&{\rm div}_y ( D(y) (\nabla_y \omega^j + e_j)) =0 \quad &&  \text{ in } Y\setminus \tilde \Gamma, \\
&D_e (\nabla_y \omega^j_e + e_j)\cdot n =0,  \quad  D_f (\nabla_y \omega^j_f + e_j)\cdot n =0  && \text{ on } \widetilde \Gamma,
\end{aligned}
\end{equation}
where $\omega_e^j(y) = \omega^j(y)$ for $y\in Y_e$ and $\omega_f^j(y) = \omega^j(y)$ for $y\in Y_f$,  and  $z$   is a $Y$-periodic solution of
\begin{equation}\label{unit_cell_prob_cf}
\begin{aligned}
&{\rm div}_y ( D_f \nabla_y z - \mathcal G(\partial_t u_f)) =0 \quad && \text{ in } Y_f, \\
& ( D_f \nabla_y z - \mathcal G(\partial_t u_f))\cdot n = 0 \quad && \text{ on } \Gamma.
\end{aligned}
\end{equation}
Here  $D(y) = \begin{cases} D_e \quad \text{ in } Y_e\\
D_f \quad \text{ in } Y_f
\end{cases}. $
  Notice that the definition of $D_b^{\rm hom}$ and $D^{\rm hom}$, and the fact that $D_b^{jj} >0$, with $j=1,2,3$,  $D_e>0$, $D_f>0$, and $\omega_b^{j}$, $\omega^{j}$ are solutions of the unit cell problems
\eqref{unit_cell_prob_b} and \eqref{unit_cell_prob_c} ensure that  $D_b^{\rm hom}$ and $D^{\rm hom}$ are positive definite.

Next we derive macroscopic equations for  the limit functions  $b_e$ and $c$ defined in \eqref{convergences_111}.
The main difficulty  in  the proof is to show the convergence of the non-linear functions depending on  the displacement gradient.
\begin{theorem}
Solutions of the microscopic problem  \eqref{eq_codif},~\eqref{exbou_co}  converge to solutions  $b_e, c \in L^2(0,T; H^1(\Omega))$
of the macroscopic equations
\begin{equation}\label{macro_concentration}
\begin{aligned}
&\vartheta_e \, \partial_t b_e - {\rm div} ( D_{b }^{\rm hom} \nabla b_e )
 \\ &  \hspace{1.6 cm } =  \vartheta_e  \dashint_{Y_e} g_b(c, b_e, W(b_{e,3},y) \, \be(u_e)) dy + \vartheta_\Gamma \,  P(b_e)  \quad && \text{\rm in } \Omega_T, \\
& \partial_t c - {\rm div} ( D^{\rm hom} \nabla c  -{\rm v}_f \,  c)
 \\ & \hspace{1.6 cm }  =\vartheta_f \, g_{f}(c)+ \vartheta_e \, \dashint_{Y_e} g_e(c, b_e,   W(b_{e,3}, y) \, \be(u_e)) dy  && \text{\rm in } \Omega_T,\\
& D_{b}^{\rm hom} \nabla b_e   \cdot n = F_b(b_e)  && \text{\rm on }  (\partial \Omega)_T, \\
& ( D^{\rm hom} \nabla c  - {\rm v}_f \, c)  \cdot n = F_c(c)  \quad && \text{\rm on }  (\partial \Omega)_T, \\
& b(0,x) = b_0(x), \quad  c(0,x)= c_0(x) \quad && \text{\rm in }  \Omega,
\end{aligned}
\end{equation}
where
\begin{equation}\label{defi_w}
W(b_{e,3},y)=\big\{W_{klij}(b_{e,3}, y)\big\}_{k,l,i,j=1}^3 =\big\{{b}_{kl}^{ij} + (\be_y(w^{ij}(b_{e,3},y)))_{kl}\big\}_{k,l,i,j=1}^3
\end{equation}
with $w^{ij}$ being  solutions of the  unit cell problems  \eqref{unit_cell_ue_w}, and  ${\bf b}_{kl}  = e_k \otimes e_l$,  $\{e_k\}_{k=1}^3$ is the  canonical basis of $\mathbb R^3$.    \\
Here  $\vartheta_e=|Y_e|/|Y|$, $\vartheta_f=|Y_f|/|Y|$,  and $\vartheta_\Gamma = |\Gamma|/|Y|$.
We have the convergence in the following sense
\begin{equation*}
\begin{aligned}
& b_e^\ve \rightarrow b_e, \;  \; \;  \qquad &&\overline c^\ve \to c && \text{ strongly in } L^2(\Omega_T),  \\
& \nabla b_e^\ve \rightharpoonup \nabla b_e + \nabla_y b_e^1,  &&\nabla c^\ve \rightharpoonup \nabla c + \nabla_y c^1 && \text{ weakly two-scale}.
\end{aligned}
\end{equation*}
 \end{theorem}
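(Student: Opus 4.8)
The plan is to pass to the two-scale limit in the weak formulations \eqref{cd_one} and \eqref{cd_two}, using test functions of the form $\varphi(t,x)+\ve\varphi_1(t,x,x/\ve)$ with $\varphi\in C^\infty(\overline\Omega_T)$ and $\varphi_1\in C_0^\infty(\Omega_T;C^\infty_{\rm per}(Y))$, and to treat the nonlinear reaction and convection terms by combining the strong convergences $b_e^\ve\to b_e$, $c_e^\ve\to c$, $\overline c^\ve\to c$ in $L^2(\Omega_T)$ from Lemma~\ref{convergence_b_c} with the strong two-scale convergence $\be(u_e^\ve)\to\be(u_e)+\be_y(u_e^1)$ and $\partial_t u_f^\ve\to\partial_t u_f$ established in Section~\ref{strong_convergence}. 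First I would handle the linear parts: taking $\varphi_1=0$, the weak two-scale convergences $\nabla b_e^\ve\rightharpoonup\nabla b_e+\nabla_y b_e^1$ and $\nabla c^\ve\rightharpoonup\nabla c+\nabla_y c^1$ pass directly to the limit in the diffusion terms; the inflow term $\ve\langle P(b_e^\ve),\varphi\rangle_{\Gamma^\ve_T}$ converges, by the compactness theorem for two-scale convergence on oscillating surfaces together with the strong convergence of $b_e^\ve$, to $\vartheta_\Gamma\langle P(b_e),\varphi\rangle_{\Omega_T}$, and the external boundary terms $\langle F_b(b_e^\ve),\varphi\rangle_{(\partial\Omega)_T}$, $\langle F_c(c_e^\ve),\varphi\rangle_{(\partial\Omega)_T}$ converge by the strong convergence of the traces of the extensions of $b_e^\ve$ and $c_e^\ve$. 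Choosing then $\varphi=0$ and $\varphi_1$ periodic, and using linearity in $\nabla_y b_e^1$, $\nabla_y c^1$, I recover the cell problems \eqref{unit_cell_prob_b} and \eqref{unit_cell_prob_c}, so that $b_e^1=\sum_j\partial_{x_j}b_e\,\omega_b^j$ and $c^1=\sum_j\partial_{x_j}c\,\omega^j$ plus a transport corrector; substituting back yields the effective matrices $D_b^{\rm hom}$, $D^{\rm hom}$ of \eqref{dhom_uhom}. For the convection term I would use the strong two-scale convergence of $\partial_t u_f^\ve$, the Lipschitz continuity of $\mathcal G$ (Assumption \textbf{A3.}), and the strong convergence $\overline c^\ve\to c$; the corrector contribution from the drift produces the auxiliary problem \eqref{unit_cell_prob_cf} for $z$ and hence the effective velocity ${\rm v}_f$.

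The main obstacle is the passage to the limit in the nonlinear reaction terms $g_b(c_e^\ve,b_e^\ve,\be(u_e^\ve))$ and $g_e(c_e^\ve,b_e^\ve,\be(u_e^\ve))$. Here I would argue via periodic unfolding: by Lemma~\ref{Lemma:apriori} the sequence $b_e^\ve$ is bounded in $L^\infty(\Omega_T)$ and, together with $c_e^\ve$ and $\be(u_e^\ve)$, strongly two-scale convergent, so the unfolded sequences converge strongly in $L^2(\Omega_T\times Y_e)$ and, along a subsequence, a.e.\ in $\Omega_T\times Y_e$. The continuity of $g_b$, $g_e$ (Assumption \textbf{A4.}) then gives a.e.\ convergence of the unfolded reaction terms to $g_b(c,b_e,\be(u_e)+\be_y(u_e^1))$ and $g_e(c,b_e,\be(u_e)+\be_y(u_e^1))$; the growth bounds in \textbf{A4.}, combined with the uniform $L^\infty$-bound on $b_e^\ve$ and the strong $L^2$-convergence of $c_e^\ve$ and $\be(u_e^\ve)$, give equi-integrability, so Vitali's theorem allows passing to the limit in the integrals. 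By linearity of the cell problem \eqref{unit_cell_ue_w}, the corrector $u_e^1$ admits the representation $u_e^1(t,x,y)=\sum_{k,l}\be_{kl}(u_e)(t,x)\,w^{kl}(b_{e,3}(t,x),y)$, whence $\be(u_e)+\be_y(u_e^1)=W(b_{e,3},y)\,\be(u_e)$ with $W$ as in \eqref{defi_w}; inserting this into the limit of the reaction terms and averaging over $Y_e$ produces the right-hand sides of \eqref{macro_concentration}.

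Finally, the initial conditions pass to the limit using the strong convergence in $C([0,T];L^2)$-type spaces, which follows from the equicontinuity estimates \eqref{estim_time_h} and the Kolmogorov/Simon compactness argument, and the regularity $b_e,c\in L^2(0,T;H^1(\Omega))$ together with the stated convergences follows directly from Lemma~\ref{convergence_b_c}. I would collect the limit identities for $b_e$, $c$, the corrector equations for $b_e^1$, $c^1$, $z$, and the explicit form of $W$ to obtain \eqref{macro_concentration}; uniqueness of the limit is not needed at this stage since it is established separately in Section~\ref{uniqueness_macro}, which also implies that the whole sequence (not merely a subsequence) converges.
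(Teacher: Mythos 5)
Your proposal is correct and follows essentially the same structure as the paper's proof: test functions $\phi_1(t,x)+\ve\phi_2(t,x,x/\ve)$, linear diffusion terms by weak two-scale convergence, the boundary term over $\Gamma^\ve$ by trace estimates and strong convergence, the convection term via the unfolded flow velocity, the corrector representations giving $D_b^{\rm hom}$, $D^{\rm hom}$, ${\rm v}_f$ and the matrix $W(b_{e,3},y)$, and full-sequence convergence deduced from uniqueness in Section~\ref{uniqueness_macro}.

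The one place where you take a genuinely different route is the passage to the limit in the nonlinear terms $g_b$ and $g_e$. The paper exploits the quantitative local-Lipschitz bounds stated in {\bf A4.} (the estimates for $|g_b(s_1,r_1,\xi_1)-g_b(s_2,r_2,\xi_2)|$ etc.) and directly bounds the $L^1(\Omega_T\times Y_e)$-difference between $g_b(\mathcal T_\ve^\ast c_e^\ve,\mathcal T_\ve^\ast b_e^\ve,\mathcal T_\ve^\ast\be(u_e^\ve))$ and $g_b(c,b_e,\be(u_e)+\be_y(u_e^1))$ by the $L^2$-distances of the arguments, a clean one-line estimate given the strong $L^2$-convergence of the unfolded sequences. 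You instead argue by pointwise a.e.\ convergence (along a subsequence) from the strong $L^2$-convergence of the unfolded fields, the continuity of $g_b$, $g_e$, and then Vitali's theorem using the growth bounds in {\bf A4.} together with the uniform $L^\infty$-bound on $b_e^\ve$ for equi-integrability. Both arguments are valid and both rest on {\bf A4.}; the paper's is more quantitative and avoids the extraction of a further a.e.\ subsequence, while yours is structurally more robust in that it would still apply if the Lipschitz estimates in {\bf A4.} were relaxed to mere continuity plus the stated growth bounds. Either route delivers the two-scale convergence of $g_b$, $g_e$ needed to identify the averaged reaction terms in \eqref{macro_concentration}.
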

\begin{proof}
We can rewrite  the  microscopic equation for $b_e^\ve$ as
\begin{equation}\label{cd_two_scale_b}
\begin{aligned}
& -\langle   b^\ve_e  \, \chi\big._{\Omega_e^\ve} ,\partial_t \varphi_1\rangle_{\Omega_T}+\langle D^\ve_b \nabla b^\ve_e, \nabla\varphi_1\,  \chi\big._{\Omega_e^\ve}  \rangle_{\Omega_{T}} -
\langle b_{e0}, \varphi_1\chi\big._{\Omega_e^\ve} \rangle_{\Omega_T}
\\ &
\qquad \qquad = \langle g_b(c^\ve_e,b_e^\ve, \be(u_e^\ve)), \varphi_1\,  \chi\big._{\Omega_e^\ve}  \rangle_{\Omega_{T}}
 + \ve \langle  P(b^\ve_e), \varphi_1 \rangle_{ \Gamma^\ve_T} +  \langle  F_b(b_e^\ve),  \varphi_1 \rangle_{(\partial \Omega)_T}
\end{aligned}
\end{equation}
with  $\varphi_1 = \phi_1(t,x) + \ve \phi_2(t,x, x/\ve)$, where $\phi_1 \in C^\infty(\overline\Omega_T)$ is such that  $\phi_1(T,x)=0$ for $x\in \overline\Omega$,  and $\phi_2 \in C^\infty_0(\Omega_T; C^\infty_{\rm per} (Y))$, and $\chi\big._{\Omega_e^\ve}$ is the characteristic function of $\Omega_e^\ve$.
Taking into account  the strong convergence of $b_e^\ve$ and $c_e^\ve$ and the two-scale convergence of $\nabla b_e^\ve$ and $\nabla c_e^\ve$, see Lemma~\ref{convergence_b_c},   together with  the strong two-scale convergence of $\be(u_e^\ve)$ we obtain
\begin{equation}\label{corrector_b_11}
\begin{aligned}
 -\langle |Y_e|   b_e,  \partial_t\phi_1 \rangle_{\Omega_T}+\langle D_b (\nabla b_e + \nabla_y b_e^1), \nabla\phi_1 + \nabla_y \phi_2 \rangle_{\Omega_{T}\times Y_e} \\
 -  \langle |Y_e|  b_{e0}, \phi_1 \rangle_{\Omega_T}
= \langle g_b(c,b_e, \be(u_e)+ \be_y(u_e^1)), \phi_1 \rangle_{\Omega_{T}\times Y_e}\\
 + \langle  P(b_e), \phi_1 \rangle_{\Omega_T \times \Gamma} +  |Y| \langle  F_b(b_e),  \phi_1 \rangle_{ (\partial \Omega)_T}.
\end{aligned}
\end{equation}
Here we used the fact that due to the strong two-scale convergence of $\be(u_e^\ve)$ we have
$$
\lim\limits_{\ve \to 0} \|\mathcal T^\ast_\ve({\bf e}(u_e^\ve)) -{\bf e}(u_e) -{\bf  e}_y(u_e^1)\|_{L^2(\Omega_{T}\times Y_e)} =0,
$$
where $\mathcal T^\ast_\ve$ is the periodic unfolding operator for the perforated domain $\Omega_e^\ve$, see e.g.\ \cite{CDDGZ}.
Assumptions on $g_b$ in {\bf A4.} and the \textit{a priori} estimates for  $c^\ve$, $b_e^\ve$, and $u_e^\ve$  ensure
\begin{equation}\label{estim_converg_g_b}
\begin{aligned}
& \|g_b(  \mathcal T_\ve^\ast(c^\ve_e),  \mathcal T_\ve^\ast(b^\ve_e),  \mathcal T_\ve^\ast (\be(u_e^\ve )))  - g_b(c, b_e, \be(u_e) + \be_y(u_e^1)) \|_{L^1(\Omega_T\times Y_e)}\\
&\qquad  \qquad \leq
C_1  \Big(  \| \mathcal T_\ve^\ast(c^\ve_e) -  c\|_{L^2(\Omega_T\times Y_e)}  +  \| \mathcal T_\ve^\ast(b^\ve_e) -  b_e\|_{L^2(\Omega_T\times Y_e)}
  \\
  & \hspace{3.4 cm }  +  \| \mathcal T_\ve^\ast(\be(u^\ve_e)) -  \be(u_e)  - \be_y(u_e^1)  \|_{L^2(\Omega_T\times Y_e)}  \Big),\\
&  \|g_b( c^\ve_e,  b^\ve_e,  \be(u_e^\ve ))  \|_{L^2(\Omega^\ve_{e,T})} \leq C_2,
 \end{aligned}
\end{equation}
where  $C_1= C_1\big(  \| \mathcal T_\ve^\ast (\be(u^\ve_e)) \|_{L^2(\Omega_T\times Y_e)}, \|\be(u_e)  +\be_y(u_e^1) \|_{L^2(\Omega_T\times Y_e)}, \| \mathcal T_\ve^\ast(c^\ve_e)\|_{L^2(\Omega_T\times Y_e)},\\   \| \mathcal T_\ve^\ast(b^\ve_e)\|_{L^2(\Omega_T\times Y_e)}, \|c\|_{L^2(\Omega_T)}, \,  \|b_e\|_{L^2(\Omega_T)}\big)$ and the constants $C_1$ and $C_2$ are independent of $\ve$.
Combining the estimates in  \eqref{estim_converg_g_b},   the definition of $\Omega_e^\ve$, and    the  strong convergence of $c^\ve_e$ and $b^\ve_e$ in $L^2(\Omega_T)$ and of $\mathcal T_\ve^\ast (\be(u^\ve_e))$  in $L^2(\Omega_T\times Y_e)$, along with  the properties of the unfolding operator,  we obtain
\begin{eqnarray}\label{converg_gb_two-scale}
 \lim\limits_{\ve \to 0} \!\int_{\Omega_{e,T}^\ve} \! g_b(  c^\ve_e,  b^\ve_e,  \be(u_e^\ve ))  \psi(t,x,  x/ \ve)  dx dt  =
\frac 1 {|Y|}\!\int\limits_{\Omega_{T}}\! \int\limits_{Y_e} g_b( c,  b_e,  \be(u_e ) + \be_y(u_e^1)) \psi  dy dx dt  \nonumber \\
 + \!\frac 1 {|Y|}\! \lim\limits_{\ve \to 0} \!  \int\limits_{\Omega_{T}}\!\!\int\limits_{Y_e}\! \big[g_b(  \mathcal T_\ve^\ast(c^\ve_e),  \mathcal T_\ve^\ast(b^\ve_e),  \mathcal T_\ve^\ast (\be(u_e^\ve ))) - g_b(c, b_e, \be(u_e) + \be_y(u_e^1)) \big]  \mathcal T_\ve^\ast(\psi)   dy dx dt  \nonumber \\
 = \frac 1 {|Y|}\int_{\Omega_{T}}\int_{Y_e} g_b( c,  b_e,  \be(u_e ) + \be_y(u_e^1))\,  \psi\,   dy dx dt
\end{eqnarray}
for all $\psi \in C^\infty_0(\Omega_T; C_{\rm per} (Y))$.  Thus using the estimate for $\|g_b( c^\ve_e, b^\ve_e,  \be(u_e^\ve ))  \|_{L^2(\Omega^\ve_{e,T})}$  in   \eqref{estim_converg_g_b} we conclude
$$
g_b(c_e^\ve,b_e^\ve, {\bf e}(u_e^\ve)) \to g_b(c_e,b_e,{\bf  e}(u_e)+ {\bf e}_y(u_e^1)) \quad \text{ two-scale} .
$$
To show the convergence of  the  boundary integral over $\Gamma^\ve$ we used the Lipschitz continuity of $P$ and the trace estimate
\begin{equation}
\begin{aligned}
\ve \| b_e^\ve - b_e \|^2_{L^2(\Gamma^\ve_T)}  & \leq C_1 \Big(  \|b_e^\ve - b_e \|^2_{L^2(\Omega^\ve_{e,T})}  + \ve^2  \|\nabla (b_e^\ve - b_e) \|^2_{L^2(\Omega^\ve_{e,T})} \Big) \\ &  \leq C_2  \Big(  \|b_e^\ve - b_e \|^2_{L^2(\Omega^\ve_{e,T})}  + \ve^2  \big[  \|\nabla b_e^\ve\|_{L^2(\Omega^\ve_{e,T})} +\| \nabla b_e \|^2_{L^2(\Omega^\ve_{e,T})} \big] \Big) .
\end{aligned}
\end{equation}
Then  due to the strong convergence of $b^\ve_e$  in $L^2(\Omega_T)$, the regularity of $b_e$, i.e.\ $b_e \in L^2(0, T; H^1(\Omega))$,  and  the boundedness of $\nabla b^\ve_e$ in $L^2(\Omega_{e,T}^\ve)$, uniformly in $\ve$, we obtain
$$
\lim\limits_{\ve \to 0} \ve\| P(b_e^\ve) - P(b_e) \|^2_{L^2(\Gamma^\ve_T)}  \leq  C \lim\limits_{\ve \to 0} \ve\,  \| b_e^\ve - b_e \|^2_{L^2(\Gamma^\ve_T)}   =0.
$$
Taking in \eqref{corrector_b_11} first $\phi_1\equiv 0$ and then $\phi_2\equiv 0$ and considering such $\phi_1$ that $\phi_1(0) =0$ we obtain macroscopic equations  for $b_e$ in~\eqref{macro_concentration}.
The standard arguments for parabolic equations imply that $\partial_t b_e \in L^2(0, T; H^1(\Omega)^\prime)$.
 Combining this with  the fact that $b_e\in L^2(0, T; H^1(\Omega))$, see Lemma~\ref{convergence_b_c},  we conclude that $b_e \in C([0,T]; L^2(\Omega))$.
Then from \eqref{corrector_b_11}  we obtain that  $b_e$  satisfies the  initial condition.

The properties of $\Omega_f^\ve$ and of  the unfolding operator $\mathcal T^\ast_{\ve, f}$ for the domain $\Omega_f^\ve$ yield
\begin{equation*}
\begin{aligned}
& \lim\limits_{\ve \to 0 } \int_{\Omega_{f,T}^\ve} \mathcal G(\partial_t u_f^\ve )\, \psi(t,x, x/\ve) \, dx dt =\lim\limits_{\ve \to 0 } \frac 1{|Y|} \int_{\Omega_T}\int_{Y_f} \mathcal G( \mathcal T_{\ve,f}^\ast (\partial_t u_f^\ve )) \mathcal T^\ast_{\ve,f} ( \psi ) dy dx dt
\\& \quad =  \frac 1 {|Y|}\int_{\Omega_T}\int_{Y_f} \mathcal G(\partial_t u_f) \psi \, dy dx dt
\\ &\quad +\frac 1 {|Y|} \lim\limits_{\ve \to 0 }   \int_{\Omega_T}\int_{Y_f} \left[\mathcal G( \mathcal T_{\ve,f}^\ast (\partial_t u_f^\ve ))  - \mathcal G(\partial_t u_f) \right]\mathcal T^\ast_{\ve,f} (\psi) \, dy dx dt
\end{aligned}
\end{equation*}
for all $\psi \in C_0^\infty(\Omega_T; C_{\rm per}(Y))$.  Using the Lipschitz continuity of $\mathcal G$ and the strong convergence of $\mathcal T_{\ve,f}^\ast (\partial_t u_f^\ve )$, ensured by the strong two-scale convergence of $\partial_t u_f^\ve$, we obtain
\begin{equation*}
\begin{aligned}
& \lim\limits_{\ve \to 0 }   \int_{\Omega_T\times Y_f} \left[\mathcal G( \mathcal T_{\ve,f}^\ast (\partial_t u_f^\ve ))  - \mathcal G(\partial_t u_f) \right] \mathcal T_{\ve,f}^\ast (\psi) dy dx dt
\\
& \leq  C \lim\limits_{\ve \to 0 }  \|  \mathcal T_{\ve,f}^\ast (\partial_t u_f^\ve ) - \partial_t u_f \|_{L^2(\Omega_T\times Y_f)} \|\psi \|_{L^2(\Omega_T\times Y_f)}  =0.
 \end{aligned}
\end{equation*}
Thus taking into account  the boundedness of  $ \mathcal G(\partial_t u_f^\ve )$ we conclude
\begin{equation*}
\begin{aligned}
& \mathcal G(\partial_t u_f^\ve )\to \mathcal G(\partial_t u_f) && \text{ two-scale}.
\end{aligned}
\end{equation*}
In the same way as for $g_b$, the assumptions in {\bf A4.} ensure
\begin{equation}\label{estim_converg_g_e}
\begin{aligned}
& \|g_e(  \mathcal T_\ve^\ast(c^\ve_e),  \mathcal T_{\ve}^\ast(b^\ve_e),  \mathcal T_\ve^\ast (\be(u_e^\ve )))  - g_e(c, b_e, \be(u_e) + \be_y(u_e^1)) \|_{L^1(\Omega_T\times Y_e)}\\ &  \leq
C  \Big(  \| \mathcal T_{\ve}^\ast(c^\ve_e) -  c\|_{L^2(\Omega_T\times Y_e)}\\
&+  \| \mathcal T_{\ve}^\ast(b^\ve_e) -  b_e\|_{L^2(\Omega_T\times Y_e)}
  +  \| \mathcal T_{\ve}^\ast(\be(u^\ve_e)) -  \be(u_e)  - \be_y(u_e^1)  \|_{L^2(\Omega_T\times Y_e)}  \Big),
 \end{aligned}
\end{equation}
where $C = C\big(  \| \mathcal T_\ve^\ast (\be(u^\ve_e)) \|_{L^2(\Omega_T\times Y_e)},   \|\be(u_e)  +\be_y(u_e^1) \|_{L^2(\Omega_T\times Y_e)}, \| \mathcal T_\ve^\ast(c^\ve_e)\|_{L^2(\Omega_T\times Y_e)}, \\    \| \mathcal T_\ve^\ast(b^\ve_e)\|_{L^2(\Omega_T\times Y_e)},  \|c\|_{L^2(\Omega_T)},  \|b_e\|_{L^2(\Omega_T)} \big)$.  The {\it a priori} estimates for $c^\ve$, $b_e^\ve$, and $u_e^\ve$  and assumptions on $g$ in {\bf A4.}  imply
$$
\|g_e(  c^\ve_e,  b^\ve_e,  \be(u_e^\ve )) \|_{L^2(\Omega_{e,T}^\ve)} \leq C,
$$
with a constant $C$ independent of $\ve$.
Then  estimate \eqref{estim_converg_g_e} and  the  strong convergence of $c^\ve_e$ and $b^\ve_e$ in $L^2(\Omega_T)$ and of $\mathcal T_\ve^\ast (\be(u^\ve_e))$  in $L^2(\Omega_T\times Y_e)$,
together with calculations similar to \eqref{converg_gb_two-scale}, yield
\begin{equation*}
g_e(c_e^\ve, b_e^\ve, \be(u_e^\ve)) \to g_e(c, b_e, \be(u_e) + \be_y(u_e^1)) \quad  \text{ two-scale}.
\end{equation*}
\noindent Considering  $\varphi_2(t,x)= \psi_1(t,x) + \ve \psi_2\big(t,x, \frac x \ve\big)$,  with  
$\psi_1 \in C_0^\infty(0,T; C^\infty(\overline\Omega))$,  $\psi_2 \in C^\infty_0(\Omega_T; C^\infty_{\rm per} (Y\setminus \widetilde \Gamma))$, as a test function in
\eqref{cd_two} we obtain
\begin{equation*}\label{cd_two_scale_c}
\begin{aligned}
  -\langle    c^\ve_e  \chi_{\Omega_e^\ve},  \partial_t\varphi_2\rangle_{\Omega_T}+\langle D_c \nabla c^\ve_e, \nabla\varphi_2 \chi_{\Omega_e^\ve}  \rangle_{\Omega_{T}}
-\langle g_c(c^\ve_e,b_e^\ve, \be(u_e^\ve)), \varphi_2 \chi_{\Omega_e^\ve}  \rangle_{\Omega_{T}} \;  \qquad
\\  -\langle   c^\ve_f  \chi_{\Omega_f^\ve}, \partial_t \varphi_2  \rangle_{\Omega_T}
  +\langle D_f \nabla c^\ve_f - \mathcal{G}(\partial_t u^\ve_f) c_f^\ve, \nabla\varphi_2 \chi_{\Omega_f^\ve} \rangle_{\Omega_{T}}
-\langle g_f(c^\ve_f), \varphi_2 \chi_{\Omega_f^\ve} \rangle_{\Omega_{T}} \\
= \langle F_c(c^\ve_e),  \varphi_2 \rangle_{(\partial\Omega)_T}.
\end{aligned}
\end{equation*}
The two-scale  and  the strong convergences of $c_e^\ve$ and $c_f^\ve$ together with strong two-scale convergence of
$\be(u_e^\ve)$ and $\partial_t u_f^\ve$  ensure  that 
\begin{equation*}
\begin{aligned}
& - \langle |Y_e|  c,  \partial_t  \psi_1  \rangle_{\Omega_T}+\langle D_c (\nabla c+ \nabla_y c^1), \nabla\psi_1  + \nabla_y \psi_2  \rangle_{\Omega_{T}\times Y_e}
\\ & -\langle |Y_f|   c,  \partial_t \psi_1   \rangle_{\Omega_T}+
\langle D_f (\nabla c + \nabla_y c^1) - \mathcal G(\partial_t u_f) c, \nabla\psi_1 + \nabla_y \psi_2 \rangle_{\Omega_{T}\times Y_f}\\ &-\langle g_c(c, b_e, \be(u_e)+ \be(u_e^1)), \psi_1   \rangle_{\Omega_{T}\times Y_e} -\langle g_f(c), \psi_1  \rangle_{\Omega_{T}\times Y_f}=|Y| \langle F_c(c),  \psi_1 \rangle_{(\partial\Omega)_T}.
\end{aligned}
\end{equation*}
Letting $\psi_1= 0$ yields
\begin{equation}\label{corrector_c1}
\begin{aligned}
\langle D_c (\nabla c+ \nabla_y c_e^1),  \nabla_y \psi_2  \rangle_{\Omega_{T}\times Y_e}+
\langle D_f (\nabla c + \nabla_y c_f^1),  \nabla_y \psi_2 \rangle_{\Omega_{T}\times Y_f}
\\   -\langle\mathcal G(\partial_t u_f) c,  \nabla_y \psi_2  \rangle_{\Omega_{T}\times Y_f}=0,
\end{aligned}
\end{equation}
where $c_l^1(t,x,y)= c^1(t,x,y)$ for $y\in Y_l$ and  $(t,x) \in \Omega_T$, with $l=e,f$. Taking into account  the structure of  equation  \eqref{corrector_c1} we represent $c^1$ in the form
\begin{equation*}
\begin{aligned}
& c^1_e(t,x,y)  = \sum_{j=1}^3 \partial_{x_j} c(t,x)\,  \omega^j(y)  && \text{for } \; \;   (t,x) \in \Omega_T, \; y\in Y_e,\\
& c^1_f(t,x,y)  = \sum_{j=1}^3 \partial_{x_j} c(t,x)\,  \omega^j(y) + c(t,x) \, z(t,x,y) \quad  && \text{for } \; \;   (t,x) \in \Omega_T, \; y\in Y_f,
\end{aligned}
\end{equation*}
where  $\omega^j$, with $j=1, 2, 3$, and $z$ are solutions of the unit cell problems
\eqref{unit_cell_prob_c} and \eqref{unit_cell_prob_cf}, respectively.
Then choosing $\psi_2 =0$ we obtain  the macroscopic equations for $c$ in \eqref{macro_concentration}.
\end{proof}

\section{Well-posedness of the limit problem. Uniqueness of a weak solution}\label{uniqueness_macro}
To ensure that the whole sequence of solutions of microscopic problem converges we shall prove the uniqueness of a solution of the limit  problem \eqref{macro_ue}--\eqref{macro_two-scale_uf}, \eqref{macro_concentration}.
In fact we are going to prove, using the contraction arguments, that the limit problem is well-posed and in particular has a unique solution.

\medskip\noindent
We consider an operator $\mathcal K$ on $L^\infty(0,T; H^1(\Omega))\times L^\infty(0,T; L^2(\Omega))$  given by  $(u^{j}_{e}, \partial_t u^{j}_{f})$  $= \mathcal K (u_{e}^{j-1},  \partial_t u^{j-1}_{f})$, where for given $(u_{e}^{j-1}, \partial_t u_{f}^{j-1})$ we first define $b^{j}_{e}, c^{j}$ as a solution of  \eqref{macro_concentration} with $(u_{e}^{j-1}, \partial_t u_{f}^{j-1})$ in place of
$(u_{e},\partial_t u_{f})$ and then  $(u^{j}_{e}, p_e^{j}, u_{f}^{j}, \pi_f^j)$ are  solutions of  \eqref{macro_ue}--\eqref{macro_two-scale_uf}, with $b_{e}^{j}$ in place of
$b_{e}$.
We denote $\widetilde c^j= c^{j} - c^{j-1}$, $\widetilde b^j_e= b^{j}_e - b^{j-1}_e$, $\widetilde u_{e}^{j-1}= u_e^{j-1} - u_e^{j-2}$, $\widetilde p_{e}^{j-1}= p_e^{j-1} - p_e^{j-2}$, and  $\widetilde u_{f}^{j-1}= u_f^{j-1} - u_f^{j-2}$.
To prove the existence of a unique solution of problem \eqref{macro_ue}--\eqref{macro_two-scale_uf}, \eqref{macro_concentration} we derive a contraction inequality  and show that the operator $\mathcal K$ has a fixed point.

First we obtain  estimates for solutions of the reaction-diffusion-convection system \eqref{macro_concentration}.
\begin{lemma}\label{estim_uniq_macro_bc}
Any two consecutive  iterations
$$
(u_{e}^{j-1},    \partial_t u_{f}^{j-1}), \ (b_{e}^j, c^j) \ \   \hbox{and}\ \   (u_{e}^{j-2},   \partial_t u_{f}^{j-2}), \ (b_{e}^{j-1}, c^{j-1})
 $$
 for the  limit  problem \eqref{macro_ue}--\eqref{macro_two-scale_uf},~\eqref{macro_concentration}   satisfy the following estimates
\begin{equation}
\begin{aligned}
&\| b_{e}^j \|_{L^\infty(0,T; L^\infty(\Omega))}  +\| c^j \|_{L^\infty(0,T; L^\infty(\Omega))}  + \| b_{e}^{j-1} \|_{L^\infty(0,T; L^\infty(\Omega))}
  \\ & \hskip 3cm+\| c^{j-1} \|_{L^\infty(0,T; L^\infty(\Omega))}  \leq C,  \\ &
  \|\widetilde  b_{e}^j\|_{L^\infty(0,s; L^\infty(\Omega))} + \| \widetilde c^j\|_{L^\infty(0, s; L^2(\Omega))}
  \\ & \hskip 3cm \leq C\big[ \|{\bf e}(\widetilde u_{e}^{j-1})\|_{L^{1 + \frac 1 \sigma}(0,s; L^2(\Omega))} +\| \partial_t \widetilde u_{f}^{j-1} \|_{L^2(\Omega_s\times Y_f)}  \big]
\end{aligned}
\end{equation}
with an arbitrary  $s\in (0,T]$  and   any  $0 < \sigma< 1/9$, the constant $C$ being independent of $s$.
\end{lemma}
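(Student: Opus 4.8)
The plan is to mirror, at the macroscopic level, the contraction argument used in the proof of Theorem~\ref{th:exist_uniq_micro}, exploiting the fact that the limit problem no longer carries the oscillating interface and the $\ve$-scaled boundary terms. \emph{Step 1 (uniform $L^\infty$ bounds).} For given $(u_e^{j-1},\partial_t u_f^{j-1})$ the pair $(b_e^j,c^j)$ solves a reaction–diffusion–convection system of the same structure as \eqref{eq_codif}: the averaged reaction terms $\dashint_{Y_e} g_b(c^j,b_e^j, W(b_{e,3}^j,y)\,\be(u_e^j))\,dy$ and $\dashint_{Y_e} g_e(\cdots)\,dy$ inherit the growth estimates of {\bf A4}, since $|W(\zeta,y)A|\le C|A|$ uniformly in $y,\zeta$ by \eqref{defi_w} and the well-posedness of \eqref{unit_cell_ue_w}; the extra volume term $\vartheta_\Gamma P(b_e^j)$ is Lipschitz of linear growth; and the effective field $\mathrm v_f$ of \eqref{dhom_uhom} is bounded because $\mathcal G$ is bounded ({\bf A3}) and \eqref{unit_cell_prob_cf} is uniformly elliptic. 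Non-negativity of $b_e^j$ and $c^j$ follows from the theorem on positively invariant regions \cite{Redlinger,Smoller} applied recursively, exactly as for the microscopic system. Testing with $b_e^j$ and $c^j$, using the Gagliardo–Nirenberg and trace inequalities as in \eqref{estim_GN_trace} and invoking Gronwall gives uniform $L^\infty(0,T;L^2(\Omega))\cap L^2(0,T;H^1(\Omega))$ bounds; the Alikakos iteration \cite[Lemma 3.2]{Alikakos}, carried out as in Lemma~\ref{Linf_L4}, then yields $\|b_e^j\|_{L^\infty(0,T;L^\infty(\Omega))}\le C$, together with $\|c^j\|_{L^\infty(0,T;L^4(\Omega))}\le C$ and finally $\|c^j\|_{L^\infty(0,T;L^\infty(\Omega))}\le C$; the same holds for $b_e^{j-1},c^{j-1}$. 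The decisive point is that $C$ depends only on $T$, the data and the model constants, not on the iteration.

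\emph{Step 2 (equations for differences and $L^2$-estimate).} The differences $\widetilde b_e^j$, $\widetilde c^j$ satisfy parabolic equations obtained by subtracting two copies of \eqref{macro_concentration}: their right-hand sides contain differences of the averaged reaction terms, controlled by the local-Lipschitz bounds of {\bf A4} precisely as in \eqref{eq_diff_b_weak}–\eqref{eq_diff_c_weak} with $\be(u_e)$ replaced by $W(b_{e,3},y)\,\be(u_e)$, and, in the $c$-equation, the convective term ${\rm div}\big(c^j(\mathrm v_f^{j-1}-\mathrm v_f^{j-2})\big)$. Lipschitz continuity of $\mathcal G$ ({\bf A3}) and continuous dependence of the cell solution $z$ of \eqref{unit_cell_prob_cf} on its datum give $\|\mathrm v_f^{j-1}-\mathrm v_f^{j-2}\|_{L^2(\Omega\times Y_f)}(t)\le C\|\partial_t\widetilde u_f^{j-1}(t)\|_{L^2(\Omega\times Y_f)}$. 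Testing with $\widetilde b_e^j$ and $\widetilde c^j$, estimating the quartic terms $\|\widetilde b_e^j\|_{L^4}^2$, $\|\widetilde c^j\|_{L^4}^2$ and the boundary terms on $\partial\Omega$ via Gagliardo–Nirenberg and trace inequalities, using Step 1 and the $L^\infty(0,T;L^4)$ bound on $c^{j-1}$ to absorb the gradient terms, and applying Gronwall, I obtain
\[
\|\widetilde b_e^j\|_{L^\infty(0,s;L^2(\Omega))}^2+\|\nabla\widetilde b_e^j\|_{L^2(\Omega_s)}^2+\|\widetilde c^j\|_{L^\infty(0,s;L^2(\Omega))}^2+\|\nabla\widetilde c^j\|_{L^2(\Omega_s)}^2\le C\big(\|\be(\widetilde u_e^{j-1})\|_{L^2(\Omega_s)}^2+\|\partial_t\widetilde u_f^{j-1}\|_{L^2(\Omega_s\times Y_f)}^2\big),
\]
with $C=C_2e^{C_3 s}\le C_2e^{C_3 T}$, hence independent of $s\in(0,T]$.

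\emph{Step 3 ($L^\infty$-estimate for $\widetilde b_e^j$).} I would test the $\widetilde b_e^j$-equation with $|\widetilde b_e^j|^{p-1}$, $p=2^k$, apply the Gagliardo–Nirenberg inequality to $|\widetilde b_e^j|^{p/2}$, and iterate the recursion in $k$ following \cite[Lemma 3.2]{Alikakos} exactly as in the proof of Lemma~\ref{Linf_contract} in the Appendix. Since the forcing for $\widetilde b_e^j$ is linear in $\be(\widetilde u_e^{j-1})$ (through $g_b$ and the uniform bound on $b_e^{j-1}$) and, after Step 2, in $\partial_t\widetilde u_f^{j-1}$, the iteration produces
\[
\|\widetilde b_e^j\|_{L^\infty(0,s;L^\infty(\Omega))}\le C\big(\|\be(\widetilde u_e^{j-1})\|_{L^{1+\frac1\sigma}(0,s;L^2(\Omega))}+\|\partial_t\widetilde u_f^{j-1}\|_{L^2(\Omega_s\times Y_f)}\big)
\]
for any $0<\sigma<1/9$, with $C$ depending on $T$ but not on $s$. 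Together with the $L^\infty(0,s;L^2)$ bound on $\widetilde c^j$ from Step 2 this is the asserted estimate.

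\emph{Main obstacle.} As in the microscopic case, the delicate part is running the Alikakos iteration so that both the final constant and the time exponent $1+1/\sigma$ (with $\sigma<1/9$) remain controlled and, above all, so that the constant does not blow up as $s\downarrow0$ — this uniformity is what will later let $\mathcal K$ be a contraction on a small time interval and be iterated over $[0,T]$. This forces one to keep every Gronwall constant in the form $Ce^{CT}$ and to treat the elastic-strain term as genuine forcing rather than absorbing it. The only feature absent from the microscopic argument is the macroscopic convective term in the $c$-equation, whose treatment reduces to the Lipschitz/continuous dependence of $\mathrm v_f$ (equivalently of $z$ in \eqref{unit_cell_prob_cf}) on $\partial_t u_f$, which is routine given uniform ellipticity.
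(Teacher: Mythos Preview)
Your overall strategy coincides with the paper's: Step~2 (the $L^2$-difference estimate via testing with $\widetilde b_e^j,\widetilde c^j$, Gagliardo--Nirenberg, and Gronwall) and Step~3 (the Alikakos iteration for $\|\widetilde b_e^j\|_{L^\infty(L^\infty)}$, run exactly as in Lemma~\ref{Linf_contract}) are carried out in the paper in essentially the same way, including the crucial point that the Gronwall constants are of the form $Ce^{CT}$ and hence independent of $s$. One item you leave implicit but which the paper spells out: in the macroscopic problem the strain enters through $W(b_{e,3},y)\,\be(u_e)$, so the difference of reaction terms produces an extra contribution $\widetilde W^j=W(b_{e,3}^j,\cdot)-W(b_{e,3}^{j-1},\cdot)$, and one needs $\|\widetilde W^j\|_{L^2(0,s;L^4(\Omega;L^2(Y_e)))}\le C\|\widetilde b_e^j\|_{L^2(0,s;L^4(\Omega))}$ (from the $C^1$-dependence of the cell solutions $w^{kl}$ on $b_{e,3}$) to close the inequality; this is a self-coupling term that is absorbed, not a forcing.

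The one genuine methodological difference is in Step~1, for the bound $\|c^j\|_{L^\infty(0,T;L^\infty(\Omega))}\le C$. You propose to continue the Alikakos iteration beyond $L^4$; the paper instead tests with $(c^j-M)^+$ and invokes the De~Giorgi--type level-set argument of \cite[Theorem~II.6.1]{LSU}. Your route works as well once $\|b_e^j\|_{L^\infty}$ is in hand (so that the $|r||\xi|$-part of $g_e$ becomes linear in $c^j$ and the convection $\mathrm v_f$ is bounded), but the De~Giorgi approach avoids the somewhat heavier $p$-iteration and deals more directly with the quadratic forcing $\|\be(u_e^{j-1})\|_{L^2}^2\|W^j\|_{L^\infty(L^2(Y_e))}^2(1+M^2)|\Omega_M(t)|^{1/2}$. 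Either argument yields the claimed bound with a constant depending only on $T$ and the data.
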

\begin{proof}
The boundedness of $b_e^j$ and $b_e^{j-1}$ can be obtained in the same way as the corresponding estimate for $b^\ve_e$  in  \eqref{estim_b_c}.
To show the boundedness of $c^j$ we consider  $(c^j- M)^{+}$, where $M \geq \max\{ \|c_0\|_{L^\infty(\Omega)}, 1\}$,  as a test function in  the equation for $c^j$  in \eqref{macro_concentration}. Using assumptions {\bf A4.}  on $g_e$, $g_f$, and $F_c$ we obtain
\begin{equation*}
\begin{aligned}
&\|(c^j(s) - M)^+\|^2_{L^2(\Omega)} + \|\nabla (c^j-M)^+\|^2_{L^2(\Omega_s)}
\\ &\qquad \leq
 M\big[\| b_e^j\|_{L^\infty(\Omega_s)} +1\big]\|(c^j- M)^+\|_{L^1(\Omega_s)}\\ & \qquad
+M\big[\| {\rm v}^{j-1}_f\|_{L^\infty(\Omega_s)} +1\big]\|\nabla (c^j- M)^+\|_{L^1(\Omega_s)}\\
& \qquad
+ C\big[\| b_e^j\|_{L^\infty(\Omega_s)} + C_\delta \|{\rm v}_f^{j-1} \|_{L^\infty(\Omega_s)} + 1 \big] \|(c^j- M)^+\|^2_{L^2(\Omega_s)}  \\ & \qquad
+  \| {\bf e}(u_e^{j-1})\|_{L^\infty(0,s; L^2(\Omega))} \| W^{j}\|_{L^\infty(\Omega_s; L^2(Y_e))} \|(c^j- M)^+\|^2_{L^2(0,s; L^4(\Omega))}\\ & \qquad
+   \| {\bf e}(u_e^{j-1})\|^2_{L^2(\Omega_s)} \| W^{j}\|^2_{L^\infty(\Omega_s; L^2(Y_e))} (1+ M^2) \int_0^s |\Omega_M(t)|^{\frac  12} dt,
\end{aligned}
\end{equation*}
for   $s\in (0,T)$, where $\Omega_M(t) =\{ x \in \Omega:  c^j(t,x) > M \}$ for $t \in (0,T)$.
 Here  ${\rm v}^{j-1}_f$ is defined in  the following way:
first we replace $\partial_t u_f$ in the unit cell problem \eqref{unit_cell_prob_cf}  with  $\partial_t u_f^{j-1}$ to obtain $z^{j-1}$ and then we  use  the third line  of \eqref{dhom_uhom}  with $z^{j-1}$ instead of $z$ to obtain ${\rm v}^{j-1}_f$.
The definition of ${\rm v}^{j-1}_f$    and of  $W^j=W(b_{e,3}^j,y)$ in \eqref{defi_w} together with  assumptions {\bf A1.} and   {\bf A3.} on  ${\bf E}$ and   $\mathcal G$ ensure that $\| {\rm v}^{j-1}_f\|_{L^\infty(\Omega_s)} \leq C$ and $\| W^{j}\|_{L^\infty(\Omega_s;  L^2(Y_e))} \leq C_1\|b_{e,3}^{j} \|_{L^\infty(\Omega_s)} \leq C_2$.
Using the embedding  $H^1(\Omega) \subset L^4(\Omega)$  we obtain
\begin{equation*}
\begin{aligned}
\|(c^j - M)^+\|^2_{L^\infty(0, s; L^2(\Omega))} + \|\nabla (c^j-M)^+\|^2_{L^2(\Omega_s)} \leq  C  M^2  \int_0^s \Big[ |\Omega_M(t)| +   |\Omega_M(t)|^{\frac 12} \Big]dt
\end{aligned}
\end{equation*}
for some $s\in (0,T]$.

Then  applying Theorem II.6.1 in \cite{LSU}   with $q=4(1+\gamma)$, $r=5(1+\gamma)/2$  and iterating over time intervals yield the boundedness of $c^j$ in $L^\infty(0,T; L^\infty(\Omega))$. The same calculations ensure also  the boundedness of $c^{j-1}$.

Considering the equations  for   $\widetilde b^j_e$ and $\widetilde c^j$   and using  $\widetilde b^j_e$ and $\widetilde c^j$ as test functions in these equations, we obtain
\begin{equation}\label{uniq_b_1}
\begin{aligned}
&\|\widetilde b_{e}^j(s)  \|^2_{L^2(\Omega)} +  \|\nabla  \widetilde b_{e}^j \|^2_{L^2(\Omega_s)} \leq
C_1  \|c^{j-1}\|_{L^\infty(0, s; L^2(\Omega))} \|\widetilde b_{e}^j \|^2_{L^2(0, s; L^4(\Omega))}
\\ &+  \| \widetilde b_{e}^j \|^2_{L^2(\Omega_s)}
+
C_2 \|b_{e}^j\|_{L^\infty(\Omega_s)}\big[ \|\widetilde b_{e}^j\|^2_{L^2(\Omega_s)} + \|\widetilde c^j\|^2_{L^2(\Omega_s)} \big]
\\ &
+ C_3 \|b_{e}^{j-1}\|_{L^\infty(\Omega_s)} \Big[\|{\bf e} (\widetilde u_{e}^{j-1}) \|^2_{L^2(\Omega_s)}
 +  \| \widetilde W^{j}\|^2_{L^2(0,s; L^4(\Omega;  L^2(Y_e)))} +  \|\widetilde b_{e}^j \|^2_{ L^2(\Omega_s)} \\
 &+  \|\widetilde b_{e}^j \|^2_{L^2(0,s; L^4(\Omega))}\Big]
+ C_4\|{\bf e}(u_{e}^{j-1})\|_{L^\infty(0, s; L^2(\Omega))}  \|\widetilde b_{e}^j \|^2_{L^2(0, s; L^4(\Omega))}
\end{aligned}
\end{equation}
where $\widetilde W^{j}=W(b_{e,3}^j,y) - W(b_{e,3}^{j-1},y)$, and
\begin{equation}\label{uniq_c_1}
\begin{aligned}
& \|\widetilde c^j(s)  \|^2_{L^2(\Omega)} +  \|\nabla \widetilde c^j\|^2_{L^2(\Omega_s)}
 \\ & \leq
 C_1\left[1+ \|b_{e}^{j-1}\|_{L^\infty(\Omega_s)} + \|c^{j}\|_{L^\infty(\Omega_s)} \right]\big[\| \widetilde c^j \|^2_{L^2(\Omega_s)}+ \| \widetilde b_{e}^j \|^2_{L^2(\Omega_s)} \big]
 \\ &
   +
C_2 \|{\bf e}(u_{e}^{j-1})\|_{L^\infty(0,s; L^2(\Omega))}  \big[ \|\widetilde b_{e}^j \|^2_{L^2(0, s; L^4(\Omega))} + \|\widetilde c^j \|^2_{L^2(0, s; L^4(\Omega))} \big]
\\ &+ C_3\left( \|b_{e}^{j-1}\|_{L^\infty(\Omega_s)}
+  \|c^{j-1}\|_{L^\infty(\Omega_s)}  \right)   \left[\|{\bf e} (\widetilde u_{e}^{j-1}) \|^2_{L^2(\Omega_s)} \right. \\
& \left. \qquad \; \; +  \| \widetilde W^{j}\|^2_{L^2(0,s; L^4(\Omega;  L^2(Y_e)))} +  \|\widetilde c^j \|^2_{ L^2(\Omega_s)} + \|\widetilde c^j \|^2_{ L^2(0,s; L^4(\Omega))}\right]
\\ &
+ C_\mu \left[ \|{\rm v}^{j-1}_f \|^2_{L^\infty(\Omega_s)} \| \widetilde c^j \|^2_{L^2(\Omega_s)} + \|c^{j-1}\|^2_{L^\infty(\Omega_s)}
 \| \widetilde {\rm v}^{j-1}_f \|^2_{L^2(\Omega_s)}\right]
\end{aligned}
\end{equation}
for  $s \in (0,T]$. Here we used  assumptions {\bf A4.} on the non-linear functions  $g_b$,  $g_e$,  $g_f$, $P$, $F_b$, and  $F_c$.
From the definition of ${\rm v}_f^{j-1}$ and $W^{j-1}$, the Lipschitz continuity of $\mathcal G$ and assumptions on ${\bf E}$, it follows that
$$
 \| \widetilde {\rm v}^{j-1}_f \|_{L^2(\Omega_s)}\leq C \| \partial_t \widetilde u_{f}^{j-1}\|_{L^2(\Omega_s \times Y_f)}, \quad
 \| \widetilde W^{j}\|_{L^2(0,s; L^4(\Omega;  L^2(Y_e)))}  \leq C \|\widetilde b_e^{j} \|_{L^2(0,s; L^4(\Omega))}.
$$
Adding the inequalities \eqref{uniq_b_1} and \eqref{uniq_c_1},  considering  the compactness of  embedding $H^1(\Omega)\subset L^4(\Omega)$,  and using the H\"older and Gronwall inequalities  yield
 \begin{equation}\label{estim_bc_2}
\begin{aligned}
\|\widetilde b_{e}^j\|_{L^\infty(0,s; L^2(\Omega))} +  \|\nabla   \widetilde b_{e}^j \|_{L^2(\Omega_s)}
+  \|\widetilde c^j \|_{L^\infty(0, s; L^2(\Omega))} +  \|\nabla  \widetilde c^j \|_{L^2(\Omega_s)} \qquad  \qquad \\
 \leq C\left[ \|{\bf e} (\widetilde u_{e}^{j-1}) \|_{L^2(\Omega_s)} +  \| \partial_t \widetilde u_{f}^{j-1}\|_{L^2(\Omega_s \times Y_f)}  \right].
\end{aligned}
\end{equation}
To derive  the estimate for the  $L^\infty$-norm of $\widetilde b^j_e$ we
use $(\widetilde b_e^j)^{p-1}$ as a test function in  \eqref{macro_concentration}:
\begin{equation}\label{estim_contr_22}
\begin{aligned}
& \frac 1 p \|\widetilde b_{e}^j(s)  \|^p_{L^p(\Omega)}  + \frac {4(p-1)}{p^2} \|\nabla |\widetilde b_{e}^j|^{\frac p 2} \|^2_{L^2(\Omega_s)} \leq
C_1\Big[ \|c^j\|_{L^\infty(0,s; L^2(\Omega))} \\
&+ \|{\bf e}(u_{e}^{j-1})\|_{L^\infty(0,s; L^2(\Omega))}\|W^{j-1}\|_{L^\infty(\Omega_s; L^2(Y_e))}\Big]  \| |\widetilde b_{e}^j|^{\frac p2}\|^2_{L^2(0,s; L^{4}(\Omega))}
\\
& + C_2 \| \widetilde b_{e}^j \|^p_{L^p(\Omega_s)} +  C_3 \|b_{e}^{j-1}\|^2_{L^\infty(\Omega_s)}  \big \langle |{\bf e}(\widetilde u_{e}^{j-1})|, |\widetilde b_{e}^j|^{p-1}\big \rangle_{\Omega_s}
\\ & +
C_4 \|b_{e}^{j-1}\|_{L^\infty(\Omega_s)}  \big \langle |{\bf e}(u_{e}^{j-1})|\|\widetilde W^{j}\|_{L^2(Y_e)}, |\widetilde b_{e}^j|^{p-1}\big \rangle_{\Omega_s}
\\ &
 +  C_5 \|b_{e}^{j-1}\|_{L^\infty(\Omega_s)} \Big[\frac 1 p  \|\widetilde c^j\|^p_{L^\infty(0, s; L^2(\Omega))} +
  \frac { p-1}p\||\widetilde b_{e}^j|^{\frac p 2}\|^2_{L^2(0,s; L^4(\Omega))} \Big]
\end{aligned}
\end{equation}
for  $s\in (0,T]$.
Using the Gagliardo-Nirenberg inequality
$$
\|w\|_{L^4(\Omega)} \leq C \|\nabla w \|^{\alpha}_{L^2(\Omega)} \| w \|^{1-\alpha}_{L^1(\Omega)},
$$
with $\alpha = 9/10$,    and making calculations   similar to  those in \eqref{contract_b_e_1} in Appendix   we obtain  the following estimate
\begin{equation}\label{estim_conts_33}
\begin{aligned}
 & \langle |{\bf e}(\widetilde u_{e}^{j-1})| , |\widetilde b_{e}^j|^{p-1} \rangle_{\Omega_s}\leq
\delta \frac { p-1}{p^2} \|\nabla |\widetilde b_{e}^j|^{\frac p 2} \|^2_{L^2(\Omega_s)}  \\
& \qquad \qquad + C_\delta \frac{(p-1) p^\beta}{p} \| |\widetilde b_{e}^j|^{\frac p 2} \|_{L^\infty(0, s; L^1(\Omega))}^2
+
C\frac 1 p \|{\bf e}(\widetilde u_{e}^{j-1})\|^p_{L^{1+\frac 1\sigma}(0,s; L^{2}(\Omega))},
\end{aligned}
\end{equation}
where $\beta = \frac \alpha{1 - \alpha}$,  $0<\sigma< 1/9$, and  $\delta>0$ can be chosen  arbitrarily.  The definition of $\widetilde W^j$ implies
\begin{eqnarray}\label{estim_conts_34}
\quad  \; \;  \langle |{\bf e}(u_{e}^{j-1})|\, \|\widetilde W^{j}\|_{L^2(Y_e)}, |\widetilde b_{e}^j|^{p-1} \rangle_{\Omega_s}\!
 \leq\! C \| {\bf e}(u_{e}^{j-1}) \|_{L^\infty(0,s; L^2(\Omega))} \||\widetilde b_{e}^j|^{\frac p 2}\|^2_{L^2(0,s; L^4(\Omega))}.
\end{eqnarray}
Incorporating  the estimate    \eqref{estim_bc_2} for $\|\tilde c^j\|_{L^\infty(0,\tau; L^2(\Omega))}$  together with   \eqref{estim_conts_33}  and \eqref{estim_conts_34} into  \eqref{estim_contr_22},  using  the  Gagliardo-Nirenberg inequality  to estimate  $ \| \widetilde b_{e}^j \|^p_{L^p(\Omega_s)}$ by  $
\|| \widetilde b_{e}^j|^{\frac p 2} \|^2_{L^1(\Omega_s)}$,  and  iterating over $p=2^k$, with $k=2,3,\ldots$, as in the  Alikakos  Lemma  \cite[Lemma~3.2]{Alikakos}  we finally get
$$
\|\widetilde b_{e}^{j} \|_{L^\infty(0,s; L^\infty(\Omega))} \leq C\big[ \|{\bf e}(\widetilde u_{e}^{j-1})\|_{L^{1+\frac 1 \sigma}(0,s; L^2(\Omega))} + \|\partial_t \widetilde u_{f}^{j-1}\|_{L^2(\Omega_s\times Y_f)}  \big]
$$
for  $s \in (0,T]$ and any  $0<\sigma < 1/9$.
\end{proof}

The macroscopic equations for elastic deformation and pressure are coupled with the  two-scale problem for  fluid flow velocity. Thus the derivation of the estimates for  $u_{e}$ and $\partial_t u_{f}$ is non-standard and is shown in the following lemma.
\begin{lemma}\label{estim_u_ef_macro}
For two iterations
$$
(u_{e}^{j-1},   p_{e}^{j-1}, \partial_t u_{f}^{j-1}, \pi_f^{j-1}),\  (b_e^{j-1}, c^{j-1})\ \  \hbox{and} \ \  (u_{e}^j,   p_{e}^j, \partial_t u_{f}^j, \pi_f^j),\  (b_e^j, c^j)
$$
for  limit problem \eqref{macro_ue}--\eqref{macro_two-scale_uf},~\eqref{macro_concentration} we have the following estimates
\begin{equation}
\begin{aligned}
  \| \partial_t \widetilde u_{e}^j\|_{L^\infty(0,s; L^2(\Omega))} + \| {\bf e}(\widetilde u_{e}^{j})\|_{L^\infty(0,s; L^2(\Omega))}\\  +
    \| \widetilde p_{e}^j\|_{L^\infty(0,s; L^2(\Omega))}  + \| \nabla \widetilde p_{e}^j\|_{L^2(\Omega_s)}  & \leq C \| \widetilde b_{e}^j\|_{L^\infty(0,s; L^\infty(\Omega))} , \\
  \| \partial_t \widetilde  u_{f}^j\|_{L^\infty(0,s; L^2(\Omega\times Y_f))} + \| {\bf e}_y(\partial_t \widetilde u_{f}^j)\|_{L^2(\Omega_s\times Y_f)}
   & \leq C \|\widetilde  b_{e}^j\|_{L^\infty(0,s; L^\infty(\Omega))},
\end{aligned}
\end{equation}
for  $s\in(0,T]$, where  $\widetilde u_e^j = u_e^j - u_e^{j-1}$, $\widetilde p_e^j = p_e^j - p_e^{j-1}$, $\partial_t\widetilde u_f^j = \partial_t u_f^j - \partial_t  u_f^{j-1}$,    $\widetilde b_e^j = b_e^j - b_e^{j-1}$, and  the constant $C$ is independent of $s$ and solutions of the macroscopic problem.
\end{lemma}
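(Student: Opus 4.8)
The plan is to reproduce, at the level of the differences of two iterates, the energy computation behind \eqref{macro_test_1}. First I would write down the equations satisfied by $\widetilde u_e^j$, $\widetilde p_e^j$, $\partial_t\widetilde u_f^j$: subtracting the macroscopic systems \eqref{macro_ue}--\eqref{macro_two-scale_uf}, together with the corrector equations \eqref{correctos_ue} and \eqref{correctos_pe}, for the consecutive indices $j$ and $j-1$, one sees that $(\widetilde u_e^j,\widetilde p_e^j,\partial_t\widetilde u_f^j)$ together with the correctors $\widetilde u_e^{1,j}$, $\widetilde p_e^{1,j}$ solve the same coupled poroelastic / two-scale Stokes system, now with the elasticity tensor ${\bf E}^{\rm hom}(b_{e,3}^j)$ frozen, with vanishing external data $F_u$, $F_p$, and with the single extra source term $-{\rm div}\big(({\bf E}^{\rm hom}(b_{e,3}^j)-{\bf E}^{\rm hom}(b_{e,3}^{j-1}))\,\be(u_e^{j-1})\big)$ in the displacement equation; the coupling contributions $K_u\,\partial_t u_e$ and $Q(x,\partial_t u_f)$ are linear, hence produce $K_u\,\partial_t\widetilde u_e^j$ and $Q(x,\partial_t\widetilde u_f^j)$, and, the cell functions $w_p^k$, $w_e^k$, $q$ being linear, $\widetilde p_e^{1,j}$ is again given by \eqref{form_pe-1} with tilded data. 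I would work with this two-scale form rather than the collapsed one, since the cancellations are then transparent.

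Next I would test this difference system with $(\partial_t\widetilde u_e^j,\widetilde p_e^j,\partial_t\widetilde u_f^j)$, integrated over $(0,s)\times\Omega$ and over $\Omega_s\times Y_f$, and simultaneously test the corrector equation \eqref{correctos_ue} for $\widetilde u_e^{1,j}$ with $\partial_t\widetilde u_e^{1,j}$ and \eqref{correctos_pe} for $\widetilde p_e^{1,j}$ with $\widetilde p_e^{1,j}$, exactly as in \eqref{macro_corrector_pe1}--\eqref{macro_corrector_ue1}. As in the derivation of \eqref{macro_test_1}, all terms carrying no $b$-difference collapse: the left side keeps the nonnegative energy contributions $\rho_e\|\partial_t\widetilde u_e^j(s)\|^2_{L^2(\Omega)}$, $\langle{\bf E}^{\rm hom}(b_{e,3}^j)\be(\widetilde u_e^j(s)),\be(\widetilde u_e^j(s))\rangle_{\Omega}$, $\rho_p\|\widetilde p_e^j(s)\|^2_{L^2(\Omega)}$, $\langle K_p(\nabla\widetilde p_e^j+\nabla_y\widetilde p_e^{1,j}),\nabla\widetilde p_e^j+\nabla_y\widetilde p_e^{1,j}\rangle_{\Omega_s\times Y_e}$, $\rho_f\|\partial_t\widetilde u_f^j(s)\|^2_{L^2(\Omega\times Y_f)}$ and $\mu\|\be_y(\partial_t\widetilde u_f^j)\|^2_{L^2(\Omega_s\times Y_f)}$, the coupling boundary term $\langle\widetilde p_e^{1,j},\partial_t\widetilde u_f^j\cdot n\rangle_{\Omega_s\times\Gamma}$ being absorbed by the corrector identity \eqref{correctos_pe}. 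On the right side there remain only $\big\langle({\bf E}^{\rm hom}(b_{e,3}^j)-{\bf E}^{\rm hom}(b_{e,3}^{j-1}))\be(u_e^{j-1}),\partial_t\be(\widetilde u_e^j)\big\rangle_{\Omega_s}$ and the term $-\langle\partial_t{\bf E}^{\rm hom}(b_{e,3}^j)\be(\widetilde u_e^j),\be(\widetilde u_e^j)\rangle_{\Omega_s}$ coming from the time derivative hitting the time-dependent tensor. I would rewrite the first of these by integration by parts in time, exactly as in the last displayed inequality in the proof of Theorem~\ref{th:exist_uniq_micro}, moving $\partial_t$ onto $({\bf E}^{\rm hom}(b_{e,3}^j)-{\bf E}^{\rm hom}(b_{e,3}^{j-1}))$ and onto $\be(u_e^{j-1})$ and leaving a boundary-in-time term $\partial_t\langle({\bf E}^{\rm hom}(b_{e,3}^j)-{\bf E}^{\rm hom}(b_{e,3}^{j-1}))\be(u_e^{j-1}),\be(\widetilde u_e^j)\rangle_{\Omega}$.

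For the estimates I would use that ${\bf E}^{\rm hom}$ depends on $b_{e,3}$ only through $\mathcal F(b_{e,3})$ and that, by {\bf A1} and the smooth dependence of the cell problems \eqref{unit_cell_ue_w} on parameters, this dependence is $C^2$, so that ${\bf E}^{\rm hom}(b_{e,3}^j)-{\bf E}^{\rm hom}(b_{e,3}^{j-1})$, together with its first and second time derivatives, is bounded by $C\big(\|\mathcal F(\widetilde b_{e,3}^j)\|_{L^\infty}+\|\partial_t\mathcal F(\widetilde b_{e,3}^j)\|_{L^\infty}+\|\partial_t^2\mathcal F(\widetilde b_{e,3}^j)\|_{L^\infty}\big)$; since $\mathcal F(\widetilde b_{e,3}^j)(t)=\int_0^t\kappa(t-\tau)\widetilde b_{e,3}^j(\tau,x)\,d\tau$ with $\kappa(0)=0$, each of these is at most $C\|\widetilde b_{e,3}^j\|_{L^\infty(0,s;L^\infty(\Omega))}$, with $C$ depending only on $\kappa$ and $T$. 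Combining this with the a priori bounds for $u_e^{j-1}$, $\partial_t u_e^{j-1}$, $\partial_t^2 u_e^{j-1}$ (the macroscopic analogues of \eqref{estim_u_p_u}, uniform in $j$, obtained as in Theorem~\ref{thm52}), the right-hand side is controlled by $\delta\|\be(\widetilde u_e^j)\|^2_{L^\infty(0,s;L^2(\Omega))}+C_\delta\|\widetilde b_{e,3}^j\|^2_{L^\infty(0,s;L^\infty(\Omega))}+C\int_0^s\|\be(\widetilde u_e^j)(t)\|^2_{L^2(\Omega)}\,dt$; absorbing the $\delta$-term into the energy and applying Gronwall's inequality on $(0,s)$ yields both asserted estimates, with a constant independent of $s$ and of the particular iterates, since the a priori constants depend only on $T$ and the data in {\bf A1}--{\bf A5}. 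The main obstacle is organizing the cancellations in the coupled poroelastic / two-scale Stokes energy identity so that only the $b$-difference source survives, and controlling the time derivative of the non-local-in-time homogenized elasticity tensor; the integration-by-parts-in-time device already used at the microscopic level settles the latter, but one must first check that the iterates enjoy enough regularity — $u_e^j\in H^2(0,T;L^2(\Omega))\cap H^1(0,T;H^1(\Omega))$, $p_e^j\in H^1(0,T;H^1(\Omega))$, $\partial_t u_f^j\in H^1(0,T;L^2(\Omega\times Y_f))\cap L^2(\Omega_T;H^1(Y_f))$ — for the chosen test functions and integrations by parts to be admissible, which follows from the linear theory for \eqref{macro_ue}--\eqref{macro_two-scale_uf} with $b_{e,3}^j$ fixed.
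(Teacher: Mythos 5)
Your proposal is correct and follows the same energy--Gronwall strategy as the paper's proof: test the difference system with $(\partial_t\widetilde u_e^j,\widetilde p_e^j,\partial_t\widetilde u_f^j)$, exploit the Lipschitz dependence of ${\bf E}^{\rm hom}$ on $b_{e,3}$ through $\mathcal F$ to bound $\|{\bf E}^{\rm hom}(b_{e,3}^{j})-{\bf E}^{\rm hom}(b_{e,3}^{j-1})\|_{L^\infty}$ and its time derivative by $\|\widetilde b_{e}^j\|_{L^\infty(0,s;L^\infty(\Omega))}$, integrate the ${\bf E}$-difference source by parts in time, and close with Gronwall. Those are precisely the ingredients in \eqref{eq_uniq_u}--\eqref{eq_uniq_p} and the displayed estimate after them.

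There is one genuine deviation worth noting, in how the coupling boundary term $\langle\widetilde p_e^{1,j},\partial_t\widetilde u_f^j\cdot n\rangle_{\Omega_s\times\Gamma}$ is treated. You propose to cancel it by testing the corrector equation \eqref{correctos_pe} for $\widetilde p_e^{1,j}$, exactly as in \eqref{macro_corrector_pe1}, so that the pressure energy appears in two-scale form $\langle K_p(\nabla\widetilde p_e^j+\nabla_y\widetilde p_e^{1,j}),\nabla\widetilde p_e^j+\nabla_y\widetilde p_e^{1,j}\rangle_{\Omega_s\times Y_e}$; this is the route the paper takes in Section~\ref{strong_convergence} for the strong convergence, but not here. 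In the proof of Lemma~\ref{estim_u_ef_macro} the paper instead stays with the collapsed energy (so the dissipation is the positive definite $\langle K_p^{\rm hom}\nabla\widetilde p_e^j,\nabla\widetilde p_e^j\rangle_{\Omega_s}$, giving $\|\nabla\widetilde p_e^j\|_{L^2(\Omega_s)}$ immediately) and estimates the boundary term directly: it uses the explicit representation \eqref{form_pe-1} of $\widetilde p_e^{1,j}$ to get $\|\widetilde p_e^{1,j}\|_{L^2(\Omega_s\times\Gamma)}\le C(\|\nabla\widetilde p_e^j\|+\|\partial_t\widetilde u_e^j\|+\|\partial_t\widetilde u_f^j\|_{L^2(\Omega_s\times\Gamma)})$, and then the interpolation $\|\partial_t\widetilde u_f^j\|_{L^2(\Gamma)}\le C_\delta\|\partial_t\widetilde u_f^j\|_{L^2(Y_f)}+\delta\|\nabla_y\partial_t\widetilde u_f^j\|_{L^2(Y_f)}$ to make the result absorbable by Young. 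Your corrector-identity version is cleaner structurally but requires one extra step to recover the asserted bound on $\|\nabla\widetilde p_e^j\|_{L^2(\Omega_s)}$ from the two-scale quadratic form, since $\widetilde p_e^{1,j}$ also carries contributions from $\partial_t\widetilde u_e^j$ and $\partial_t\widetilde u_f^j$ and is not merely the $\nabla\widetilde p_e^j$-corrector; the paper's direct estimate of $\widetilde p_e^{1,j}$ on $\Gamma$ sidesteps this. Also, a small point: the paper freezes ${\bf E}^{\rm hom}$ at $b_{e,3}^{j-1}$ and puts the difference against $\be(u_e^j)$, while you freeze at $b_{e,3}^{j}$ and use $\be(u_e^{j-1})$; both decompositions work.
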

\begin{proof} We begin with the two-scale model for fluid flow velocity.
Taking $\partial_t \widetilde u_{f}^j  -  \partial_t \widetilde u_{e}^j$ as a test function in the equation for the difference  $\partial_t \widetilde u_{f}^j$ we obtain
\begin{equation}\label{eq_uniq_u}
  \begin{aligned}
&   \big\langle {\bf E}^{\rm hom}(b_{e,3}^{j-1}) {\bf e}(\widetilde u_{e}^j(s)), {\bf e}(\widetilde u_{e}^j(s))\big\rangle_\Omega
 -  \big\langle \partial_t {\bf E}^{\rm hom}(b_{e,3}^{j-1}) {\bf e}(\widetilde u_{e}^j),{\bf e}(\widetilde u_{e}^j)\big\rangle_{\Omega_s}
\\ & +2 \big \langle ({\bf E}^{\rm hom}(b_{e,3}^{j}) - {\bf  E}^{\rm hom}(b_{e,3}^{j-1})) \, {\bf e}(u_{e}^j(s)),   \be(\widetilde u_{e}^j(s))\big\rangle_\Omega \\
& - 2 \big\langle \partial_t  ({\bf E}^{\rm hom}(b_{e,3}^{j}) - {\bf E}^{\rm hom}(b_{e,3}^{j-1}))  {\bf e}(u_{e}^j),
 {\bf e}(\widetilde u_{e}^j)\big \rangle_{\Omega_s} \\
 & - 2 \big\langle({\bf E}^{\rm hom}(b_{e,3}^{j}) - {\bf E}^{\rm hom}(b_{e,3}^{j-1}))  \partial_t {\bf e}(u_{e}^j),
 {\bf e}(\widetilde u_{e}^j)\big \rangle_{\Omega_s}
\\ &
 + \rho_e \|\partial_t  \widetilde u_{e}^j(s)\|^2_{L^2(\Omega)}
+ \rho_f \|\partial_t \widetilde u_{f}^j(s) \|^2_{L^2(\Omega\times Y_f)}  + 2 \mu \|{\bf e}_y(\partial_t \widetilde u_{f}^j)\|^2_{L^2(\Omega_s\times Y_f)}
\\
& +2  \langle \nabla \widetilde p_{e}^j, \partial_t \widetilde u_e^j + \int_{Y_f} \partial_t \widetilde u_{f}^j \, dy \rangle_{\Omega_s}  = 2 \langle \widetilde p_e^{1, j}, \partial_t \widetilde u_{f}^j \cdot n \rangle_{\Omega_s\times \Gamma}
+ \rho_e  \|\partial_t  \widetilde u_{e}^j(0)\|^2_{L^2(\Omega)}\\
&
+ \rho_f \|\partial_t \widetilde u_{f}^j(0) \|^2_{L^2(\Omega\times Y_f)}
+  \big\langle {\bf E}^{\rm hom}(b_{e,3}^{j-1})\, {\bf e}(\widetilde u_{e}^j(0)), {\bf e}(\widetilde u_{e}^j(0))\big\rangle_\Omega
\\ &+ 2 \big \langle ({\bf E}^{\rm hom}(b_{e,3}^{j}) - {\bf  E}^{\rm hom}(b_{e,3}^{j-1})) \, {\bf e}(u_{e}^j(0)),   \be(\widetilde u_{e}^j(0))\big\rangle_\Omega ,
\end{aligned}
\end{equation}
where  $\widetilde p_e^{1,j} = p^{1,j}_{e} - p^{1,j-1}_{e}$.   Equation \eqref{macro_ue} for $p_e^j$ and $p_e^{j-1}$ yields
\begin{equation} \label{eq_uniq_p}
\begin{aligned}
&\rho_p \|\widetilde p_{e}^j(s)\|^2_{L^2(\Omega)} + 2 \langle K_{p}^{\rm hom} \nabla \widetilde p_{e}^j, \nabla \widetilde  p_{e}^j  \rangle_{\Omega_s}
\\ &
=  2 \langle K_{u}\,  \partial_t \widetilde  u_e^j  + Q(x, \partial_t u_f^j)- Q(x, \partial_t u_f^{j-1}), \nabla \widetilde   p_{e}^j \rangle_{\Omega_s}   + \rho_p  \|\widetilde p_{e}^j(0)\|^2_{L^2(\Omega)} .
\end{aligned}
\end{equation}
Due to the  assumptions in {\bf A1.} on ${\bf E}$ we have
$$
\begin{aligned}
& \|{\bf E}^{\rm hom}(b_{e,3}^j) - {\bf E}^{\rm hom}(b_{e,3}^{j-1})\|_{L^\infty(\Omega_s)}+ \|\partial_t  ({\bf E}^{\rm hom}(b_{e,3}^j) - {\bf E}^{\rm hom}(b_{e,3}^{j-1})) \|_{L^\infty(\Omega_s)}
 \\ & \qquad\leq  C \|\widetilde b_{e}^j \|_{L^\infty(0,s; L^\infty(\Omega))}
\end{aligned}
$$
for  $s \in (0,T]$.  The  expression \eqref{form_pe-1} for  $p_{e}^{1,j}$ and $p_e^{1,j-1}$    and the estimates for the  $H^1$-norm  of the  solutions of the unit cell problems  \eqref{correct_p_Kp}, \eqref{correct_p_Ku}, and \eqref{two-scale_qf} yield
\begin{equation*}
\begin{aligned}
\| \widetilde p^{1,j}_{e} \|_{L^2(\Omega_s\times \Gamma)}  \leq  C \left(\|\nabla \widetilde p_{e}^j  \|_{L^2(\Omega_s)} +  \|\partial_t  \widetilde u_{e}^j \|_{L^2(\Omega_s)}
+ \|\partial_t  \widetilde u_{f}^j \|_{L^2(\Omega_s\times \Gamma)} \right).
\end{aligned}
\end{equation*}
From the compactness of  the embedding  $H^1(Y_f) \subset L^2(\Gamma)$  we obtain
\begin{equation*}
\begin{aligned}
 \|\partial_t  \widetilde u_{f}^j \|_{L^2(\Omega_s\times \Gamma)} \leq  C_\delta \|\partial_t  \widetilde u_{f}^j \|_{L^2(\Omega_s\times Y_f)} + \delta  \|\nabla_y \partial_t  \widetilde u_{f}^j \|_{L^2(\Omega_s\times Y_f)}
\end{aligned}
\end{equation*}
for any $\delta >0$.
Adding  \eqref{eq_uniq_u} and \eqref{eq_uniq_p}, and applying the  H\"older and Gronwall inequalities yield
\begin{equation*}
\begin{aligned}
&  \|\partial_t  \widetilde u_{f}^j \|_{L^\infty(0,s; L^2(\Omega\times Y_f))}  +  \|{\bf e}_y(\partial_t  \widetilde u_{f}^j)\|_{L^2(\Omega_s\times Y_f)}
+ \|\partial_t  \widetilde u_{e}^j\|_{L^\infty(0,s; L^2(\Omega))}
\\ & + \| {\bf e}(\widetilde u_{e}^j) \|_{L^\infty(0,s; L^2(\Omega))}
 +  \|\widetilde p_{e}^j\|_{L^\infty(0,s; L^2(\Omega))}  + \|\nabla \widetilde p_{e}^j\|_{ L^2(\Omega_s)}  \leq C  \|\widetilde b_{e}^j \|_{L^\infty(0, s; L^\infty(\Omega))}
\end{aligned}
\end{equation*}
for all $s\in (0,T]$.
\end{proof}

The estimates in Lemmas~\ref{estim_uniq_macro_bc}~and~\ref{estim_u_ef_macro}  together with a fixed point argument  imply  the existence of a unique solution of the  strongly coupled limit problem \eqref{macro_ue}--\eqref{macro_two-scale_uf},~\eqref{macro_concentration}.
\begin{lemma}\label{uniq_macro}
There exists a unique weak solution of the limit  problem \eqref{macro_ue}--\eqref{macro_two-scale_uf} and \eqref{macro_concentration}.
\end{lemma}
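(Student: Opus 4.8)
The plan is to establish Lemma~\ref{uniq_macro} by the Banach fixed-point theorem applied to the operator $\mathcal K$ introduced above, in exactly the same spirit as the proof of Theorem~\ref{th:exist_uniq_micro}, but using the two preceding lemmas in place of the corresponding microscopic contraction estimates. First I would check that every iterate is well defined. For given $(u_e^{j-1},\partial_t u_f^{j-1})$ the system \eqref{macro_concentration} is a semilinear parabolic system with fixed effective coefficients and nonlinearities that are Lipschitz on bounded sets; its well-posedness, together with the non-negativity of $b_e^j$ and $c^j$ (invariant-region argument based on the sign conditions in {\bf A4.}) and the $L^\infty$-bounds of Lemma~\ref{estim_uniq_macro_bc}, follows by a Galerkin/fixed-point argument identical (with simplifications) to the one carried out below and to that in Lemma~\ref{Lemma:apriori}. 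For given $b_e^j$ the coupled system \eqref{macro_ue}--\eqref{macro_two-scale_uf} is linear, and its solvability is obtained by the Galerkin method in the constrained subspace (divergence-free two-scale fluid velocity, matching tangential traces on $\Gamma$) already used in the proof of Theorem~\ref{thm52}; the fluid pressure $\pi_f$ is then recovered via De~Rham's theorem, with the transmission condition on $\Gamma$ fixing it uniquely. The first iterate is chosen so as to satisfy the initial and boundary data of \eqref{macro_ue}--\eqref{macro_two-scale_uf}, and one verifies bounds on the iterates uniform in $j$, analogous to \eqref{estim_u_p_u}--\eqref{estim_b_c}.

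Next I would chain the two contraction estimates. Writing $\Phi_j(s):=\|\be(\widetilde u_e^j)\|_{L^\infty(0,s;L^2(\Omega))}+\|\partial_t\widetilde u_f^j\|_{L^\infty(0,s;L^2(\Omega\times Y_f))}$ and $\sigma_1:=1+1/\sigma>10$, Lemma~\ref{estim_u_ef_macro} yields
$$\Phi_j(s)+\|\nabla\widetilde p_e^j\|_{L^2(\Omega_s)}+\|\be_y(\partial_t\widetilde u_f^j)\|_{L^2(\Omega_s\times Y_f)}\le C\,\|\widetilde b_e^j\|_{L^\infty(0,s;L^\infty(\Omega))},$$
while Lemma~\ref{estim_uniq_macro_bc} yields
$$\|\widetilde b_e^j\|_{L^\infty(0,s;L^\infty(\Omega))}\le C\big[\|\be(\widetilde u_e^{j-1})\|_{L^{\sigma_1}(0,s;L^2(\Omega))}+\|\partial_t\widetilde u_f^{j-1}\|_{L^2(\Omega_s\times Y_f)}\big].$$
Applying $\|f\|_{L^{\sigma_1}(0,s;X)}\le s^{1/\sigma_1}\|f\|_{L^\infty(0,s;X)}$ and $\|f\|_{L^2(0,s;X)}\le s^{1/2}\|f\|_{L^\infty(0,s;X)}$, I obtain $\Phi_j(s)\le C\big(s^{1/\sigma_1}+s^{1/2}\big)\,\Phi_{j-1}(s)$, with $C$ depending only on $T$ and the model parameters. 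Hence for $s=s_0$ small enough that $C\big(s_0^{1/\sigma_1}+s_0^{1/2}\big)<1$, the map $\mathcal K$ is a contraction on $L^\infty(0,s_0;H^1(\Omega))\times L^\infty(0,s_0;L^2(\Omega\times Y_f))$, equipped with the metric induced by $\Phi$, restricted to the closed set of iterates satisfying the uniform bounds and the prescribed initial data. The Banach fixed-point theorem then gives a unique fixed point, that is, a unique weak solution of \eqref{macro_ue}--\eqref{macro_two-scale_uf},~\eqref{macro_concentration} on $(0,s_0)$; the remaining unknowns $b_e$, $c$, $p_e$, the correctors, and $\pi_f$ are then determined by the corresponding linear sub-problems.

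Finally, since $C$, $\sigma_1$, and hence $s_0$ depend only on $T$ and the model parameters and not on the sub-interval nor on the initial data, I would iterate this argument over consecutive time intervals of length $s_0$ to propagate the solution to the whole of $(0,T)$, retaining uniqueness at each step. Uniqueness of a weak solution on $(0,T)$ then follows, because any weak solution is a fixed point of $\mathcal K$ on each such sub-interval, so two solutions must coincide. The main obstacle is the bookkeeping of function spaces in the fixed-point step: the two-scale fluid velocity only has its microscopic gradient $\be_y(\partial_t u_f)$ controlled in $L^2$ in time rather than in $L^\infty$, and the estimate for $\widetilde b_e^j$ involves the $L^{\sigma_1}$-in-time norm of $\be(\widetilde u_e^{j-1})$ instead of its natural energy norm, so the metric space must be chosen — as above — so that $\mathcal K$ is simultaneously well defined, maps the admissible set into itself, and is a genuine contraction; once this is arranged, the argument reduces to the direct analogue of the proof of Theorem~\ref{th:exist_uniq_micro}.
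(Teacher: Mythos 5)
Your proposal is correct and follows essentially the same approach as the paper: it chains the contraction estimates of Lemmas~\ref{estim_uniq_macro_bc} and \ref{estim_u_ef_macro}, uses the factors $s^{1/\sigma_1}$ and $s^{1/2}$ to obtain a contraction on a short time interval, and then iterates over intervals (with uniqueness on each step), exactly as in the paper's proof. The extra discussion you give of well-posedness of each iterate via Galerkin/De~Rham is consistent with the paper's earlier treatment and does not change the line of argument.
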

\begin{proof}
Considering the equations for the difference of two iterations for  \eqref{macro_ue}--\eqref{macro_two-scale_uf},~\eqref{macro_concentration},  and using estimates in Lemmas~\ref{estim_uniq_macro_bc} and \ref{estim_u_ef_macro}
 yield
\begin{equation}
\begin{aligned}
&\|\partial_t (u_{e}^j -u_{e}^{j-1}) \|_{L^\infty(0,s; L^2(\Omega))} + \|{\bf e} (u_{e}^j -u_{e}^{j-1}) \|_{L^\infty(0,s; L^2(\Omega))}  \\
 & +
\|\partial_t (u_{f}^j -u_{f}^{j-1}) \|_{L^\infty(0,s; L^2(\Omega\times Y_f))} + \|{\bf e}_y (u_{f}^j -u_{f}^{j-1}) \|_{L^2(\Omega_s \times Y_f)}
\\ &  \leq C_1 \|b_e^j- b_e^{j-1}\|_{L^\infty(0,s; L^\infty(\Omega))} \\
&   \leq  C \big[\|{\bf e} (u_{e}^{j-1} -u_{e}^{j-2}) \|_{L^{1+ \frac 1 \sigma}(0,s; L^2(\Omega))} + \|\partial_t (u_{f}^{j-1} -u_{f}^{j-2}) \|_{L^2(\Omega_s\times Y_f)} \big]
\end{aligned}
\end{equation}
for  $s\in (0,T)$ and  any $0<\sigma < 1/9$, where $C$ is independent of $s$ and  iterative solutions of the limit problem. Considering a time interval $(0,\tilde T)$, such that $C \tilde T^{\frac \sigma{1+\sigma}} < 1$  and $C \tilde T^{1/2} < 1$
and applying a fixed point argument we obtain the existence of a unique solution of the coupled system \eqref{macro_ue}--\eqref{macro_two-scale_uf},~\eqref{macro_concentration}  on the time interval $[0,\tilde T]$.
Iterating this step over time-intervals of length $\tilde T$ yields  the existence and uniqueness of a solution of the macroscopic problem \eqref{macro_ue}--\eqref{macro_two-scale_uf},~\eqref{macro_concentration} on an arbitrary time interval $[0,T]$.
\end{proof}

\section{Incompressible case. Quasi-stationary   poroelastic equations in $\Omega_e^\ve$}\label{incompresible}

Problem \eqref{eq_codif}--\eqref{exbou_co} was derived under a number of assumptions on plant tissue. In some cases
these assumptions should be changed and system \eqref{eq_codif}--\eqref{exbou_co} should be modified accordingly.

In this section we consider two possible modifications of problem \eqref{eq_codif}--\eqref{exbou_co}:\\
 (i) the incompressible case, when the intercellular space is completely saturated
with water  and we have the elliptic equation for $p_e^\ve$; \\
(ii) the quasi-stationary case for the displacement  $u_e^\ve$.
 In this case we can consider both compressible and incompressible fluid phase  in the elastic part $\Omega_e^\ve$.

 In the first case the equation for  $p_e^\ve$ in \eqref{equa_cla} is  replaced  with the following  elliptic equation:
\begin{equation}\label{mod_pe}
- {\rm div} ( K^\ve_p \nabla p^\ve_e -  \partial_t u_e^\ve)  = 0  \qquad   \text{ in } \Omega_{e,T}^\ve.
\end{equation}
In the second situation we consider in \eqref{equa_cla}  the quasi-stationary equations for  $u_e^\ve$
\begin{equation}\label{mod_ue}
 - {\rm div} ( {\bf E}^\ve(b^\ve_{e,3}) \be( u^\ve_e)) + \nabla p_e^\ve  = 0  \qquad  \text{ in } \Omega_{e,T}^\ve.
 \end{equation}
In the incompressible case, i.e.\  $p_e^\ve$ satisfies  equation  \eqref{mod_pe},  Definition~\ref{def_weak_micro} of a weak solution of  microscopic problem  \eqref{eq_codif}--\eqref{exbou_co} should be modified. Namely,  we assume that
\begin{equation}\label{def_constr}
\begin{aligned}
& p_e^\ve \in L^2(0,T; H^1(\Omega_e^\ve))\; \; \;  \text{ with } \; \;  \int_{\Omega_e^\ve} p_e^\ve(t,x) \, dx   =0 \;\; \;  \text{for } \; \; \; t \in (0,T)
\end{aligned}
\end{equation}
 and no initial conditions for $p_e^\ve$ are required.  Additionally we  assume that
 $$\int_{\partial \Omega} F_p(t,x) \,  dx  = 0 \quad  \text{ for } \; \; t \in (0,T). $$

The analysis of  the quasi-stationary problems considered in this section is very similar to the analysis of  \eqref{eq_codif}--\eqref{exbou_co} presented in the previous sections. The only part that should be slightly modified is the derivation of \textit{a priori} estimates.

For the incompressible case,  in the same  way as in the proof of Lemma~\ref{Lemma:apriori}, but now with  equation \eqref{mod_pe} for $p^\ve_e$,    we obtain
\begin{equation}\label{estim_uef_p_12}
\begin{aligned}
& \|\partial_t u_e^\ve(s)\|^2_{L^2(\Omega_e^\ve)}  +  \| \be(u_e^\ve(s) )\|^2_{L^2(\Omega_e^\ve)} + \|\nabla p_e^\ve\|^2_{L^2(\Omega_{e, s}^\ve)}
 + \ve^2 \| \be(\partial_t u_f^\ve)\|^2_{L^2(\Omega_{e,s}^\ve)}\\
& +  \|\partial_t u_f^\ve(s)\|^2_{L^2(\Omega_f^\ve)} \leq    \delta \big[\|u^\ve_e(s) \|^2_{L^2(\partial \Omega)} +  \|p^\ve_e \|^2_{L^2((0,s)\times\partial \Omega)} \big]
  + C_1
 \|\be(u_e^\ve)\|^2_{L^2(\Omega_{e,s}^\ve)}  \\
& + C_\delta\big[\| F_u \|^2_{L^\infty(0,s; L^2(\partial \Omega))}
  + \| \partial_t F_u \|^2_{L^2((0,s)\times\partial \Omega)}
  + \| F_p \|^2_{L^2((0,s)\times  \partial \Omega)} \big] +C_2
\end{aligned}
\end{equation}
for $s \in (0,T]$  and arbitrary $\delta>0$. Then, as in the proof of Lemma~\ref{Lemma:apriori},  applying the trace and Korn inequalities \cite{OShY}  and using extension properties of $u_e^\ve$ and assumptions {\bf A5.} on initial data $u^\ve_{e0}$, $u_{e0}^1$, and $u_{f0}^1$,  we obtain  estimates \eqref{bou_u},  \eqref{bou_init_u}, and \eqref{bou_uef_11}.
The trace and Poincare  inequalities together with the constraints in \eqref{def_constr} and properties of an extension of $p^\ve_e$ from $\Omega_e^\ve$ to $\Omega$, see Lemma~\ref{extension},  ensure that
\begin{equation}\label{bou_p}
\begin{aligned}
\|p^\ve_e \|^2_{L^2((0,s)\times\partial \Omega)}  \leq C \|\nabla p_e^\ve\|^2_{L^2(\Omega_{e, s}^\ve)}
\end{aligned}
\end{equation}
for $s\in (0,T]$. Then  applying the Gronwall inequality we obtain from  \eqref{estim_uef_p_12}  the estimates for $u_e^\ve$, $\partial_t u_e^\ve$, $p_e^\ve$ and $\partial_t u_f^\ve$ in \eqref{estim_upu_22}.

Differentiating the  equations in \eqref{equa_cla} and  \eqref{mod_pe}, with respect to time $t$ and  taking  $(\partial^2_t u_e^\ve,\,  \partial_t p_e^\ve,\, \partial^2_t u_f^\ve)$ as test functions in the weak formulation of the  resulting equations,  we obtain
\begin{equation}
\begin{aligned}
& \rho_e \| \partial^2_t u^\ve_e(s) \|^2_{L^2(\Omega_{e}^\ve)} +
\langle {\bf E}^\ve(b_{e,3}^\ve) \be( \partial_t u^\ve_e(s)), \be(\partial_t u^\ve_e(s)) \rangle_{\Omega_{e}^\ve}  \\
& +
2 \langle K_p^\ve \nabla \partial_t p^\ve_e, \nabla \partial_t p^\ve_e \rangle_{\Omega_{e,s}^\ve}
 +\rho_f  \| \partial^2_t u^\ve_f(s) \|^2_{L^2(\Omega_{f}^\ve)}\! + 2 \mu \,
\ve^2  \| \be(\partial^2_t u^\ve_f)\|^2_{L^2(\Omega_{f, s}^\ve)}\\
&
\! =\! 2 \langle \partial_t F_u,   \partial^2_t u^\ve_e \rangle_{(\partial \Omega)_s} +2 \langle \partial_t F_p,   \partial_t p^\ve_e \rangle_{(\partial \Omega)_s}   +\rho_e \| \partial^2_t u^\ve_e(0) \|^2_{L^2(\Omega_{e}^\ve)}
\\
&+\rho_f \| \partial^2_t u^\ve_f(0) \|^2_{L^2(\Omega_{f}^\ve)}+2 \big\langle \partial_t {\bf E}^\ve(b^\ve_{e,3}) \be( u^\ve_e(s)), \be(\partial_t u^\ve_e(s)) \big\rangle_{\Omega_{e}^\ve}\\
& +  \big\langle {\bf E}^\ve(b_{e,3}^\ve) \be(\partial_t u^\ve_e(0)) - 2\partial_t {\bf E}^\ve(b^\ve_{e,3}(0)) \be( u^\ve_e(0)), \be(\partial_t u^\ve_e(0)) \big\rangle_{\Omega_{e}^\ve} \\
& -\big\langle 2 \partial^2_t {\bf E}^\ve(b_{e,3}^\ve) \be( u^\ve_e)+ \partial_t {\bf E}^\ve(b_{e,3}^\ve) \be(\partial_t u^\ve_e), \be(\partial_t u^\ve_e)\big \rangle_{\Omega_{e, s}^\ve}
\end{aligned}
\end{equation}
for  $s\in (0,T]$.  As before, applying  the Korn inequality  and the Poincare inequality together with  the constraint in \eqref{def_constr},   we obtain  the estimates for $\partial^2_t u_e^\ve$,  $\partial_t p^\ve_e$, and $\partial^2_t u_f^\ve$ stated in \eqref{estim_u_p_u}.
The equations for $\partial_t u_f^\ve$ and  $u_e^\ve$ and calculations similar to those in the proof of Lemma~\ref{Lemma:apriori} ensure the estimate for $p_f^\ve$.

To derive the {\it a priori} estimates  in  the second case, when $u_e^\ve$ satisfies the quasi-stationary equations \eqref{mod_ue},  we have to check that the Korn  inequality holds for  $u_e^\ve$.
\begin{lemma}\label{Korn_quasi} For $u_e^\ve(s) \in H^1(\Omega_e^\ve)$, with $s \in (0,T]$, we have the following estimate
\begin{equation}
\begin{aligned}
 \|u^\ve_e(s)\|_{H^1(\Omega_e^\ve)} \leq &\, C \big[ \|\be(u_e^\ve(s)) \|_{L^2(\Omega_e^\ve)}  + \ve^{\frac 12 }  \|\Pi_\tau \partial_t  u_f^\ve\|_{L^2(\Gamma^\ve_T)}  + \| u^\ve_{e}(0) \|_{H^1(\Omega)}\big], \\
 \|\partial_t u^\ve_e(s)\|_{H^1(\Omega_e^\ve)} \leq &\, C \big[ \|\partial_t \be(u_e^\ve(s)) \|_{L^2(\Omega_e^\ve)}  + \ve^{\frac 12 }  \|\Pi_\tau \partial_t  u_f^\ve(s)\|_{L^2(\Gamma^\ve)} \big].
\end{aligned}
\end{equation}
\end{lemma}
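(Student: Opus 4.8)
The statement is a Korn-type inequality for the displacement $u_e^\ve$ in the quasi-stationary case, where — unlike in the evolutionary case — there is no damping term $\rho_e \partial_t^2 u_e^\ve$ to control $u_e^\ve$ itself, so the $L^2$-norm of $u_e^\ve$ must be recovered from the boundary data. The key point is that we have two pieces of information on $\Gamma^\ve$ besides the bulk symmetrized gradient: the no-slip tangential condition $\Pi_\tau \partial_t u_e^\ve = \Pi_\tau \partial_t u_f^\ve$, and, after integrating in time, $\Pi_\tau u_e^\ve(s) = \Pi_\tau u_e^\ve(0) + \int_0^s \Pi_\tau \partial_t u_f^\ve\, dt$. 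The strategy is to first establish the standard second Korn inequality with a lower-order remainder on a single reference cell $Y_e$, then rescale, sum over the periodicity cells $\xi \in \Xi^\ve$, and finally absorb the global $L^2$-norm of $u_e^\ve$ using the boundary conditions on $\Gamma^\ve$ and $\partial\Omega$.

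\textbf{Step 1: Korn inequality on the reference cell with a boundary term.} On the connected Lipschitz domain $Y_e$ one has the classical estimate
$$
\| v \|_{H^1(Y_e)} \leq C\big( \| \be(v) \|_{L^2(Y_e)} + \| \Pi_\tau v \|_{L^2(\Gamma)} + |\text{(rigid part)}| \big),
$$
and the quotient/compactness argument (Peetre--Tartar) shows that if $\be(v) = 0$ and $\Pi_\tau v|_\Gamma = 0$ force $v$ to lie in a finite-dimensional space of rigid displacements tangent to $\Gamma$, which on a bounded cell is reduced to translations/infinitesimal rotations that are themselves controlled by, say, $\|v\|_{L^2}$ of a fixed reference (or, as here, by $\Pi_\tau v$ on $\Gamma$ together with $\be(v)$ unless $\Gamma$ admits a nontrivial tangent rigid motion; since $\Gamma = \partial Y_f$ is a closed surface bounding $Y_f \subset\subset Y$, no nonzero rigid displacement is everywhere tangent to it, so the kernel is trivial). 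Hence on $Y_e$,
$$
\| v \|_{H^1(Y_e)} \leq C\big( \| \be(v) \|_{L^2(Y_e)} + \| \Pi_\tau v \|_{L^2(\Gamma)} \big).
$$

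\textbf{Step 2: Scaling and summation.} Apply Step 1 to $v(y) = u_e^\ve(s, \ve(y+\xi))$ on each cell, noting that under $x = \ve(y+\xi)$ the symmetrized gradient scales as $\be_y(v) = \ve\, \be_x(u_e^\ve)$ and the surface measure as $d\gamma_x = \ve^2 d\gamma_y$. Multiplying by $\ve^3$ and summing over $\xi\in\Xi^\ve$ gives, on $\Omega_e^\ve$ (up to the boundary layer, which is handled by the usual extension argument in Lemma~\ref{extension}),
$$
\| u_e^\ve(s) \|_{L^2(\Omega_e^\ve)} + \|\nabla u_e^\ve(s)\|_{L^2(\Omega_e^\ve)} \leq C\big( \| \be(u_e^\ve(s)) \|_{L^2(\Omega_e^\ve)} + \ve^{1/2} \| \Pi_\tau u_e^\ve(s) \|_{L^2(\Gamma^\ve)} \big),
$$
the $\ve^{1/2}$ weight being exactly what makes the scaled trace term dimensionally consistent (this is the same weighting used in \eqref{estim_GN_trace} and in Lemma~\ref{convergence_u_p}).

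\textbf{Step 3: Using the transmission condition.} On $\Gamma^\ve$ we have $\Pi_\tau u_e^\ve(s) = \Pi_\tau u_e^\ve(0) + \int_0^s \Pi_\tau \partial_t u_f^\ve\, dt$, so by Minkowski's integral inequality $\ve^{1/2}\|\Pi_\tau u_e^\ve(s)\|_{L^2(\Gamma^\ve)} \leq \ve^{1/2}\|\Pi_\tau u_e^\ve(0)\|_{L^2(\Gamma^\ve)} + \ve^{1/2}\|\Pi_\tau \partial_t u_f^\ve\|_{L^2(\Gamma^\ve_T)}$. The first term is bounded via the trace/Korn inequality applied to the initial datum $u_{e0}^\ve$ by $\|u_e^\ve(0)\|_{H^1(\Omega)}$ (using assumption \textbf{A5.} and the extension). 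This yields the first claimed inequality. For the second inequality, differentiate in time: since $u_e^\ve$ satisfies the quasi-stationary system at every time, $\partial_t u_e^\ve$ solves the time-differentiated equations, and on $\Gamma^\ve$ we have directly $\Pi_\tau \partial_t u_e^\ve(s) = \Pi_\tau \partial_t u_f^\ve(s)$ with no integration needed, so the initial term disappears and we obtain
$$
\|\partial_t u_e^\ve(s)\|_{H^1(\Omega_e^\ve)} \leq C\big( \| \be(\partial_t u_e^\ve(s)) \|_{L^2(\Omega_e^\ve)} + \ve^{1/2} \| \Pi_\tau \partial_t u_f^\ve(s) \|_{L^2(\Gamma^\ve)} \big).
$$

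\textbf{Main obstacle.} The delicate point is Step 1: verifying that $\be(v) = 0$ together with $\Pi_\tau v|_\Gamma = 0$ admits no nonzero solution on $Y_e$, i.e.\ that there is no rigid-body displacement (translation plus infinitesimal rotation) whose restriction to the interior boundary $\Gamma$ is everywhere tangential. This is geometrically clear once one recalls that $\Gamma$ is a closed surface bounding a three-dimensional region $Y_f$ (a rigid motion tangent along a closed surface must vanish), but it must be argued carefully so that the constant $C$ from the Peetre--Tartar compactness argument is uniform — which it is, since it is obtained on the single fixed cell $Y_e$ before any rescaling. The rest is bookkeeping with the $\ve$-scaling and the boundary-layer extension, entirely parallel to the estimates already carried out in Lemma~\ref{Lemma:apriori}.
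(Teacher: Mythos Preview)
Your overall strategy matches the paper's proof exactly: cell-level Korn with the tangential trace on $\Gamma$ as the kernel-killing term, then rescale and sum, then feed in the transmission condition $\Pi_\tau\partial_t u_e^\ve=\Pi_\tau\partial_t u_f^\ve$ (integrated in time for the first estimate, pointwise in time for the second). Your identification of the ``main obstacle'' --- that no nonzero rigid displacement is tangent everywhere on the closed surface $\Gamma$, hence $\mathcal V\cap\mathcal R(Y_e)=\{0\}$ --- is precisely the paper's starting point.

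There is, however, a genuine scaling error in Step~2. After rescaling $x=\ve(y+\xi)$ and summing, the cell inequality becomes
\[
\|u_e^\ve\|_{L^2(\hat\Omega_e^\ve)}^2 + \ve^2\|\nabla u_e^\ve\|_{L^2(\hat\Omega_e^\ve)}^2
\;\leq\; C\big[\ve^2\|\be(u_e^\ve)\|_{L^2(\hat\Omega_e^\ve)}^2 + \ve\,\|\Pi_\tau u_e^\ve\|_{L^2(\Gamma^\ve)}^2\big],
\]
so the gradient is controlled only with an $\ve^{-1/2}$ (not $\ve^{1/2}$) weight on the boundary term. Your displayed inequality in Step~2 does not follow from the computation you describe. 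The paper uses the rescaled cell inequality only to bound $\|u_e^\ve\|_{L^2(\hat\Omega_e^\ve)}$, then invokes the elasticity extension to $\Omega$ (the one from \cite{OShY}, preserving the symmetrized-gradient bound) and the \emph{global} second Korn inequality on $\Omega$ to upgrade the $L^2$ control to full $H^1$ control with the correct $\ve^{1/2}$ factor. You should insert this extension-plus-global-Korn step; once you do, your Steps~1 and~3 close the argument as written.
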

\begin{proof}
Consider first $Y_e$ and $\mathcal V=\{v\in H^1(Y_e)^3 : \Pi_\tau v =0 \text{ on } \Gamma\}$. Then since $\mathcal V\cap \mathcal R(Y_e)=\{0\}$, where $\mathcal R(Y_e)$ is the space of all rigid displacements,   we have
\begin{equation}
\|v\|^2_{H^1(Y_e)} \leq C\big[ \|\be(v)\|^2_{L^2(Y_e)} + \|\Pi_{\tau} v \|^2_{L^2(\Gamma)}\big].
\end{equation}
Considering scaling $x = \ve y$ and summing over $\xi\in \Xi^\ve$ we obtain
\begin{equation}\label{estim_korn_v}
\|v\|^2_{L^2(\hat \Omega^\ve_e)}  +\ve^2  \|\nabla v\|^2_{L^2(\hat \Omega_e^\ve)} \leq C\big[ \ve^2 \|\be(v)\|^2_{L^2(\hat \Omega^\ve_e)} +
\ve  \|\Pi_{\tau} v \|^2_{L^2(\Gamma^\ve)}\big],
\end{equation}
where $\hat \Omega^\ve_e = {\rm Int} \big(\bigcup_{\xi \in \Xi^\ve} \ve(\overline Y_e + \xi) \big)$.
Using the fact that $\Pi_\tau \partial_t u_e^\ve =  \Pi_\tau \partial_t u_f^\ve $ on $\Gamma^\ve$ and   estimating $u_e^\ve$ by $\partial_t u_e^\ve$ and the initial value $u^\ve_{e}(0)$ we obtain
$$
\|\Pi_\tau u_e^\ve(s)\|_{L^2(\Gamma^\ve)} \leq C \big[ \|\Pi_\tau \partial_t u_f^\ve \|_{L^2(\Gamma^\ve_T)}+ \|u^\ve_{e}(0)\|_{L^2(\Gamma^\ve)}\big].
$$
Hence applying \eqref{estim_korn_v} to $u_e^\ve$  and using the fact that $\ve\|u^\ve_{e}(0)\|^2_{L^2(\Gamma^\ve)}\leq C \|u^\ve_{e}(0)\|^2_{H^1(\Omega)}$ we have
\begin{equation*}
\|u_e^\ve(s)\|^2_{L^2(\hat \Omega^\ve_e)}  \leq C\big[ \ve^2 \|\be(u_e^\ve)(s)\|^2_{L^2(\hat \Omega^\ve_e)} +
\ve  \|\Pi_{\tau} \partial_t u_f^\ve \|^2_{L^2(\Gamma^\ve_T)} +  \| u^\ve_{e}(0) \|^2_{H^1(\Omega)}\big].
\end{equation*}
Then considering the  extension of $u_e^\ve$ from $\Omega_e^\ve$ to $\Omega$, see e.g.\ \cite{OShY}, and applying the Korn inequality in $\Omega$ yield  the estimate stated in the Lemma.
\end{proof}

Then,  in the same way as in the proof of Lemma~\ref{Lemma:apriori},  applying the Korn inequalities proved in
Lemma~\ref{Korn_quasi},  using extension properties of $u_e^\ve$ and the regularity of the initial data  $u_{f0}^1 \in H^2(\Omega)^3$  we obtain the following {\it a priori} estimates  for solutions of the quasi-stationary  problem
\begin{equation}\label{estim_u_p_u_quasi}
\begin{aligned}
&\| u_e^\ve\|_{L^\infty(0,T; H^1(\Omega_e^\ve))} + \|\partial_t u_e^\ve\|_{L^\infty(0,T; H^1(\Omega_e^\ve))}  \leq C , \\
&\| p_e^\ve\|_{L^2(0,T; H^1(\Omega_e^\ve))}+   \| \partial_t p_e^\ve \|_{L^2(0,T; H^1(\Omega_{e}^\ve))} \leq C, \\
& \| \partial_t u_f^\ve\|_{L^\infty(0,T; L^2(\Omega_f^\ve))} + \| \partial^2_t u_f^\ve\|_{L^\infty(0,T; L^2(\Omega_{f}^\ve))}
\\ &\hskip 3cm + \ve \|\nabla \partial_t u_f^\ve\|_{H^1(0,T; L^2(\Omega_{f}^\ve))}   + \| p_f^\ve \|_{L^2(\Omega_{f,T}^\ve)} \leq C,
\end{aligned}
\end{equation}
where the constant $C$ is independent of $\ve$. Notice that in the incompressible and quasi-stationary  case, i.e.\ in the case of equations  \eqref{mod_pe} and \eqref{mod_ue} for $p_e^\ve$ and $u_e^\ve$, respectively,  problem  \eqref{equa_cla},   \eqref{exbou_co}, \eqref{mod_pe}, and \eqref{mod_ue}  is well-posed without the initial conditions for $u_e^\ve$ and $p_e^\ve$. In this case  $u_e^\ve(0, \cdot)$ and $\partial_t u_e^\ve(0, \cdot)$ are determined from the corresponding elliptic equations and the initial values for the fluid flow $u_{f0}^1$.

In contrast with the limit equations given by \eqref{macro_ue}, in the quasi-stationary and incompressible  case the macroscopic equations
for  effective displacement and pressure do not contain time derivatives and take the form
\begin{equation}\label{macro_ue_quasi}
\begin{aligned}
& - {\rm div} ( {\bf E}^{\rm{hom}}(b_{e,3}) \be(u_e)) + \nabla p_e + \vartheta_f  \rho_f \dashint_{Y_f} \partial^2_t u_f \, dy = 0 && \text{ in } \Omega_T, \\
& - {\rm div} \big( K_{p}^{\rm hom} \nabla p_e - K_ {u}\,  \partial_t u_e - Q(x, \partial_t u_f)\big) =0 && \text{ in } \Omega_T,\\
& \; \;  {\bf E}^{\rm hom}(b_{e,3}) \be(u_e) \, {n} =F_u  && \text{ on } (\partial \Omega)_T, \\
&\; \;  (K_{p}^{\rm hom} \nabla p_e - K_{u}\,  \partial_t u_e)\cdot {n} = F_p + Q(x, \partial_t u_f) \cdot n
&& \text{ on }   (\partial \Omega)_T,
\end{aligned}
\end{equation}
together with the two-scale equations \eqref{macro_two-scale_uf} for  $u_f$ and $\pi_f$.

\section{Appendix} Here we provide  proofs of the estimates for  $\| b^\ve_e \|_{L^\infty(0,T; L^\infty(\Omega_e^\ve))}$, $\| c^\ve \|_{L^\infty(0,T; L^4(\Omega^\ve))}$ and for the difference $\| \widetilde b^{\ve,j}_e \|_{L^\infty(0,T; L^\infty(\Omega_e^\ve))}$ of two iterations for system  \eqref{eq_codif}--\eqref{exbou_co}.
\begin{lemma}\label{Linf_L4}
Under assumptions {\bf A1.}--{\bf A5.}  solutions of the microscopic problem \eqref{eq_codif}--\eqref{exbou_co} satisfy the following estimates
\begin{equation}\label{estim_Linfty_L4}
\begin{aligned}
&\| b^\ve_e \|_{L^\infty(0,T; L^\infty(\Omega_e^\ve))} \leq C, \\
&\| c^\ve_e \|_{L^\infty(0,T; L^4(\Omega_e^\ve))} + \| c^\ve_f \|_{L^\infty(0,T; L^4(\Omega_f^\ve))}   \leq C,
\end{aligned}
\end{equation}
where the constant $C$ is independent of $\ve$.
\end{lemma}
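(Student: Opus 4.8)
The plan is to establish the $L^\infty$-bound for $b_e^\ve$ and the $L^4$-bound for $c_e^\ve$, $c_f^\ve$ by a Moser--Alikakos iteration, using as input only the uniform $L^2(0,T;H^1)$ and $L^\infty(0,T;L^2)$ estimates already obtained in \eqref{estim_b_c}--\eqref{bou_b_c}, the uniform bound on $\|\be(u_e^\ve)\|_{L^\infty(0,T;L^2(\Omega_e^\ve))}$ from \eqref{estim_u_p_u}, the growth conditions on $g_b$, $g_e$, $P$, $F_b$, $F_c$ in {\bf A4.}, and the boundedness of $\mathcal G$ in {\bf A3.} The key device is that all Sobolev and trace inequalities are applied to the extensions of $b_e^\ve$ and $c^\ve$ to the whole domain $\Omega$ provided by Lemma~\ref{extension}, so that the embedding constants are independent of $\ve$.

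First I would treat $b_e^\ve$. Fix $p=2^k$, $k\ge 1$, and use $|b_{e,i}^\ve|^{p-1}$ (componentwise, exploiting the non-negativity from \eqref{estim_b_c}) as a test function in \eqref{cd_one}. This produces
$$
\frac1p\,\partial_t\|b_{e,i}^\ve\|_{L^p(\Omega_e^\ve)}^p
+\frac{4(p-1)}{p^2}\big\||b_{e,i}^\ve|^{p/2}\big\|_{L^2(\Omega_e^\ve)}^2
\le \big\langle |g_{b,i}(c_e^\ve,b_e^\ve,\be(u_e^\ve))|,|b_{e,i}^\ve|^{p-1}\big\rangle_{\Omega_e^\ve}
+\ve\big\langle |P_i(b_e^\ve)|,|b_{e,i}^\ve|^{p-1}\big\rangle_{\Gamma^\ve}
+\big\langle |F_{b,i}(b_e^\ve)|,|b_{e,i}^\ve|^{p-1}\big\rangle_{\partial\Omega}.
$$
For the volume term the bound $|g_{b,i}|\le C_1(1+|c_e^\ve|+|b_e^\ve|)+C_2|b_e^\ve||\be(u_e^\ve)|$ together with the Gagliardo--Nirenberg inequality (in $\Omega$, applied to $|b_{e,i}^\ve|^{p/2}$) and the $L^\infty(0,T;L^2)$ control of $\be(u_e^\ve)$ and $c_e^\ve$ lets one absorb the highest-order contribution into $\delta\,\||b_{e,i}^\ve|^{p/2}\|_{H^1}^2$, leaving terms controlled by $\||b_{e,i}^\ve|^{p/2}\|_{L^1(\Omega)}^2$ and $\||b_{e,i}^\ve|^{p/2}\|_{L^2(\Omega)}^2$ with constants polynomial in $p$; the surface terms are handled by the trace inequalities of \eqref{estim_GN_trace} (including the $\ve$-scaled trace on $\Gamma^\ve$, which is exactly what the factor $\ve$ in the boundary condition compensates). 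Integrating in time and discarding the gradient term yields a recursion of the form $\|b_{e,i}^\ve\|_{L^\infty(0,s;L^p)}^p\le (Cp)^{a p}\big(1+\|b_{e,i}^\ve\|_{L^\infty(0,s;L^{p/2})}^{p}\big)$, and the Alikakos lemma \cite[Lemma~3.2]{Alikakos} (using the already established $L^\infty(0,T;L^2)$-bound as the $k=1$ base case) gives \eqref{estim_Linfty_L4}$_1$ with a constant independent of $\ve$.

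For $c_e^\ve$ and $c_f^\ve$ I would run one step of the same scheme with $p=4$: test the first two equations of \eqref{eq_codif} with $(c_e^\ve)^3$ and $(c_f^\ve)^3$, add them, and note that the coupling through $\Gamma^\ve\setminus\widetilde\Gamma^\ve$ cancels after integration by parts (continuity of $c$ and of the normal flux there), while on $\widetilde\Gamma^\ve$ the fluxes vanish; the convection term $\mathcal G(\partial_t u_f^\ve)c_f^\ve$ is controlled using $|\mathcal G|\le R$ from {\bf A3.} after an integration by parts, producing $R\langle c_f^\ve,\nabla(c_f^\ve)^3\rangle$ which is absorbed into the Dirichlet term plus a lower-order $L^4$ contribution. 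Here one also needs the $L^\infty$-bound on $b_e^\ve$ just proved, since $g_e$ contains $|b_e^\ve||\be(u_e^\ve)|$; with $\|b_e^\ve\|_{L^\infty}$ in hand, the Gagliardo--Nirenberg and trace inequalities of \eqref{estim_GN_trace} and the Gronwall inequality close the estimate and give \eqref{estim_Linfty_L4}$_2$.

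\textbf{Main obstacle.} The delicate point is keeping every constant uniform in $\ve$ while iterating in $p$: the Gagliardo--Nirenberg and ordinary trace inequalities must be applied to the $H^1(\Omega)$-extensions (whence Lemma~\ref{extension} is essential and the geometric hypotheses on $\widetilde\Gamma^\delta$ matter for $c^\ve$), and the boundary integral over $\Gamma^\ve$ must be handled through the $\ve$-weighted trace estimate $\ve\|w\|_{L^2(\Gamma^\ve)}^2\le C(\ve^2\|\nabla w\|_{L^2(\Omega_e^\ve)}^2+\|w\|_{L^2(\Omega_e^\ve)}^2)$, so that the $\ve$ in front of $P(b_e^\ve)$ in \eqref{eq_codif} precisely neutralizes the blow-up of the surface measure. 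Tracking the $p$-dependence of the constants through these steps (the factor $(Cp)^{ap}$) and verifying it is summable in the Alikakos recursion is the technically heaviest part; the rest is the by-now-standard Moser iteration machinery.
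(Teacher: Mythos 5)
Your proposal is the paper's approach in spirit: Alikakos/Moser iteration with the Gagliardo--Nirenberg inequality applied to the extensions from Lemma~\ref{extension}, the $\ve$-scaled trace inequality to absorb the $\ve P(b_e^\ve)$ boundary term, the $L^\infty(0,T;L^2)$ control of $\be(u_e^\ve)$ used via H\"older so the bilinear reaction term produces $\|\be(u_e^\ve)\|_{L^2}\||b_e^\ve|^{p/2}\|_{L^4}^2$, and then interpolation to close the recursion $\|b_e^\ve\|_{L^\infty L^p}^p\lesssim p^a(1+\|b_e^\ve\|_{L^\infty L^{p/2}}^p)$; the $L^4$ bound for $c^\ve$ similarly uses the $L^\infty$-bound on $b_e^\ve$ and one more step of the same machinery.

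However, there is a genuine gap: you test \eqref{cd_one} with $|b_e^\ve|^{p-1}$ (and \eqref{cd_two} with $(c^\ve_j)^3$) without justifying that these are admissible test functions. With only $b_e^\ve\in L^2(0,T;H^1(\Omega_e^\ve))\cap L^\infty(0,T;L^2(\Omega_e^\ve))$ in hand, $|b_e^\ve|^{p-1}\notin L^2(0,T;H^1(\Omega_e^\ve))$ for $p>2$, so the computation you write down is purely formal. The paper resolves this by a two-stage argument: it first introduces the truncations $b_{e,N}^\ve=\min\{b_e^\ve,N\}$, tests with $(b_{e,N}^\ve)^{p-1}$, and iterates $p=3,4,\ldots$ with constants that may depend on $\ve$ and $p$, obtaining $b_e^\ve\in L^\infty(0,T;L^p(\Omega_e^\ve))$ with $|b_e^\ve|^{p/2}\in L^2(0,T;H^1(\Omega_e^\ve))$ for each finite $p$ after letting $N\to\infty$; only then does it re-test with $(b_e^\ve)^{p-1}$ (now admissible) and run the $\ve$-uniform Alikakos iteration you describe. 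This preliminary step is not mere bookkeeping: after truncation, the reaction term yields the mismatched product $\int_0^s\|b_e^\ve\|_{L^{2p}}\|b_{e,N}^\ve\|_{L^{2p}}^{p-1}\,dt$ in which the \emph{untruncated} $b_e^\ve$ appears in $L^{2p}$, and controlling it requires the auxiliary inequality $\|\nabla|b_e^\ve|^{p/3}\|_{L^2}^2\leq\|\nabla b_e^\ve\|_{L^2}^2+\|\nabla|b_e^\ve|^{(p-1)/2}\|_{L^2}^2$ together with the result of the previous iteration step. An analogous truncation and preliminary iteration ($p=3,\ldots,6$) is needed before $(c_j^\ve)^3$ can legitimately be used as a test function. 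Without these stages, the clean recursion you write down does not follow from the stated hypotheses.
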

\begin{proof}
  To show that $|b_e^\ve|^p$, for $p\geq 2$,  is an admissible test function for the equation \eqref{cd_one},  we  set  $b_{e,N}^\ve(t,x)= \min\{ b_e^\ve(t,x), N\}$ for  $(t,x) \in \Omega_{e,T}^\ve$, where  $N>\|b_{e0}\|_{L^\infty(\Omega)}$, and  derive  estimates for $|b_{e,N}^\ve|^p$ independent of $N$.  Then letting $N \to \infty$ we obtain the desired  estimates for  $b_e^\ve$. Taking $(b_{e,N}^\ve)^{p-1}$  as a test function in \eqref{cd_one} and applying simple calculations we obtain
\begin{equation}\label{estim_Linfty_be_1}
\begin{aligned}
& \|b^\ve_{e,N}(s)\|^p_{L^p(\Omega_e^\ve)} + \|\nabla |b^\ve_{e,N}|^{\frac p2} \|^2_{L^2(\Omega_{e,s}^\ve)}\leq  C_1 \Big[\| \be(u_e^\ve)\|_{L^\infty(0, s; L^2(\Omega_e^\ve))} \\
&\qquad \qquad  + \|c^\ve_e\|_{L^\infty(0, s; L^2(\Omega_e^\ve))} +1\Big]
\int_0^s \|b_{e}^\ve\|_{L^{2p}(\Omega_e^\ve)} \|b_{e,N}^\ve\|^{p-1}_{L^{2p}(\Omega_e^\ve)} dt
\\
& + C_2 \|b_{e0}\|^p_{L^p(\Omega_e^\ve)}+ C_3 \|b^\ve_e\|_{L^\infty(0,s; L^2(\Omega_e^\ve)) }\||b^\ve_{e,N}|^{\frac p2}\|^{2\frac{p-1} p}_{L^2(0,s; L^4(\Omega_{e}^\ve))} \\
&+ C_4 \Big[\||b_{e,N}^\ve|^{\frac p 2}\|^2_{L^2(0,s; L^4(\Omega_e^\ve))} + \| c^\ve_e\|^p_{L^p(0, s; L^2(\Omega_e^\ve))}+  \| \be(u_e^\ve)\|^p_{L^p(0, s; L^2(\Omega_e^\ve))} \Big]\\
& \qquad \qquad  + \ve \langle |P ( b_e^\ve)|, |b_{e,N}^\ve|^{p-1} \rangle_{\Gamma_s^\ve}  + \langle |F_b ( b_e^\ve)|, |b_{e,N}^\ve|^{p-1} \rangle_{(\partial \Omega)_s}
\end{aligned}
\end{equation}
for $s \in (0,T]$. Here we used the fact that the definition of $b_{e,N}^\ve$ implies
$$
\langle \nabla b_e^\ve, \nabla (b_{e,N}^\ve)^{p-1} \rangle_{\Omega_{e,s}^\ve}= \langle \nabla b_{e,N}^\ve, \nabla (b_{e,N}^\ve)^{p-1} \rangle_{\Omega_{e,s}^\ve}
$$
and that due to the inequality  $b_e^\ve \geq 0$ in $\Omega_{e,T}^\ve$ we have
\begin{equation*}
\begin{aligned}
& \langle \partial_t b_e^\ve, |b_{e,N}^\ve|^{p-1} \rangle_{\Omega_{e,s}^\ve}
 \geq\frac 1p  \|b^\ve_{e,N}(s)\|^p_{L^p(\Omega_e^\ve)}  - \frac 1 p \|b_{e0}\|^p_{L^p(\Omega_e^\ve)}
 -  \|b_{e0}\|^p_{L^p(\Omega_e^\ve)}\\
 & + \langle  b_{e}^\ve(s), |b_{e,N}^\ve(s)|^{p-1} \rangle_{\Omega_{e}^\ve \setminus \Omega_{e}^{\ve,N}\!(s)}
 \geq \frac 1p  \|b^\ve_{e,N}(s)\|^p_{L^p(\Omega_e^\ve)} - (1 + 1/p) \|b_{e0}\|^p_{L^p(\Omega_e^\ve)}.
\end{aligned}
\end{equation*}
Here $\Omega_{e}^{\ve,N}(t)=\{ x \in \Omega_{e,s}^\ve\, : \, b_e^\ve(t,x) \leq N \}$ for $t \in (0,T)$.
Applying  the Gagliardo-Nirenberg inequality   we can estimate
\begin{equation}\label{GN_p_Linfty}
\begin{aligned}
\||b^\ve_{e,N}|^p\|_{L^2(\Omega_{e}^\ve)}  =  \||b^\ve_{e,N}|^{\frac p2}\|^2_{L^4(\Omega_{e}^\ve)}  \leq C\|\nabla |b^\ve_{e,N}|^{\frac p2}\|^{2\alpha}_{L^2(\Omega^\ve_{e})}  \||b^\ve_{e,N}|^{\frac p2}\|^{1-\alpha}_{L^1(\Omega^\ve_{e})}
\end{aligned}
\end{equation}
with $\alpha = 9/10$. Using the embedding $L^2(0, s; H^1(\Omega_e^\ve)) \subset L^2(0, s; L^6(\Omega_e^\ve))$, in space-dimension two and  three, and applying the Gagliardo-Nirenberg inequality to \\
$\||b_{e,N}^\ve|^{\frac p 2}\|_{L^4(\Omega_e^\ve)}$ yield
\begin{equation*}
\begin{aligned}
&\int_0^s \|b_{e}^\ve\|_{L^{2p}(\Omega_e^\ve)} \|b_{e,N}^\ve\|^{p-1}_{L^{2p}(\Omega_e^\ve)} dt
\\ & \leq
\int_0^s \big(\|b_e^\ve\|_{L^{\frac{2p}3}(\Omega_e^\ve)} +  \|\nabla |b_e^\ve|^{\frac p3}\|^{\frac 3p}_{L^{2}(\Omega_e^\ve)} \big)
\|b_{e,N}^\ve\|^{\frac{p-1}4}_{L^{p}(\Omega_e^\ve)} \|\nabla|b_{e,N}^\ve|^{\frac p 2}\|^{\frac{3 (p-1)}{2 p}}_{L^{2}(\Omega_e^\ve)} dt.
\end{aligned}
\end{equation*}
 Then using  the H\"older inequality on the right-hand side of the last estimate, we obtain
\begin{equation}\label{etim_compl_1}
\begin{aligned}
&\int_0^s \|b_{e}^\ve\|_{L^{2p}(\Omega_e^\ve)} \|b_{e,N}^\ve\|^{p-1}_{L^{2p}(\Omega_e^\ve)} dt  \\
&\leq C\Big[ \int_0^s \big(\|b_e^\ve\|^{\frac{2p}3}_{L^{\frac{2p}3}(\Omega_e^\ve)} +  \|\nabla |b_e^\ve|^{\frac p3}\|^2_{L^{2}(\Omega_e^\ve)} \big)
dt\Big]^{\frac 3{2p}}
\\ & \qquad \qquad  \times \sup_{(0,s)} \|b_{e,N}^\ve\|^{ \frac{p-1} 4}_{L^{p}(\Omega_e^\ve)}  \Big[\int_0^s   \|\nabla|b_{e,N}^\ve|^{\frac p 2}\|^{\frac 32\frac{(2p-2)} {2p-3}}_{L^{2}(\Omega_e^\ve)} dt\Big]^{\frac{2p-3}{2p}}.
\end{aligned}
\end{equation}
For $p \geq 3$ we can estimate
\begin{equation}\label{spec_nabla}
\begin{aligned}
\|\nabla |b_e^\ve|^{\frac p3}\|^2_{L^{2}(\Omega_{e,s}^\ve)}
 \leq \|\nabla b_e^\ve \|^2_{L^{2}(\Omega_{e,s}^{\ve,1 })} +  \|\nabla |b_e^\ve|^{\frac {p-1} 2}\|^2_{L^{2}(\Omega_{e,s}^\ve\setminus \Omega_{e,s}^{\ve, 1})}
\\  \leq   \|\nabla b_e^\ve \|^2_{L^{2}(\Omega_{e,s}^{\ve})} +  \|\nabla |b_e^\ve|^{\frac {p-1} 2}\|^2_{L^{2}(\Omega_{e,s}^{\ve})},
\end{aligned}
\end{equation}
where $\Omega_{e,s}^{\ve,1} =\{ (t,x) \in \Omega_{e,s}^\ve:  b_e^\ve(t,x) \leq 1 \}. $ Also notice that for  $p\geq 3$  we have  $\frac 3 4\frac{(2p-2)} {2p-3}\leq 1$ and $\frac{2p}3 \leq p-1$.
Thus  applying the Young   inequality  in \eqref{etim_compl_1}   yields
\begin{equation}\label{estim_int_cut_b}
\begin{aligned}
\int_0^s\! \|b_{e}^\ve\|_{L^{2p}(\Omega_e^\ve)} \|b_{e,N}^\ve\|^{p-1}_{L^{2p}(\Omega_e^\ve)} dt
\leq \delta_1\sup_{(0,s)} \|b_{e,N}^\ve\|^{p}_{L^{p}(\Omega_e^\ve)}  + \delta_2   \|\nabla|b_{e,N}^\ve|^{\frac p 2}\|^2 _{L^{2}(\Omega_{e,s}^\ve)}
\\  +  C_\delta \Big(1+ \|b_e^\ve\|^{p-1}_{L^{p-1}(\Omega_{e,s}^{\ve})} + \|\nabla b_e^\ve \|^2_{L^{2}(\Omega_{e,s}^{\ve})} +  \|\nabla |b_e^\ve|^{\frac {p-1} 2}\|^2_{L^{2}(\Omega_{e,s}^{\ve})} \Big)^{\frac 32}
\end{aligned}
\end{equation}
for any $\delta_1>0$ and $\delta_2>0$.
 Using the trace inequality,  we  estimate  the integral over $\Gamma^\ve$ as
\begin{equation}\label{estim-bound_int_b}
\begin{aligned}
&\ve \langle |P ( b_e^\ve)|, |b_{e,N}^\ve|^{p-1} \rangle_{\Gamma_s^\ve} \leq   C_1 \ve  \langle 1 + |b_e^\ve|,  |b_{e,N}^\ve|^{p-1}\rangle_{\Gamma_s^\ve}\\
& \leq
\!C_2(\ve)\!\int\limits_0^s\!\Big[1+  \||b_e^\ve|^{\frac p3}\|^{\frac 14}_{L^2(\Omega_{e}^\ve)}  \|\nabla  |b_e^\ve|^{\frac p 3}\|^{\frac 34}_{L^2(\Omega_{e}^\ve)}
+  \||b_e^\ve|^{\frac p3}\|^{\frac 16}_{L^2(\Omega_{e}^\ve)}  \|\nabla  |b_e^\ve|^{\frac p 3}\|^{\frac 56}_{L^2(\Omega_{e}^\ve)} \Big]^{\frac 3 p} \\
&\hspace{4.5 cm } \times \Big[ \||b_{e,N}^\ve|^{\frac p 2}\|_{L^2(\Omega_{e}^\ve)}\|\nabla  |b_{e,N}^\ve|^{\frac p 2}\|_{L^2(\Omega_{e}^\ve)}  \Big]^{\frac {p-1} p} dt\\
&\leq C_3(\ve)\left[1+ \||b_e^\ve|^{\frac p3}\|^{\frac 1 {2p}}_{L^\infty(0,s;L^2(\Omega_{e}^\ve))} \|\nabla  |b_e^\ve|^{\frac p 3}\|^{\frac 5{2(p+1)}}_{L^2(\Omega_{e,s}^\ve)}\right]
\\ & \hspace{4.5 cm}   \times \sup\limits_{(0,s)}\||b_{e,N}^\ve|^{\frac p 2}\|^{\frac{p-1} p}_{L^2(\Omega_{e}^\ve)}  \|\nabla  |b_{e,N}^\ve|^{\frac p 2}\|^{\frac {p-1}{p}}_{L^2(\Omega_{e,s}^\ve)}.
\end{aligned}
\end{equation}
Applying  the Young inequality on the right-hand side of \eqref{estim-bound_int_b} and using  \eqref{spec_nabla},  together with the uniform estimate of $\|\nabla b_e^\ve\|_{L^2(\Omega_{e,s}^\ve)}$, obtained in Lemma~\ref{Lemma:apriori}, yield
\begin{equation*}
\begin{aligned}
\ve \langle |P ( b_e^\ve)|, |b_{e,N}^\ve|^{p-1} \rangle_{\Gamma_s^\ve} \leq C(\ve)\Big[1+ \||b_e^\ve|^{\frac p3}\|^{\frac 12 }_{L^\infty(0,s;L^2(\Omega_{e}^\ve))}\big(1+ \|\nabla  |b_e^\ve|^{\frac {p-1} 2}\|^{\frac {5p}{2(p+1)}}_{L^2(\Omega_{e,s}^\ve)}\big)\Big] \\
+ \delta_1 \sup_{(0,s)}\||b_{e,N}^\ve|^{\frac p 2}\|^2_{ L^2(\Omega_{e}^\ve)}
+ \delta_2  \|\nabla  |b_{e,N}^\ve|^{\frac p 2}\|^2_{L^2(\Omega_{e,s}^\ve)}.
\end{aligned}
\end{equation*}
The same calculations together with \eqref{spec_nabla} ensure that
\begin{equation*}
\begin{aligned}
\langle |F_b ( b_e^\ve)|, |b_{e,N}^\ve|^{p-1} \rangle_{(\partial \Omega)_s} \leq
C(\ve)\Big[1+ \||b_e^\ve|^{\frac {p-1}2}\|^3_{L^\infty(0,s;L^2(\Omega_{e}^\ve))} +  \|\nabla  |b_e^\ve|^{\frac {p-1} 2}\|^{3}_{L^2(\Omega_{e,s}^\ve)}\Big]
\\
 + \delta_1\sup_{(0,s)} \| b_{e,N}^\ve\|^p_{L^p(\Omega_{e}^\ve)}
+ \delta_2  \|\nabla  |b_{e,N}^\ve|^{\frac p 2}\|^2_{L^2(\Omega_{e,s}^\ve)}.
\end{aligned}
\end{equation*}
Considering $p=3$  and using the standard {\it a priori} estimates \eqref{bou_b_c} for $b_e^\ve$ yield
\begin{equation}\label{estim_p3_1}
\begin{aligned}
\int_0^s \! \|b_{e}^\ve\|_{L^{6}(\Omega_e^\ve)} \|b_{e,N}^\ve\|^{2}_{L^{6}(\Omega_e^\ve)} dt \leq C\Big[ \int_0^s\! \Big(\|b_e^\ve\|^{2}_{L^{2}(\Omega_e^\ve)} +  \|\nabla  b_e^\ve \|^2_{L^{2}(\Omega_e^\ve)} \Big)
dt\Big]^{\frac 1{2}}
\\  \times \sup_{(0,s)} \|b_{e,N}^\ve\|^{ \frac{1} 2}_{L^{3}(\Omega_e^\ve)}  \Big[\int_0^s   \|\nabla|b_{e,N}^\ve|^{\frac 3 2}\|^{2}_{L^{2}(\Omega_e^\ve)} dt\Big]^{\frac{1}{2}}\\
 \leq  C_\delta + \delta_1  \sup_{(0,s)} \|b_{e,N}^\ve(s)\|^{3}_{L^{3}(\Omega_e^\ve)}  + \delta_2   \|\nabla|b_{e,N}^\ve|^{\frac 3 2}\|^2 _{L^{2}(\Omega_{e,s}^\ve)}.
\end{aligned}
\end{equation}
For the boundary integrals, for $p=3$, we have
\begin{equation}\label{estim_p3_2}
\begin{aligned}
& \ve \langle |P ( b_e^\ve)|, |b_{e,N}^\ve|^{2} \rangle_{\Gamma_s^\ve}
 + \langle |F_b ( b_e^\ve)|, |b_{e,N}^\ve|^{2} \rangle_{(\partial \Omega)_s}  \leq C_1(\ve)\Big[1\\
 &+ \| b_e^\ve\|^{\frac 1 {6}}_{L^\infty(0,s;L^2(\Omega_{e}^\ve))} \|\nabla b_e^\ve\|^{\frac 5{8}}_{L^2(\Omega_{e,s}^\ve)}\Big]
 \sup\limits_{(0,s)}\||b_{e,N}^\ve|^{\frac 3 2}\|^{\frac{2} 3}_{L^2(\Omega_{e}^\ve)}  \|\nabla  |b_{e,N}^\ve|^{\frac 3 2}\|^{\frac {2}{3}}_{L^2(\Omega_{e,s}^\ve)}\\
& \leq C_2(\ve) \big[1+ \| b_e^\ve\|^{\frac 1 {2}}_{L^\infty(0,s;L^2(\Omega_{e}^\ve))} \|\nabla b_e^\ve\|^{\frac {15}{8}}_{L^2(\Omega_{e,s}^\ve)}\big]
\\ & +
\delta_1  \sup\limits_{(0,s)}\|b_{e,N}^\ve(s)\|^{3}_{L^3(\Omega_{e}^\ve)}  + \delta_2  \|\nabla  |b_{e,N}^\ve|^{\frac 3 2}\|^{2}_{L^2(\Omega_{e,s}^\ve)} .
\end{aligned}
\end{equation}
Considering  \eqref{estim_Linfty_be_1} for $p=3$ and using   the estimates  \eqref{GN_p_Linfty}, \eqref{estim_p3_1} and \eqref{estim_p3_2}  together with the standard {\it a priori} estimates for $b_e^\ve$, $c_e^\ve$ and $u_e^\ve$, shown in Lemma~\ref{Lemma:apriori},  we obtain
\begin{equation*}
\begin{aligned}
& \|b^\ve_{e,N}(s)\|^3_{L^3(\Omega_e^\ve)} + \|\nabla |b^\ve_{e,N}|^{\frac 32} \|^2_{L^2(\Omega_{e,s}^\ve)}
\\ &
 \leq  C(\ve) + \delta_1  \sup\limits_{(0,s)}\|b_{e,N}^\ve(s)\|^{3}_{L^3(\Omega_{e}^\ve)}
 + \delta_2  \|\nabla  |b_{e,N}^\ve|^{\frac 3 2}\|^{2}_{L^2(\Omega_{e,s}^\ve)}
\end{aligned}
\end{equation*}
with  $s\in (0,T]$,  a constant $C(\ve)$ independent of $N$, and arbitrary $0<\delta_1\leq \frac 12 $ and $0<\delta_2 \leq \frac 12$.  Considering the supremum over $(0,s)$  and taking the  limit $N \to \infty$ yield that $b_e^\ve \in L^\infty(0,T; L^3(\Omega_e^\ve))$ and $\nabla |b^\ve_{e}|^{\frac 32} \in L^2(\Omega_{e,T}^\ve)$.
Taking iteratively   $p=4,5,\ldots$ and choosing $\delta_1>0$ and $\delta_2>0$ sufficiently small for each fixed $p$ and  for fixed $\ve$ we obtain estimates for $\|b_{e,N}^\ve\|_{L^\infty(0,T;L^p(\Omega_e^\ve))}$ and $\| \nabla  |b^\ve_{e,N}|^{\frac p2} \|^2_{L^2(\Omega_{e,T}^\ve)}$
independent of $N$. Letting  $N\to \infty$ yields that $ |b^\ve_{e}|^{\frac p 2} \in L^2(0,T;  H^1(\Omega_{e}^\ve))$  and $b^\ve_{e} \in L^\infty(0,T; L^p(\Omega_e^\ve))$  for every fixed $p \geq 2$.

 Now we  consider  $(b_e^\ve)^{p-1}$  as a test function in \eqref{cd_one} and obtain
\begin{equation}\label{estim_Lp_b}
\begin{aligned}
\frac 1 p \|b^\ve_{e}(s)\|^p_{L^p(\Omega_e^\ve)} +\frac {4(p-1)} {p^2} \|\nabla |b^\ve_{e}|^{\frac p2} \|^2_{L^2(\Omega_{e,s}^\ve)}\leq
\frac 1 p \|b_{e0}\|^p_{L^p(\Omega_e^\ve)}
+ \|b^\ve_{e}\|^p_{L^p(\Omega_{e,s}^\ve)} \\
+ C_1 \big(\| c^\ve_e\|_{L^\infty(0, s; L^2(\Omega_e^\ve))}+  \| \be(u_e^\ve)\|_{L^\infty(0, s; L^2(\Omega_e^\ve))} +1\big) \||b_{e}^\ve|^{\frac p 2}\|^2_{L^2(0,s; L^{4}(\Omega_e^\ve))}
\\
+ \frac {C_2} p \big(\| c^\ve_e\|^p_{L^p(0, s; L^2(\Omega_e^\ve))}+  \| \be(u_e^\ve)\|^p_{L^p(0, s; L^2(\Omega_e^\ve))} \big)
 \\ + \ve \langle |P ( b_e^\ve)|, |b_e^\ve|^{p-1}\rangle_{\Gamma_s^\ve} +
\langle F_b(b_e^\ve), |b_e^\ve|^{p-1}\rangle_{(\partial\Omega)_s}
\end{aligned}
\end{equation}
for  $s \in (0,T]$.
The integral over $\Gamma^\ve$ is estimated as
\begin{equation*}
\begin{aligned}
\ve \langle |P ( b_e^\ve)|, |b_e^\ve|^{p-1}\rangle_{\Gamma_s^\ve} & \leq   C_1 \ve  \langle 1 + |b_e^\ve|,  |b_e^\ve|^{p-1} \rangle _{\Gamma_s^\ve}
\\ & \leq
C_2\Big( 1+ \||b_e^\ve|^{\frac p 2}\|^2_{L^2(\Omega_{e,s}^\ve)} + \ve^2 \|\nabla  |b_e^\ve|^{\frac p 2}\|^2_{L^2(\Omega_{e,s}^\ve)} \Big).
\end{aligned}
\end{equation*}
Using the properties of extension of $b_e^\ve$ from $\Omega_e^\ve$ to $\Omega$
and applying  the Gagliardo-Nirenberg inequality
$$
\|w\|_{L^2(\Omega)} \leq C \|\nabla w \|^{\alpha_1}_{L^2(\Omega)} \| w \|^{1-\alpha_1}_{L^1(\Omega)},
\qquad
 \|w\|_{L^4(\Omega)} \leq C \|\nabla w \|^{\alpha_2}_{L^2(\Omega)} \| w \|^{1-\alpha_2}_{L^1(\Omega)},
$$
with $\alpha_1=\frac 3 5$ and $\alpha_2 = \frac {9}{10}$,   we obtain
\begin{equation*}
\begin{aligned}
\ve \langle |P ( b_e^\ve)|, |b_e^\ve|^{p-1}\rangle_{\Gamma_s^\ve} & \leq C_\delta\big( 1+\||b_e^\ve|^{\frac p 2}\|^2_{L^1(\Omega_{e,s}^\ve)} \big)+ (\ve^2+\delta) \|\nabla  |b_e^\ve|^{\frac p 2}\|^2_{L^2(\Omega_{e,s}^\ve)}, \\
\langle |F_b(b_e^\ve)|, |b_e^\ve|^{p-1}\rangle_{(\partial\Omega)_s} &\leq C\big(1 + \||b_e^\ve|^{\frac p 2}\|^2_{L^2((0,s)\times\partial \Omega)}\big)
\\ &
\leq C_\delta\big[1+ \||b_e^\ve|^{\frac p 2}\|^2_{L^1(\Omega_{e,s}^\ve)}\big] + \delta \|\nabla  |b_e^\ve|^{\frac p 2}\|^2_{L^2(\Omega_{e,s}^\ve)}.
\end{aligned}
\end{equation*}
Then  applying  the Gagliardo-Nirenberg inequality and  the extension lemma, Lemma~\ref{extension}, to  $\||b_e^\ve|^{\frac p2}\|^2_{L^2(\Omega_s)}$ and
$\||b_{e}^\ve|^{\frac p 2}\|^2_{L^2(0,s; L^{4}(\Omega_e^\ve))}$  in \eqref{estim_Lp_b} and using the  estimates \eqref{bou_b_c}  yield
\begin{equation*}
\begin{aligned}
& \|b^\ve_{e}(s)\|^p_{L^p(\Omega_e^\ve)} +\|\nabla |b^\ve_{e}|^{\frac p2} \|^2_{L^2(\Omega_{e,s}^\ve)}\leq
C^p_1
+ C_2 (1+ p^{10})\int_0^s \||b_{e}^\ve|^{\frac p 2}\|^2_{L^1(\Omega_e^\ve)}  dt,
\end{aligned}
\end{equation*}
where the constants $C_1$ and $C_2$ are independent of $\ve$. Then the  Alikakos iteration lemma implies the boundedness of $b_e^\ve$, uniformly in  $\ve$.

We turn to $c^\ve$.
Considering first  $(c_{e,N}^{\ve})^{p-1}$ and  $(c_{f,N}^{\ve})^{p-1}$, where $c_{j,N}^\ve(t,x) = \min\{ c_j^\ve(t,x), N\}$ for  $(t,x) \in \Omega_{j,T}^\ve$  with $j=e,f$ and $N >0$, as test functions in  \eqref{cd_two} and performing calculations similar to those in the derivation of  \eqref{estim_Linfty_be_1}, we obtain
 \begin{equation*}
\begin{aligned}
 \| c_{e,N}^\ve(s)\|^p_{L^p(\Omega_e^\ve)} + \| c_{f,N}^\ve(s)\|^p_{L^p(\Omega_f^\ve)}   + \| \nabla |c_{e,N}^\ve|^{\frac p2}\|^2_{L^2(\Omega_{e,s}^\ve)}+  \| \nabla |c_{f,N}^\ve|^{\frac p 2}\|^2_{L^2(\Omega_{f, s}^\ve)}
\\
 \leq \| c_e^\ve(0)\|^p_{L^p(\Omega_e^\ve)}
 +   \| c_f^\ve(0)\|^p_{L^p(\Omega_f^\ve)}
+ C_1 \big[1+  \|\mathcal G(\partial_t u_f^\ve)\|^2_{L^\infty(\Omega_{f,T}^\ve)}\big]  \|c_{f,N}^\ve\|^p_{L^p(\Omega_{f,s}^\ve)} \\
+ C_2\Big[\|b_e^\ve\|^p_{L^p(\Omega_{e, s}^\ve)} +\|c_{e,N}^\ve\|^p_{L^p(\Omega_{e, s}^\ve)}\Big]
+ C_3\int_0^s \! \big(1+ \|c_e^\ve\|_{L^{p}(\partial\Omega)}\big) \| c_{e,N}^\ve\|^{p-1}_{L^p(\partial \Omega)} dt
\\
+ C_4\|\be(u_e^\ve)\|_{L^\infty(0,s; L^2(\Omega_{e}^\ve))} \int_0^s \! \Big[ \||b_e^\ve|^p\|_{L^2(\Omega_{e}^\ve)}   +
 \|c_{e}^\ve\|_{L^{2p}(\Omega_{e}^\ve)} \|c_{e,N}^\ve\|^{p-1}_{L^{2p}(\Omega_e^\ve)} \\
  + \||c_{e,N}^\ve|^p\|_{L^2(\Omega_{e}^\ve)}\Big] dt
+C_5 \|c_{e}^\ve\|_{L^\infty(0,s; L^2(\Omega_{e}^\ve))}\||c_{e,N}^\ve|^{\frac p2}\|^{2\frac {p-1} p}_{L^2(0,s; L^4(\Omega_{e}^\ve))} \\
 + C_6
 \|c_{f}^\ve\|_{L^\infty(0,s; L^2(\Omega_{f}^\ve))}\ \||c_{f,N}^\ve|^{\frac p 2}\|^{2\frac{p-1} p}_{L^2(0,s; L^4(\Omega_{f}^\ve))} .
\end{aligned}
\end{equation*}
Similar to \eqref{estim_int_cut_b}   we estimate
 \begin{equation*}
\begin{aligned}
\int_0^s \! \|c_{e}^\ve\|_{L^{2p}(\Omega_{e}^\ve)} \|c_{e,N}^\ve\|^{p-1}_{L^{2p}(\Omega_e^\ve)} dt
\leq  \delta_1 \! \sup_{(0,s)} \|c_{e,N}^\ve(s)\|^p_{L^p(\Omega_e^\ve)}
+ \delta_2  \|\nabla |c_{e,N}^\ve|^{\frac p 2}\|^2_{L^2(\Omega_{e,s}^\ve)}
\\  +
C_\delta \Big( 1+\|c_e^\ve\|^{p-1}_{L^{p-1}(\Omega_{e,s}^{\ve})} + \|\nabla c_e^\ve \|^2_{L^{2}(\Omega_{e,s}^{\ve})} +  \|\nabla |c_e^\ve|^{\frac {p-1} 2}\|^2_{L^{2}(\Omega_{e,s}^{\ve})} \Big)^{\frac 32}.
\end{aligned}
\end{equation*}
The boundary integral can be estimated in the same way as in \eqref{estim-bound_int_b}:
\begin{equation*}
\begin{aligned}
\int_0^s \! (1+\|c_e^\ve\|_{L^{p}(\partial\Omega)}) \| c_{e,N}^\ve\|^{p-1}_{L^p(\partial \Omega)} dt \leq  \delta_1 \! \sup_{(0,s)} \| c_{e,N}^\ve(s)\|^p_{L^p(\Omega_{e}^\ve)}
+ \delta_2  \|\nabla  |c_{e,N}^\ve|^{\frac p 2}\|^2_{L^2(\Omega_{e,s}^\ve)}
\\
  +
C(\ve)\Big[1+ \||c_e^\ve|^{\frac {p-1}2}\|^3_{L^\infty(0,s;L^2(\Omega_{e}^\ve))} +  \|\nabla  |c_e^\ve|^{\frac {p-1} 2}\|^{3}_{L^2(\Omega_{e,s}^\ve)}\Big].
\end{aligned}
\end{equation*}
Considering $p=3,\ldots, 6$ iteratively, using estimates \eqref{bou_b_c}, and making the calculations similar to those for $b_{e,N}^\ve$   yield
$$
\begin{aligned}
& \| c_{e,N}^\ve\|_{L^\infty(0,T; L^6(\Omega_e^\ve))} + \| c_{f,N}^\ve(s)\|_{L^\infty(0,T; L^6(\Omega_f^\ve))}
\\ & + \| \nabla |c_{e,N}^\ve|^{3}\|_{L^2(\Omega_{e,s}^\ve)}+  \| \nabla |c_{f,N}^\ve|^3\|_{L^2(\Omega_{f, s}^\ve)} \leq  C,
\end{aligned}
$$
where the constant $C$ depends on $p$ and $\ve$ and  is independent of $N$.  Letting $N \to \infty$ we obtain that $(c_{j}^\ve)^{p-1} \in L^2(0,T; H^1(\Omega_j^\ve))$  with $j=e,f$ and $p=3,\ldots, 6$.
Thus we can consider $(c_e^\ve)^{p-1}$ and $(c_f^\ve)^{p-1}$, with $p=3,4$, as test functions  in  \eqref{cd_two}:
\begin{equation*}
\begin{aligned}
 &\| c_{e}^\ve(s)\|^p_{L^p(\Omega_e^\ve)} + \| c_{f}^\ve(s)\|^p_{L^p(\Omega_f^\ve)}   + \| \nabla |c_{e}^\ve|^{\frac p2}\|^2_{L^2(\Omega_{e,s}^\ve)}+  \| \nabla |c_{f}^\ve|^{\frac p 2}\|^2_{L^2(\Omega_{f, s}^\ve)}
\\ &\leq \| c_e^\ve(0)\|^p_{L^p(\Omega_e^\ve)}  +   \| c_f^\ve(0)\|^p_{L^p(\Omega_f^\ve)} + C_1 \big[1+  \|\mathcal G(\partial_t u_f^\ve)\|^2_{L^\infty(\Omega_{f,T}^\ve)}\big] \|c_{f}^\ve\|^p_{L^{p}(\Omega_{f,s}^\ve)}
\\ & + C_2\|\be(u_e^\ve)\|_{L^\infty(0,s; L^2(\Omega_{e}^\ve))}  \big[ \||b_e^\ve|^{\frac p 2}\|^2_{L^2(0,s; L^4(\Omega_{e}^\ve))}+  \||c_{e}^\ve|^{\frac p 2}\|^2_{L^{2}(0,s ; L^4(\Omega_{e}^\ve))} \big]
\\ & +C_3\big[1+\|b_e^\ve\|^p_{L^p(\Omega_{e, s}^\ve)} + \|c_{e}^\ve\|^p_{L^p(\Omega_{e, s}^\ve)} +  \|c_e^\ve\|^p_{L^{p}((0,s)\times\partial\Omega)}\big] .
\end{aligned}
\end{equation*}

In the same way as  in \eqref{estim_Lp_b},  applying the Gagliardo-Nirenberg inequality to $|c_j^\ve|^{\frac p2}$ in $L^2(\Omega_j^\ve)$ and  $L^4(\Omega_j^\ve)$ and using properties of the extension of $c_e^\ve$ from $\Omega_e^\ve$ to $\Omega$ and of $c^\ve$ from $\widetilde \Omega_{ef}$ to $\Omega$ we obtain
$$
\| c_{e}^\ve\|_{L^\infty(0,T; L^4(\Omega_e^\ve))} + \| c_{f}^\ve \|_{L^\infty(0,T; L^4(\Omega_f^\ve))}
 + \| \nabla |c_{e}^\ve|^{2}\|_{L^2(\Omega_{e,T}^\ve)}+  \| \nabla |c_{f}^\ve|^2\|_{L^2(\Omega_{f, T}^\ve)} \leq  C,
$$
where the constant $C$ is independent of $\ve$.
\end{proof}
Next we present the proof of the estimate for  $\|\widetilde b_{e}^{\ve, j} \|_{L^\infty(0,s; L^\infty(\Omega_e^\ve))}$.
\begin{lemma}\label{Linf_contract}
For the difference of two iterations $\widetilde b^{\ve,j}_e=b^{\ve, j-1}_e - b_e^{\ve, j}$, $\widetilde u^{\ve,j-1}_e=u^{\ve, j-2}_e - u_e^{\ve, j-1}$,  and $\partial_t \widetilde u^{\ve,j-1}_f=\partial_t u^{\ve, j-2}_f - \partial_t u_f^{\ve, j-1}$ for the microscopic system \eqref{eq_codif}--\eqref{exbou_co}, defined in Theorem~\ref{th:exist_uniq_micro}, we have
$$
\begin{aligned}
\|\widetilde b_{e}^{\ve, j} \|_{L^\infty(0,s; L^\infty(\Omega_e^\ve))} & \leq C\|\be(\widetilde u_e^{\ve, j-1})\|_{L^{1+ \frac 1\sigma}(0,s; L^2(\Omega_e^\ve))}
\\ &  +C_\delta \|\partial_t \widetilde u_{f}^{\ve, j-1}\|_{L^2(\Omega_{f,s}^\ve)} +
\delta  \|\be(\partial_t \widetilde u_{f}^{\ve, j-1})\|_{L^2(\Omega_{f,s}^\ve)}
\end{aligned}
$$
for  $s \in (0,T)$,  any $\delta>0$ and   $0<\sigma < 1/9$, where the constants $C$ and $C_\delta$ are independent of $s$ and $j$.
\end{lemma}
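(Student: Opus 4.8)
The plan is to establish the bound by the Alikakos iteration technique \cite[Lemma~3.2]{Alikakos}, mimicking the argument already used for $\|b_e^\ve\|_{L^\infty(0,T;L^\infty(\Omega_e^\ve))}$ in Lemma~\ref{Linf_L4}, but carried out for the difference equation \eqref{eq_diff}--\eqref{eq_diff_2} for $\widetilde b_e^{\ve,j}$ and keeping the forcing terms $\be(\widetilde u_e^{\ve,j-1})$ and $\partial_t\widetilde u_f^{\ve,j-1}$ explicit on the right-hand side. First I would fix $p=2^k$, $k=2,3,\ldots$, and, after a preliminary truncation of $\widetilde b_e^{\ve,j}$ at levels $\pm N$ to make the test function admissible (exactly as in Lemma~\ref{Linf_L4}, with $N\to\infty$ at the end), test the weak formulation of \eqref{eq_diff}--\eqref{eq_diff_2} with $|\widetilde b_e^{\ve,j}|^{p-2}\widetilde b_e^{\ve,j}$. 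Since $\widetilde b_e^{\ve,j}(0)=0$, this yields an inequality of the form
\begin{equation*}
\frac1p\|\widetilde b_e^{\ve,j}(s)\|^p_{L^p(\Omega_e^\ve)} + \frac{4(p-1)}{p^2}\bigl\|\nabla|\widetilde b_e^{\ve,j}|^{p/2}\bigr\|^2_{L^2(\Omega_{e,s}^\ve)} \le \mathcal R_p + \mathcal B_p,
\end{equation*}
where $\mathcal R_p$ collects the reaction contributions and $\mathcal B_p$ the boundary contributions over $\Gamma^\ve$ and $\partial\Omega$.

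Next I would expand $\mathcal R_p$ using the local Lipschitz estimate on $g_b$ from assumption {\bf A4.} together with the uniform bounds on $b_e^{\ve,j-1}$, $b_e^{\ve,j}$ in $L^\infty(0,T;L^\infty(\Omega_e^\ve))$ and on $c_e^{\ve,j-1}$, $c_e^{\ve,j}$, $\be(u_e^{\ve,j-2})$, $\be(u_e^{\ve,j-1})$ in $L^\infty(0,T;L^2(\Omega_e^\ve))$, all provided by \eqref{bou_b_c_itera} and \eqref{bou_u_p_u_itera}. The delicate term $\bigl\langle|\be(\widetilde u_e^{\ve,j-1})|,|\widetilde b_e^{\ve,j}|^{p-1}\bigr\rangle_{\Omega_{e,s}^\ve}$ is treated exactly as in \eqref{estim_conts_33}: I apply the Gagliardo--Nirenberg inequality $\|w\|_{L^4(\Omega)}\le C\|\nabla w\|_{L^2(\Omega)}^{\alpha}\|w\|_{L^1(\Omega)}^{1-\alpha}$ with $\alpha=9/10$ to $w=|\widetilde b_e^{\ve,j}|^{p/2}$, after extending $\widetilde b_e^{\ve,j}$ to $\Omega$ by Lemma~\ref{extension} so that the constant is independent of $\ve$, and then Young's inequality; this absorbs a small fraction of $\|\nabla|\widetilde b_e^{\ve,j}|^{p/2}\|^2_{L^2(\Omega_{e,s}^\ve)}$ and a small fraction of $\||\widetilde b_e^{\ve,j}|^{p/2}\|^2_{L^\infty(0,s;L^1(\Omega_e^\ve))}$ into the left-hand side, at the cost of the fixed-norm term $\frac{C}{p}\|\be(\widetilde u_e^{\ve,j-1})\|^p_{L^{1+1/\sigma}(0,s;L^2(\Omega_e^\ve))}$, valid for any $0<\sigma<1/9$, exactly the restriction appearing in \eqref{estim_conts_33}. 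The $\widetilde c_e^{\ve,j}$- and $\widetilde c_f^{\ve,j}$-contributions in $\mathcal R_p$ are bounded, after a H\"older step in time, by the $L^2$-estimate \eqref{eq_diff_bc_weak}, which controls $\|\widetilde c_e^{\ve,j}\|_{L^2(\Omega_{e,s}^\ve)}+\|\widetilde c_f^{\ve,j}\|_{L^2(\Omega_{f,s}^\ve)}$ in terms of $\|\be(\widetilde u_e^{\ve,j-1})\|_{L^2(\Omega_{e,s}^\ve)}$, $\|\partial_t\widetilde u_f^{\ve,j-1}\|_{L^2(\Omega_{f,s}^\ve)}$ and $\delta\|\be(\partial_t\widetilde u_f^{\ve,j-1})\|_{L^2(\Omega_{f,s}^\ve)}$ --- this is precisely where the flow-velocity terms in the claimed bound originate. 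The boundary term $\mathcal B_p$, containing the Lipschitz functions $P$ and $F_b$ applied to $\widetilde b_e^{\ve,j}$, is handled by the scaled trace inequality as in \eqref{estim-bound_int_b}, again absorbing the resulting gradient and supremum contributions.

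Collecting everything yields a recursion of the schematic shape
\begin{equation*}
\|\widetilde b_e^{\ve,j}\|^p_{L^\infty(0,s;L^p(\Omega_e^\ve))} \le C^p\Phi^p + C\,p^{\beta}\int_0^s\bigl\||\widetilde b_e^{\ve,j}|^{p/2}\bigr\|^2_{L^1(\Omega_e^\ve)}\,dt,
\end{equation*}
with $\Phi=\|\be(\widetilde u_e^{\ve,j-1})\|_{L^{1+1/\sigma}(0,s;L^2(\Omega_e^\ve))}+C_\delta\|\partial_t\widetilde u_f^{\ve,j-1}\|_{L^2(\Omega_{f,s}^\ve)}+\delta\|\be(\partial_t\widetilde u_f^{\ve,j-1})\|_{L^2(\Omega_{f,s}^\ve)}$ and $\beta=\alpha/(1-\alpha)$ for $\alpha=9/10$, the $L^\infty$-in-time norm on the left arising by taking the supremum over $s$ before iterating. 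Iterating over $p=2^k$ exactly as in \cite[Lemma~3.2]{Alikakos} and as already carried out in Lemma~\ref{Linf_L4}, the product of the $p$-dependent prefactors converges, and passing to the limit $k\to\infty$ (together with $N\to\infty$) gives the asserted estimate with constants $C$, $C_\delta$ independent of $j$, $\ve$ and $s\in(0,T)$. The hard part is the bookkeeping: one must ensure that the only forcing surviving in a non-$L^2$ norm after the iteration is $\be(\widetilde u_e^{\ve,j-1})$ in $L^{1+1/\sigma}_tL^2_x$ with a fixed $\sigma<1/9$ --- not a norm whose temporal exponent degrades as $p\to\infty$ --- and that every $p$-dependent constant grows at most polynomially in $p$, so that the Alikakos product is finite; both are secured by using the Gagliardo--Nirenberg inequality with the sharp exponent $\alpha=9/10$ and isolating the $\be(\widetilde u_e^{\ve,j-1})$-term with the fixed exponent exactly as in \eqref{estim_conts_33}.
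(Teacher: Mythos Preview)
Your proposal is correct and follows essentially the same route as the paper's proof: testing the difference equation with $|\widetilde b_e^{\ve,j}|^{p-2}\widetilde b_e^{\ve,j}$, isolating the critical term $\langle|\be(\widetilde u_e^{\ve,j-1})|,|\widetilde b_e^{\ve,j}|^{p-1}\rangle$ via Gagliardo--Nirenberg with $\alpha=9/10$ to produce the fixed $L^{1+1/\sigma}_tL^2_x$ forcing norm, invoking \eqref{eq_diff_bc_weak} for the $\widetilde c_e^{\ve,j}$-contribution, and closing with the Alikakos iteration over $p=2^k$. The only (harmless) differences are that you make the truncation step and the sign-aware test function explicit, and that $\widetilde c_f^{\ve,j}$ does not enter $\mathcal R_p$ directly but only through \eqref{eq_diff_bc_weak} when bounding $\widetilde c_e^{\ve,j}$.
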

\begin{proof} Considering  $(\widetilde b_{e}^{\ve, j})^{p-1}$ as a test function in the weak formulation of \eqref{eq_diff} yields
 \begin{equation}\label{estim_contr_222}
\begin{aligned}
& \frac 1 p \|\widetilde b_{e}^{\ve, j}(s)  \|^p_{L^p(\Omega_e^\ve)}  + \frac {2(p-1)}{p^2} \|\nabla |\widetilde b_{e}^{\ve,j}|^{\frac p 2} \|^2_{L^2(\Omega_{e,s}^\ve)}  \leq
C_1 \| \widetilde b^{\ve, j}_{e} \|^p_{L^p(\Omega_{e,s}^\ve)}
\\
&+C_2\left[ \|c^{\ve, j}_e\|_{L^\infty(0,s; L^2(\Omega_e^\ve))} + \|{\bf e}(u_{e}^{\ve, j-1})\|_{L^\infty(0,s; L^2(\Omega_e^\ve))}\right] \||\widetilde b_{e}^{\ve, j}|^{\frac p2}\|^2_{L^2(0,s; L^{4}(\Omega_e^\ve))}
\\
& +  C_3 \|b_{e}^{\ve, j-1}\|_{L^\infty(\Omega_{e,s}^\ve)} \left[\frac 1 p  \|\widetilde c_e^{\ve, j}\|^p_{L^\infty(0,s; L^2(\Omega_e^\ve))} +
  \frac { p-1}p\||\widetilde b_{e}^{\ve,j}|^{\frac p 2}\|^2_{L^2(0, s; L^4(\Omega_e^\ve))} \right]\\
  &+
 C_4 \|b_{e}^{\ve, j-1}\|_{L^\infty(\Omega_{e,s}^\ve)}   \langle |{\bf e}(\widetilde u_{e}^{\ve, j-1})|, |\widetilde b_{e}^{\ve, j}|^{p-1} \rangle_{\Omega_{e,s}^\ve}
\end{aligned}
\end{equation}
for $s\in (0,T)$.  Applying the H\"older inequality we estimate
\begin{equation}
\begin{aligned}
\langle |{\bf e}(\widetilde u_{e}^{\ve, j-1})| , |\widetilde b_{e}^{\ve, j}|^{p-1} \rangle_{\Omega^\ve_{e,s}} \leq \int_0^s \|{\bf e}(\widetilde u_{e}^{\ve, j-1})\|_{L^{\frac{2p}{p+1}}(\Omega_e^\ve)} \|\widetilde b_{e}^{\ve, j} \|^{p-1}_{L^{2p}(\Omega_{e}^\ve)}  dt \\
 \leq  C_1 \int_0^s \|{\bf e}(\widetilde u_{e}^{\ve, j-1})\|_{L^{2}(\Omega_e^\ve)} \|\widetilde b_e^{\ve, j} \|^{p-1}_{L^{2p}(\Omega_{e}^\ve)} dt\\
\leq C_2 \Big(\int_0^s \|\be(\widetilde u_{e}^{\ve,j-1})\|^{\frac{p(1+\sigma)}{(p\sigma +1)}}_{L^{2}(\Omega_e^\ve)} dt\Big)^{\frac{(p\sigma +1)}{p(1+\sigma)}}  \Big(\int_0^s \||\widetilde b_{e}^{\ve, j}|^{\frac p 2} \|^{2(1+\sigma)}_{L^{4}(\Omega_e^\ve)} dt\Big)^{\frac{(p-1)}{p(1+\sigma)}}
\end{aligned}
\end{equation}
for some $\sigma >0$. Applying the  Gagliardo-Nirenberg inequality
$$
\|w\|_{L^4(\Omega)} \leq C \|\nabla w \|^{\alpha}_{L^2(\Omega)} \| w \|^{1-\alpha}_{L^1(\Omega)}
$$
with $\alpha = \frac {9}{10}$,   we obtain for  $0<\sigma < 1/9$
\begin{eqnarray}\label{contract_b_e_1}
\Big(\!\int\limits_0^s \||\widetilde b_{e}^{\ve, j}|^{\frac p 2} \|^{2(1+\sigma)}_{L^{4}(\Omega_e^\ve)} dt \Big)^{\frac 1{1+\sigma}}\! \leq  C \Big(\! \int\limits_0^s \! \|\nabla |\widetilde b_{e}^{\ve, j}|^{\frac p 2} \|^{2(1+\sigma)\alpha}_{L^{2}(\Omega_e^\ve)} \||\widetilde b_{e}^{\ve, j} |^{\frac p 2} \|^{2(1+\sigma)(1-\alpha)}_{L^{1}(\Omega_e^\ve)} dt \Big)^{\frac 1{(1+\sigma)}} \nonumber \\
 \leq  C_2 \|\nabla |\widetilde b_{e}^{\ve, j}|^{\frac p 2} \|^{2\alpha}_{L^2(\Omega_{e,s}^\ve)} \Big( \int_0^s \| |\widetilde b_{e}^{\ve, j}|^{\frac p 2} \|^{\frac{2(1+\sigma)(1-\alpha)}{ 1- \alpha(1+\sigma)}}_{L^{1}(\Omega_e^\ve)} dt \Big)^{\frac{1-\alpha(1+\sigma)}{1 + \sigma}} \nonumber \\
\leq\frac  \delta p  \| \nabla |\widetilde b_{e}^{\ve, j}|^{\frac p 2} \|^2_{L^2(\Omega_{e,s}^\ve)}  +
C_\delta p^{\frac \alpha{1 - \alpha}}\Big( \int_0^s \||\widetilde b^{\ve, j}_{e}|^{\frac p 2} \|^{\frac{2(1+\sigma)(1-\alpha)}{ 1- \alpha(1+\sigma)}}_{L^{1}(\Omega_e^\ve)} dt \Big)^{\frac{1-\alpha(1+\sigma)}{(1 + \sigma)(1-\alpha)}}\\
 \leq \frac \delta p   \|\nabla |\widetilde b_{e}^{\ve, j}|^{\frac p 2} \|^2_{L^2(\Omega_{e,s}^\ve)}  +  C_\delta  p^{\frac \alpha{1 - \alpha}}\| |\widetilde b_{e}^{\ve, j}|^{\frac p 2} \|_{L^\infty(0,s; L^1(\Omega_e^\ve))}^2 \nonumber
\end{eqnarray}
for any $\delta >0$. Hence we have the following estimate:
\begin{equation}\label{estim_conts_332}
\begin{aligned}
& \langle |{\bf e}(\widetilde u_{e}^{\ve,j-1})|, |\widetilde b_{e}^{\ve, j}|^{p-1} \rangle_{\Omega_{e,s}^\ve} \leq
\delta \frac { p-1}{p^2} \|\nabla |\widetilde b_{e}^{\ve,j}|^{\frac p 2} \|^2_{L^2(\Omega_{e,s}^\ve)}
\\ & \quad + C_\delta \frac{(p-1) p^\beta}{p} \| |\widetilde b_{e}^{\ve,j}|^{\frac p 2} \|_{L^\infty(0, s; L^1(\Omega_e^\ve))}^2   +
C\frac 1 p \|{\bf e}(\widetilde u_{e}^{\ve, j-1})\|^p_{L^{1+ \frac{1}\sigma}(0,s; L^{2}(\Omega_e^\ve))},
\end{aligned}
\end{equation}
with $\beta = \frac \alpha{1 - \alpha}$.
Incorporating   \eqref{estim_conts_332}   in  \eqref{estim_contr_222},
estimating   $ \| \widetilde b_{e}^{\ve, j} \|^p_{L^p(\Omega_{e,s}^\ve)}$ and \\ $\||\widetilde b_{e}^{\ve,j}|^{\frac p 2}\|^2_{L^2(0, s; L^4(\Omega_e^\ve))}$  in terms of  $
\|| \widetilde b_{e}^{\ve, j}|^{\frac p 2} \|^2_{L^2(0,s; L^1(\Omega_e^\ve))}$ and $\|\nabla |\widetilde b_{e}^{\ve,j}|^{\frac p 2} \|^2_{L^2(\Omega_{e,s}^\ve)}$ by applying   the  Gagliardo-Nirenberg inequality,   and using  the estimate \eqref{eq_diff_bc_weak} for \\ $\|\widetilde c_e^{\ve, j}\|^p_{L^\infty(0,s; L^2(\Omega_e^\ve))}$  and the boundedness of  $b_e^{\ve, j-1}$, which can be shown in the same way as the $L^\infty$-estimates in  \eqref{estim_Linfty_L4},  yield
\begin{equation*}
\begin{aligned}
 &\|\widetilde b_{e}^{\ve, j}(s)  \|^p_{L^p(\Omega^\ve_e)}  + \|\nabla |\widetilde b_{e}^{\ve,j}|^{\frac p 2} \|^2_{L^2(\Omega^\ve_{e,s})} \\
& \qquad \qquad  \leq C_1 p^{10} \sup\limits_{(0,s)}\| |\widetilde b^{\ve, j}_{e}|^{\frac p 2} \|^2_{L^1(\Omega_e^\ve)}
 + C_2^p \|{\bf e}(\widetilde u_{e}^{\ve, j-1})\|^p_{L^{1+ \frac{1}\sigma}(0,s; L^{2}(\Omega_e^\ve))}
\\ & \qquad \qquad  +  \delta^p  \|{\bf e}(\partial_t \widetilde u_{f}^{\ve, j-1})\|^p_{L^{2}(\Omega_{f,s}^\ve)}
 +  C_\delta^p  \|\partial_t \widetilde u_{f}^{\ve, j-1}\|^p_{L^{2}(\Omega_{f,s}^\ve)}.
\end{aligned}
\end{equation*}
Using \eqref{eq_diff_bc_weak} and iterating in $p=2^k$, for $k=2,3,\ldots$, similarly to \cite[Lemma 3.2]{Alikakos}, we obtain the estimate stated in Lemma.
\end{proof}

\end{document}